\numberwithin{equation}{section}
\theoremstyle{plain}
\newtheorem{thm}{Theorem}[section]
\newtheorem{lem}[thm]{Lemma}
\newtheorem{prop}[thm]{Proposition}
\newtheorem{cor}[thm]{Corollary}
\newtheorem{claim}[thm]{Claim}
\newtheorem{rem}{Remark}[thm]
\theoremstyle{definition}
\theoremstyle{remark}
\newcommand{\ca}[1]{{\mathcal{#1}}}
\newcommand{\R}{\mathbb{R}}
\newcommand{\I}{\mathcal{I}(t)}
\newcommand{\LL}{\mathcal{L}}
\newcommand{\J}{\mathcal{J}(t)}
\newcommand{\N}{\mathcal{N}(t)}
\newcommand{\vA}{\varphi_A}
\newcommand{\vB}{\varphi_B}
\newcommand{\zA}{\zeta_A}
\newcommand{\zB}{\zeta_B}
\newcommand{\IOpg}{(1-\gar \partial_x^2)^{-1}}
\newcommand{\Jap}[2]{\left\langle {#1}, {#2} \right\rangle}
\newcommand{\sgn}{\operatorname{sgn}}
\newcommand{\bd}[1]{\boldsymbol{#1}}
\newcommand{\Q}{\boldsymbol{Q}}
\newcommand{\QQr}{\boldsymbol{Q}_{c,\rho}}
\newcommand{\QQrt}{\boldsymbol{Q}_{c(t),\rho(t)}}
\newcommand{\QQc}{\boldsymbol{Q}_{c}}
\newcommand{\Qc}{Q_{c}}
\newcommand{\Qr}{Q_{c,\rho}}
\newcommand{\uu}{\boldsymbol{u}}
\newcommand{\JJ}{\boldsymbol{J}}
\newcommand{\NN}{{\bf{N}}}
\newcommand{\T}{{\bf{T}}_{c,\rho}}
\newcommand{\D}{{\bf{D}}_{c,\rho}}
\newcommand{\G}{{\bf{G}}_{c,\rho}}
\newcommand{\gar}{\varepsilon}
\newcommand{\Opg}{(1-\gar\partial_x^2)^{-1}}
\newcommand{\et}{\eta}
\newcommand{\sech}{\operatorname{sech}}
\begin{document}
	\title[Asymptotics of Good Boussinesq solitary waves]{On asymptotic stability of stable Good Boussinesq solitary waves}
	\author[Christopher Maul\'en]{Christopher Maul\'en}  
	\address{Fakultat f\"ur Mathematik, Universit\"at Bielefeld,  Postfach 10 01 31, 33501 Bielefeld, Germany.}
	\email{cmaulen@math.uni-bielefeld.de}
	\thanks{Ch.Ma. was funded by the Deutsche Forschungsgemeinschaft (DFG, German Research Foundation) -- Project-ID 317210226 -- SFB 1283, and Chilean research grants FONDECYT 1231250.}
	
		\author[Claudio Mu\~noz]{Claudio Mu\~noz}  
	\address{Departamento de Ingenier\'{\i}a Matem\'atica and Centro
de Modelamiento Matem\'atico (UMI 2807 CNRS), Universidad de Chile, Casilla
170 Correo 3, Santiago, Chile.}
	\email{cmunoz@dim.uchile.cl}
	\thanks{Cl.Mu. was partially funded by Chilean research grants FONDECYT 1231250 and Basal CMM FB210005.}

\subjclass[2020]{Primary 35B40; Secondary 37K40, 35L76}

\keywords{Generalized Boussinesq, Boussinesq, decay, virial, Asymptotic stability}

	\begin{abstract}
		We consider the generalized Good-Boussinesq (GB) model in one dimension, with subcritical power nonlinearity $1<p<5$ and data in the energy space $H^1\times L^2$. This model has solitary waves with speeds $c\in (-1,1)$. If $c^2>\frac{p-1}{4}$, Bona and Sachs showed in 1988 the orbital stability of such waves. Previously, one of us showed the existence of an odd-even data submanifold of the energy space where unstable GB standing waves can be perturbed to get the asymptotic stability property if $p\ge 2$. In this paper we prove that for any $2\leq p<5$ stable GB solitary waves are asymptotically stable for initial data placed in the energy space and speeds $|c|>c_+(p)\geq \sqrt{\frac{p-1}{4}}$. For $p$ slightly above 3, one has $c_+(p)= \sqrt{\frac{p-1}{4}}$. The proof involves the introduction of a new set of virial estimates specifically adapted to the GB system in a moving setting. In particular, a new virial estimate with mixed variables is considered to treat arbitrary scaling and shift modulations. Another new ingredient is the understanding the corresponding linear matrix operator under mixed orthogonality conditions, a feature absent in the previous works in Boussinesq models.
	\end{abstract}
	\maketitle

	\section{Introduction}

	\subsection{Setting}
Consider the generalized good Boussinesq \cite{MK} model posed in $\mathbb R_t \times \mathbb R_x$:
		\begin{align}
		  \partial_t^2 \phi+\partial^4_x \phi-\partial^2_x \phi+\partial_{x}^2(f(\phi))=0. \label{eq:GB}
		\end{align}
Here, $\phi(t,x)$ is a real-valued function.  In this paper, we shall consider the case where $f(s)=|s|^{p-1}s$ and $p>1$.  If $u_1:=\phi$ and $u_2:=\partial_x^{-1}\partial_t \phi$, \eqref{eq:GB} can be written as
\begin{align}\label{eq:gGB}
		(g\mbox{GB)}\ \ \ \ 
		\begin{cases}
		\partial_t u_1=\partial_x u_2 \\
		\partial_t u_2=\partial_x (-\partial_x^2 u_1+u_1-f(u_1)).
		\end{cases}
\end{align}
The model \eqref{eq:gGB} has the following associated conserved quantities:	
\begin{equation}\label{eq:energy}
\begin{aligned}
E[u_1,u_2] =  \frac12 \int \left(u_2^2 +u_1^2 +(\partial_x u_1)^2-2F(u_1)\right)\quad  (\mbox{Energy}),\quad P[u_1,u_2] = \int u_1u_2 \quad  &(\mbox{Momentum}) .
\end{aligned}
\end{equation}	
(Here $\int$ means $\int_{\mathbb R} dx$.) From \eqref{eq:energy} these laws define a standard energy space $(u_1,u_2)\in H^1\times L^2$. As well as the Korteweg-de Vries (KdV) equation, \eqref{eq:gGB} in the case $f(s)=s^2$ is considered as a canonical model of shallow water waves \cite{whitman}. In addition, \eqref{eq:gGB} arises in the so-called ``nonlinear string equation'' describing small nonlinear oscillations in an elastic beam \cite{FST}. Finally, \eqref{eq:gGB} is also derived from the Kadomtsev--Petviashvilli equation with negative dispersion (see \cite{pegoKP,Nishita}).

The study of the Boussinesq-type equations \cite{SR,Bou1,BCS1,BCS2} has increased recently, mainly due to the versatility of these models when describing nonlinear phenomena. The Good Boussinesq equation has been the subject of extensive mathematical research, particularly in the areas of solitary and soliton theory, in particular, the long time behavior of solitary waves. This interesting question has attracted the attention of several authors before us, showing that the behavior of solitary waves in the standard energy space $H^{1}\times L^2$ is not an easy problem. It is well-known that solitary waves have the explicit form
	\begin{equation}\label{eq:QQ}
	\begin{aligned}
	\Q_c(t,x)=(Q_c, -cQ_c)(x-ct -x_0), \quad x_0\in\mathbb R, \quad |c|<1.
	\end{aligned}
	\end{equation}
	Here,
	\begin{equation}\label{eq:scaling_Q}
	Q_c(x)=\gamma^{\frac{2}{p-1}}Q(\gamma x),\quad \gamma=\sqrt{1-c^2},\quad 	Q(x)=\left(\frac{p+1}{2\cosh^{2}\left(\frac{p-1}{2}x\right)}\right)^{1/(p-1)}.
	\end{equation}
Bona and Sachs \cite{Bona-Sachs}, applying the theory developed by Grillakis, Shatath and Strauss (see \cite{GSS_stability}), proved that solitary waves are stable if the speed $c$ obeys the condition $(p-1)/4<c^2<1$ and $1<p<5$. The asymptotic stability of these nonlinear waves has been an open problem since 1988. 

Linares \cite{Linares,Notes_linares}, using Strichartz estimates, proved that the Cauchy problem is globally well-posed in the energy space in the case of small data. This result was later improved by Farah \cite{Farah} considering spaces of negative regularity. Kishimoto \cite{kishimoto}, in the case of a quadratic nonlinearity, proved that the Cauchy problem is  globally well-posed in $H^{s}(\R)$,  for $s\geq -1/2$, and ill-posed for $s<-1/2$. Kalantarov and Ladyzhenkaya in \cite{KZ} proved that solutions associated to  initial data with nonpositive energy may blow up in some sense. Inspired by this work, Liu \cite{Liu} showed that there are solutions with initial data arbitrarily near the ground state ($c=0$) that blow up in  finite time. The same year Sachs \cite{Sachs} also showed blow up in the case of a negative energy condition, and Alexander and Sachs showed instability if $|c|<\frac12$ \cite{AS}.  In \cite{MPP} (see also \cite{MPPerratum}), it was proved that small solutions in the energy space must decay to zero as time tends to infinity in proper subsets of space. 

Recently, one of us \cite{Mau} showed the existence of a finite codimensional manifold of initial data with odd-even parity under which the standing wave ($c=0$) is asymptotically stable. In this paper we address the asymptotic stability of Bona-Sachs stable solitary waves with no restriction on the symmetry of the data.

\begin{thm}\label{MT}
Let $2\leq p< 5$. There exists  $\sqrt{\frac{p-1}{4}} \leq  c_+(p)<1$ such that, for all $c_+(p) < |c| <1$, the following is satisfied. Let $x_0\in\mathbb R$ be a fixed shift parameter, let $\Q_c(t,x)$ be as in \eqref{eq:QQ}. There exists $C_0,\delta_0>0$ such that for all $0<\delta<\delta_0$, the following holds. Assume that $(\phi_{1,0},\phi_{2,0})\in H^1\times L^2$ are such that $\| (\phi_{1,0},\phi_{2,0})- \Q_c(0,\cdot)\|_{H^1\times L^2}<\delta$. Then the corresponding solution $(\phi_1,\phi_2)(t)$ to \eqref{eq:gGB} with initial data $(\phi_{1,0},\phi_{2,0})$ at time $t=0$ is globally defined in $H^1\times L^2$ for all $t\geq 0$ and there are $c_+\in (-1,1)$, $|c_+-c|\leq C_0\delta$, and a shift $\rho(t)$, such that for any bounded interval $I$,
\begin{equation}\label{AS}
\lim_{t\to +\infty} \| \phi_1(t,\cdot + \rho(t))- Q_{c_+}\|_{(L^2\cap L^\infty)(I)}=0.
\end{equation}
\end{thm}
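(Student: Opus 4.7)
The plan is to follow the now-classical Martel--Merle style strategy for asymptotic stability of solitary waves, adapted to the matrix-valued Hamiltonian structure of \eqref{eq:gGB}. Since the assumption $|c|>c_+(p)\geq\sqrt{(p-1)/4}$ places us in the Bona--Sachs orbital stability regime, for $\delta$ small enough the solution $(\phi_1,\phi_2)(t)$ remains in a tubular neighborhood of the family $\{\QQr\}$. Using the implicit function theorem I would extract smooth modulation parameters $c(t)$ and $\rho(t)$, writing
\begin{equation*}
(\phi_1,\phi_2)(t,x)=\QQrt(t,x)+(\eta_1,\eta_2)(t,x-\rho(t)),
\end{equation*}
with orthogonality conditions on $(\eta_1,\eta_2)$ designed to remove the kernel directions $\partial_x\QQc$ and $\partial_c\QQc$ of the linearized operator. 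Because the system is matrix-valued and both modulations act at once, as the abstract anticipates these conditions will be \emph{mixed}, i.e.\ pairing $\eta_1$ with one profile and $\eta_2$ with another, rather than the more classical ``one condition per component'' form.

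Next, differentiating the orthogonality conditions in $t$ and using the equation for $(\eta_1,\eta_2)$ gives a $2\times 2$ linear ODE system for $(\dot c(t),\dot\rho(t)-c(t))$, invertible thanks to the Bona--Sachs non-degeneracy $\frac{d}{dc}P[\QQc]\neq 0$, and yielding the standard quadratic modulation bound $|\dot c|+|\dot\rho-c|\lesssim\|(\eta_1,\eta_2)\|_{H^1\times L^2}$. The analytic heart of the proof is then a virial identity in the moving frame. I would introduce a localized Morawetz-type functional $\I=\int\psi_A(x)\,\eta_1\,\partial_x\eta_2\,dx$ (or a close variant) with $\psi_A$ a bounded, slowly-varying weight on scale $A\gg 1$, and compute
\begin{equation*}
\tfrac{d}{dt}\I=-\B(\eta_1,\eta_2)+\text{(modulation errors)}+\text{(nonlinear remainder)},
\end{equation*}
where $\B$ is an $x$-localized quadratic form. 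The key novelty will be to complement $\I$ with a second virial involving \emph{mixed variables}, whose role is to supply positivity in directions that $\B$ alone misses, so that together they produce a coercive functional modulo the kernel directions already removed by the orthogonality conditions.

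The principal obstacle is the coercivity of this combined quadratic form under the mixed orthogonality conditions. The linearized operator around $\QQc$ in the moving frame is non-self-adjoint and matrix-valued with a two-dimensional generalized kernel, and one must establish a spectral-gap inequality of the form
\begin{equation*}
\B(\eta_1,\eta_2)\geq\kappa\,\bigl\|(\eta_1,\eta_2)\bigr\|_{H^1\times L^2(|x|\leq A)}^2,
\end{equation*}
valid whenever the mixed orthogonality relations hold. This reduces, as usual, to checking the sign of a finite matrix of inner products of canonical test profiles against suitable trial functions; the threshold $c_+(p)$ is defined precisely as the range of $c$ where this matrix is positive, and a continuity/perturbative argument in $c$ should recover $c_+(p)=\sqrt{(p-1)/4}$ for $p$ slightly above $3$. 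Once coercivity is in hand, integrating $\frac{d}{dt}\I$ on $[0,T]$ (using that $\I$ is uniformly bounded and the modulation and nonlinear errors are integrable in time) produces a finite local-in-space $L^2$ bound on $(\eta_1,\eta_2)$. This forces $\dot c\in L^1(\mathbb R_+)$, so that $c(t)\to c_+$, and a standard compactness/asymptotic-profile argument combined with the virial bound yields the local uniform decay \eqref{AS}.
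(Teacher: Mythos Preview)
Your outline captures the broad Martel--Merle architecture (modulation, virial, coercivity, integration in time), but it misses the mechanism that actually makes the virial argument close in this problem.

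\medskip

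\textbf{The transformed (dual) problem is missing.} A single virial functional on the perturbation $(\eta_1,\eta_2)$, even combined with a second ``mixed-variable'' virial, will not produce a coercive quadratic form: the first virial on $\bd v$ leaves an uncontrolled term of the form $\int f'(Q_c)\,w_1^2$ (see \eqref{cota_I}). The paper's key step is a Darboux-type change of variables $\bd z=(1-\varepsilon\partial_x^2)^{-1}\bd L\,\bd v$ (Section~\ref{Sec:4}), and a \emph{second} virial $\mathcal J$ on the transformed variable $\bd z$. This transformed system has two features your proposal does not anticipate: the shift error terms cancel, and the resulting quadratic form is governed by the modified operator $\bd{\tilde L}$ with $-3\partial_y^2$ instead of $-\partial_y^2$. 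The coercivity you need is therefore \eqref{coer0_new} for $\bd{\tilde L}$ under the \emph{almost}-orthogonality $\langle\Lambda\QQc,\bd\eta\rangle\approx 0$ inherited through the transform---not a spectral gap for the original linearized operator under the primal orthogonalities. This is exactly where the threshold $c_+(p)$ arises (Lemma~\ref{lem2p3}(iv)), and it is not the same computation as the Bona--Sachs stability matrix.

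\medskip

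\textbf{Two further points your sketch gets wrong.} First, the modulation estimate you state is linear in both $\dot c$ and $\dot\rho-c$; in fact the orthogonality condition $\langle\bd J\QQc,\bd v\rangle=0$ is chosen precisely so that $|c'|$ is \emph{quadratic} in local norms (see \eqref{eq:c'}), and the paper stresses this is essential for closing the third virial. Second, two virials are not enough: the transform inverts regularities, so a third virial $\mathcal N=\int\psi_{A,B}\,\partial_x z_1\,\partial_x z_2$ (Proposition~\ref{prop:N}) is required to recover the missing derivative on $\eta_2$ before the three estimates can be combined into the functional $\mathcal H$ whose derivative is genuinely coercive.
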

This result may be understood as the first result on asymptotic stability of stable solitary waves in shallow water waves models of Boussinesq's type having a ``two wave'' vector-valued description. These include well-known models such as \emph{good, improved}, and \emph{abcd} Boussinesq equations. Unlike KdV type models, which have a preferred direction of movement, two wave dynamics are characterized by two possible directions of propagation, making the dynamics richer and more involved. Standing solitary waves are also allowed. In the GB case, the two wave representation is expressed in two equally important directions of propagation, revealing that previous works on asymptotic stability of generalized KdV type models may not be suitable to get full control on the perturbed solitary wave for large times. Therefore, an important part of the problem is to get the correct framework to study the asymptotic problem. Additionally, almost monotonicity of mass, a property key to get convergence, is  not present in our model. 

An important point to remark is the validity of the parameter $c_+(p)$, which is proved to exist without requiring costly numerical computations and for all $2\leq p<5$. This follows the state of the art in virial identities, where numerical computations to check spectral properties can be reduced to its minimum presence or with no use at all \cite{KMM2017,KM}. Technically, $c_+(p)$ represents the validity of a \emph{spectral condition} for the \emph{dual} or transformed system  to get asymptotic stability. It turns out that one can obtain an explicit formula for this number, depending on some integrals of $Q=Q_{c=1}$ and other functions easily computable. Then, the number $c_+(p)$ can be explicitly computed with high precision for any value of $p$ with any suitable numerical software (see Fig. \ref{positividad}). One can easily see that for $p$ slightly above 3, $c_+(p)=\sqrt{\frac{p-1}{4}}$, however, in the regime $p$ below this number the situation becomes more complicated and one has, with the methods used in this paper, that $c_+(p)>\sqrt{\frac{p-1}{4}}$.  Moreover, we have evaluated other methods involving highly depending numerical tests, and a more complicated panorama is found. Therefore, the proposed method is simpler and strictly not depending on highly demanding numerical computations. A possible solution to this problem would involve to let the scaling modulation evolve with a less demanding orthogonality condition, but to ensure the convergence of the scaling parameter is not obvious at all. Specially interesting here is the case of scaling critical dynamics, where the scaling parameter converges thanks to the use of an additional blow-up modulation mode that has a precise law (but also requiring sometimes particular initial data). We believe that some of these ideas might be useful to treat the more demanding case $2\leq p\leq 3$. On the other hand, the condition $p\geq2$ in Theorem \ref{MT} already appears in \cite{Mau}, as a consequence of the lack of sufficient smoothness on the nonlinearity to perform stable Taylor's expansions if $p<2$.

The reader may ask on the second component of the solution in \eqref{AS}. The situation is more involved than for the first component, due to certain loss of derivatives present in the model. We will obtain that for some $\varepsilon>0$ small but fixed,
\begin{equation}\label{AS2}
\lim_{t\to +\infty} \| \IOpg ( \phi_2(t,\cdot + \rho(t)) + c_+Q_{c_+})\|_{H^s(I)} =0, \quad 0\leq s<2.
\end{equation}
therefore, there is convergence to zero of the difference $\phi_2(t,\cdot + \rho(t)) + c_+Q_{c_+}$, but in an space slightly more regular than $L^2$. Precisely, the value $s=2$ represents the $L^2$ norm of the solution, which as far as we understand, cannot be controlled in terms of the local integrated dynamics without assuming additional regularity or decay properties. It is also interesting to notice that although $\IOpg$ represents a smoothing operator, it does not affect the actual dynamics, in the sense that no convergence below the natural energy-space regularity $L^2$ is obtained.

Another key ingredient in GB is its vector-valued character, consequence of its fourth order character. This vector-valued setting makes GB harder but still tractable enough to be considered as the first step towards the understanding of \emph{improved and abcd} solitary waves. In the first case, solitary waves are superluminal, making the analysis harder because energy and momentum seem not enough to capture shifts and scaling modulations, and in abcd, one of the most prominent models studied today, the parameters defining the model are as important as the initial data to obtain a suitable representation of the long time dynamics, making the choice of the particular abcd regime very important. Among these particular abcd regimes, we identify KdV-KdV (probably the easiest regime) and BBM-BBM regimes, and the KBK integrable case. In any case, in addition to clarify the asymptotic stability of stable GB solitary waves, Theorem \ref{MT} is a first but fundamental step in the direction of understanding all Boussinesq's solitary wave dynamics. This program of research is supported by previous works \cite{MPP,Mau,MaMu} characterized by a simplified dynamics, and carried out to the most general setting (allowing scaling and shift modulations).

As previously expressed, Theorem \ref{MT} complements the orbital stability proof obtained by Bona and Sachs \cite{Bona-Sachs} in 1988. This nearly 40 years gap between orbital and asymptotic stability is heavily influenced by the lack of correct functional setting to treat two-wave problems, and very importantly, the absence of transformations of Darboux type. Important advances were obtained by Pego and Weinstein in an influential work \cite{PW2}, where linear asymptotic stability in Boussinesq type models was studied. Precisely, the program described above is strongly based on this influential work. We believe that the ideas in this work are suitable to treat each model described there, after reasonable improvements are done. In the meantime, Pego and Weinstein proved asymptotic stability of gKdV solitary waves \cite{PW1}, later improved by the works by Martel and Merle \cite{Martel-Merle1,Martel-Merle2,MM_solitonsKdV}. In the case of Improved Boussinesq, in addition to the convective stability proved by Pego and Weinstein, one can find the recent work \cite{MaMu0} where decay on compact spaces is proved.

There are interesting open results that can be proved in the case of GB solitary waves. A first one is the description of the full regime $c\in(-1,1)$. Since $c^2 <\frac{p-1}{4}$ is a regime of instability, we believe that merging ideas of this work together with \cite{Mau} will be necessary to construct an asymptotically stable manifold of initial data for GB in the moving case. Another interesting direction is dictated by the critical case $c^2 =\frac{p-1}{4}$, where a different behavior can be expected. Another, more involved problem, is to describe the asymptotic stability of abcd solitary waves, without using the particular character of the parameters of the model. Naturally one may think that KdV-KdV systems may be favored by classical Martel-Merle's techniques, but devising a method able to consider the full stable regime of space parameters in abcd remains an important open question.

\subsection{Main difficulties} The proof of Theorem \ref{MT} is mainly based on the previously published works \cite{KMMV20,Mau,MaMu} whose main ingredient is the use of combined virial estimates to obtain the convergence of perturbations of a GB soliton at large times. However, to get \eqref{AS} we required the introduction of several new ingredients not present in previous works. Some of them are mentioned now:

\medskip

\noindent
{\it A suitable setting for orbital stability}. It turns out that the classical Grillakis-Shatah-Strauss/Bona-Souganidis-Strauss setting described in \cite{Bona-Sachs} to obtain the orbital stability of GB solitary waves is unsatisfactory when dealing with GB long time convergence properties. A first issue is the vectorial character of the problem. Indeed, a natural instability direction associated to scalings in GB is the first eigenfunction associated to the operator $L$ in the matrix operator
\[
\bd{L}=\begin{pmatrix} L &~{}&c \\ c &~{}& 1\end{pmatrix}, \quad L=-\partial_y^2+1- f'(Q_{c}).
\]
An important advantage of this choice of eigenfunction is that the spectral analysis can be split into two disjoint pieces, passing from a matrix-valued operator to a scalar valued one. It turns out that in our case this direction needs to be replaced by the more suitable vector-valued translation and scaling directions
\[
\Jap{\Q_c'}{\bd{v}}=\Jap{\bd{J}\QQc}{\bd{v}}=0, \quad \bd{J}\QQc =  \begin{pmatrix} -cQ_c  \\ Q_c \end{pmatrix}. 
\]
In particular, the second orthogonality condition is directly related to the obtention of a quadratic estimate on the scaling parameter $|c'|$. Although this is in principle a good property, it will imply that the spectral properties and required coercivities will have to be proved for vector-valued operators with mixed orthogonality conditions.

\medskip

\noindent
{\it Mixed variables}. It turns out that the GB system has a natural mixed variable $u_2+cu_1$, where $u_1,u_2$ are perturbations and $c$ is the speed of the solitary wave (notice that this quantity becomes zero in the case of the solitary wave). This is a feature deeply related to the use of the energy space only, where there is no preferred direction of movement. Although this phenomenon had already been treated in \cite{KMMV20}, two important new consequences appear because of this twisted behavior: first, the previously mentioned orthogonality conditions become just ``linear'' conditions in terms of this new variable, meaning that their ``orthogonal'' behavior is lost; and second, every virial estimate must be necessarily placed in these new variables. Naturally, good primal estimates are completely out of reach if one only has just  linear conditions, but the case of dual virial estimates can be saved by a new almost orthogonality condition that appears for the transformed system, precisely as mentioned below. 

\medskip

\noindent
{\it Transformed system with a related vector-valued coercivity estimate}. It is usually common to find that Darboux transformations of linear dynamics lead to the study of simpler evolution systems. This is the case of scalar fields \cite{KMMV20}. In the GB setting the transformed problem conserves essentially all the bad properties of the original one except by one: perturbative terms issued from shifts are now not present. As a consequence of the previous steps, the classical transformed problem (Section \ref{Sec:4}) leads to the study of a vector valued virial bilinear form 
\[
\int \left( 3\eta _{2,y}^2 +(1-c^2)\eta_2^2 -f'(Q_c)\eta_2^2 +(\eta_1+c\eta_2)^2 \right) ,
\]
under (roughly speaking) almost orthogonality conditions ($\varepsilon$ is a small parameter)
\[
\begin{aligned}
	&\left| \langle {\bf Q}_c' , \bd{\eta} \rangle \right|+ \left| \langle   \Lambda \QQc , \bd{\eta} \rangle \right| \lesssim \varepsilon  \|\bd{\eta}\|_{L^2\times H^1}. 
	\end{aligned}
\]
The second orthogonality condition is new to the system and it is a consequence of the chosen transformed problem. Since the bilinear operator above has mixed variables, and the orthogonalities consider both coordinates at similar sizes, an alternative strategy to obtain a coercivity estimate is needed. In this case, we first relate the problem to a well-known one appearing in the orbital stability proof, to then generalize the corresponding coercivity estimate by comparing the obtained result with the more involved case above described. This is the only part in the proof where we require speeds larger than the expected threshold $\sqrt{(p-1)/4}$ ($p$ below a number above but very close to 3), since to ensure a suitable coercivity estimate requires sufficiently large speeds. The existence of a more involved dynamics for smaller speeds is not discarded at all, mainly because the lower the nonlinearity the more involved the dynamics (possibly internal modes or similar objects), and since other numerically motivated methods tested by us before publishing this paper reveal a more complicated setting than the expected one. In particular, the classical method developed by Weinstein to get coercivity estimates (numerically computing the inner product $(\mathcal L^{-1} \cdot , \cdot )$ and check the good negative sign) seems to give less conclusive evidence than the one present here.

\medskip

\noindent
{\it Inverted regularities. } The transformation performed into the linearized GB system inverts regularities, leading to strange results where good estimates are obtained for the second coordinate of the perturbation, but bad estimates are obtained for the first one. This is somehow standard in certain virial estimates, but the GB case is more involved and a new virial estimate (for the functional $\mathcal N$) is needed. Compared with the previous works \cite{Mau,MaMu}, in this case we strongly need two important facts: a miraculous cancellation of shifts error terms in the transformed problem, and the quadratic behavior of the scaling parameter, that can be considered as another almost cancellation of bad directions. This was not needed in the previous versions \cite{Mau,MaMu}, either because there were no shifts nor scalings, or scalings were not considered.

\subsection{Organization of this work} This paper is organized as follows. In Section \ref{Sec:2} we recall several important facts for the considered model and the key modulation estimates. Later, Section \ref{Sec:3} we perform first virial estimates. Section \ref{Sec:4} is devoted to the second virial estimates. Section \ref{Sec:5} deals with a third set of virial identities, and Section \ref{Sec:6} will contain the main coercivity estimates necessary for the final proof. Finally, in Section \ref{Sec:7} we prove Theorem \ref{MT}.

\subsection*{Acknowledgments} Ch. Ma. would like to thank the CMM and DIM at University of Chile, for their support and hospitality during research stays while this work was written. Cl. Mu. would like to thank the Erwin Schr\"odinger Institute ESI (Vienna) and INRIA Lille France, where part of this work was written.

\section{Spectral analysis, modulation and orbital stability}\label{Sec:2}

In this section we recall some standard facts of the Boussinesq model, including its solitary waves. Several facts are taken from the previous work \cite{Mau}, see this reference for full details and proofs.

\subsection{Rewriting of the problem} Let $c=c(t) \in (-1,1)$ and $\rho=\rho(t) \in \mathbb R$  be a scaling and shift parameters to be defined later. We define
\begin{equation}\label{v_u}
\phi_j(t,x):= \tilde \phi_j(t,y), \quad  j=1,2,\quad y := x-\rho(t).
\end{equation}
Then \eqref{eq:gGB} reads now in centered variables $(\tilde \phi_1, \tilde \phi_2)$
\begin{align}\label{eq:gGB_new}
\begin{cases}
\partial_t \tilde \phi_1=\partial_y (\tilde \phi_2 +c \tilde \phi_1) +(\dot\rho-c) \partial_y \tilde \phi_1 \\
\partial_t \tilde \phi_2=c \partial_y  (\tilde \phi_2 +c \tilde \phi_1)+ \partial_y (-\partial_y^2 \tilde \phi_1+ (1-c^2) \tilde \phi_1-f(\tilde \phi_1)) +(\dot\rho-c) \partial_y \tilde \phi_2.
\end{cases}
\end{align}
We will drop the tildes if no confusion is present. \eqref{eq:gGB_new} will be the model to study in this paper. Recall $Q$ and $Q_c$ in \eqref{eq:scaling_Q} and $\Q_c$ in \eqref{eq:QQ}. Notice first that $Q$ and $Q_c$ satisfy the equations
	\begin{equation}\label{eq:soliton_eq}
	Q'' -Q +f(Q)=0, \quad \hbox{and} \quad Q_c''-(1-c^2)Q_c+f(Q_c)=0,
	\end{equation}
respectively. Consider the modulated solitary wave 
	\begin{equation}\label{Mod_soliton}
	\begin{aligned}
	\Q_c :=  \begin{pmatrix} Q_c  \\ -cQ_c \end{pmatrix}(y), \qquad {\bf \phi} = \Q_c + \bd{v}.
	\end{aligned}
	\end{equation}
One can see that the perturbation $\bd{v} =(v_1,v_2)^T$ satisfies 
		\begin{align} \label{eq:eq_lin}
		\begin{cases}
		\partial_t v_1  = \partial_y (v_2+c v_1) +(\rho'-c)\partial_y (\Qc+v_1)-c'~\Lambda (\Qc)\\
		\partial_t v_2 = \partial_y \LL v_1 +c\partial_y (v_2+cv_1)+ N+(\rho'-c)\partial_y(-c\Qc+v_2)+c'~\Lambda (c\Qc), 
		\end{cases}
		\end{align}
	where $\Lambda f_c := \partial_c f_c$,
	\begin{equation}\label{eq:L_N}
	\begin{aligned}
	\LL:=&~{}-\partial_y^2+1-c^2-f'(\Qc), 
	\quad 
	N=N(v_1): =  -\partial_y(f (\Qc+v_1) - f (\Qc)- f'(\Qc)v_1).
	\end{aligned}
	\end{equation}
Additionally, from \eqref{Mod_soliton} consider $\Lambda \Q_c:= \partial_c \Q_c$,
 \begin{equation}\label{eq:TDGF}
 \begin{aligned}
 {\bf T}_c = \Q_c', \qquad  {\bf D}_c =\Lambda \Q_c,  \qquad {\bf G}_c=\JJ\partial_y^{-1}{\bf T}_c =\JJ\Q_c.
 \end{aligned}
 \end{equation}
Let also $L=-\partial_y^2+1- f'(Q_{c})$,
	\begin{equation}\label{eq:LL}
	\JJ= \begin{pmatrix}
	0&1\\ 1&0
	\end{pmatrix}, \quad \bd{L}=\begin{pmatrix} L &~{}&c \\ c &~{}& 1\end{pmatrix}, \quad  \NN(\bd{v})=\begin{pmatrix}0 \\ N(v_1)
\end{pmatrix}.
\end{equation}
Then using \eqref{eq:TDGF} we rewrite the system as follows:
	\begin{align}\label{eq:u_TD}
	\partial_t \bd{v}= \partial_y \JJ \bd{L} \bd{v} +(\rho'-c) ({\bf T}_c +\partial_y \bd{v})-c' {\bf D}_c +\NN (v_1).
	\end{align}
In order to fix the scaling and shift parameters, later we will choose the natural orthogonality conditions 
\[
\Jap{\bd{T}_c}{\bd{v}}=\Jap{\bd{J}\QQc}{\bd{v}}=0.
\]
Since $|c|>0$ by hypothesis, rewriting in terms of the variables $(cv_1+v_2,v_2)$, we get
\begin{equation}\label{eq:orto}
\begin{aligned}
& 0=\Jap{\bd{T}_c}{\bd{v}} =\int \Qc'(v_1- cv_2) = (1+c^2)\int Q_c'v_1 - \int Q_c'(v_2+cv_1) ,\\
& 0=\Jap{\bd{J}\QQc}{\bd{v}} =\int \Qc(v_2-cv_1) = -2c\int Q_cv_1  + \int Q_c(v_2+cv_1) .
\end{aligned}
\end{equation}
	\begin{lem}\label{lem2p2}
	Let $\bd{L}$ be as in \eqref{eq:LL}. Then it holds
	\begin{enumerate}
	\item[$(i)$]  $\bd{L}$ is a self-adjoint operator in $(L^2)^2$ with dense domain $H^2\times L^2.$
	\item[$(ii)$] $\bd{L}$ has a unique negative eigenvalue $-\nu_0<0$, associated to an eigenfunction $\bd{\Psi}_{-}$ such that $\|\bd{\Psi}_{-}\|_{L^2}=1$.
	\item[$(iii)$] $ \hbox{ker}(\bd{L})=\hbox{span}\{\bd{Q}_c'\}$.
	\item[$(iv)$]  $\mathcal L \Lambda \Qc  = 2c \Qc$ and $\bd{L}[{\bf \Lambda \Qc}]=-\JJ {\bf\Qc}$. 
	\item[$(v)$] Under \eqref{eq:orto},  there exists $c_0>0$ such that 
	\begin{equation}\label{coer0}
	\langle \bd{L} \bd{v} ,\bd{v} \rangle \geq c_0  \| \bd{v}\|_{H^1\times L^2}^2.
	\end{equation}
	\end{enumerate}
	\end{lem}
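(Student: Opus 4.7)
Parts (i)--(iv) are spectral and algebraic. For (i), $L = -\partial_y^2 + 1 - f'(Q_c)$ is a Schr\"odinger operator with smooth exponentially decaying potential, hence self-adjoint on $L^2$ with domain $H^2$; the matrix $\bd{L}$ differs from $\mathrm{diag}(L,1)$ only by the bounded symmetric block $c\bd{J}$, so self-adjointness on $H^2 \times L^2$ follows. For (ii)--(iii), I would apply a Schur-complement reduction: for $\lambda < 1$, the eigenvalue equation $(\bd{L} - \lambda)(u_1, u_2)^T = 0$ forces $u_2 = -cu_1/(1-\lambda)$ from the second row, and the first row becomes the scalar problem $Lu_1 = \mu(\lambda) u_1$ with $\mu(\lambda) := \lambda + c^2/(1-\lambda)$. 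Since $\mu$ is a strictly increasing bijection of $(-\infty, 1)$ onto $\mathbb{R}$ and $L = \LL + c^2$, the classical scalar soliton linearization $\LL$ (with its unique simple negative eigenvalue and kernel $\mathrm{span}\{Q_c'\}$) provides exactly one eigenvalue of $L$ strictly below $c^2 = \mu(0)$ and the eigenvalue $c^2$ itself with eigenfunction $Q_c'$; pulling back through $\mu^{-1}$ yields (ii) and (iii) with kernel eigenvector $(Q_c', -cQ_c')^T = \bd{Q}_c'$. For (iv), the identity $\LL\Lambda Q_c = 2cQ_c$ is the classical scaling identity (differentiate the soliton equation in $c$); then $\Lambda\bd{Q}_c = (\Lambda Q_c, -Q_c - c\Lambda Q_c)^T$, and applying $\bd{L}$ row-by-row together with $L = \LL + c^2$ collapses the first component to $cQ_c$ and the second to $-Q_c$, giving $-\bd{J}\bd{Q}_c$.

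The core of the lemma is the coercivity (v). The starting point is the algebraic identity
\[
\langle \bd{L}\bd{v}, \bd{v}\rangle = \langle \LL v_1, v_1\rangle + \|v_2 + c v_1\|_{L^2}^2,
\]
obtained by expanding the matrix product and completing the square using $L = \LL + c^2$. Setting $w := v_2 + cv_1$ and rewriting \eqref{eq:orto} in this mixed variable gives $(1+c^2)\int Q_c' v_1 = c\int Q_c' w$ and $2c\int Q_c v_1 = \int Q_c w$; since $|c|$ is bounded below by the hypothesis $|c| > \sqrt{(p-1)/4} > 0$, this produces the approximate-orthogonality bounds
\[
|\langle v_1, Q_c' \rangle| + |\langle v_1, Q_c\rangle| \lesssim \|w\|_{L^2},
\]
i.e., $v_1$ is only \emph{almost} orthogonal to the natural scalar directions, with error controlled linearly by $\|w\|_{L^2}$.

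The main obstacle is exactly this mixing: the sharp scalar orthogonality is replaced by an approximate one depending on $v_2$, so the usual scalar coercivity for $\LL$ cannot be applied verbatim. To close the estimate I would invoke the stability regime $c^2 > (p-1)/4$ (ensured by the hypothesis), under which the Bona--Sachs/GSS framework yields the \emph{modified} scalar coercivity
\[
\langle \LL v_1, v_1\rangle \geq \alpha_0 \|v_1\|_{H^1}^2 - K\bigl(\langle v_1, Q_c\rangle^2 + \langle v_1, Q_c'\rangle^2\bigr) \quad \text{for all } v_1 \in H^1,
\]
valid by projecting out the two scalar bad directions and using strict positivity of $\LL$ on their orthogonal complement (a consequence of $\langle \LL^{-1} Q_c, Q_c\rangle < 0$, equivalent to $c^2 > (p-1)/4$). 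Substituting the approximate orthogonality bound yields $\langle \LL v_1, v_1\rangle \geq \alpha_0 \|v_1\|_{H^1}^2 - C_*\|w\|_{L^2}^2$, so that
\[
\langle \bd{L}\bd{v}, \bd{v}\rangle \geq \alpha_0 \|v_1\|_{H^1}^2 + (1 - C_*)\|w\|_{L^2}^2.
\]
If $C_*$ is not already strictly less than $1$, the remainder is absorbed by a soft contradiction/compactness argument at the vector level: by (iv), $\bd{J}\bd{Q}_c = -\bd{L}\Lambda\bd{Q}_c$, and a direct computation gives $\langle \bd{L}\Lambda\bd{Q}_c, \Lambda\bd{Q}_c\rangle = c M'(c) + M(c)$ with $M(c) := \|Q_c\|_{L^2}^2$, a quantity having the sign of $(p-1) - 4c^2 < 0$ in the stability regime. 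Hence $\Lambda\bd{Q}_c$ has nontrivial component along the unique negative eigenvector $\bd{\Psi}_-$, and the orthogonality $\bd{v} \perp \bd{J}\bd{Q}_c$ effectively excises $\bd{\Psi}_-$ from the spectral decomposition of $\bd{v}$, producing positive constants $c_1, c_2$ with $\langle \bd{L}\bd{v}, \bd{v}\rangle \geq c_1 \|v_1\|_{H^1}^2 + c_2 \|w\|_{L^2}^2$. Finally, $\|v_2\|_{L^2}^2 \leq 2\|w\|_{L^2}^2 + 2c^2\|v_1\|_{L^2}^2$ converts this into \eqref{coer0}.
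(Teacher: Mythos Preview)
Your treatment of (i)--(iv) is correct and essentially the paper's: the Schur reduction you describe via the monotone bijection $\mu(\lambda)=\lambda+c^2/(1-\lambda)$ on $(-\infty,1)$ is equivalent to the paper's explicit parametrization through $\theta_\pm$ and the Chang--Gustafson--Nakanishi--Tsai eigenvalue list for $L_0$, and your computation in (iv) matches.

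For (v), your route is more roundabout than the paper's and leaves a small gap at the end. Your first attack---approximate scalar orthogonalities $|\langle v_1,Q_c\rangle|+|\langle v_1,Q_c'\rangle|\lesssim\|w\|$ plugged into a penalized scalar coercivity---cannot close because $C_*$ is uncontrolled, as you yourself note. The paper skips this detour and goes straight to the Weinstein mechanism you invoke as fallback: from $\bd{L}\Lambda\bd{Q}_c=-\bd{J}\bd{Q}_c$, $\langle\bd{L}\Lambda\bd{Q}_c,\Lambda\bd{Q}_c\rangle<0$, and $\langle\bd{\Psi}_-,\Lambda\bd{Q}_c\rangle\neq 0$ (verified using the explicit eigenfunction $\bd{\Psi}_-$ from (ii)), one gets $\langle\bd{L}\bd{v},\bd{v}\rangle\geq 0$ under $\bd{v}\perp\bd{J}\bd{Q}_c$. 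Your final sentence then asserts that this ``produces positive constants $c_1,c_2$'' directly; but Weinstein's argument only yields \emph{nonnegativity}, and the kernel direction $\bd{Q}_c'$ is still present. The paper closes this gap by a weak-compactness contradiction: a unit-norm sequence with $\langle\bd{L}\bd{v}_n,\bd{v}_n\rangle\to 0$ has a weak limit $\bar{\bd{v}}$ satisfying $\bar v_2+c\bar v_1=0$, whereupon the two vector orthogonalities \eqref{eq:orto} collapse to the \emph{exact} scalar conditions $\int Q_c\bar v_1=\int Q_c'\bar v_1=0$, and the classical scalar coercivity of $\LL$ under these forces $\bar{\bd{v}}=0$. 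This last step is precisely where the scalar coercivity (in the stable regime $c^2>(p-1)/4$) actually enters, and you should make it explicit rather than folding it into the phrase ``soft contradiction/compactness argument''.
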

	\begin{proof}
	Proof of $(i)$: this is clear from \cite{Bona-Sachs}. 
	
	Proof of  $(ii)$ and $(iii)$. Recall that $c\neq 0$. The eigenvalue problem for $\bd{L}$ can be written as 
	\[
	L v_1 + cv_2 = \lambda v_1, \quad cv_1 + v_2 = \lambda v_2.
	\]
	The case $\lambda=1$ is simple: one has $v_1=0$ and therefore $v_2=0$, then this case is discarded. Now, if $\lambda \neq 1$, $v_2=\frac{cv_1}{\lambda-1}$. Replacing in the first equation we arrive to $\mathcal L v_1 = \left( \lambda -c^2 -\frac{c^2}{\lambda-1}\right) v_1$. Performing the change of variables $v_1(x)=\tilde v_1(y)$, $y=\gamma x$, one arrives to $L_0 \tilde v_1 = \mu \tilde v_1$, where $\mu:= \gamma^{-2}\left( \lambda -c^2 -\frac{c^2}{\lambda-1}\right)= \frac{\lambda(\lambda-1-c^2)}{(\lambda-1)\gamma^2}$ and $L_0:= -\partial_y^2 +1 -f'(Q)$. Now we invoke \cite[Theorem 3.1]{Chang} which characterizes the eigenvalues and eigenvectors of $L_0$: let $p_1=+\infty$, $p_m=\frac{m+1}{m-1}$, $m>1$. Define $\lambda_m:= 1-k_m^2$, $k_m= \frac{m-1}{2}(p_m-p)$. Then, if $p\in[p_{j+1},p_j)$ for some $j$, $L_0$ has eigenvalues $\lambda_m$, $m=0,1,\ldots,j$. Then, one has the equation $\lambda(\lambda-1-c^2) =(\lambda-1)\gamma^2 \lambda_m$, for some valid $m$. If $\theta:= \lambda -1$, one obtains $\theta^2 +\gamma^2 k_m^2 \theta -c^2 =0$. This returns 
	\[
	\theta_\pm = \frac12 \left( -\gamma^2 k_m^2 \pm \sqrt{\gamma^4 k_m^4 +4c^2} \right).
	\] 
Now we proceed by cases. In the case $\lambda=0$ ($\theta=-1$) one easily has $\lambda_m=0$ corresponding to the kernel of $L_0$. One completes $(iii)$ noticing that 
	\[
	\begin{aligned}
	\bd{L}\bd{Q}_c'
	= \begin{pmatrix}L\Qc' -c^2\Qc' \\0\end{pmatrix} 
	= \begin{pmatrix}0 \\0\end{pmatrix}.
	\end{aligned}
	\]
	Negative eigenvalues are obtained when $\theta_-<-1$. In this case, assume that $k_m^2=1$. Then $\theta_- =-1$. Consequently, $\theta_-<-1$ only in the cases where $k_m>1$. Since $k_0=\frac{p+1}{2}>1$, and $k_1=1$, the only possibility is to get $k_0$. In that case  $\theta_-<-1$, leading to the unique negative eigenvalue for $\bd{L}$, which is 
	\[
	\lambda^0 = \theta_-+1 = 1 - \frac12 \gamma^2 \left( \left( \frac{p+1}{2}\right)^2 + \sqrt{ \left( \frac{p+1}{2}\right)^4 + \frac{4c^2}{ \gamma^4}}\right).
	\]
	The associated eigenfunction is easily constructed from the eigenfunction of $L_0$ associated to $\lambda_0=1-\frac{(p+1)^2}{4}$, which is $Q^{(p+1)/2}$. One gets
	\begin{equation}\label{Psi_-}
	\bd{\Psi}_{-}(x) = c_0 Q_c^{\frac{p+1}2}(x) \begin{pmatrix} 1 \\ \frac{c}{\lambda^0-1}\end{pmatrix}.
	\end{equation}
	where $c_0>0$ is chosen such that $\|\bd{\Psi}_{-}\|_{L^2}=1$. This proves $(ii)$.
	
	Now we show $(iv)$. By \eqref{eq:soliton_eq}, we have $\Lambda\Qc''-(1-c^2) \Lambda \Qc +2c\Qc +f'(\Qc)\Lambda \Qc=0$. Therefore, $\mathcal L \Lambda \Qc  = 2c \Qc.$ Also,
	\[\begin{aligned}
	\bd{L}[\bd{\Lambda} \QQc]
	= \begin{pmatrix}L(\Lambda \Qc) -c\Lambda (c\Qc) \\c\Lambda \Qc-\Lambda (c \Qc) \end{pmatrix} 
	= \begin{pmatrix}L(\Lambda \Qc) -c\Lambda (c\Qc) \\ - \Qc \end{pmatrix} = \begin{pmatrix} L  (\Lambda \Qc) -c^2\Lambda \Qc-c\Qc \\ - \Qc \end{pmatrix} = \begin{pmatrix}\LL (\Lambda \Qc) -c\Qc \\ - \Qc \end{pmatrix} .
	\end{aligned}\]
	We conclude that
	\[\begin{aligned}
	\bd{L}[\bd{\Lambda} \QQc]
	=&~{} \begin{pmatrix} c\Qc \\ - \Qc \end{pmatrix} 
	=-\JJ \QQc.
	\end{aligned}
	\]
	This finally shows $(iv)$. 
	
	Now we prove $(v)$. Now we follow \cite[Lemma E.1]{Wei}. Since $\bd{L}$ is clearly self-adjoint, has a unique negative eigenvalue, $\bd{\Lambda Q}_c$ is even, and $\bd{L} \bd{\Lambda Q}_c =-\bd{JQ}_c$,  one has from \eqref{calculo_base} that $\langle \bd{L} \bd{\Lambda Q}_c, \bd{\Lambda Q}_c\rangle = - \langle \bd{JQ}_c, \bd{\Lambda Q}_c\rangle = - \frac{\gamma^{\frac{7-3p}{p-1}}}{p-1} \left(p-1-4c^2 \right)  \|Q\|_{L^2}^2 <0$. Additionally, from \eqref{Psi_-}
\[
\langle \bd{\Psi}_-, \bd{\Lambda Q}_c \rangle = \frac1{\lambda^0} \langle \bd{\Psi}_-, \bd{JQ}_c \rangle = \hbox{cst} \left( 1 - \frac{1}{\lambda^0+1}\right)c \int Q_c^{\frac{p+3}2}\neq 0.
\]
In conclusion, under $0=\Jap{\bd{J}\QQc}{\bd{v}}$ one has $\langle \bd{L} \bd{v} ,\bd{v} \rangle \geq	 0$.

Now we prove \eqref{coer0}. Arguing by contradiction as in \cite{CM}, there exists a sequence $(\bd{v}_n)_{n\in\mathbb N}$ of unit $H^1\times L^2$ norm and such that $0\leq  \langle \bd{L} \bd{v}_n ,\bd{v}_n \rangle <\frac1n$. Therefore, $\int v_{n,1} \mathcal L v_{n,1} \to 0$ and $ \int (v_{n,2} +cv_{n,1})^2\to 0$. Passing to a subsequence if necessary, $\bd{v}_n \to \bd{\bar v}$ in $H^1\times L^2$ weak and locally on compact sets. By weak convergence, one has $\langle \mathcal L \bar v_1,\bar v_1\rangle =\int ( \bar v_{1,x}^2 + (1-c^2) \bar v_1^2 -f'(Q_c) \bar v_1^2 ) \leq 0$. Additionally,  \eqref{eq:orto}  passes to the limit and $\langle \bd{T}_c,\bd{\bar v}\rangle=\langle \bd{JQ}_c,\bd{\bar v}\rangle=0$. Finally, $v_{2,n} +cv_{1,n}$ weakly $L^2$ converges to $\bar v_2+ c\bar v_1$. Since  the $L^2$ norm $ \int (v_{n,2} +cv_{n,1})^2$ also converges to 0, we get $ \int (\bar v_{2} +c\bar v_{1})^2 \leq \liminf_{n\to +\infty}  \int (v_{n,2} +cv_{n,1})^2 =0.$ Therefore $\bar v_2 = -c\bar v_1$ a.e. 

Therefore, \eqref{eq:orto} now reads $\int Q_c' \bar v_1 =\int Q_c \bar v_1 =0$. Under these orthogonalities, and the conditions $1<p< 5$, $\frac{p-1}{4} < c^2 <1$, there exists $c_0>0$ such that $ \int v_1 \mathcal L \bar v_1 \geq c_0 \| \bar v_1\|_{H^1}^2 \sim  \| \bd{\bar v}\|_{H^1\times L^2}^2$. Therefore $\bd{\bar v} \equiv 0$, a contradiction. This proves \eqref{coer0}.
\end{proof}

Now we consider the self-adjoint operator $\bd{\tilde L}$ given by
\begin{equation}\label{eq:tildeL}
 \bd{\tilde L}=\begin{pmatrix} \tilde L &~{}&c \\ c &~{}& 1 \end{pmatrix}, 
 \end{equation}
where $ \tilde L:= -3\partial_{y}^2  +1 -f'(Q_c)$. Let $\mathcal{\tilde L}:=-3\partial_{y}^2  +(1-c^2) -f'(Q_c) $.
	
	\begin{lem}\label{lem2p3}
	Let $\bd{\tilde L}$ be as in \eqref{eq:tildeL}. Then it holds
	\begin{enumerate}
	\item[$(i)$]  $\bd{\tilde L}$ is a self-adjoint operator in $(L^2)^2$ with dense domain $H^2\times L^2.$
	\item[$(ii)$]  For any $p\in(1,5)$,  $\mathcal{\tilde L}$ has an explicit unique negative eigenvalue associated to an eigenfunction $Q^\alpha$, $\alpha=\alpha(p)>0$ given. Moreover, there exists  $c_0>0$ such that in the orthogonal to $Q^\alpha$ one has
	\[
	\int \eta_2 \mathcal{\tilde L} \eta_2 \geq c_0 \gamma \|\eta_2\|_{H^1}^2.
	\] 
	\item[$(iii)$] $\bd{\tilde L}$ has a unique negative eigenvalue. 
	\item[$(iv)$] For each $1<p<5$, there exists $c_+(p) > 0$ such that, if $|c|>c_+$, the following holds: there exists $c_0(c,p)>0$ such that, under $\langle \JJ\bd{\Lambda Q}_c ,\bd{\eta} \rangle=0$,  
	\begin{equation}\label{coer0_new}
	\langle \bd{\tilde L} \bd{\eta} ,\bd{\eta} \rangle \geq c_0  \gamma \| \bd{\eta}\|_{H^1\times L^2}^2.
	\end{equation}
	\end{enumerate}
	\end{lem}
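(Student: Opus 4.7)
The plan is to dispatch (i) and (iii) as close analogues of the arguments for $\bd{L}$ in Lemma~\ref{lem2p2}, and then focus on (ii) and especially on the main coercivity (iv). For (i), self-adjointness on $H^2\times L^2$ is routine: $\tilde L$ is self-adjoint on $H^2$ by Kato--Rellich since $f'(Q_c)\in L^\infty$ is a bounded perturbation of $-3\partial_y^2+1$, and the remaining blocks of $\bd{\tilde L}$ are bounded self-adjoint. For (iii), I would reuse the matrix-to-scalar reduction of Lemma~\ref{lem2p2}: the case $\lambda=1$ is excluded by $c\neq 0$, and otherwise $v_2=cv_1/(\lambda-1)$ collapses the eigenvalue problem to $\tilde L v_1=\tilde\mu v_1$ with $\tilde\mu=\lambda-\tfrac{c^2}{\lambda-1}$, i.e.\ $\lambda=\tfrac12\bigl[\tilde\mu+1\pm\sqrt{(\tilde\mu-1)^2+4c^2}\bigr]$. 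Since $\lambda_+\geq 0$ always and $\lambda_-<0\iff \tilde\mu<c^2\iff \mu_0<0$ for the corresponding eigenvalue of $\mathcal{\tilde L}=\tilde L-c^2$, the unique negative eigenvalue produced by (ii) yields a unique negative eigenvalue of $\bd{\tilde L}$.

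For (ii), I would rescale $z=\gamma y$ to reduce $\mathcal{\tilde L}$ to $\gamma^2\mathcal{\tilde L}_0$ with $\mathcal{\tilde L}_0:=-3\partial_z^2+1-pQ^{p-1}(z)$, and try $w=Q^\alpha$. Using $Q''=Q-Q^p$ and $(Q')^2=Q^2-\tfrac{2}{p+1}Q^{p+1}$, a direct computation gives
\[
\mathcal{\tilde L}_0 Q^\alpha=(1-3\alpha^2)Q^\alpha+\Bigl[\tfrac{3\alpha(2\alpha+p-1)}{p+1}-p\Bigr]Q^{\alpha+p-1},
\]
and cancelling the bracket amounts to $6\alpha^2+3(p-1)\alpha-p(p+1)=0$, whose positive root is $\alpha=\alpha(p)>0$ and which yields the eigenvalue $\mu_0=1-3\alpha^2<0$ for all $1<p<5$ (for instance $\alpha(3)=1$, $\mu_0=-2$). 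A further rescaling of $z$ turns $\mathcal{\tilde L}_0$ into a standard P\"oschl--Teller operator, so its discrete spectrum can be counted exactly via Chang's theorem \cite{Chang} to confirm that $\mu_0$ is the unique negative eigenvalue; the coercivity $\int\eta_2\mathcal{\tilde L}\eta_2\geq c_0\gamma\|\eta_2\|_{H^1}^2$ under $\eta_2\perp Q^\alpha$ then follows from the spectral gap, with the $\gamma$-prefactor appearing upon undoing the rescaling.

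For the main item (iv), complete the square to obtain
\[
\langle\bd{\tilde L}\bd{\eta},\bd{\eta}\rangle=\int\eta_1\mathcal{\tilde L}\eta_1+\int h^2,\qquad h:=\eta_2+c\eta_1.
\]
Decompose $\eta_1=\beta Q_c^\alpha+\eta_1^\perp$ with $\int Q_c^\alpha\eta_1^\perp=0$; by (ii), $\int\eta_1\mathcal{\tilde L}\eta_1\geq\mu_0\beta^2\|Q_c^\alpha\|_{L^2}^2+c_0\gamma\|\eta_1^\perp\|_{H^1}^2$. Since $\bd{\Lambda Q}_c=(\Lambda Q_c,-Q_c-c\Lambda Q_c)^T$, the constraint $\langle\JJ\bd{\Lambda Q}_c,\bd{\eta}\rangle=0$ rewritten via $\eta_2=h-c\eta_1$ becomes
\[
\int(\Lambda Q_c)h=\int(Q_c+2c\Lambda Q_c)\eta_1=\beta M(c,p)+\int(Q_c+2c\Lambda Q_c)\eta_1^\perp,
\]
with $M(c,p):=\int Q_c^\alpha(Q_c+2c\Lambda Q_c)$. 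Using the explicit formula $\Lambda Q_c=-\tfrac{2c}{(p-1)\gamma^2}Q_c-\tfrac{c}{\gamma^2}yQ_c'$ and integrating $yQ_c'Q_c^\alpha$ by parts, $M$ reduces to an explicit scalar multiple of $\int Q_c^{\alpha+1}$. Whenever $M(c,p)\neq 0$ one has $|\beta|\lesssim|M|^{-1}(\|h\|_{L^2}+\|\eta_1^\perp\|_{L^2})$, and substituting back produces a lower bound of the shape
\[
\langle\bd{\tilde L}\bd{\eta},\bd{\eta}\rangle\geq\Bigl(1-\tfrac{C_1|\mu_0|\|Q_c^\alpha\|^2}{M(c,p)^2}\Bigr)\|h\|_{L^2}^2+\Bigl(c_0\gamma-\tfrac{C_2|\mu_0|\|Q_c^\alpha\|^2}{M(c,p)^2}\Bigr)\|\eta_1^\perp\|_{H^1}^2,
\]
and $c_+(p)$ is by definition the threshold above which both brackets are strictly positive. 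The hard part will be exactly this positivity check: the single orthogonality on $\bd{\eta}$ only controls one linear combination of $\eta_1$ and $\eta_2$, so the bound on $\beta$ must be extracted from a delicate interplay between $M(c,p)$, $|\mu_0|$ and $\|Q_c^\alpha\|^2$. This is where the comparison with the Bona--Sachs orbital-stability computation for $\bd{L}$ flagged in the introduction should enter, yielding $c_+(p)=\sqrt{(p-1)/4}$ for $p$ slightly above $3$ and $c_+(p)>\sqrt{(p-1)/4}$ otherwise.
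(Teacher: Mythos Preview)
Your treatment of (i)--(iii) is correct and close to the paper. For (iii) the paper argues slightly differently: it observes directly that $\langle\bd{\tilde L}\bd\eta,\bd\eta\rangle\geq\langle\bd{L}\bd\eta,\bd\eta\rangle$ (since $3\eta_{1,y}^2\geq\eta_{1,y}^2$), so $\bd{\tilde L}$ inherits at most one negative eigenvalue from $\bd{L}$, and exhibits one by taking $\eta_2+c\eta_1=0$ with $\eta_1$ the negative eigenfunction of $\mathcal{\tilde L}$. Your matrix-to-scalar reduction also works. For (ii) the paper, instead of invoking the P\"oschl--Teller/Chang count, computes the second eigenfunction $Q^\beta Q'$ explicitly and checks that its eigenvalue $1-3(\beta+1)^2$ is positive on $(1,5)$; either route is fine.

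For (iv) the overall structure is right but your key inequality is looser than the paper's, and this matters for the value of $c_+(p)$. The paper does \emph{not} bound $|\beta|$ from the constraint. Instead it applies Cauchy--Schwarz in the form
\[
\int h^2\;\geq\;\frac{\bigl(\int\Lambda_0 Q\,\tilde h\bigr)^2}{\int(\Lambda_0 Q)^2},\qquad \tilde h=\tilde\eta_2+c\tilde\eta_1,
\]
and then uses the constraint to replace $\int\Lambda_0 Q\,\tilde h$ by a linear functional of $\tilde\eta_1$ alone. This reduces the whole question to positivity of a rank-one perturbation of $\tilde L_0$; since that functional is automatically coercive orthogonal to $Q^\alpha$, one only has to test $\tilde\eta_1=Q^\alpha$, which produces an \emph{explicit} quadratic $1+a_1 m_c+a_2 m_c^2$ in $m_c=2c^2/\gamma^2$, with
\[
a_2=\Bigl(\tfrac{1}{p-1}-\tfrac{1}{2(\alpha+1)}\Bigr)^2,\qquad a_1=\tfrac12(1-3\alpha^2)\frac{\int Q^{2\alpha}\int(\Lambda_0 Q)^2}{(\int Q^{\alpha+1})^2}-2\Bigl(\tfrac{1}{p-1}-\tfrac{1}{2(\alpha+1)}\Bigr).
\]
The positive root $m_{c,+}(p)$ of this quadratic is what defines $c_+(p)=\sqrt{m_{c,+}/(1+m_{c,+})}$. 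Your route---bounding $|\beta|\lesssim|M|^{-1}(\|h\|+\|\eta_1^\perp\|)$ and then using $(a+b)^2\leq 2(a^2+b^2)$---introduces slack at two places and yields two separate positivity conditions instead of one; the resulting threshold would be strictly larger than the paper's for most $p$, and in particular you would not recover $c_+(p)=\sqrt{(p-1)/4}$ for $p$ just above $3$. To get the sharp outcome you would need to optimize over $h$ under the constraint, which brings you back exactly to the paper's Cauchy--Schwarz step.
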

	\begin{proof}
	Item $(i)$ is direct. 
	
	Let us prove $(ii)$. We must study the equation $\mathcal{\tilde L} \eta_2 = -\lambda \eta_2$. It is enough to consider $\mathcal{\tilde L}_0:= \mathcal{\tilde L}_{c=0}$ and study $\mathcal{\tilde L}_0\tilde\eta_2 =  - \gamma^{-2}\lambda \tilde \eta_2$. The general case is obtained by considering $\eta_2(y)=\tilde \eta_2(\gamma y)$, $\gamma=\sqrt{1-c^2}$ and $Q_c(y)=\gamma^{\frac{2}{p-1}}Q(\gamma y)$. The operator $\mathcal{\tilde L}_0$ has a unique negative eigenvalue $\lambda= -3\alpha^2+1<0$,  $\alpha=\frac{p+1}{12} \left( -3 +\frac{6}{p+1} +\left( \frac{9+6p+33p^2}{(1+p)^2}\right)^{1/2}\right)$, with associated eigenfunction $Q^\alpha$ (see Fig. \ref{Fig1} left). This is checked as follows: one has
\[
\begin{aligned}
(Q^m)'' = &~{} mQ^{m-1}Q''+m(m-1)Q^{m-2}Q'^2 = m^2 Q^{m} - m \left( 1+ \frac2{p+1}(m-1)\right)Q^{m+p-1}; \\
(Q^n Q' )''  = &~{}  n^2 Q^{n}Q' - n \left( 1+ \frac2{p+1}(n-1)\right)Q^{n+p-1} Q' +2nQ^{n-1}Q'(Q-Q^p) + Q^n (Q' -pQ^{p-1}Q')\\
=&~{} (n+1)^2 Q^{n}Q'  - \left( p+ 3n + \frac2{p+1}n(n-1) \right) Q^{n+p-1} Q' .
\end{aligned}
\]
\begin{figure}[htbp]
   \centering
   \includegraphics[width=0.4\textwidth]{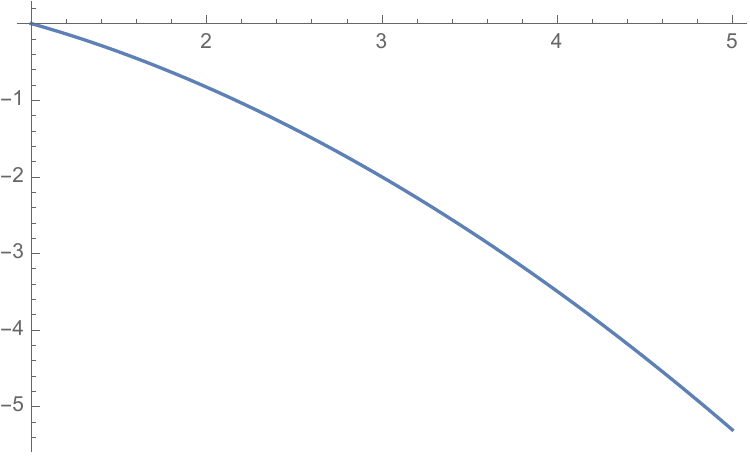} 
    \includegraphics[width=0.42\textwidth]{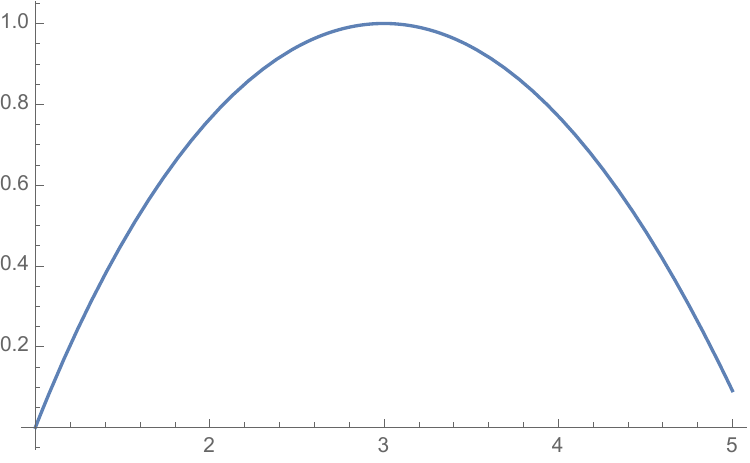} 
   \caption{Left. Negative eigenvalue of $\tilde L_0$ as a function of $p\in (1,5)$. Notice that at $p=1$ the corresponding value is zero. Right: Second eigenvalue of $\tilde L_0$, which is zero at $p=1$.}
   \label{Fig1}
\end{figure}
Therefore
\[
\begin{aligned}
\mathcal{\tilde L}_0 Q^m =&~{} -3(Q^m)'' +Q^m -pQ^{p+m-1} =(1 -3 m^2) Q^{m}  + \left(  3 m + \frac6{p+1}m (m-1) -p\right)Q^{m+p-1}; \\
\mathcal{\tilde L}_0 (Q^n Q' ) = &~{}  \left(1-3 (n+1)^2 \right)Q^{n}Q'  +3 \left( \frac23p+ 3n + \frac2{p+1}n(n-1) \right) Q^{n+p-1} Q' .
\end{aligned}
\]
The positive solution $m$ to $ 3 m + \frac6{p+1}m (m-1) -p =0$ leads to $\alpha$ previously defined, and the eigenvalue $\lambda= -3\alpha^2+1<0$. Notice that $Q^\alpha$ is positive. Also, the positive solution $n$ to  $ \frac23p+ 3n + \frac2{p+1}n(n-1) =0$ leads to a coefficient $\beta=\frac{p+1}{4} \left( -3 +\frac{2}{p+1} +\left( \frac{3+2p+11p^2}{3(1+p)^2}\right)^{1/2}\right)$ such that $Q^\beta Q'$ is the second eigenfunction (with only one zero) associated to the eigenvalue $1-3 (\beta+1)^2$. This eigenvalue is always positive in the range $1<p<5$, as it is easily checked (see Fig. \ref{Fig1} right).  Notice that
\[
\begin{aligned}
 \mathcal B[\eta_2] := &~{}\int \eta_2 \mathcal{\tilde L} \eta_2 = \gamma  \int \tilde \eta_2 \mathcal{\tilde L}_0 \tilde \eta_2 
= : \gamma \mathcal B_0[\tilde \eta_2].
 \end{aligned}
\]
It is enough to check the positivity of $ \mathcal B_0[\tilde \eta_2]$.  On the orthogonal to $Q^\alpha$, we know that there exists $c_0=c_0(p)>0$ such that $ \mathcal B_0[\tilde \eta_2] \geq c_0 \|\tilde \eta_2\|_{H^1}^2$. 
	
	Proof of $(iii)$. Notice that the action of $\bd{\tilde L}$ reads as follows:
	\[
	\langle  \bd{\tilde L} \bd{\eta},\bd{\eta} \rangle = \int \left( 3\eta _{1,y}^2 +(1-c^2)\eta_1^2 -f'(Q_c)\eta_1^2 +(\eta_2+c\eta_1)^2 \right).
	\]
	 Thanks to this fact we clearly see that $ \bd{\tilde L} \geq \bd{L}$, and therefore we have that $\bd{\tilde L}$ is an operator with at most one negative eigenvalue. Moreover, choosing $\eta_2$ such that $\eta_2+c\eta_1 =0$ and $\eta_1$ being an eigenfunction of $\mathcal{\tilde L}= -3\partial_y^2 +(1-c^2) -f'(Q_c)$ associated with the unique negative eigenvalue, we see that $ \bd{\tilde L}$ has also a unique negative eigenvalue.
	
Now we prove $(iv)$. First of all, notice that 
\[
\begin{aligned}
\Lambda Q_c = &~{} -\frac{c}{\gamma^2} \gamma^{\frac{2}{p-1}}  \left( \frac{2}{p-1} Q + yQ' \right) (\gamma y) =: -\frac{c}{\gamma^2}\gamma^{\frac{2}{p-1}} \Lambda_0Q(\gamma y),\\
2c\Lambda Q_c +Q_c = &~{} \gamma^{\frac{2}{p-1}} \left(  -\frac{2c^2}{\gamma^2} \Lambda_0Q + Q\right)(\gamma y) = : \gamma^{\frac{2}{p-1}} \left(    Q-m_c \Lambda_0Q\right)(\gamma y),\quad m_c:=\frac{2c^2}{\gamma^2}.
 \end{aligned}
\]
Recall that $c\neq 0$. 
Define $\eta(x)=\tilde \eta(y)$, with $x=\gamma  y$. We have $ 0=\langle \JJ \Lambda \QQc , \bd{\eta} \rangle  = \int (\Lambda Q_c (\eta_2+c\eta_1 )  -(2c\Lambda Q_c + Q_c)\eta_1)$. Thus,
\[
\begin{aligned}
0= &~{} \int (\Lambda Q_c (\eta_2+c\eta_1 )  -(2c\Lambda Q_c + Q_c)\eta_1)= \gamma^{\frac{2}{p-1}-1} \int \left( \frac{c}{\gamma^2} \Lambda_0Q (\tilde \eta_2 +c\tilde \eta_1) + \tilde\eta_1 (Q-m_c\Lambda_0 Q)\right).
\end{aligned}
\]
Similarly,
\[
 \int (3\eta_{1,y}^2 +(1-c^2)\eta_1^2 -f'(Q_c)\eta_1^2 +(\eta_2+c\eta_1)^2)   = \gamma  \int (3\tilde \eta_{1,z}^2 +\tilde \eta_1^2 -f'(Q)\tilde \eta_1^2 ) +\frac1{\gamma} \int (\tilde \eta_2+c\tilde \eta_1)^2.
\]
Then, by Cauchy-Schwarz, and using that $\tilde L_0:= \tilde L_{c=0}$,
\begin{equation}\label{chequeo}
\begin{aligned}
& \int (3\eta_{1,y}^2 +(1-c^2)\eta_1^2 -f'(Q_c)\eta_1^2 +(\eta_2+c\eta_1)^2)   \geq \gamma \int \tilde \eta_1 \tilde L_0 \tilde \eta_1  + \frac1{\gamma}\left( \int (\Lambda_0 Q)^2\right)^{-1} \left(\int \Lambda_0 Q (\tilde \eta_2 +c\tilde \eta_1) \right)^2 \\
& \quad  = \gamma \int \tilde \eta_1 \tilde L_0 \tilde \eta_1 
+ \frac{\gamma^3}{c^2} \left(  \int (\Lambda_0 Q )^2 \right)^{-1} \left( \int (-m_c \Lambda_0 Q + Q)\tilde \eta_1 \right)^2.  
\end{aligned}
\end{equation}
An additional simplification gives
\[
\begin{aligned}
&\eqref{chequeo} \\
&~{} = \gamma \left(\int (\Lambda_0 Q )^2\right)^{-1} \left( \int (\Lambda_0 Q )^2 \int \tilde \eta_1 \tilde L_0 \tilde \eta_1  +\frac{\gamma^2}{c^2} \left( \int \tilde \eta_1 Q\right)^2 -4 \int \tilde\eta_1 Q \int\tilde \eta_1 \Lambda_0Q +2m_c \left(\int\tilde\eta_1 \Lambda_0Q \right)^2 \right).
\end{aligned}
\]
It is enough to check the positivity of this last term.  From $(ii)$, the operator $\tilde L_0$ has a unique negative eigenvalue $\lambda= -3\alpha^2+1<0$,  $\alpha=\frac{p+1}{12} \left( -3 +\frac{6}{p+1} +\left( \frac{9+6p+33p^2}{(1+p)^2}\right)^{1/2}\right)$, with associated eigenfunction $Q^\alpha$. Orthogonal to $Q^\alpha$, we know that there exists $c_0=c_0(p)>0$ such that \eqref{chequeo} satisfies $\geq c_0 \|\tilde \eta_1\|_{L^2}^2$. Therefore we need to check the positivity of \eqref{chequeo} in the case $\tilde \eta_1 =Q^\alpha.$ In this situation, notice that
\[
\int Q^\alpha \Lambda_0Q = \left( \frac{1}{p-1} -\frac1{2(\alpha+1)}\right)\int Q^{\alpha+1}.
\]
Therefore,
\[
\begin{aligned}
&\eqref{chequeo} \\
 &~{}= \frac{\gamma^3}{c^2}  \left(  \int (\Lambda_0 Q )^2\right)^{-1} \left(\frac{c^2}{\gamma^2}  \int (\Lambda_0 Q )^2 \int \tilde \eta_1 \tilde L_0 \tilde \eta_1  + \left( \int \tilde \eta_1 Q\right)^2 -4  \frac{c^2}{\gamma^2}  \int \tilde\eta_1 Q \int\tilde \eta_1 \Lambda_0Q + m_c^2 \left(\int\tilde\eta_1 \Lambda_0Q \right)^2 \right)\\
 &~{}= \frac{\gamma^3}{c^2}  \left(  \int (\Lambda_0 Q )^2\right)^{-1} \\
 &~{} \qquad \qquad \left( \frac12m_c (1-3\alpha^2)  \int (\Lambda_0 Q )^2 \int Q^{2\alpha}  + \left( \int  Q^{\alpha+1}\right)^2 - 2m_c  \int  Q^{\alpha+1} \int Q^{\alpha} \Lambda_0Q + m_c^2 \left(\int Q^{\alpha} \Lambda_0Q \right)^2 \right)\\
 &~{} =\frac{\gamma^3}{c^2} \left(\int (\Lambda_0 Q )^2\right)^{-1}  \left(  \int Q^{\alpha+1} \right)^2 \left( 1+ a_1 m_c +a_2 m_c^2 \right),
\end{aligned}
\]
with
\[
a_1=\frac12(1-3\alpha^2)\left(  \int Q^{\alpha+1} \right)^{-2} \int Q^{2\alpha}\int(\Lambda_0 Q )^2  -2 \left( \frac{1}{p-1} -\frac1{2(\alpha+1)}\right),\quad a_2:= \left( \frac{1}{p-1} -\frac1{2(\alpha+1)}\right)^2.
\]
A simple numerical computation reveals that $a_2>0$ for all $1<p<6.6$ and $a_1<0$ (see Fig. \ref{positividad}). Therefore, for each $1<p<5$, there exists $c_+(p) > 0$ such that, if $|c|>c_+$, \eqref{chequeo} is positive. Indeed, one has $1+ a_1 m_c +a_2 m_c^2>0$ for $m>m_{c,+}(p)$, with $m_{c,+}(p):=\frac12(|a_1| +\sqrt{a_1^2-4a_2})>0$. Therefore, $c_+(p):=\sqrt{m_+(p)/(1+m_+(p))}$.

\begin{figure}[htbp]
   \centering
   \includegraphics[width=0.45\linewidth]{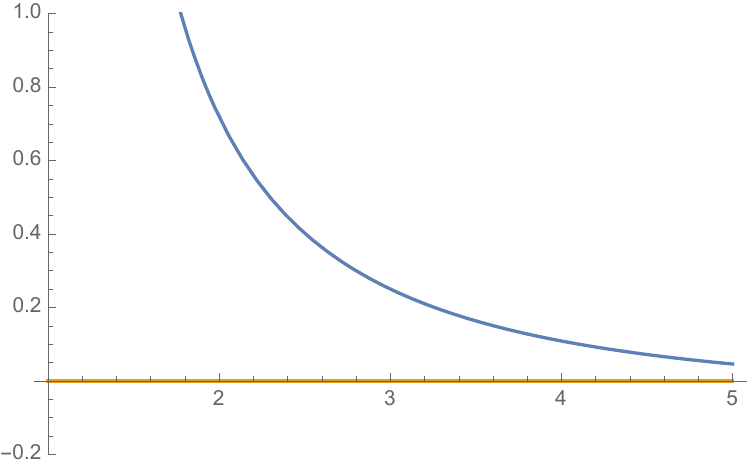} \hfill 
   \includegraphics[width=0.45\linewidth]{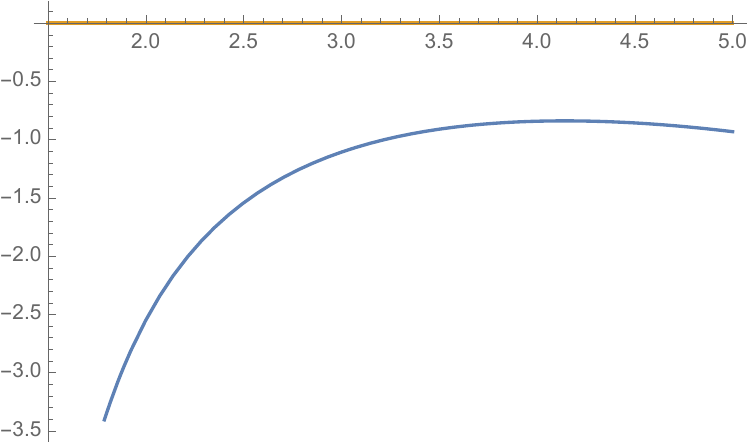} \vfill
      \includegraphics[width=0.45\linewidth]{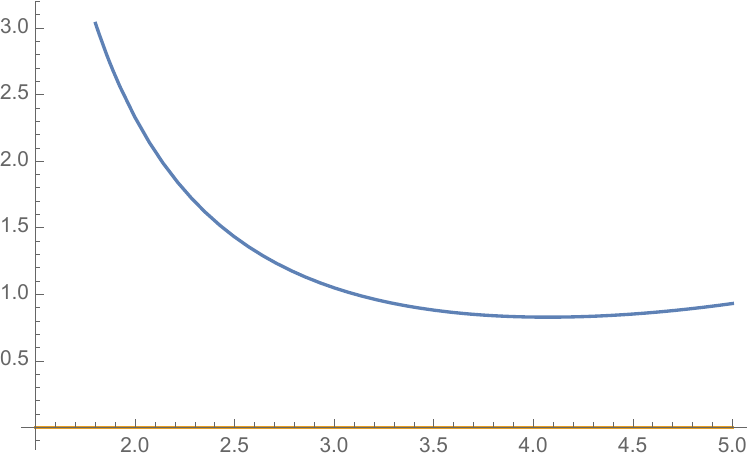} \hfill 
   \includegraphics[width=0.45\linewidth]{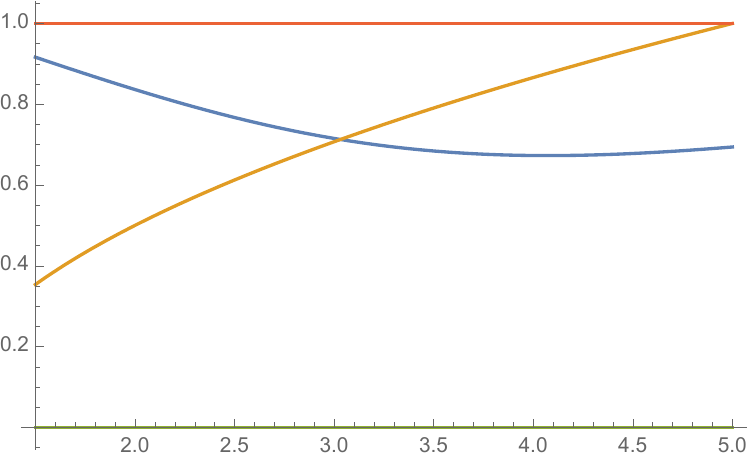}
   \caption{Above left: the coefficient $a_2(p)$ (blue) for $1<p<5$. The yellow line represents the zero value. Notice that $a_2=0$ for $p\sim 6.6$. Above right: the coefficient $a_1(p)<0$ (blue), in yellow, the zero value. Below left: $m_{c,+}(p)$ (blue), in yellow the zero value. Below right: $c_+(p)$ (blue), $p\mapsto \sqrt{\frac{p-1}{4}}$ (yellow). In green the zero value, in red the value 1.}
   \label{positividad}
\end{figure}

In the particular case $p=3$ we can compute all terms. First, for $p=3$, $\alpha=1$, $a_2=\frac1{16}$, and after a computation using Mathematica, $a_1=-\frac1{36}(12+\pi^2) -\frac12$. Therefore, $m_{c,+}(3) =\frac1{72}(30+\pi^2 +\sqrt{576 +60\pi^2 +\pi^4})\sim 1.048$ and $c_+(3)\sim 0.7153$. On the other hand, $\sqrt{(p-1)/4}\sim 0.7071$. One can check that for $p$ slightly above 3, one has $c_+(p)<\sqrt{\frac{p-1}4}$. 

We have proved that 
\[
\langle \bd{\tilde L} \bd{\eta} ,\bd{\eta} \rangle = \int \left( 3\eta _{2,y}^2 +(1-c^2)\eta_2^2 -f'(Q_c)\eta_2^2 +(\eta_1+c\eta_2)^2 \right) \geq 0.
\]	
We conclude \eqref{coercoer_new}. Let $\bd{\eta}_n$ be a sequence of unit $H^1\times L^2$ norm such that  
\[
(n+1) \int (3\eta_{2,n,y}^2 +(1-c^2)\eta_{2,n}^2 -f'(Q_c)\eta_{2,n}^2 +(\eta_{1,n}+c\eta_{2,n})^2)   + \left|  \langle  \Lambda \QQc , \bd{\eta}_n \rangle \right|^2 <\frac1{n+1}.
\]
Passing to a subsequence, one has $\bd{\eta}_n \to \bd{\eta}\neq 0$ weak in $H^1\times L^2$ and strong on compact sets. Moreover, $ \langle  \Lambda \QQc , \bd{\eta} \rangle=0 $, and $\int f'(Q_c)\eta_{2,n}^2 \to \int f'(Q_c)\eta_{2}^2$. Using weak convergence limit,
\[
\begin{aligned}
&  \int (3\eta_{2,y}^2 +(1-c^2)\eta_{2}^2 -f'(Q_c)\eta_{2}^2 +(\eta_{1,n}+c\eta_{2})^2) \\
& \qquad \leq \liminf_n  \int (3\eta_{2,n,y}^2 +(1-c^2)\eta_{2,n}^2 -f'(Q_c)\eta_{2,n}^2 +(\eta_{1,n}+c\eta_{2,n})^2)  =0,
\end{aligned}
\]
a contradiction with the fact that if $\bd{\eta}\neq 0$, under $ \langle  \Lambda \QQc , \bd{\eta} \rangle  =0$ one has $ \int (3\eta_{2,y}^2 +(1-c^2)\eta_2^2 -f'(Q_c)\eta_2^2 +(\eta_1+c\eta_2)^2)>0$.
	\end{proof}

\subsection{Modulation}
During this subsection we will come back to the static variable $\uu(t,x)=\bd{v}(t,x-\rho(t))$ (see \eqref{v_u}), which from \eqref{eq:u_TD} is the solution to 
\begin{align}\label{eq:v_TD}
\partial_t \bd{u}= \partial_y \JJ( \bd{L}-c\bd{J}) \bd{u} +(\rho'-c) \T
-c' \D +\NN (u_1),
\end{align}
where $(\T,\D,{\bf Q}_{c,\rho})(x)=({\bf T}_c,{\bf D}_c,{\bf Q}_c)(x-\rho(t))$. This is done to prove the following modulation result:
\begin{lem}
There exists $C,\delta_{1}>0$ such that for any $c_0\in (-1,1)$, $\rho_0\in \R$ , $\delta\in (0,\delta_1)$,
if $\bd \phi (t)$ satisfies
\begin{equation}\label{expectation}
\sup_{t\in [T_1,T_2]} \|{\bd \phi}(t)-\Q_{c_0,\rho_0}\|_{H^1\times L^2}\leq \delta,
\end{equation}
there exist unique functions $c:[T_1,T_2]\to  (-1,1)$ and $\rho:[T_1,T_2]\to  \R$ of class $C^1$, such that one has the decomposition
\begin{equation}\label{revelation}
{\bd \phi}(t)=\QQrt+\uu(t),
\end{equation}
and the following hold:
\begin{enumerate}
\item Smallness. $\|\uu\|_{H^1\times L^2}\leq C\delta$.
\item Orthogonality. One has $\Jap{\bd{T}_{c,\rho}}{\bd{u}}=\Jap{\JJ \QQr}{\uu}= 0.$
\item Estimates on the modulation terms: 
\begin{equation} \label{eq:c'} 
\begin{aligned}
 |\rho'-c|  \lesssim&~{}  \left\|\Qr^{3/4}u_1\right\|_{L^2}+\left\|\Qr^{3/4}(u_2+cu_1)\right\|_{L^2},\\
	|c' |  \lesssim&~{}    \left\| \Qr^{(p-1)/2} u_1\right\|_{L^2}^2 +\left\| \Qr^{3/4}u_1  \right\|_{L^2}^2+ \left\| \Qr^{3/4}(u_2+cu_1)  \right\|_{L^2}^2.
\end{aligned}
\end{equation}
\end{enumerate}
\end{lem}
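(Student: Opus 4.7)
The plan is to construct $c(t),\rho(t)$ via the implicit function theorem and then extract the modulation estimates by differentiating the orthogonality conditions in time. Fix $t$ and consider the map $\mathcal F:H^1\times L^2\times (-1,1)\times\mathbb R\to\mathbb R^2$ defined by
\[
\mathcal F(\bd\phi,c,\rho):=\bigl(\Jap{\T}{\bd\phi-\QQr},\ \Jap{\JJ\QQr}{\bd\phi-\QQr}\bigr).
\]
At $(\bd\phi,c,\rho)=(\bd Q_{c_0,\rho_0},c_0,\rho_0)$ one has $\mathcal F=0$, and using $\partial_\rho\QQr=-\T$, $\partial_c\QQr=\D$, the Jacobian in $(\rho,c)$ is upper-triangular with diagonal entries $\|\T\|_{L^2}^2>0$ and $-\Jap{\JJ\QQr}{\D}$; the off-diagonal vanishing follows from $\Jap{\JJ\QQr}{\T}=-c\int(Q_c^2)'=0$. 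The nondegeneracy of the second diagonal entry is exactly the quantity $\tfrac{\gamma^{(7-3p)/(p-1)}}{p-1}(p-1-4c_0^2)\|Q\|_{L^2}^2$ computed in the proof of Lemma \ref{lem2p2}$(v)$, nonzero precisely in the Bona--Sachs stable regime $c_0^2>(p-1)/4$. The implicit function theorem then produces unique $C^1$ functions $c(t),\rho(t)$ while $\bd\phi(t)$ stays in a small neighborhood of the soliton orbit, giving items $(1)$ and $(2)$ after a triangle inequality.

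For item $(3)$ we differentiate the orthogonalities in time using $\partial_t\T=c'\Lambda\T-\rho'\partial_y\T$, $\partial_t(\JJ\QQr)=c'\JJ\D-\rho'\partial_y\JJ\QQr$, together with \eqref{eq:v_TD}. This yields a $2\times 2$ linear system for $(\rho'-c,c')$ whose coefficient matrix equals the Jacobian above modulo $O(\uu)$, hence is invertible for $\uu$ small. Solving at leading order and expressing everything in the natural variables $(u_1,u_2+cu_1)$ from \eqref{eq:orto}, the linear terms pair $\uu$ against the localized profiles $\T,\Lambda\T,\D$, all of which decay like $Q_c$; this yields the linear-in-$\uu$ bound $|\rho'-c|\lesssim\|\Qr^{3/4}u_1\|_{L^2}+\|\Qr^{3/4}(u_2+cu_1)\|_{L^2}$.

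The main obstacle is upgrading the naive $|c'|=O(\uu)$ to the quadratic bound in \eqref{eq:c'}. Two cancellations must occur in the second modulation equation. First, $\Jap{\JJ\QQr}{\T}=0$ eliminates the $(\rho'-c)\T$ source contribution. Second, the linear Boussinesq flux satisfies
\[
\Jap{\JJ\QQr}{\partial_y\JJ\bd L\uu}=-\Jap{\bd L\T}{\uu}=0
\]
by integration by parts (using $\JJ^2=I$) and Lemma \ref{lem2p2}$(iii)$. Combining the remaining linear contributions from $\partial_t\JJ\QQr$ with the $-c\partial_y$ piece of \eqref{eq:v_TD} produces exactly $-(\rho'-c)\Jap{\partial_y\JJ\QQr}{\uu}$, which is already quadratic because $|\rho'-c|$ is itself linear in $\uu$ by the first equation. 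The nonlinear source $\Jap{\JJ\QQr}{\NN(u_1)}$ is genuinely quadratic in $u_1$ by Taylor expanding $f$ around $Q_c$ (valid thanks to $p\geq 2$), and this expansion is what produces the weight $\Qr^{(p-1)/2}$. Taken together, these cancellations yield the claimed quadratic bound on $|c'|$. The choice of the second orthogonality $\Jap{\JJ\QQr}{\uu}=0$ is in fact dictated precisely by these identities --- most crucially $\bd L\T=0$ --- and a merely linear estimate on $c'$ would be insufficient for the virial arguments of later sections, which makes identifying this cascade of cancellations the conceptual heart of the modulation step.
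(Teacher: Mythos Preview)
Your proposal is correct and follows essentially the same route as the paper: implicit function theorem with the same Jacobian computation (the paper organizes it as an anti-diagonal matrix rather than triangular, but the two nonzero entries are identical), followed by differentiating the orthogonality conditions and exploiting the same two cancellations $\Jap{\JJ\QQr}{\T}=0$ and $\bd L\T=0$ to obtain the quadratic $|c'|$ bound. The paper packages the time-differentiation step into a general identity (their \eqref{aux00}) applied first with $\bd H=\T$ and then with $\bd H=\G=\JJ\QQr$, but the content is the same as yours; one minor point worth making explicit is that the exponent $3/4$ on the weight arises because $\Lambda\T$ contains terms of the form $|y|Q_c(y)$, not just $Q_c$.
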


\begin{proof}
The existence of modulated parameters is a standard fact and we only sketch the main ideas. First of all we prove that there exists $C,\delta_{0}>0$ such that for any $c_0 \in (-1,1)$, $\rho_0\in \R$, $\delta\in (0,\delta_1)$, and for any
\begin{equation}\label{eq:small}
\| {\bd \phi}-\Q_{c_0,\rho_0}\|\leq \delta,
\end{equation}
there exists $c=c({\bd \phi}) \in (-1,1)$ and $\rho$ such that $\uu={\bd \phi}-\QQr$ satisfies $\|\uu\|_{H^1\times L^2}\leq C\delta$ and $\Jap{\bd{T}_{c,\rho}}{\bd{u}}=\Jap{\JJ \QQr}{\uu}= 0$. Later, we 
 use \eqref{expectation} and the ideas in \cite{KMMV20} to conclude \eqref{revelation}, $\|\uu\|_{H^1\times L^2}\leq C\delta$ and $\Jap{\bd{T}_{c,\rho}}{\bd{u}}=\Jap{\JJ \QQr}{\uu}= 0.$ In the case of fixed time perturbations \eqref{eq:small}, the orthogonality conditions $\Jap{\bd{T}_{c,\rho}}{\bd{u}}=\Jap{\JJ \QQr}{\uu}= 0$ are ensured since
\[
\det \begin{pmatrix} \Jap{\bd{T}_{c,\rho}}{ \partial_c \bd{Q}_{c,\rho}} & &  \Jap{\bd{T}_{c,\rho}}{ \partial_\rho \bd{Q}_{c,\rho}}  \\   \Jap{\JJ \QQr}{ \partial_c \bd{Q}_{c,\rho}} & &  \Jap{\JJ \QQr}{ \partial_\rho \bd{Q}_{c,\rho}}\end{pmatrix} =\det \begin{pmatrix} 0 & &  - \Jap{\bd{T}_{c,\rho}}{ \bd{T}_{c,\rho} }  \\   \Jap{\JJ \QQr}{ \Lambda \bd{Q}_{c,\rho}} & &  0 \end{pmatrix} \neq 0.
\]
The fact that 
\begin{equation}\label{stab}
\Jap{\JJ \QQr}{ \Lambda \bd{Q}_{c,\rho}}\neq 0
\end{equation}
is proved as follows. Recall \eqref{eq:TDGF}. One has $\Jap{ \D }{\G} = \partial_c \left( c\gamma^{\frac{5-p}{p-1}}\right)\|Q\|_{L^2}^2$.
Indeed, we get that 
	\[\begin{aligned}
	\Jap{\D}{\JJ\QQr}
	=&
	- \int \big( c\Qr \Lambda \Qr+\Qr  \Lambda (c\Qr) \big)
	 =-\partial_c \left( c\int   \Qr^2 \right),
	 \end{aligned}
	\]
	and taking in mind \eqref{eq:scaling_Q} joint to $\partial_c (\gamma)=-c\gamma^{-1}$, one obtains	\begin{equation}\label{calculo_base}
	\begin{aligned}
	\Jap{\D}{\JJ\QQr}
	=& -\partial_c \left( c \gamma^{\frac{4}{p-1}-1} \int   Q^2(\gamma y) \gamma dy \right) = -\partial_c \left( c \gamma^{\frac{4}{p-1}-1}\right)  \|Q\|_{L^2}^2 \\
	=& -\partial_c \left( c \gamma^{\frac{5-p}{p-1}}\right)  \|Q\|_{L^2}^2 
	 = - \frac{\gamma^{\frac{7-3p}{p-1}}}{p-1} \left(p-1-4c^2 \right)  \|Q\|_{L^2}^2.
	\end{aligned}
	\end{equation}
Since the solitary wave is stable, we have $c^2>\frac{p-1}4$ and \eqref{stab}. This last fact concludes the proof of existence of modulated parameters.

\medskip

Let ${\bf H}_{c,\rho}$ be any modulated vector valued in $H^1\times L^2$. It holds
\begin{equation}\label{aux00}
	\begin{aligned}
	\frac{d}{dt} \Jap{\uu}{\bd{H}_{c,\rho}}  
	   =&  -\Jap{ \uu }{ \bd{L} \JJ \partial_x \bd{H}_{c,\rho}}
  	+ (\rho'-c)\left( \Jap{ \T }{\bd{H}_{c,\rho}}  - \Jap{\uu}{\bd{H}'_{c,\rho}}\right)
	\\&
	 +c' \left( \Jap{\uu}{ \bd{\Lambda H}_{c,\rho}} - \Jap{ \D }{\bd{H}_{c,\rho}}\right)
	   + \Jap{\NN (u_1)}{\bd{H}_{c,\rho}}  .
\end{aligned}
\end{equation}
Indeed,
\[
\begin{aligned}
\frac{d}{dt} \Jap{\uu}{\bd{H}_{c,\rho}}
 =&  \Jap{\partial_t \uu}{\bd{H}_{c,\rho}}-\rho' \Jap{\uu}{\bd{H}'_{c,\rho}}+c'  \Jap{\uu}{\bd{ \Lambda H}_{c,\rho}}.
\end{aligned}
\]
Now, replacing \eqref{eq:v_TD}, one gets
\[
\begin{aligned}
\frac{d}{dt} \Jap{\uu}{\bd{H}_{c,\rho}}
 =&  \Jap{ \partial_x \JJ (\bd{L}-c\JJ )\uu +(\rho'-c) \T-c' \D +\NN (u_1)}{\bd{H}_{c,\rho}}
 -\rho' \Jap{\uu}{\bd{H}'_{c,\rho}}+c'  \Jap{\uu}{\bd{\Lambda H}_{c,\rho}}
 \\
  =&  \Jap{ \partial_x \JJ (\bd{L}-c\JJ )\uu }{\bd{H}_{c,\rho}}
  	+ (\rho'-c) \Jap{ \T }{\bd{H}_{c,\rho}}
	  -c'\Jap{ \D }{\bd{H}_{c,\rho}}\\
& -\rho' \Jap{\uu}{\bd{H}'_{c,\rho}}+c'  \Jap{\uu}{\bd{ \Lambda H}_{c,\rho}}
 + \Jap{\NN (u_1)}{\bd{H}_{c,\rho}}.
\end{aligned}
\]
Rearranging the terms and integrating by parts, we obtain \eqref{aux00}. Now we shall use this identity with $\bd{H}_{c,\rho} = \bd{T}_{c,\rho}$. We have
\[
\begin{aligned}
& \Jap{  \bd{T}_{c,\rho} }{ \bd{T}_{c,\rho} }=(1+c^2)\int Q_c'^2,\qquad
\Jap{ \D }{ \bd{T}_{c,\rho} }=0;
\\
& |\Jap{\uu}{\bd{\Lambda}  \bd{T}_{c,\rho}}|+|\Jap{ \uu }{ \bd{L} \JJ \partial_x  \bd{T}_{c,\rho} }|
+|\Jap{\uu}{ \bd{T}'_{c,\rho} }|\lesssim C \| \uu\|_{H^1\times L^2}.
\end{aligned}
\]
Then, using that $\|\uu\|_{H^1\times L^2}\leq \delta$ and $\bd{H}_{c,\rho} = \bd{T}_{c,\rho}$, we obtain
\begin{equation}\label{rhorho}
	\begin{aligned}
  	|\rho'-c|\lesssim&
	 |\rho'-c|\left| \Jap{ \T }{\bd{T}_{c,\rho}}  +\Jap{\uu}{\bd{T}'_{c,\rho}}\right|
	\\
	\lesssim&~{}
	| \Jap{ \uu }{ \bd{L} \JJ \partial_x \bd{T}_{c,\rho}}|
	 +|c'| \left|  \Jap{ \D }{\bd{T}_{c,\rho}}-\Jap{\uu}{\bd{\Lambda T}_{c,\rho}} \right|
	   +\|\Qr u_1\|_{L^2}^2 
	   \\	
	   \lesssim& ~{} (1+|c'|) \left(\left\| \Qr^{3/4}u_1\right\|_{L^2}+\left\|\Qr^{3/4}(u_2+cu_1)\right\|_{L^2} \right).
\end{aligned}
\end{equation}
The exponent $3/4$ comes from the fact that $\bd{\Lambda}\bd{T}_{c,\rho}$ contains terms of the form $|y| Q_c(y)$. Now we use again \eqref{aux00} with $\bd{H}_{c,\rho} = \bd{G}_{c,\rho}=\bd{J Q}_{c,\rho}$. First, we observe now that
\begin{equation}\label{eq:u_G}
\begin{aligned}
& \Jap{ \uu }{ \bd{L} \JJ \partial_x \G }=0, 
 \quad 
\Jap{ \T }{\G} =0, 
\quad	|\Jap{\NN (u_1)}{\G} |\lesssim \int  f''(\Qr) \Qr
	 u_1^2 \lesssim \int \Qr^{p-1} u_1^2. 
\end{aligned}
\end{equation}
The previous identities are direct. Recalling that $\Jap{\uu}{\G}=0$, applying \eqref{aux00} and \eqref{eq:u_G}, one has 
\[
\begin{aligned}
c' \left( \Jap{\uu}{\bd{\Lambda}\G}  - \Jap{ \D }{\G}\right)
  	+ (\rho'  -c)  \left( \Jap{ \T }{\G}  -\Jap{\uu}{\G'}\right)	   = - \Jap{\NN (u_1)}{\G},
\end{aligned}
\]
 using that $\|\uu\|_{H^1\times L^2}\leq \delta$, we conclude
\[
\begin{aligned}
	|c' | \lesssim&~{} |c'| ~{}\big| \Jap{\uu}{\bd{\Lambda}\G} - \Jap{ \D }{\G}\big|
	   \lesssim  |\Jap{\NN (u_1)}{\G} |+ |\rho'-c| |\Jap{\uu}{\G'}|
	   \\
	    \lesssim&~{}   \int \Qr^{p-1} u_1^2 + |\rho'-c| \left( \left\| \Qr^{3/4}u_1  \right\|_{L^2}+ \left\| \Qr^{3/4}(u_2+cu_1)  \right\|_{L^2}\right).
\end{aligned}
\]
Gathering \eqref{rhorho} and the previous estimate, we obtain \eqref{eq:c'}.
\end{proof}

\subsection{Orbital stability} In this subsection, and for the sake of completeness, we provide some orbital stability quantitative estimates in the case of the orthogonality conditions \eqref{eq:orto}. For a similar proof, see \cite{Bona-Sachs}.
\begin{thm}\label{MT1}
Let $1<p< 5$, $\frac{p-1}{4} < c^2 <1$, and $x_0\in\mathbb R$ be fixed parameters and $\Q_c(t,x)$ as in \eqref{eq:QQ}. 
There exists $\delta_0,C_0>0$ such that for all $0<\delta<\delta_0$, the following holds. Assume that $(\phi_{1,0},\phi_{2,0})\in H^1\times L^2$ are such that $\| (\phi_{1,0},\phi_{2,0})- \Q_c(0,\cdot)\|_{H^1\times L^2}<\delta$. Then the corresponding solution $(\phi_1,\phi_2)(t)$ to \eqref{eq:gGB} with initial data $(\phi_{1,0},\phi_{2,0})$ at time $t=0$ is globally defined in $H^1\times L^2$ for all $t\geq 0$ and there is a shift $\rho(t)$ such that 
\begin{equation}\label{OS}
\sup_{t\geq 0} \| (\phi_1,\phi_2)(t,\cdot + \rho(t))- \Q_{c}\|_{H^1\times L^2} \leq C_0\delta.
\end{equation}
Moreover, one has $|c(t)-c(0)| \lesssim C_0^2\delta^2.$ 
\end{thm}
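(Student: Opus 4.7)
The proof follows the classical Grillakis--Shatah--Strauss / Bona--Souganidis--Strauss scheme, adapted to the vector-valued orthogonality framework of Section \ref{Sec:2}. The ingredients are conservation of energy $E$ and momentum $P$, the coercivity \eqref{coer0} of Lemma \ref{lem2p2}$(v)$, and the fact that the scalar profile $d(c):=E[\Q_c]+cP[\Q_c]$ is strictly convex precisely in the Bona--Sachs window $c^2>(p-1)/4$, which corresponds to $-\langle\D,\JJ\QQc\rangle>0$ in \eqref{calculo_base}.

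\textbf{Setup and expansion.} I fix a constant $K>0$ (to be chosen later) and run a bootstrap on the maximal interval $[0,T^\star)$ on which the modulated perturbation satisfies $\|\uu(t)\|_{H^1\times L^2}\le K\delta$. On this interval the modulation lemma produces $C^1$ functions $c(t),\rho(t)$ and a decomposition $\bd\phi(t,\cdot+\rho(t))=\Q_{c(t)}+\uu(t)$ obeying $\langle\bd{T}_c,\uu\rangle=\langle\JJ\QQc,\uu\rangle=0$. A direct expansion using \eqref{eq:soliton_eq} and integration by parts shows that the second orthogonality kills \emph{all} linear-in-$\uu$ contributions, yielding
\[
P[\bd\phi(t)] = P[\Q_{c(t)}] + \int u_1u_2, \qquad E[\bd\phi(t)]+c(t)P[\bd\phi(t)] = d(c(t))+\tfrac12\langle\bd L(c(t))\uu,\uu\rangle+R(\uu),
\]
with $|R(\uu)|\lesssim\|\uu\|_{H^1\times L^2}^{\min(3,p+1)}$ (using $H^1\hookrightarrow L^\infty$). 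The first orthogonality $\langle\bd{T}_c,\uu\rangle=0$ is the one that activates the coercivity $\langle\bd L(c(t))\uu,\uu\rangle\ge c_0\|\uu\|_{H^1\times L^2}^2$.

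\textbf{Coupled control of $c$ and $\uu$.} Using $d'(c)=P[\Q_c]$ (since $\Q_c$ is a critical point of $E+cP$) and Taylor expanding $d$ around $c(0)$, conservation of $E$ and $P$ gives the Lyapunov identity
\[
\tfrac12\langle\bd L(c(t))\uu(t),\uu(t)\rangle+R(\uu(t)) = \mathcal F_0 + (c(t)-c(0))\bigl(P[\bd\phi(0)]-P[\Q_{c(0)}]\bigr) - \tfrac12 d''(c(0))(c(t)-c(0))^2 + O(|c(t)-c(0)|^3),
\]
where $\mathcal F_0 := E[\bd\phi(0)]+c(0)P[\bd\phi(0)]-d(c(0))=O(\delta^2)$. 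Independently, momentum conservation together with the identity $P[\bd\phi]=P[\Q_c]+\int u_1u_2$ forces $d'(c(t))-d'(c(0))=\int u_1(0)u_2(0)-\int u_1(t)u_2(t)=O(K^2\delta^2)$; combined with the strict positivity
\[
d''(c(0)) = -\partial_c\bigl(c\gamma^{(5-p)/(p-1)}\bigr)\|Q\|_{L^2}^2 = \frac{\gamma^{(7-3p)/(p-1)}}{p-1}\bigl(4c^2-(p-1)\bigr)\|Q\|_{L^2}^2 > 0
\]
in the Bona--Sachs regime (this is exactly the quantity $-\langle\D,\JJ\QQc\rangle$ from \eqref{calculo_base}), Taylor inversion yields the \emph{quadratic} bound $|c(t)-c(0)|\lesssim K^2\delta^2$. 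The quadratic (rather than linear) dependence on $\|\uu\|$ is made possible precisely by the vector orthogonality $\langle\JJ\QQc,\uu\rangle=0$, which eliminates linear-in-$\uu$ contributions to $P[\bd\phi]-P[\Q_c]$.

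\textbf{Closing the bootstrap and global existence.} Inserting $|c(t)-c(0)|\lesssim K^2\delta^2$ back into the Lyapunov identity and invoking coercivity \eqref{coer0} gives
\[
\tfrac{c_0}{2}\|\uu(t)\|_{H^1\times L^2}^2 \le |\mathcal F_0| + C\delta\cdot K^2\delta^2 + CK^4\delta^4 + C\|\uu(t)\|_{H^1\times L^2}^{\min(3,p+1)} \le C_1\delta^2 + C\|\uu(t)\|^{\min(3,p+1)},
\]
so that choosing $K>2\sqrt{2C_1/c_0}$ and $\delta_0$ small enough strictly improves the bootstrap hypothesis, forcing $T^\star=+\infty$. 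Global well-posedness in $H^1\times L^2$ then follows by combining this uniform a priori bound with the local theory of Linares \cite{Linares}, and the sharpened $|c(t)-c(0)|\lesssim C_0^2\delta^2$ claimed in the statement is read off a posteriori from the momentum identity once $\|\uu(t)\|_{H^1\times L^2}\lesssim C_0\delta$ is known. The main subtlety of the argument is precisely the simultaneous closure of the $\uu$ and $c$ bootstraps; the mixed vectorial orthogonalities of Section \ref{Sec:2} were designed so that this double bootstrap closes cleanly, whereas a scalar orthogonality condition would leave a linear-in-$\delta$ term in the $c$ estimate that no amount of bootstrap could absorb.
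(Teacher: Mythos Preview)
Your proof is correct and follows essentially the same route as the paper's---expand $E+cP$ around $\Q_{c(t)}$, invoke coercivity \eqref{coer0}, exploit $\langle\JJ\QQc,\uu\rangle=0$ to make $P[\bd\phi]-P[\Q_c]=\int u_1u_2$ purely quadratic in $\uu$, deduce $|c(t)-c(0)|\lesssim\|\uu\|^2$ from momentum conservation and nondegeneracy of $c\mapsto P[\Q_c]$, and close the bootstrap---only packaged in the GSS scalar function $d(c)$ rather than the paper's direct manipulation of $E$ and $P$ separately. Two harmless inaccuracies worth correcting: from \eqref{calculo_base} one has $d''(c)=\partial_cP[\Q_c]=\langle\D,\JJ\QQc\rangle$ (not $-\langle\D,\JJ\QQc\rangle$), and the coercivity in Lemma~\ref{lem2p2}$(v)$ actually requires \emph{both} orthogonalities of \eqref{eq:orto}, not just $\langle\bd{T}_c,\uu\rangle=0$; neither affects your argument since you only need $d''\neq0$ to invert the momentum identity and you have both orthogonalities anyway.
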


\begin{proof}
We follow the Bona-Sach's argument, but now orthogonality conditions \eqref{eq:orto} differ. From \eqref{eq:energy} and \eqref{eq:soliton_eq}, after modulation, one has
\[
\begin{aligned}
& E[\bd{Q}_c + \bd{v}] + cP[\bd{Q}_c + \bd{v}]  \\
 &~{} =  E[\bd{Q}_c] + c P[\bd{Q}_c ] + \int \left( -cQ_c v_2 + Q_c v_1 + Q_c' \partial_y v_1 -f(Q_c) v_1 + cQ_c v_2 -c^2Q_c v_1 \right)\\
&~{}  \quad + \frac12 \int \left( (v_2 +cv_1)^2 +(\partial_x v_1)^2 + (1-c^2)v_1^2  -f'(Q_c) v_1^2\right) \\
&~{} \quad - \int \left(  F(Q_c+v_1)-F(Q_c) -f(Q_c)v_1- \frac12 f'(Q_c)v_1^2\right) \\
&~{} = E[\bd{Q}_c] + c P[\bd{Q}_c ] + \frac12\langle \bd{L} \bd{v} ,\bd{v} \rangle +\bd{N}_2(\bd{v}) .
\end{aligned}
\]	 
The term $\bd{N}_2(\bd{v})$ naturally obeys the bound $|\bd{N}_2(\bd{v})| \lesssim \|\bd{v}\|_{H^1\times L^2}^3$. Additionally,
\[
\langle \bd{L} \bd{v} ,\bd{v} \rangle = \int (v_1Lv_1 +2cv_1v_2 +v_2^2) = \int v_1 \mathcal L v_1 + \int (v_2 +cv_1)^2 . 
\]
Thanks to Lemma \ref{lem2p2}, and \eqref{coer0}, we have  that there exists $c_0>0$ such that $\langle \bd{L} \bd{v} ,\bd{v} \rangle \geq c_0 \| \bd{v} \|_{H^1\times L^2}$. Consequently, since the energy and the momentum are conserved,
\[
\begin{aligned}
& c_0 \| \bd{v}\|_{H^1 \times L^2}^2 \\
& \le - (E[\bd{Q}_c] -E[\bd{Q}_{c(0)}]) - (c P[\bd{Q}_c ]  -c(0) P[\bd{Q}_{c(0)} ] ) +  (c(t)-c(0)) P[\bd{Q}_c + \bd{v}]  + C \| \bd{v}(0)\|_{H^1 \times L^2}^2 +C\| \bd{v}\|_{H^1 \times L^2}^3 \\
& \le - (E[\bd{Q}_c] -E[\bd{Q}_{c(0)}])  -c(0) ( P[\bd{Q}_c ]  -  P[\bd{Q}_{c(0)} ] ) + C|c(t)-c(0)| \|\bd{v}\|_{H^1\times L^2} +C\delta^2 + CC_0^3 \delta^3.
\end{aligned}
\]
Recall that $\int Q_c^2 =  4\gamma^{\frac{5-p}{p-1}}$, $\int Q_c^{p+1} = 2\left( \frac{p+1}{p+3}\right)(1-c^2) \int Q_c^2$, $\int Q_c'^2 = \frac{p-1}{2(p+1)}\int Q_c^{p+1}=(1-c^2)\frac{p-1}{p+3} \int Q_c^2$, $E[\bd{Q}_c] =\frac12 \int ((1+c^2)Q_c^2 + Q_c'^2 -\frac2{p+1} Q_c^{p+1}) = \frac{p-1 +4c^2}{p+3} \int Q_c^2= \left( 1 -\frac{4\gamma^2}{p+3}\right) \int Q_c^2$ and $P[\bd{Q}_c] =-c\int Q_c^2$. Therefore, since $\partial_c E[\bd{Q}_c] + c \partial_c P[\bd{Q}_c]=0$, one has
\[
-(E[\bd{Q}_c] -E[\bd{Q}_{c(0)}])  -c(0) ( P[\bd{Q}_c ]  -  P[\bd{Q}_{c(0)} ] ) \lesssim |c(t)-c(0)|^2.
\]
Finally, since $P[\bd{Q}_c + \bd{v}]  = -c\int Q_c^2 + \int Q_c v_2 -c\int Q_c v_1 +\int v_1 v_2 = -c\int Q_c^2 +\int v_1 v_2$, one has $|c(t)-c(0)| \lesssim \| \bd{v}\|_{H^1 \times L^2}^2$. We conclude that 
\[
\begin{aligned}
  \| \bd{v}\|_{H^1 \times L^2}^2  \le C\delta^2 + CC_0^3 \delta^3 \le C \delta^2.
\end{aligned}
\]
This ends the proof of orbital stability.
\end{proof}

\section{First virial estimates}\label{Sec:3}

In what follows, we assume Theorem \ref{MT1} and the boundedness property \eqref{OS}.

\subsection{Notation for virial argument}
Now recall the weights needed for virial arguments \cite{KMM}. Consider a smooth even function $\chi:\R\to \R$ satisfying
\[
\chi=1 \mbox{ on } [-1,1], \quad \chi=0 \mbox{ on } (-\infty,2]\cup [2,\infty), \quad \chi'\leq 0 \mbox{ on } [0,\infty).
\]
For $A,B>0$, define the functions
\[
\begin{aligned}
& \zeta_A(x)=\exp\left( -\dfrac{1}{A}(1-\chi(x))|x|\right),
\quad \varphi_A(x)=\int_{0}^{x} \zeta_A^2(y)dy,\\
& \zB(x)=\exp\left( -\dfrac{1}{B}(1-\chi(x))|x|\right),
\quad \varphi_B(x)=\int_{0}^{x} \zB^2(y)dy.
\end{aligned}
\]
Consider the function $\psi_{A,B}$ defined as
\begin{equation}\label{eq:psi_eti}
\psi_{A,B}(x)=\chi^2_A(x)\varphi_B(x)\quad  \mbox{ where }\quad \chi_A(x)=\chi\left(\dfrac{x}{A}\right), \ \  x \in \R.
\end{equation}
These functions will be used in two distinct virial arguments with different scales
\begin{equation}\label{eq:scales}
	1\ll B \ll B^{10}  \ll A.
\end{equation}
The following technical estimates on functions $\vA$, $\zeta_K$ and $\chi_{A}$ will be useful  (for the proofs, see \cite{Mau}). Notice that for the function $\vA$, it holds
	\begin{equation}\label{eq:vA_prop}
	\begin{aligned}
	0<\vA'(x) \leq 1, \quad |\vA(x)|\leq |x|, \quad |\vA(x)|\leq CA,
	\quad |\vA(x)\Qc(x) |+|\vA (x) \Lambda \Qc(x)|\leq C\sech\left(\frac{3\gamma}{4}x\right).
	\end{aligned}
	\end{equation}
	Similarly, one can see that $\vB$ and $\psi_{A,B}$ satisfies the following estimates
	\begin{equation}\label{eq:vB_psi_prop}
	\begin{aligned}
	|\vB(x)|\leq CB,\quad |\psi_{A,B}|\leq CB \chi_A^2(x).
	\end{aligned}
	\end{equation}
	Finally, one simple but fundamental inequality is: 
	for each function $v$
	\begin{equation}\label{eq:chiA_zA}
		\int \chi_A^2 v^2\leq\int_{|x|\leq 2A} v^2 \leq C\int_{|x|\leq 2A} e^{-4|x|/A}v^2 \lesssim 	\int v^2 \zeta_A^4 \leq \| \zeta_A^2 v\|^2_{L^2} .
	\end{equation}

We also observe that for any $K>0$ sufficiently large,
		\begin{equation}\label{eq:z11}
			\left|\dfrac{ \zeta_K''}{\zeta_K} -2\left(\dfrac{\zeta_K'}{\zeta_K} \right)^{2}\right|\lesssim \frac{1}{K},\quad 
\left| \frac{\zeta_K'}{\zeta_K}\right|\lesssim K^{-1} \textbf{1}_{\{|x|>1\}}(x),
		\quad 
				\left| \frac{\zeta_K''}{\zeta_K}\right|
				\lesssim   K^{-2}+K^{-1} \sech(x),
		\end{equation}
		and
		\begin{equation}\label{eq:z'/z}
		\begin{aligned}
			\left| \frac{\zeta_K'''}{\zeta_K}\right|
			\lesssim & ~{}  K^{-3}+K^{-1} \sech(x),\quad 
			\left| \frac{\zeta_K^{(4)}}{\zeta_K}\right|
			\lesssim   K^{-4}+K^{-1} \sech(x), \quad \left| \frac{\zeta_K''}{\zeta_K}\right| +
	\left| \frac{\zeta_K'''}{\zeta_K}\right|+
	\left|  \frac{\zeta_K^{(4)}}{\zeta_K}\right|\lesssim   K^{-1}.
		\end{aligned}
	\end{equation}
	In particular, for $A\geq K$ large enough, the following estimate holds:	 
	\begin{equation}\label{eq:C*z'/z}
	\begin{aligned}
			\left| \textbf{1}_{\{A<|x|<2A\}} \frac{\zeta_K^{(n)}}{\zeta_K}\right|\lesssim   \frac1{K^{n}}, \mbox{ for } n\in\mathbb{N}.
	\end{aligned}
	\end{equation}

\subsection{First virial estimate}

	Let $\varphi$ a bounded smooth function and consider the functional
	\begin{equation}\label{eq:I}
	\mathcal{I}=\int \varphi_A(y) v_1(t,y) v_2(t,y) dy,
	\end{equation}
and
\begin{equation} \label{eq:wi}
w_i=\zeta_A v_i, \quad  i=1,2.
\end{equation}

\begin{prop}\label{prop:dtI}
There exists $C_1>0$, and $\delta_1$ such that for all $\delta \in [0,\delta_1]$ the following holds. Let
\begin{equation}\label{eq:A}
A=\delta^{-1},
\end{equation}
and assume that for all $t\geq0$, \eqref{eq:small}  holds. Then
\begin{equation}\label{cota_I}
\begin{aligned}
\frac{d}{dt}\I 
\leq&~{}  -\frac12 \int  \left( (w_2 + c   w_1)^2+ 3 (\partial_x w_1)^2 +\bigg(1-c^2-f'(\Qc)+\frac{\zA''}{\zA}- 2\left(\frac{\zA'}{\zA}\right)^2 \bigg) w_1^2 \right)
\\&
	+C_1 |\rho'-c| \left( \|w_2+cw_1\|_{L^2}  +\|w_1\|_{L^2}  \right) 
	+ C_1 |c'| \left(\frac{c}{\gamma^2}\|w_2+c w_1\|_{L^2}+\|w_1\|_{L^2} \right)  
	\\&
	+C_1  \int\sech^2\left(\frac{\gamma}{4}y\right) v_1^2 
	.
	\end{aligned}
\end{equation}
\end{prop}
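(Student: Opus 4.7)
The plan is to differentiate $\I = \int \vA v_1 v_2$ in time using the linearized system \eqref{eq:eq_lin}, integrate by parts systematically, and then rewrite every resulting quadratic form in the weighted variables $w_1 = \zA v_1$ and $w_2+cw_1 = \zA(v_2+cv_1)$ via the key identity $\vA' = \zA^2$. The target inequality then emerges by collecting coefficients, the RHS splitting naturally into a principal negative part, a $\sech^2$-localized remainder coming from the soliton profile and the nonlinearity, and modulation terms controlled by $|\rho'-c|$ and $|c'|$.

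First, I would isolate the leading transport contribution
\[
\int \vA \partial_y(v_2+cv_1)\,v_2 \;+\; c\int \vA v_1\,\partial_y(v_2+cv_1),
\]
coming respectively from the $\partial_y(v_2+cv_1)$ block in $\partial_t v_1$ and the $c\partial_y(v_2+cv_1)$ block in $\partial_t v_2$. Setting $V := v_2+cv_1$, these combine into $\int \vA V \partial_y V = -\tfrac12 \int \vA' V^2 = -\tfrac12 \int (w_2+cw_1)^2$, which is precisely the first coercive term on the RHS. This cancellation is the ``mixed-variable miracle'' intrinsic to the GB two-wave structure, and it is the only way to produce the $(w_2+cw_1)^2$ term without spurious $w_1 w_2$ cross terms.

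Second, I would compute $\int \vA v_1 \partial_y\LL v_1$ by three integrations by parts in the $-\partial_y^3 v_1$ piece and one in each of the remaining pieces, obtaining
\[
-\tfrac32\int \vA'(\partial_y v_1)^2 + \tfrac12\int \vA''' v_1^2 -\tfrac{1-c^2}{2}\int \vA' v_1^2 + \tfrac12\int \vA' f'(\Qc)v_1^2 - \tfrac12 \int \vA f''(\Qc)\Qc' v_1^2.
\]
Using $\vA'=\zA^2$, the algebraic identity $\int \zA^2(\partial_y v_1)^2 = \int (\partial_y w_1)^2 + \int (\zA''/\zA)\,w_1^2$ (derived by expanding $(\partial_y w_1)^2 = (\zA' v_1 + \zA\partial_y v_1)^2$ and one integration by parts), and the expansion $\vA''' = 2\zA^2\bigl(\zA''/\zA + (\zA'/\zA)^2\bigr)$, I collect all $w_1^2$ coefficients and check they assemble into
\[
-\tfrac32 \int(\partial_y w_1)^2 - \tfrac12 \int\Bigl(1-c^2 - f'(\Qc) + \tfrac{\zA''}{\zA} - 2\bigl(\tfrac{\zA'}{\zA}\bigr)^2\Bigr) w_1^2.
\]
The leftover $-\tfrac12\int \vA f''(\Qc) \Qc' v_1^2$ is pointwise $\sech$-localized thanks to \eqref{eq:vA_prop}, hence absorbed into the $\sech^2(\tfrac\gamma 4 y) v_1^2$ remainder. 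The nonlinear contribution $\int \vA v_1 N(v_1)$ is treated by one more integration by parts and a Taylor expansion of $f$ around $\Qc$: the principal quadratic piece is $\Qc$-localized, while higher-order pieces are absorbed using $\|\bd{v}\|_{H^1\times L^2}\lesssim \delta$ together with $p\geq 2$, both feeding into the $C_1\int\sech^2(\tfrac\gamma 4 y) v_1^2$ remainder.

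Finally, the modulation terms are treated by Cauchy--Schwarz against the soliton weights. The $(\rho'-c)$ contribution, after using $v_1\partial_y v_2 + v_2\partial_y v_1 = \partial_y(v_1 v_2)$, organizes into
\[
(\rho'-c)\int \vA \Qc'(v_2-cv_1) \;-\;(\rho'-c)\int \zA^2 v_1 v_2,
\]
each bounded by $|\rho'-c|(\|w_2+cw_1\|_{L^2}+\|w_1\|_{L^2})$ thanks to $|\vA \Qc'|\lesssim \sech(\tfrac{3\gamma}{4}y)$ and $\zA\sim 1$ on the soliton support. The $c'$ contribution involves $\Lambda \Qc$ and $\Lambda(c\Qc) = \Qc + c\Lambda \Qc$; by Lemma \ref{lem2p3}, $\Lambda \Qc$ carries a $c/\gamma^2$ prefactor coming from $\Lambda \Qc = -\tfrac{c}{\gamma^2}\gamma^{2/(p-1)}\Lambda_0 Q(\gamma\cdot)$, producing the stated $|c'|\bigl(\tfrac{c}{\gamma^2}\|w_2+cw_1\|_{L^2} + \|w_1\|_{L^2}\bigr)$ bound. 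The main obstacle will be the careful algebraic bookkeeping of the second step, ensuring the four $v_1^2$ contributions and the $\vA'''$ term assemble precisely into the advertised coefficient with the exact $\tfrac{\zA''}{\zA} - 2(\zA'/\zA)^2$ combination that the coercivity analysis in Section \ref{Sec:6} will later require; every other step is essentially forced once the mixed-variable transport cancellation in the first step is made.
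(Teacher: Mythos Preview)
Your proposal is correct and follows essentially the same route as the paper: the paper first records the virial identity \eqref{eq:dtI} (your transport cancellation plus the $\int \vA v_1 \partial_y L v_1$ computation), then rewrites the quadratic part in $w$-variables via Lemma~\ref{lem:var_v_w} (exactly your $\zA^2(\partial_y v_1)^2$ and $\vA'''$ identities), and finally bounds the modulation terms $I_1,I_2$ and the nonlinear term $I_3$ just as you outline. One small bookkeeping point: the cross term $-(\rho'-c)\int w_1 w_2$ survives the change of variables (see Lemma~\ref{lem:var_v_w}) and needs $\|w_1\|_{L^2}\lesssim\delta$ from orbital stability to be absorbed into the $C_1|\rho'-c|(\|w_2+cw_1\|_{L^2}+\|w_1\|_{L^2})$ bound, but this is immediate.
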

In order to prove this estimate, we shall need the following identity, proved in the appendix.
\begin{lem}
Let $(v_1,v_2)\in H^1(\R)\times L^2(\R)$ a solution of \eqref{eq:eq_lin}. Consider $\varphi_A =\varphi_A (y)$ a smooth bounded function to be chosen later. Then
\begin{equation}\label{eq:dtI}
	\begin{aligned}
   \frac{d}{dt}\I
	=&-\frac12 \int \vA' \left[ (v_2 + c   v_1)^2+ 3 (\partial_x v_1)^2 +(1-c^2)v_1^2 - f'(\Qc) v_1^2 \right]
	 -(\rho'-c)  \int \vA'  v_1 v_2
	\\
	&+\frac12 \int \vA (1-f'(\Qc))'  v_1^2
	+\frac12 \int \vA''' v_1^2 
	+(\rho'-c) \int \vA \Qc' \left(  v_2 -c  v_1\right)
	\\&
	- c' \int  \vA  \left( v_2  ~\Lambda (\Qc) +  v_1~\Lambda (c\Qc) \right)
	-\int \vA  v_1 N.
   \end{aligned}
\end{equation}
\end{lem}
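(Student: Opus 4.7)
\medskip
\noindent
\textbf{Proof proposal.} The identity is a pure computation with no analytic obstruction, so the plan is to differentiate $\mathcal{I}$ under the integral sign and substitute the linearised system \eqref{eq:eq_lin} directly, then regroup by source: the \emph{principal} (linear, non-modulated) terms coming from $\partial_y\JJ\bd{L}\bd v$, the \emph{shift-modulation} terms proportional to $\rho'-c$, the \emph{scaling-modulation} terms proportional to $c'$, and the \emph{nonlinear} term involving $N$. Since $\varphi_A$ depends only on $y$,
\[
\frac{d}{dt}\I \;=\; \int \vA\,(\partial_t v_1)\,v_2 \;+\; \int \vA\, v_1\,(\partial_t v_2),
\]
and each substitution produces a piece in exactly one of the four groups above.

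\medskip
\noindent
\emph{Principal terms.} First I would combine
\[
\int \vA\,\partial_y(v_2+c v_1)\,v_2 \;+\; c\int \vA\, v_1\,\partial_y(v_2+cv_1) \;=\; \tfrac12\int \vA\,\partial_y\bigl[(v_2+cv_1)^2\bigr] \;=\; -\tfrac12\int \vA'(v_2+cv_1)^2,
\]
which produces the quadratic kinetic piece in the target. The other principal contribution is $\int \vA\, v_1\,\partial_y\LL v_1$, with $\LL=-\partial_y^2+(1-c^2)-f'(\Qc)$. This I would expand term by term:
\begin{itemize}
\item For $-\int \vA\, v_1\,\partial_y^3 v_1$, three integrations by parts yield $-\tfrac32\int \vA'(\partial_y v_1)^2+\tfrac12\int \vA''' v_1^2$.
\item For $(1-c^2)\int \vA\, v_1\,\partial_y v_1$, one integration by parts yields $-\tfrac{1-c^2}{2}\int \vA' v_1^2$.
\item For $-\int \vA\, v_1\,\partial_y(f'(\Qc)v_1)$, the Leibniz rule and integration by parts produce both $+\tfrac12\int \vA' f'(\Qc)v_1^2$ and $-\tfrac12\int \vA\, f''(\Qc)\Qc' v_1^2$, and I would rewrite the latter using $(1-f'(\Qc))'=-f''(\Qc)\Qc'$, giving $+\tfrac12\int \vA (1-f'(\Qc))'v_1^2$.
\end{itemize}
Adding these pieces recovers the first three lines of the target identity.

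\medskip
\noindent
\emph{Modulation and nonlinear terms.} The shift contribution is
\[
(\rho'-c)\int \vA\bigl[\partial_y(\Qc+v_1)v_2 + v_1\,\partial_y(-c\Qc+v_2)\bigr]
\;=\; (\rho'-c)\int \vA\, \Qc'\,(v_2-cv_1) \;+\; (\rho'-c)\int \vA\,\partial_y(v_1 v_2),
\]
where the last integral, after one integration by parts, becomes $-(\rho'-c)\int \vA' v_1 v_2$; this yields the two $(\rho'-c)$ lines in \eqref{eq:dtI}. The scaling term is obtained by direct substitution of $-c'\Lambda(\Qc)$ and $c'\Lambda(c\Qc)$ into the two halves of $\frac{d}{dt}\I$, and the nonlinear term is simply $\int \vA\, v_1 N$ from the $\NN(v_1)$ component of $\partial_t v_2$; neither requires integration by parts.

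\medskip
\noindent
\emph{Main subtlety.} There is no analytic obstacle: every manipulation is justified because $v_1\in H^1$, $v_2\in L^2$, and $\vA$ is smooth with bounded derivatives by \eqref{eq:vA_prop}, so all boundary terms at infinity vanish and all integrals converge absolutely (after using $|\Qc|,|\Lambda\Qc|$ decay). The only bookkeeping hazard is tracking signs and making sure that the two $f'(\Qc)$-pieces from the Leibniz step collapse correctly into the compact form $\tfrac12\int \vA(1-f'(\Qc))' v_1^2+\tfrac12\int \vA' f'(\Qc)v_1^2$, which is what allows the $f'(\Qc)$ contribution to split cleanly between the leading $-\tfrac12\int \vA'$ block and the $\tfrac12\int \vA$ block with the derivative sitting on the potential. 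Once this splitting is performed, gathering all pieces yields exactly \eqref{eq:dtI}.
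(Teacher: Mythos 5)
Your proposal is correct and follows essentially the same route as the paper's Appendix \ref{virial2}: differentiate $\I$, substitute \eqref{eq:eq_lin}, combine the two $\partial_y(v_2+cv_1)$ contributions into $-\tfrac12\int\vA'(v_2+cv_1)^2$, and reduce $\int\vA\,v_1\partial_y\LL v_1$ by integration by parts (the paper invokes the ready-made identity $\Jap{\eta\,\partial_x\LL f}{f}=-\tfrac12\int\eta'[3(\partial_x f)^2+V_0f^2]+\tfrac12\int\eta V_0'f^2+\tfrac12\int\eta'''f^2$ from \cite{MaMu}, whereas you rederive it term by term; the content is identical). One small caution: your claim that the $c'$ and $N$ terms follow by ``direct substitution'' without further manipulation does not literally reproduce the signs written in \eqref{eq:dtI}. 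Substituting $\partial_t v_2$ from \eqref{eq:eq_lin} into $\int\vA\,v_1\,\partial_t v_2$ yields $+c'\int\vA\,v_1\Lambda(c\Qc)$ and $+\int\vA\,v_1 N$, whereas the lemma states $-c'\int\vA\,v_1\Lambda(c\Qc)$ and $-\int\vA\,v_1N$; the paper's own appendix in fact ends with $+\int\vA\,v_1N$, in conflict with the lemma's display. These sign discrepancies are immaterial downstream, since Proposition \ref{prop:dtI} only uses $|I_2|$ and $|I_3|$, but you should either record the signs produced by the substitution or note explicitly that only absolute values of these terms are used.
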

\begin{proof}
See Appendix \ref{virial2}. 
\end{proof}

\subsection{Proof of Proposition \ref{prop:dtI}}
Now we rewrite  the main part of the virial identity, obtained in \eqref{eq:dtI}, using the localized variables $(w_1,w_2)$, defined in \eqref{eq:wi}.

\begin{lem}\label{lem:var_v_w}
It holds
\[
	\begin{aligned}
\int \vA'  (\partial_x v_1)^2 
=& \int (\partial_x w_1)^2 + \int   \frac{\zA''}{\zA}  w_1^2,\quad 
	\frac12 \int (\zA^2)'' v_1^2 
	=\int \bigg(\left(\frac{\zA'}{\zA}\right)^2+\frac{\zA''}{\zA} \bigg)w_1^2,
   \end{aligned}
\]
and
\[
\begin{aligned}
-\frac12 \int & \vA'  \left( (v_2 + c   v_1)^2+ 3 (\partial_x v_1)^2 +(1-f'(\Qc)-c^2) v_1^2 \right)
	 -(\rho'-c)  \int \vA'  v_1 v_2
	+\frac12 \int \vA''' v_1^2 
	\\
=&
-\frac12 \int  \left( (w_2 + c   w_1)^2+ 3 (\partial_x w_1)^2 +\bigg(1-c^2+\frac{\zA''}{\zA}- 2\left(\frac{\zA'}{\zA}\right)^2 -f'(\Qc) \bigg) w_1^2 \right)
-(\rho'-c)  \int   w_1 w_2 .
\end{aligned}
\]
\end{lem}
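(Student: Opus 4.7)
The proof is essentially a bookkeeping calculation using the identity $\varphi_A' = \zeta_A^2$ (built into the definition of $\varphi_A$) and integration by parts. The strategy is to rewrite each term on the left-hand side purely in terms of $w_i = \zeta_A v_i$, handling derivative terms by commuting $\partial_x$ past the weight $\zeta_A$.

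For the first identity, I would expand
\[
(\partial_x w_1)^2 = \zeta_A^2 (\partial_x v_1)^2 + 2\zeta_A \zeta_A' v_1 \partial_x v_1 + (\zeta_A')^2 v_1^2.
\]
The middle term is $\tfrac12 (\zeta_A^2)' (v_1^2)'$, so integrating by parts,
\[
\int 2\zeta_A \zeta_A' v_1 \partial_x v_1\, dy = -\frac12 \int (\zeta_A^2)'' v_1^2\, dy = -\int (\zeta_A')^2 v_1^2\, dy -\int \zeta_A \zeta_A'' v_1^2\, dy,
\]
where I used $(\zeta_A^2)'' = 2(\zeta_A')^2 + 2\zeta_A \zeta_A''$. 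The $(\zeta_A')^2 v_1^2$ terms cancel and I obtain
\[
\int (\partial_x w_1)^2 = \int \varphi_A' (\partial_x v_1)^2 - \int \frac{\zeta_A''}{\zeta_A} w_1^2,
\]
which gives the first claim after rearranging. The second identity is immediate from the same expansion $(\zeta_A^2)'' = 2(\zeta_A')^2 + 2\zeta_A \zeta_A''$, dividing through by $\zeta_A$.

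For the main identity, I would treat each term on the LHS separately. The pointwise algebraic terms pass through trivially using $\varphi_A' = \zeta_A^2$:
\[
\int \varphi_A' (v_2+cv_1)^2 = \int (w_2+cw_1)^2, \quad \int \varphi_A' v_1 v_2 = \int w_1 w_2,
\]
and similarly $\int \varphi_A'(1-c^2-f'(Q_c))v_1^2 = \int (1-c^2-f'(Q_c))w_1^2$. The derivative term is handled by the first identity I just proved. Finally, for the $\varphi_A'''$ term I compute $\varphi_A''' = (\zeta_A^2)'' = 2(\zeta_A')^2+2\zeta_A\zeta_A''$, so
\[
\frac12 \int \varphi_A''' v_1^2 = \int \left(\frac{\zeta_A'}{\zeta_A}\right)^2 w_1^2 + \int \frac{\zeta_A''}{\zeta_A} w_1^2.
\]

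Collecting everything: from the kinetic part I pick up $-\tfrac32 \int \frac{\zeta_A''}{\zeta_A} w_1^2$, and from $\tfrac12 \int \varphi_A''' v_1^2$ I pick up $+\int (\zeta_A'/\zeta_A)^2 w_1^2 + \int \frac{\zeta_A''}{\zeta_A} w_1^2$. Combining the two weight contributions produces exactly
\[
-\frac12 \int \left(\frac{\zeta_A''}{\zeta_A} - 2\left(\frac{\zeta_A'}{\zeta_A}\right)^2\right) w_1^2,
\]
which, together with the other terms already rewritten in the $w_i$ variables, matches the RHS. No step presents a real obstacle; the only mild subtlety is keeping track of signs in the integration by parts and in the expansion $(\zeta_A^2)'' = 2(\zeta_A')^2 + 2\zeta_A\zeta_A''$, which is where the characteristic coefficient combination $\frac{\zeta_A''}{\zeta_A} - 2(\zeta_A'/\zeta_A)^2$ (appearing also in \eqref{eq:z11}) gets generated.
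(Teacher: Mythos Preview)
Your proof is correct and follows essentially the same approach as the paper. The only cosmetic difference is that the paper starts from the substitution $\zeta_A \partial_x v_1 = \partial_x w_1 - \tfrac{\zeta_A'}{\zeta_A} w_1$ and expands $\int \zeta_A^2 (\partial_x v_1)^2$ from there, whereas you start by expanding $(\partial_x w_1)^2$; these are equivalent routes to the same integration-by-parts identity, and the bookkeeping for the main identity is identical.
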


\begin{proof}
Since  $\vA'=\zA^2$ and $w_i=\zA v_i$, then from the definition one gets $\zA \partial_x v_1= \partial_x w_1- \frac{\zA'}{\zA} w_1$. Replacing the previous identity and integrating by parts, we obtain
\[ 
	\begin{aligned}
 \int \zA^2  (\partial_x v_1)^2 
=&~{}\int \bigg( (\partial_x w_1)^2+ \left(\frac{\zA'}{\zA} w_1\right)^2-   \frac{\zA'}{\zA}\partial_x (w_1)^2 \bigg)
= \int (\partial_x w_1)^2 + \int   \frac{\zA''}{\zA}  w_1^2.
   \end{aligned}
\] 
For the second identity, expanding the derivatives and using the definition of $w_1$, one gets the simplified expression $\frac12 \int (\zA^2)'' v_1^2 = \int \left(\left(\frac{\zA'}{\zA}\right)^2+\frac{\zA''}{\zA} \right) w_1^2 .$ Now, collecting the previous identities and by the definition of $w_i$ (see \eqref{eq:wi}), we get

\[
\begin{aligned}
-\frac12 \int & \vA'  \left( (v_2 + c   v_1)^2+ 3 (\partial_x v_1)^2 +(1-f'(\Qc)-c^2) v_1^2 \right)
	 -(\rho'-c)  \int \vA'  v_1 v_2
	+\frac12 \int \vA''' v_1^2 
	\\
=&
-\frac12 \int  \left( (w_2 + c   w_1)^2+ 3 (\partial_x w_1)^2 +\bigg(1-c^2-f'(\Qc)+\frac{\zA''}{\zA}- 2\left(\frac{\zA'}{\zA}\right)^2 \bigg) w_1^2 \right)
	-(\rho'-c)  \int   w_1 w_2	.
\end{aligned}
\]
This concludes the proof of the lemma.
\end{proof}

Lemma \ref{lem:var_v_w} returns the first part in \eqref{cota_I}. Consider \eqref{eq:dtI}. Now, we deal with the terms
\[
\begin{aligned}
I_{1}=& (\rho'-c) \int \vA \Qc' \left(  v_2 -c  v_1\right),
\quad I_{2}=- c' \int  \vA  \left( v_2  ~\Lambda (\Qc) +  v_1~\Lambda (c\Qc) \right),\\ 
I_3=	&~{} -\int \vA  v_1 N- \frac12 \int \vA (f'(\Qc))'  v_1^2.
\end{aligned}
\]

\subsection*{Step 1} Control of $I_1$. Recall \eqref{eq:vA_prop}. Since $|\varphi_A|\leq x$,
\begin{equation}\label{eq:vA_Qc}
|\vA \Qc'|\lesssim|x \Qc'|\lesssim \exp\left(-\frac12\gamma|x|\right).
\end{equation}
Using the above inequality we obtain
\[
|I_1|\leq |\rho'-c| \int \exp\left( -\frac12\gamma|x|\right) \zA^{-1} |w_2-cw_1|.
\] 
Recall that by Theorem \ref{MT1} one has $\gamma$ uniformly far from 0.  Since  $B^{1/4}\gamma > 2 $ if $\delta$ is chosen large, and using  the H\"older inequality, we  conclude
\begin{equation}\label{eq_I1}
\begin{aligned}
|I_1|
\lesssim&~{} |\rho'-c| \left(\int \exp\left( -\gamma|x|\right) \zA^{-2}\right)^{1/2} \|w_2-cw_1\|_{L^2}
\\
\lesssim&~{} |\rho'-c| \left(\int \exp\left( -\gamma|x|\right) \zA^{-2}\right)^{1/2}\left( \|w_2+cw_1\|_{L^2}  +\|w_1\|_{L^2}  \right) \lesssim |\rho'-c| \left( \|w_2+cw_1\|_{L^2}  +\|w_1\|_{L^2}  \right).
\end{aligned}
\end{equation}
\subsection*{Step 2} Control of $I_2$.
Now, notice that
\[
\begin{aligned}
I_{2}=&	- c' \int  \vA  \left( v_2  ~\Lambda (\Qc) +  v_1~\Lambda (c\Qc) \right)
\\
=&	- c' \int  \vA  \left( v_2  ~\Lambda (\Qc) +  v_1~(c\Lambda \Qc+\Qc) \right)
=	- c' \int  \vA  \left(( v_2+cv_1)  ~\Lambda \Qc +  v_1\Qc \right)
=:I_{21}+I_{22}.
\end{aligned}
\]
Similarly as in $I_1$, using \eqref{eq:vA_Qc}, we obtain 
\begin{equation}\label{I22}
|I_{22}|\lesssim |c'| \|w_1\|_{L^2}.
\end{equation}
Now, for $I_{21}$, we recall that $\Lambda \Qc= \frac{-c}{\gamma^2}\left[ \frac{2}{p-1} \Qc+x \gamma \Qc'\right]$. Then, we obtain 
\[ 
\begin{aligned}
\left|\frac{\vA}{\zA} \Lambda \Qc \right| 
\lesssim& ~{} \frac{c}{\gamma^2} \left|x\zA^{-1} \left(\Qc+x \gamma \Qc'\right)\right|
\lesssim \frac{c}{\gamma^2}\left( \left|x\zA^{-1}\Qc\right|+ \left| x^2\zA^{-1}  \Qc' \right|\right) \lesssim  \frac{c}{\gamma^2} \exp\left(-\frac14\gamma|x|\right).
\end{aligned}
\] 
By Theorem \ref{MT1} one has $\gamma>0$ uniformly in time and $\delta$. Then, we obtain
\[
|I_{21}|\lesssim |c'| \left(\frac{c}{\gamma^2}\|w_2+c w_1\|_{L^2}+\|w_1\|_{L^2} \right).
\]
Finally, from \eqref{I22} and the previous estimate we conclude that $I_{2}$ satisfies
\begin{equation}\label{eq_I2}
\begin{aligned}
|I_2|\lesssim&~{}
  |c'| \left(\frac{c}{\gamma^2}\|w_2+c w_1\|_{L^2}+\|w_1\|_{L^2} \right).
\end{aligned}
\end{equation}

\subsection*{Step 3} Control of $I_3$. 
Let us split $I_3 = I_{31}+I_{32}$, where $I_{31}$ is the term that contains the term $N$.
Recalling $N$  \eqref{eq:L_N} and integrating by parts in $I_{31}$, we obtain 
\[\begin{aligned}
I_{31}
=&~{} \int \partial_x (\vA  v_1) (f (v_1+\Qc) - f (\Qc)- f'(\Qc)v_1)
\\
=&~{} \int (\zA^2v_1 +\vA  \partial_x  v_1) (f (v_1+\Qc) - f (\Qc)- f'(\Qc)v_1)
\\
=&~{} \int  \zA^2 v_1 (f (v_1+\Qc) - f (\Qc)- f'(\Qc)v_1)
-\int  \vA  \partial_x  v_1 f'(\Qc)v_1
+\int  \vA  \partial_x  v_1 (f (v_1+\Qc) - f (\Qc))
\\
=&~{} \int  \zA^2 v_1 (f (v_1+\Qc) - f (\Qc)- f'(\Qc)v_1)
\\&
+\frac12 \int  (\zA^2  f'(\Qc)+\vA  (f'(\Qc))') v_1^2
+\int  \vA  \partial_x  v_1 (f (v_1+\Qc) - f (\Qc))
\\
=&~{} \int  \zA^2 v_1 (f (v_1+\Qc) - f (\Qc)- f'(\Qc)v_1)
+\frac12 \int  \zA^2  f'(\Qc) v_1^2
\\&
+\frac12 \int  \vA  (f'(\Qc))' v_1^2
+\int  \vA  \partial_x  v_1 (f (v_1+\Qc) - f (\Qc))
.
\end{aligned}
\]
Rewriting the last integral on the RHS, we have
\[\begin{aligned}
\int  \vA  \partial_x  v_1 (f (v_1+\Qc) - f (\Qc))
=&~{} \int  \vA  \partial_x  \left( F (v_1+\Qc) - F (\Qc)-f(\Qc)v_1\right)
\\&
-\int \vA \Qc' \left( f(\Qc+v_1)-f(\Qc)-f'(\Qc)v_1\right).
\end{aligned}
\]
Collecting the above relation and integrating by parts, we obtain
\[
\begin{aligned}
I_3
 = &~{} \int  \zA^2 v_1 (f (v_1+\Qc) - f (\Qc)- f'(\Qc)v_1)
-\int  \zA^2  \left( F (v_1+\Qc) - F (\Qc)-f(\Qc)v_1\right)
\\
&
+\frac12 \int  \zA^2 f'(\Qc) v_1^2
-\int \vA \Qc' \left( f(\Qc+v_1)-f(\Qc)-f'(\Qc)v_1\right).
\end{aligned}
\]
Finally, since $\|v_1\|_{L^{\infty}}\leq \delta$ and $p\geq 2$, using the following Taylor expansion 
\begin{equation}\label{eq:taylor}
|F (v_1+\Qc) - F (\Qc)- f(\Qc)v_1|+
|f (v_1+\Qc) - f (\Qc)- f'(\Qc)v_1|\lesssim (f'(\Qc)+f''(\Qc) )|v_1|^2+|v_1|^{p},
\end{equation}
and \eqref{eq:vA_Qc}, we conclude that
\[
\begin{aligned}
|I_3| \lesssim& \int\sech^2\left(\frac{\gamma}{4}x\right) v_1^2.
\end{aligned}
\]
Gathering the previous estimate together with estimates \eqref{eq_I1} and \eqref{eq_I2} we get 
\[ \label{cota_I}
\begin{aligned}
\frac{d}{dt}\I 
\leq&~{}  -\frac12 \int  \left( (w_2 + c   w_1)^2+ 3 (\partial_x w_1)^2 +\bigg(1-c^2-f'(\Qc)+\frac{\zA''}{\zA}- 2\left(\frac{\zA'}{\zA}\right)^2 \bigg) w_1^2 \right)
\\&
	+C_1 |\rho'-c| \left( \|w_2+cw_1\|_{L^2}  +\|w_1\|_{L^2}  \right)
	+ C_1 |c'| \left(\frac{c}{\gamma^2}\|w_2+c w_1\|_{L^2}+\|w_1\|_{L^2} \right)  
	\\&
	+C_1  \int\sech^2\left(\frac{\gamma}{4}x\right) v_1^2 .
\end{aligned}
\] 
Now \eqref{eq:c'} reads in $(v_1,v_2)$ variables
\begin{equation} \label{eq:c'_new} 
\begin{aligned}
 |\rho'-c|  \lesssim&~{}   \left\|\Qc^{3/4}v_1\right\|_{L^2}+\left\|\Qc^{3/4}(v_2+cv_1)\right\|_{L^2},\\
	|c' |  \lesssim&~{}    \left\| \Qc^{(p-1)/2} v_1\right\|_{L^2}^2 +\left\| \Qc^{3/4}v_1  \right\|_{L^2}^2+ \left\| \Qc^{3/4}(v_2+cv_1)  \right\|_{L^2}^2.
\end{aligned}
\end{equation}
Using these estimates one gets \eqref{cota_I}.

\section{Transformed system}\label{Sec:4}
Estimate \eqref{cota_I} is not enough to conclude local decay of solitary wave perturbations. Let us fix $\varepsilon>0$. Let us consider the following change of variable \cite{Martel_linearKDV,CMPS}
	\begin{equation}\label{eq:z_cv}
	\bd{z} = \Opg \bd{ L}\bd{v} = \Opg \begin{pmatrix} \LL + c^2 & c \\  c & 1 \end{pmatrix} \bd{v},
	\end{equation}
	where $\bd{L}$ is given in \eqref{eq:LL}. The above change of variable is equivalent to
	\begin{equation}\label{def_z}
	\begin{aligned}
	z_1 =\Opg (\LL v_1 + c(cv_1 + v_2)), \qquad
	z_2 = \Opg (c v_1+v_2 ),
	\end{aligned}
	\end{equation}
	where $\LL$ is given by \eqref{eq:L_N}. We obtain that the transformed system 
	\begin{equation}\label{eq:z}
	\begin{aligned}
	\partial_t z_1 =& ~{}c\partial_x (z_1+cz_2)+\LL \partial_x z_2 +M_1+M_{1,c},
	\\
	\partial_t z_2 =& ~{} \partial_x (z_1+cz_2)+M_2+M_{2,c},
	\end{aligned}
	\end{equation}
	where $\LL$ is the linearized operator obtained in \eqref{eq:L_N}, and the nonlinear terms $M_1$ and $M_2$ are given by
	\begin{equation}\label{eq:M}
	\begin{aligned}
	M_1:=&~{} 
	c\Opg  N(u_1)- \gar \Opg \left( 2\partial_x[\partial_x(f'(\Qc)) \partial_x z_2]- \partial_x^2(f'(\Qc)) \partial_x z_2\right)
	,
	\\
	M_2 :=&~{}\Opg  N(u_1), 
	\end{aligned}
	\end{equation}
	as well as, the modulation terms $M_{1,c}$ and $M_{2,c}$ are given by
		\begin{equation}\label{eq:Mc}
	\begin{aligned}
	M_{1,c}:=&~{}
	 c' \Opg ~{} (c\Qc+ f''(\Qc)\Lambda \Qc v_1+ 2cv_1-v_2)+(\rho'-c) \left( \partial_x z_1 -\Opg (\partial_x(f'(\Qc)) v_1) \right)
	,\\
	M_{2,c}:=&- c' \Opg (\Qc+v_1) +(\rho'-c)  \partial_x z_2 .
	\end{aligned}
	\end{equation}
Recall that $\psi_{A,B}=\chi_A^2\varphi_B$. Set now
\begin{equation}\label{eq:def_J}
\J=\int   \psi_{A,B}(y) z_1(t,y) z_2(t,y)dy ,\quad \hbox{and}\quad  \et_i =\chi_A \zB z_i, \quad i=1,2.
\end{equation}

\begin{prop}\label{prop:J} 
Under the conditions \eqref{eq:small} and  \eqref{eq:A}, there exists $C_2>0$ and $\delta_2>0$ such that for any $0<\delta\leq  \delta_2$, the following holds. 
Fix 
\begin{equation}\label{eq:B_gar}
B=A^{1/10}=\delta^{-1/10},\quad \gar=B^{-4}=\delta^{-2/5}.
\end{equation}
Then for all $t\geq 0$,
\begin{equation}\label{dt_J}
\begin{aligned}
\frac{d}{dt} \J \leq&
-\frac12  \int  \left( (\et_1+c \et_2)^2+3 (\partial_x \et_2)^2+\big(1-c^2\big)\et_2^2 -f'(\Qc) \et_2^2\right)
\\
&+ C_2 B^{-1} \left( \|\et_1+c\eta_2\|_{L^2}^2+  \| \et_2\|_{L^2}^2+\| \partial_x \et_2\|_{L^2}^2 \right)
+C_2 B^{-1} \big( \|w_1\|_{L^2}^2+\|\partial_x w_1\|_{L^2}^2+\|cw_1+w_2\|_{L^2}^2 \big)
\\&
+C_2 |c'| \big(\|  \et_2\|_{L^2}+ \|  \et_1+c\eta_2\|_{L^2} \big).
\end{aligned}
\end{equation}
\end{prop}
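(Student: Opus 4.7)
My plan is to mimic the structure of Proposition~\ref{prop:dtI} in the transformed variables $(z_1,z_2)$. I would start by differentiating $\J$ in time and substituting the equations \eqref{eq:z}, writing the result as three pieces: a linear flow piece coming from $c\partial_x(z_1+cz_2)+\LL\partial_x z_2$ and $\partial_x(z_1+cz_2)$, a modulation piece coming from $M_{1,c},M_{2,c}$, and a nonlinear piece coming from $M_1,M_2$.

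For the linear piece, I would integrate by parts systematically, pushing all derivatives onto $\psi_{A,B}$ and symmetrizing $\LL\partial_x z_2$ against $z_2$. The outcome is a bilinear expression of the schematic form
\[
-\tfrac12\int \psi_{A,B}'\bigl[(z_1+cz_2)^2+3(\partial_x z_2)^2+(1-c^2)z_2^2-f'(\Qc)z_2^2\bigr] +\tfrac12\int \psi_{A,B}''' z_2^2+\tfrac12\int \psi_{A,B}(f'(\Qc))'z_2^2.
\]
Since $\psi_{A,B}'=\chi_A^2\zB^2+2\chi_A\chi_A'\vB$, the leading piece converts, under the substitution $\et_i=\chi_A\zB z_i$ and an identity parallel to Lemma~\ref{lem:var_v_w}, into precisely the negative quadratic form announced in \eqref{dt_J}, plus error terms of size $B^{-1}$ generated by $\zB''/\zB-2(\zB'/\zB)^2$ via \eqref{eq:z11}. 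The corrections proportional to $\chi_A\chi_A'\vB$ and the term $\psi_{A,B}'''$ are localized on the transition annulus $A<|y|<2A$; there \eqref{eq:C*z'/z} together with \eqref{eq:B_gar} makes them of order $A^{-1}\ll B^{-1}$, and after inverting the change of variable \eqref{def_z} they are absorbed into $B^{-1}(\|w_1\|_{L^2}^2+\|\partial_x w_1\|_{L^2}^2+\|cw_1+w_2\|_{L^2}^2)$.

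For the modulation piece, each summand in $M_{1,c},M_{2,c}$ is tested against $\psi_{A,B}z_j$, using Cauchy–Schwarz together with the $L^2$-contractivity of $\Opg$, the bound $|\psi_{A,B}|\lesssim B\chi_A^2$ from \eqref{eq:vB_psi_prop}, and the exponential decay of $\Qc$ and $\Lambda\Qc$. Terms containing $\partial_x z_i$ are integrated by parts to shift the derivative onto $\psi_{A,B}$, producing $\psi_{A,B}'$ and hence a pairing with $\et_i$ with a $B^{-1}$ gain; the quadratic bound on $|c'|$ in \eqref{eq:c'_new} ensures the remaining $|c'|$-contributions fit the form $|c'|(\|\et_2\|_{L^2}+\|\et_1+c\et_2\|_{L^2})$ stated in \eqref{dt_J}. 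For the nonlinear piece, the Taylor bound \eqref{eq:taylor} makes $N(v_1)$ quadratic in $v_1$ and localized by $\Qc^{p-1}$; pairing with $\Opg$ (an $L^2$-bounded regularizer tuned to $\gar=B^{-4}$) yields an $L^2$ quantity of controlled size, which couples with $\psi_{A,B}z_j$ to give contributions absorbable as $B^{-1}\|\et_i\|_{L^2}^2$ or $B^{-1}$ times local norms of $w_i$. The $\gar$-weighted commutator terms $\gar\Opg\partial_x(\partial_x(f'(\Qc))\partial_x z_2)$ in $M_1$ are handled by a further integration by parts to recast them as $\gar\cdot(\text{bounded})\cdot \partial_x z_2$, so that the prefactor $\gar=B^{-4}$ easily beats the $B$ arising from $|\psi_{A,B}|\lesssim B$.

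\textbf{Main obstacle.} The most delicate point is the coexistence of the two scales $A\gg B^{10}$: derivatives hitting $\chi_A$ produce contributions near $|y|\sim A$ that lie outside the support of the $\et_i$-coercive form, so they must be traded back to first-virial quantities through \eqref{def_z} and the commutator $[\Opg,\chi_A]$; controlling the latter commutator without losing more than a factor $B^{-1}$ (rather than merely $A^{-\alpha}$), and in particular making sure that none of the $\gar$-dependent terms in $M_1$ re-introduce a power of $\gar^{-1}$ through $\Opg$, is the technical heart of the argument. The scale choice \eqref{eq:B_gar} — chosen precisely so that $B\gar\ll 1$ and $B^{10}\ll A$ — is what finally closes the estimate.
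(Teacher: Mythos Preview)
Your proposal is essentially the paper's own approach: derive the virial identity \eqref{eq:dJ_lema}, split into a main quadratic piece $J_1+J_2$, an error $J_E$ from the change to $\et_i$, and nonlinear/modulation pieces $J_3,\dots,J_6$, then estimate each using the scale relations \eqref{eq:B_gar}.

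Two refinements are worth flagging. First, after the change of variables the main piece is not exactly the quadratic form in \eqref{dt_J}: an extra potential term $-\tfrac12\int \tfrac{\vB}{\zB^2}\partial_x(f'(\Qc))\et_2^2$ appears (see \eqref{eq:J1+J2}), and it is dropped not by a $B^{-1}$ estimate but by a pointwise sign observation (Lemma~\ref{lem:bad_term}), using that $\partial_x(f'(\Qc))$ is odd with the same sign as $-\vB$. Second, the ``commutator $[\Opg,\chi_A]$'' you invoke is not quite the right object: the paper never commutes $\Opg$ with the large-scale cutoff directly. Instead it uses the weighted commutation estimates \eqref{eq:sech_Opg}--\eqref{eq:sech_Opg_1pp} (hyperbolic weights through $\Opg$) together with the pointwise bound $\chi_A\lesssim \zA^2$ from \eqref{eq:chiA_zA}; this is what lets one pass from $\psi_{A,B}$-localized $z$-quantities back to $w$-quantities via Lemma~\ref{lem:z_w} while only paying $\gar^{-1}$ and then absorbing it with $A^{-1}B\gar^{-2}\ll B^{-1}$. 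Your claim that integration by parts on the $(\rho'-c)\partial_x z_i$ terms yields a $B^{-1}$ gain is also slightly off: after IBP one gets $\psi_{A,B}'=\chi_A^2\zB^2+(\chi_A^2)'\vB$, whose main piece has no smallness, and the actual gain in \eqref{eq:J43+J62} comes from the smallness $|\rho'-c|\lesssim\delta$ together with $A^{-1}$ from $(\chi_A^2)'$.
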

The proof of this result requires the following identity:
\begin{lem}
Let $(z_1,z_2)\in H^1\times H^2$ a solution to \eqref{eq:z}. Then
\begin{equation}\label{eq:dJ_lema}
\begin{aligned}
\frac{d}{dt} \J 
=&~{} -\frac12  \int  \psi_{A,B}' \left( (z_1+c z_2)^2+3 (\partial_x z_2)^2+(1-c^2 -f'(\Qc) )z_2^2\right)
\\&
		+\frac12 \int  \psi_{A,B} \partial_x(f'(\Qc))  z_2^2
		+\frac12 \int  \psi_{A,B}''' z_2^2	+ \int   \psi_{A,B} (M_1+M_{1,c}) z_2 + \int   \psi_{A,B} z_1 (M_2+M_{2,c}) ,
\end{aligned}
\end{equation}
here $M_1$, $M_{1,c}$,  $M_2$ and  $M_{2,c}$ are given in \eqref{eq:M} and \eqref{eq:Mc}, respectively, and the related terms to the nonlinearity, as well as, the modulation terms.
\end{lem}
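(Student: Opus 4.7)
The plan is a direct computation: differentiate $\J$ in $t$, substitute the evolution equations \eqref{eq:z} for $(z_1,z_2)$, and rearrange the linear terms through integration by parts. Since the weight $\psi_{A,B}(y)$ in the centered variable $y$ is time-independent (the transport drift from the modulation is absorbed into $M_{1,c}, M_{2,c}$), we simply have
\[
\frac{d}{dt}\J = \int \psi_{A,B}\, (\partial_t z_1)\, z_2 + \int \psi_{A,B}\, z_1\, (\partial_t z_2).
\]
The nonlinear and modulation contributions $M_1+M_{1,c}$ and $M_2+M_{2,c}$ are then carried along unchanged, producing the last two integrals in \eqref{eq:dJ_lema}, so it remains to treat only the two linear parts of \eqref{eq:z}.

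First I would handle the first-order transport pieces together. The cross-terms $c\,\partial_y(z_1+cz_2)$ in $\partial_t z_1$ and $\partial_y(z_1+cz_2)$ in $\partial_t z_2$ combine to give
\[
\int \psi_{A,B}\,(z_1+cz_2)\,\partial_y(z_1+cz_2) \;=\; \tfrac12 \int \psi_{A,B}\,\partial_y\!\bigl((z_1+cz_2)^2\bigr) \;=\; -\tfrac12 \int \psi_{A,B}'\,(z_1+cz_2)^2,
\]
which matches the $(z_1+cz_2)^2$ contribution in \eqref{eq:dJ_lema}.

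The real bookkeeping is for the third-order term $\int \psi_{A,B}\,(\LL\partial_y z_2)\,z_2$ with $\LL=-\partial_y^2+(1-c^2)-f'(\Qc)$. Splitting, the piece $-\int \psi_{A,B}\, z_2\,\partial_y^3 z_2$ is integrated by parts twice to produce $-\tfrac32 \int \psi_{A,B}'\,(\partial_y z_2)^2+\tfrac12\int \psi_{A,B}'''\,z_2^2$; the piece $(1-c^2)\int \psi_{A,B}\, z_2\,\partial_y z_2$ yields $-\tfrac12(1-c^2)\int \psi_{A,B}'\,z_2^2$; and the piece $-\int \psi_{A,B}\,f'(\Qc)\, z_2\,\partial_y z_2$ is rewritten as $\tfrac12\int \partial_y(\psi_{A,B} f'(\Qc))\,z_2^2 = \tfrac12 \int \psi_{A,B}' f'(\Qc) z_2^2 +\tfrac12 \int \psi_{A,B}\,\partial_y(f'(\Qc))\,z_2^2$, which is exactly where the surviving $\tfrac12 \int \psi_{A,B}\partial_y(f'(\Qc))z_2^2$ comes from. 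Summing these three contributions gives the remaining $-\tfrac12 \int \psi_{A,B}'\bigl[3(\partial_y z_2)^2+(1-c^2)z_2^2-f'(\Qc)z_2^2\bigr]$ and the $\tfrac12\int \psi_{A,B}'''z_2^2$ of \eqref{eq:dJ_lema}.

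No step is really an obstacle: the main point is the careful sign bookkeeping in the double integration by parts for the $\partial_y^3$ term, and the observation that the transport pieces recombine into a perfect derivative of $(z_1+cz_2)^2$, a structural feature of the transformed system that replaces the mixed $v_1v_2$ structure used in \eqref{eq:dtI}. All boundary terms vanish since $\psi_{A,B}$ and its derivatives decay (indeed are compactly supported via $\chi_A^2$) and $z_1,z_2\in H^1\times H^2$ by assumption.
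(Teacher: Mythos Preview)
Your proposal is correct and follows essentially the same approach as the paper's proof: differentiate, substitute \eqref{eq:z}, combine the first-order transport pieces into $(z_1+cz_2)\partial_y(z_1+cz_2)$, and integrate by parts the $\LL\partial_y z_2$ contribution. The only cosmetic difference is that the paper packages the last step by invoking the identity $\langle \eta\, \LL(\partial_x f), f\rangle = -\tfrac12\int \eta'[3(\partial_x f)^2 + V_0 f^2] - \tfrac12 \int \eta V_0' f^2 + \tfrac12 \int \eta''' f^2$ from \cite{MaMu}, whereas you carry out that computation explicitly term by term; your version is more self-contained but otherwise identical.
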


\begin{proof}
See Appendix \ref{virial2}.
\end{proof}
The following identities were proved in \cite{Mau,MaMu}; they are useful in the next subsection.  First of all, by definition of $\psi_{A,B}$ it follows that
\begin{equation}\label{eq:psi_deriv}
\begin{aligned}
\psi_{A,B}' =&~{}\chi_A^2 \zB^2+(\chi_A^2)'\varphi_B,\quad \psi_{A,B}'' = \chi_A^2 (\zB^2)'+2(\chi_A^2)' \zB^2+(\chi_A^2)''\varphi_B,\\
\psi_{A,B}''' =& ~{}\chi_A^2 (\zB^2)''+3(\chi_A^2)' (\zB^2)'+3(\chi_A^2)'' \zB^2+(\chi_A^2)'''\varphi_B.
\end{aligned}
\end{equation}
Additionally, let $P\in W^{1,\infty}(\R)$, $P=P(y)$ and $\et_i$ be as in \eqref{eq:def_J}. Then 
		\begin{equation}\label{eq:P_CzB_zix}
	\begin{aligned}
	\int P\chi_A^2\zB^2(\partial_x z_i)^2 
	=&	\int P (\partial_x \et_i)^2 + \int   \left( P' \frac{\zB'}{\zB}+P\frac{\zB''}{\zB}\right) \et_i^2 +\int  \mathcal{E}_1(P)\zB^2 z_i^2,
	\end{aligned}
	\end{equation}
	\begin{equation}\label{eq:claimE1}
	\begin{aligned}
	\mathcal{E}_1(P)
	:= P  \left( \chi_A''\chi_A+(\chi_A^2)' \frac{\zB'}{\zB}\right)
+ \frac12 P'(\chi_A^2)' .
	\end{aligned}
	\end{equation}
	Note that
	\begin{equation}\label{cota_final_E1}
	|\mathcal{E}_1 (P) |\lesssim 
	A^{-1}\|P' \|_{L^\infty(A\leq|y|\leq2A)} 
	+ (AB)^{-1} \|P\|_{L^\infty (A\leq|y|\leq2A)}.
	\end{equation}
For further purposes, we need the following easy consequences: 
	for $P\equiv 1$, we get
	\begin{equation}\label{eq:CzB_zix}
	\begin{aligned}
	\int  \chi_A^2\zB^2(\partial_x z_i)^2 
	=&	\int  (\partial_x \et_i)^2 +
	\int     
	\frac{\zB''}{\zB} \et_i^2
	+\int  \mathcal{E}_1 \zB^2 z_i^2,
\qquad	\mathcal{E}_1 =\mathcal{E}_1(1) = \chi_A \bigg(\chi_A'' +2 \chi_A' \frac{\zB'}{\zB}\bigg).
	\end{aligned}
	 \end{equation}
	Additionally, from  \eqref{eq:CzB_zix}, \eqref{eq:z11} and \eqref{cota_final_E1}, one has the following estimate:
	\begin{equation}\label{eq:CZzx}
	\begin{aligned}
	\| \chi_A\zB \partial_x z_i\| ^2
	\lesssim & ~{}  \| \partial_x \et_i\|_{L^2}^2+B^{-1}\| \et_i\|_{L^2}^2
	+ (AB)^{-1}\| \zeta_{B}z_i\|_{L^2}^2.
	\end{aligned}
	\end{equation}
From \eqref{eq:dJ_lema}, define
\begin{equation}\label{defs_J}
\begin{aligned}
J_1  :=&~{} -\frac12  \int  \psi_{A,B}' \left[ (z_1+c z_2)^2+3 (\partial_x z_2)^2+(1-c^2 -f'(\Qc) )z_2^2\right] \\
J_2:= &~{} \frac12 \int  \psi_{A,B} \partial_x(f'(\Qc))  z_2^2+\frac12 \int  \psi_{A,B}''' z_2^2,	\\
J_3:= &~{} \int   \psi_{A,B} z_2 M_1 , \quad J_4:= \int   \psi_{A,B} z_2 M_{1,c}, \quad 
J_5:=  \int   \psi_{A,B} z_1 M_2, \quad J_6:= \int   \psi_{A,B} z_1 M_{2,c}.
\end{aligned}
\end{equation}

\subsection{Change of Variables} Recall \eqref{defs_J}. Now we will focus on $J_1+J_2$, which is the main part of the virial identity. Replacing \eqref{eq:psi_deriv} (see \cite{MaMu} for similar computations), we get the following separation
\[
\begin{aligned}
J_1+J_2
=&~{} -\frac12  \int  \chi_A^2 \zB^2  \left[  (z_1+c z_2)^2+3 (\partial_x z_2)^2+(1-c^2 -f'(\Qc) )z_2^2 \right]
\\&
+\frac12 \int  \frac{(\zB^2)''}{\zB^2}  \chi_A^2 \zB^2 z_2^2
		+\frac12 \int  ( 3(\chi_A^2)' (\zB^2)'+3(\chi_A^2)'' \zB^2+(\chi_A^2)'''\varphi_B) z_2^2
		\\&	
		-\frac12  \int  (\chi_A^2)'\vB  \left[ (z_1+c z_2)^2+3 (\partial_x z_2)^2+(1-c^2 -f'(\Qc) )z_2^2\right]
		+\frac12 \int  \chi_A^2 \vB \partial_x(f'(\Qc))  z_2^2.
\end{aligned}
\]
Now, applying the relation obtained in \eqref{eq:CzB_zix} and the definition of $\et_i$,  one gets 
\begin{equation}\label{eq:J1+J2}
\begin{aligned}
J_1+J_2
=&~{} -\frac12  \int  \bigg[ (\et_1+c \et_2)^2+3 (\partial_x \et_2)^2+\bigg(1-c^2 -f'(\Qc)-  \frac{\vB}{\zB^2} \partial_x(f'(\Qc)) \bigg)\et_2^2\bigg]	+J_{E}.
\end{aligned}
\end{equation}
where $J_{E}$ is related to the error terms appearing by the change of variable done, and is given by 
\begin{equation}\label{eq:JE}
\begin{aligned}
J_{E}
=&~{} \frac12 \int \bigg( 2 \bigg( \frac{\zB'}{\zB}\bigg)^2-\frac{\zB''}{\zB} \bigg)  \et_2^2
	-\frac32 \int \mathcal E_1 (1)  \zB^2 z_2^2
		+\frac12 \int  \left( 3(\chi_A^2)' (\zB^2)'+3(\chi_A^2)'' \zB^2+(\chi_A^2)'''\vB \right) z_2^2
		\\&	
		-\frac12  \int  (\chi_A^2)'\vB  \left[ (z_1+c z_2)^2+3 (\partial_x z_2)^2+(1-c^2 -f'(\Qc) )z_2^2\right]
		.
\end{aligned}
\end{equation}
In order to control the main part of the virial term, a lower bound for the potential $V$ is necessary. We have the following result:
\begin{lem}\label{lem:bad_term}
	Recall \eqref{eq:J1+J2}. One has
	 \[
	  -\frac12  \int \bigg(1-c^2 -f'(\Qc)-  \frac{\vB}{\zB^2} \partial_x(f'(\Qc) )\bigg)\et_2^2
	  \leq
	 -\frac12  \int \bigg(1-c^2 -f'(\Qc) \bigg)\et_2^2.
	 \]
\end{lem}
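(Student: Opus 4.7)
\begin{proof}[Proof sketch of Lemma \ref{lem:bad_term}]
The inequality is equivalent to showing that
\[
\frac12 \int \frac{\varphi_B}{\zeta_B^2}\,\partial_x\bigl(f'(Q_c)\bigr)\, \eta_2^2 \;\leq\; 0,
\]
and since $\zeta_B^2>0$ and $\eta_2^2\ge 0$ pointwise, it suffices to prove the pointwise inequality
\[
\varphi_B(y)\,\partial_y\bigl(f'(Q_c)\bigr)(y) \;\leq\; 0 \qquad \text{for all } y\in\mathbb R.
\]

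\textbf{Sign of $\partial_y(f'(Q_c))$.} Since $f(s)=|s|^{p-1}s$ and $Q_c>0$, we have $f'(Q_c)=pQ_c^{p-1}$ and hence
\[
\partial_y\bigl(f'(Q_c)\bigr) \;=\; p(p-1)\,Q_c^{p-2}\,Q_c'(y).
\]
The prefactor $p(p-1)Q_c^{p-2}$ is strictly positive, so the sign of $\partial_y(f'(Q_c))$ coincides with the sign of $Q_c'(y)$. From the explicit formula \eqref{eq:scaling_Q}, $Q_c(y)=\gamma^{2/(p-1)}Q(\gamma y)$ is even, positive, and strictly decreasing on $(0,\infty)$, so $Q_c'(y)<0$ for $y>0$ and $Q_c'(y)>0$ for $y<0$, i.e.\ $\operatorname{sgn}(Q_c'(y))=-\operatorname{sgn}(y)$.

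\textbf{Sign of $\varphi_B$.} By definition $\varphi_B(y)=\int_0^y \zeta_B^2(s)\,ds$, and since $\zeta_B^2>0$ everywhere, $\varphi_B$ is odd and strictly increasing, with $\varphi_B(y)>0$ for $y>0$ and $\varphi_B(y)<0$ for $y<0$. Hence $\operatorname{sgn}(\varphi_B(y))=\operatorname{sgn}(y)$.

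\textbf{Conclusion.} Combining the two sign computations yields
\[
\varphi_B(y)\,\partial_y\bigl(f'(Q_c)\bigr)(y)
\;=\; p(p-1)\,Q_c^{p-2}(y)\,\varphi_B(y)\,Q_c'(y) \;\leq\; 0
\]
for every $y$, since $\varphi_B(y)$ and $Q_c'(y)$ have opposite signs (and vanish simultaneously at $y=0$). Multiplying by the nonnegative factor $\eta_2^2/\zeta_B^2$ and integrating gives $\int \frac{\varphi_B}{\zeta_B^2}\partial_x(f'(Q_c))\eta_2^2 \le 0$, which is exactly the claimed inequality after rearrangement.
\end{proof}

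The argument is essentially a pointwise sign check, so there is no serious obstacle; the only thing to be careful about is identifying the correct monotonicity of both $\varphi_B$ and $Q_c$ in the moving frame $y=x-\rho(t)$ (the profile $Q_c$ is centered at the origin of $y$, which matches the center of $\varphi_B$, so no shift between the two functions needs to be tracked). Note that this is exactly the mechanism that makes the localized potential $-(1-c^2-f'(Q_c))$ worse than in the unlocalized case: the transport term $\frac{\varphi_B}{\zeta_B^2}\partial_x f'(Q_c)$ is nonpositive and hence can be harmlessly dropped when producing an \emph{upper} bound on $J_1+J_2$.
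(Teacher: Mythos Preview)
Your proof is correct and follows essentially the same approach as the paper: both reduce the inequality to the pointwise sign check that $\varphi_B(y)\,\partial_y(f'(Q_c))(y)\le 0$, using that $\varphi_B$ is odd increasing while $Q_c'$ (hence $\partial_y f'(Q_c)$) is odd decreasing. The paper additionally records the slightly sharper bound $\varphi_B/\zeta_B^2 \ge y$ for $y>0$, but this is not needed for the lemma as stated.
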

\begin{proof}
	Firstly,  we notice that $ \partial_x (f'(\Qc))=f''(\Qc) \Qc'<0$ for $y>0$. Moreover,
	  using that for $y\in [0,\infty)\mapsto \zeta_B(y)$ is a non-increasing function, we have for $y>0,$
	\[
	\dfrac{\vB}{\zB^2}=\dfrac{\int_{0}^y \zB^2 }{\zB^2} \geq y>0.
	\]
	 Then, from this estimate, and using a similar argument to the case $y\leq 0$, the proof follows.
	  This concludes the proof.
\end{proof}

\subsection{Technical estimates}
For $\gar>0$, let $(1-\gar \partial_x^2)^{-1}$
	be the bounded operator from $L^2$ to $H^2$ defined by its Fourier transform (denoted $\mathcal F$) as
	\[
	\mathcal F((1-\gar \partial_x^2)^{-1} g)(\xi)=\frac{\widehat{g}(\xi)}{1+\gar \xi^2},\quad \mbox{for any }  g\in L^2. 
	\]
	This operator satisfies:
\begin{lem}[\cite{KMM,KMMV20}]\label{lem:estimates_IOp}
	Let $f\in L^2(\R)$ and $0<\gar<1$ fixed. We have the following estimates:
	\vspace{0.1cm}
	\begin{enumerate}[$(i)$]
		\item $\| (1-\gar \partial_x^2)^{-1}f\|_{L^2(\R)}\leq \|f \|_{L^2(\R)}$,
		\vspace{0.1cm}
		\item $\| (1-\gar \partial_x^2)^{-1}\partial_x f\|_{L^2(\R)}\leq \gar^{-1/2}\|f \|_{L^2(\R)}$,
		\vspace{0.1cm}
		\item $\| (1-\gar \partial_x^2)^{-1}f\|_{H^2(\R)}\leq \gar^{-1}\|f \|_{L^2(\R)}$.
	\end{enumerate}
\end{lem}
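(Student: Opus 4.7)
Since the operator $(1-\varepsilon \partial_x^2)^{-1}$ is the Fourier multiplier with symbol $m_\varepsilon(\xi) = (1+\varepsilon \xi^2)^{-1}$, the plan is to pass to the Fourier side via Plancherel's identity and reduce each of the three inequalities to a pointwise estimate on an explicit rational symbol. Throughout I would use the standard facts $\|g\|_{L^2} = \|\widehat g\|_{L^2}$, $\widehat{\partial_x g}(\xi) = i\xi\, \widehat g(\xi)$, and the equivalence $\|g\|_{H^2}^2 \sim \int_{\mathbb R} (1+\xi^2)^2|\widehat g(\xi)|^2 d\xi$, whose harmless constants can be absorbed into the ``$\leq$'' in the statement of the lemma.

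For (i), since $(1+\varepsilon \xi^2)^{-1} \leq 1$ for all $\xi \in \mathbb R$ and all $\varepsilon > 0$, Plancherel yields the $L^2$--contraction property directly. For (ii), the operator $(1-\varepsilon \partial_x^2)^{-1}\partial_x$ is the Fourier multiplier with symbol $i\xi/(1+\varepsilon\xi^2)$; the crucial observation is the elementary AM--GM bound $1+\varepsilon\xi^2 \geq 2\sqrt{\varepsilon}\,|\xi|$, which gives
\[
\left|\frac{\xi}{1+\varepsilon\xi^2}\right| \leq \frac{1}{2\sqrt{\varepsilon}} \leq \varepsilon^{-1/2},
\]
after which Plancherel closes the argument. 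For (iii), by the equivalent description of the $H^2$ norm it suffices to bound the symbol $(1+\xi^2)/(1+\varepsilon\xi^2)$ pointwise; for $0<\varepsilon<1$ one has $1+\varepsilon\xi^2 \geq \varepsilon(1+\xi^2)$, so this symbol is uniformly bounded by $\varepsilon^{-1}$, and a third application of Plancherel concludes.

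There is no substantive obstacle in this proof; it is a routine Fourier multiplier estimate. The only point worth emphasising is that the gain of $\varepsilon^{-1/2}$ in (ii), rather than $\varepsilon^{-1}$, is sharp and reflects the natural scaling interpolating between (i) and (iii): the operator $\varepsilon^{1/2}(1-\varepsilon\partial_x^2)^{-1/2}\partial_x$ is a Fourier multiplier of uniformly bounded symbol, so half a derivative can be traded for one factor of $\varepsilon^{1/2}$ at no cost. This scaling heuristic is also what fixes, later in the paper, the precise relation between $\varepsilon$ and the virial scales $A,B$ adopted in \eqref{eq:B_gar}.
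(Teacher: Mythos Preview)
Your proof is correct and is exactly the standard Fourier multiplier argument one expects; the paper itself does not supply a proof of this lemma but simply cites \cite{KMM,KMMV20}, so there is nothing to compare against beyond noting that your approach is the canonical one.
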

We also enunciate the following results that appear in \cite{KMMV20,Mau}. Notice that now the variable in the weights is $y=x-\rho(t)$, but the shift does not affect the final outcome.
\begin{lem}
	There exist $\gar_1>0$ and $C>0$ such that for any $\gar\in (0,\gar_1)$, $0<K\leq 1$ and $g\in L^2$, 
	\vspace{0.1cm}
	\begin{align}
	\label{eq:sech_Opg}
	\left\| \sech\left( K y\right) (1-\gar\partial_x^2)^{-1} g\right\|_{L^2}
	\leq{}~ C &\left\| (1-\gar\partial_x^2)^{-1}\left[ \sech\left( Ky\right)  g\right]\right\|_{L^2},
	\\ \vspace{0.1cm}
	\label{eq:cosh_kink}
	\left\| \cosh\left( Ky\right) (1-\gar\partial_x^2)^{-1} \left[\sech\left( Ky\right) g\right]\right\|_{L^2}
	\leq& {}~ C \left\| (1-\gar\partial_x^2)^{-1} g\right\|_{L^2},\\
	\vspace{0.1cm}
\label{eq:sech_Opg_p}
	\left\| \sech(Ky)(1-\gar \partial_x^2)^{-1}\partial_x g \right\|_{L^2} 
	\leq&{}~ C \gar^{-1/2} \left\| \sech(Ky) g \right\|_{L^2},
	\end{align}
\begin{equation}\label{eq:sech_Opg_1pp}
	\left\| \sech(Ky)(1-\gar \partial_x^2)^{-1}(1-\partial_x^2)g \right\|_{L^2} \leq {}~ C \gar^{-1} \| \sech(Ky) g \|_{L^2}.
	\end{equation}
\end{lem}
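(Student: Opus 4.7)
The plan is to prove all four estimates through a common kernel-based strategy, exploiting the explicit convolution representation of the resolvent. Since the Fourier multiplier $(1+\varepsilon\xi^2)^{-1}$ has inverse Fourier transform $G_\varepsilon(x)=\frac{1}{2\sqrt{\varepsilon}}e^{-|x|/\sqrt{\varepsilon}}$, the operator $\Opg$ acts as convolution with $G_\varepsilon$. The key analytic input is a pointwise comparison of $\sech$ weights: writing $\cosh(Kz)=\cosh(Ky)\cosh(K(z-y))+\sinh(Ky)\sinh(K(z-y))$ and using $|\tanh|\leq 1$, one obtains
\begin{equation*}
\frac{\sech(Ky)}{\sech(Kz)}=\frac{\cosh(Kz)}{\cosh(Ky)}\leq 2\cosh(K(z-y))\leq 2\,e^{K|z-y|},
\end{equation*}
together with the symmetric bound $\cosh(Ky)/\cosh(Kz)\leq 2\,e^{K|y-z|}$. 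Choosing $\varepsilon_1$ small enough that $K\sqrt{\varepsilon}\leq\tfrac12$ for all $K\leq 1$ and $\varepsilon<\varepsilon_1$, this extra exponential factor is absorbed into a wider kernel: $G_\varepsilon(x)\,e^{K|x|}\leq C\,G_{4\varepsilon}(x)$. Finally, the Fourier identity $\frac{1+\varepsilon\xi^2}{1+4\varepsilon\xi^2}\in[\tfrac14,1]$ yields the operator comparison $\|(1-4\varepsilon\partial_x^2)^{-1}h\|_{L^2}\leq\|\Opg h\|_{L^2}$.

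With these tools in hand, \eqref{eq:sech_Opg} is proved by writing
\begin{equation*}
\sech(Ky)\,\Opg g(y)=\int G_\varepsilon(y-z)\,\frac{\cosh(Kz)}{\cosh(Ky)}\,\sech(Kz)\,g(z)\,dz,
\end{equation*}
bounding the weight ratio by $2e^{K|y-z|}$, absorbing this exponential into $G_{4\varepsilon}$, and extracting absolute values to get $|\sech(Ky)\Opg g(y)|\leq C\,(1-4\varepsilon\partial_x^2)^{-1}(\sech(K\cdot)|g|)(y)$. Taking $L^2$ norms and invoking the Fourier comparison above delivers \eqref{eq:sech_Opg}. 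Estimate \eqref{eq:cosh_kink} is handled analogously after writing $\cosh(Ky)\,\Opg(\sech(K\cdot)g)(y)=\int G_\varepsilon(y-z)\frac{\cosh(Ky)}{\cosh(Kz)}g(z)\,dz$ and applying the symmetric ratio bound together with the kernel absorption.

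For \eqref{eq:sech_Opg_p} I would move the derivative onto the kernel via integration by parts, yielding $\Opg\partial_x g(y)=-\int G_\varepsilon'(y-z)\,g(z)\,dz$, together with the pointwise identity $|G_\varepsilon'(x)|=\varepsilon^{-1/2}G_\varepsilon(x)$ obtained by direct differentiation. The explicit factor $\varepsilon^{-1/2}$ pulls out, and the remaining weighted convolution is controlled exactly as in the proof of \eqref{eq:sech_Opg}. Finally, \eqref{eq:sech_Opg_1pp} follows from the algebraic identity
\begin{equation*}
\Opg(1-\partial_x^2)=\frac{1}{\varepsilon}\bigl(I-(1-\varepsilon)\Opg\bigr),
\end{equation*}
obtained by writing $1-\partial_x^2=\varepsilon^{-1}\bigl((1-\varepsilon\partial_x^2)-(1-\varepsilon)\bigr)$. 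Applying $\sech(Ky)$, taking $L^2$ norms, and controlling the $\Opg$-term via \eqref{eq:sech_Opg} combined with the $L^2$-contractivity of $\Opg$ produces the prefactor $\varepsilon^{-1}$ and the required $\|\sech(Ky)g\|_{L^2}$ on the right.

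The main obstacle is the careful absorption of $e^{K|y-z|}$ into an equivalent exponential kernel, which forces the smallness condition $K\sqrt{\varepsilon}\leq\tfrac12$ and thereby fixes $\varepsilon_1$. A secondary subtlety is the direction of the inequality $|\Opg g|\leq \Opg|g|$, which goes in the ``wrong'' sense when one wants to compare $\Opg(\sech(K\cdot)g)$ with $\Opg(\sech(K\cdot)|g|)$; this is neutralized by carrying the absolute value through the kernel estimate before taking norms, an operation harmless here since all the final estimates are in norm form.
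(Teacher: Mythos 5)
The paper itself gives no proof of this lemma (it is quoted from \cite{KMMV20,Mau}), so your argument has to stand on its own. Your toolbox is the right one: the explicit kernel $G_\gar(x)=\tfrac{1}{2\sqrt\gar}e^{-|x|/\sqrt\gar}$, the ratio bound $\cosh(Kz)/\cosh(Ky)\le 2e^{K|z-y|}$, the absorption $G_\gar(x)e^{K|x|}\le 2G_{4\gar}(x)$ for $K\sqrt\gar\le\tfrac12$, and the multiplier comparison between the scales $\gar$ and $4\gar$. This correctly proves \eqref{eq:sech_Opg_p}, and it also yields the weaker bound $\|\sech(Ky)\Opg g\|_{L^2}\le C\|\sech(Ky)g\|_{L^2}$, which together with your algebraic identity is all that \eqref{eq:sech_Opg_1pp} needs (you should invoke that weaker bound there rather than \eqref{eq:sech_Opg}, so that \eqref{eq:sech_Opg_1pp} does not inherit the problem below).

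The gap is in \eqref{eq:sech_Opg} and \eqref{eq:cosh_kink}, precisely at the point you dismiss as a ``secondary subtlety.'' Your chain ends with $\|\sech(Ky)\Opg g\|_{L^2}\le C\|(1-4\gar\partial_x^2)^{-1}[\sech(K\cdot)|g|]\|_{L^2}\le C\|\Opg[\sech(K\cdot)|g|]\|_{L^2}$, and to conclude you would need $\|\Opg[\sech(K\cdot)|g|]\|_{L^2}\lesssim\|\Opg[\sech(K\cdot)g]\|_{L^2}$. That comparison is false in general: for $\sech(K\cdot)g=\cos(Ny)e^{-y^2}$ the right-hand side is $O((1+\gar N^2)^{-1})$, while the left-hand side stays bounded below because $|\cos(Ny)|e^{-y^2}$ has a nontrivial low-frequency component; the ratio blows up as $N\to\infty$. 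The same failure occurs in \eqref{eq:cosh_kink}, where you would need $\|\Opg|g|\|_{L^2}\lesssim\|\Opg g\|_{L^2}$. Inserting absolute values is harmless only when the target is a plain weighted $L^2$ norm of $g$, not the $L^2$ norm of $\Opg$ applied to $g$. The standard repair is a substitution: set $h=\Opg[\sech(K\cdot)g]$, so $g=\cosh(Ky)(1-\gar\partial_x^2)h$, and commute the weight through $1-\gar\partial_x^2$ to obtain
\begin{equation*}
\sech(Ky)\Opg g=h+\gar\,\sech(Ky)\Opg\bigl[K^2\cosh(Ky)h+2K\sinh(Ky)\partial_xh\bigr].
\end{equation*}
The remainder is then controlled by $CK\sqrt{\gar}\,\|h\|_{L^2}$ using exactly your kernel and Schur estimates plus one integration by parts to trade $\partial_x$ for $\gar^{-1/2}$; since the target norm is now $\|h\|_{L^2}$ itself, taking absolute values costs nothing. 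The analogous substitution $h=\Opg g$ handles \eqref{eq:cosh_kink}.
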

Recall $z_1,z_2$ defined in \eqref{def_z}. The following lemma follows the spirit of the results presented on \cite{Mau,KMMV20,MaMu}, slightly different due to the new change of variable used in order to get the transformed system in the present work (see \eqref{eq:z_cv}).
\begin{lem}\label{lem:z_w}
Let  $1\leq K\leq A$ fixed. Then
	
	\begin{enumerate}[(a)]
		
		\item  Estimates on $z_1$ and $w_1$:
		\begin{equation}\label{eq:z1_w1}
		\begin{aligned}
		\|\zeta_K z_1\|\lesssim &~{}\gar^{-1} \|w_1 \|_{L^2}+\|c w_1+w_2\|_{L^2},\quad  \|\zeta_K \partial_x z_1\|\lesssim 
		\gar^{-1} \left(  \| w_1 \|_{L^2} + \|\partial_x w_1 \|_{L^2} \right)+ \gar^{-1/2} \|cw_1+w_2\|_{L^2}. 
		\end{aligned}
		\end{equation} 
		\item
		Estimates on $z_2$ and $w_2$:
		\begin{equation}\label{eq:z2_w2}
		\begin{gathered}
		\| \zeta_K z_2\|_{L^2}\lesssim  \| cw_1+w_2\|_{L^2}, \quad \quad 
		\|\zeta_K \partial_x z_2\|_{L^2} \lesssim \gar^{-1/2}\|cw_1+ w_2\|_{L^2},\quad \quad 
		\|\zeta_K \partial_x^2 z_2\|_{L^2}\lesssim \gar^{-1} \| cw_1+ w_2\|_{L^2}.
		\end{gathered}
		\end{equation}
		\item 
		Estimates on $v_1$ and $w_1$:
		\begin{equation}\label{eq:sech_v1_w1}
		\begin{aligned}
		\left\|\sech \left( \frac{y}{K}\right) v_1\right\|_{L^2} \lesssim &~{} \|w_1 \|_{L^2},\quad \left\|\sech \left( \frac{y}{K}\right) \partial_x v_1\right\|_{L^2}	\lesssim  \|\partial_x w_1 \|_{L^2}+ \| w_1 \|_{L^2}.
		\end{aligned}
		\end{equation} 
		\item 
		Estimates on $v_2+cv_1$ and $w_2+cw_1$: 
		\begin{equation}\label{eq:sech_v2_w2}
		\begin{gathered}
		\left\| \sech \left( \frac{y}{K}\right) (v_2+cv_1)\right\|_{L^2}\lesssim  \| w_2+c w_1\|_{L^2}.
		\end{gathered}
		\end{equation}
	\end{enumerate}
\end{lem}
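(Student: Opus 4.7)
The plan is to prove Lemma \ref{lem:z_w} directly from the definitions \eqref{def_z} of $z_1,z_2$, by combining the operator estimates for $\Opg$ collected in Lemma \ref{lem:estimates_IOp} together with the sech-commutator bounds \eqref{eq:sech_Opg}--\eqref{eq:sech_Opg_1pp}, with elementary pointwise weight comparisons made possible by $1\le K\le A$. The basic observation used throughout is that, under $K\le A$, the decay rate of $\zeta_K$ is no slower than that of $\zeta_A$, which yields $\zeta_K\leq C\zeta_A$ and also $\sech(y/K)\leq C\zeta_A$ pointwise.

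Parts (c) and (d) follow at once from this pointwise comparison: in (c), $\|\sech(y/K)v_1\|_{L^2}\leq C\|\zeta_A v_1\|_{L^2}=C\|w_1\|_{L^2}$, and for the derivative one writes $\zeta_A\partial_x v_1=\partial_x w_1-\zeta_A' v_1$ and absorbs $|\zeta_A'|\leq CA^{-1}\zeta_A$ from \eqref{eq:z11}. Part (d) is the same pointwise bound applied to $v_2+cv_1$. Part (b) is only slightly more involved: from $z_2=\Opg(cv_1+v_2)$ I bound $\zeta_K\leq C\sech(y/K)$, apply \eqref{eq:sech_Opg} together with Lemma \ref{lem:estimates_IOp}(i) to move the sech through $\Opg$, and then use $\sech(y/K)\leq C\zeta_A$ to land on $\|cw_1+w_2\|_{L^2}$. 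The estimates on $\partial_x z_2$ and $\partial_x^2 z_2$ follow identically, replacing the first step by \eqref{eq:sech_Opg_p} (cost $\gar^{-1/2}$) or by the algebraic identity $\gar\partial_x^2\Opg=\Opg-\mathrm{id}$ which reduces the second derivative back to the previous two cases (cost $\gar^{-1}$).

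Part (a) is the delicate one and contains the main obstacle. Rewriting $z_1=\Opg((1-\partial_x^2)v_1-f'(Q_c)v_1+cv_2)$ by reorganizing \eqref{def_z}, the weighted bound on $z_1$ uses the operator identity $\Opg(1-\partial_x^2)=\gar^{-1}\mathrm{id}+\gar^{-1}(\gar-1)\Opg$, which extracts the sharp $\gar^{-1}\|w_1\|_{L^2}$ from the identity piece while the remaining terms are $O(1)$ in $\gar$ via \eqref{eq:sech_Opg} and contribute $\|w_1\|_{L^2}+\|cw_1+w_2\|_{L^2}$. The subtle point is the bound on $\|\zeta_K\partial_x z_1\|_{L^2}$: a naive composition of the $\gar^{-1}$ factor coming from $\Opg(1-\partial_x^2)$ with the $\gar^{-1/2}$ factor from $\Opg\partial_x$ would produce an unwanted $\gar^{-3/2}\|w_1\|_{L^2}$. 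I circumvent this by first commuting $\partial_x$ through $\mathcal{L}$, which introduces only the lower-order error $-\partial_x(f'(Q_c))v_1$, and then applying \eqref{eq:sech_Opg_1pp} directly to $\Opg(1-\partial_x^2)\partial_x v_1$ as a single operator. This yields the sharp $\gar^{-1}(\|\partial_x w_1\|_{L^2}+\|w_1\|_{L^2})$, while the $c\Opg\partial_x v_2$ contribution is handled by \eqref{eq:sech_Opg_p} and gives $\gar^{-1/2}\|cw_1+w_2\|_{L^2}$, completing the proof.
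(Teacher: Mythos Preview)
Your proof is correct and follows exactly the approach one would extract from the surrounding lemmas; note that the paper itself does not give a proof of Lemma~\ref{lem:z_w}, but only records that it ``follows the spirit of the results presented on \cite{Mau,KMMV20,MaMu}'', so your argument is precisely the intended one. Two minor points: in part~(a), the coefficient $\gar^{-1}(\gar-1)$ in front of $\Opg$ is of order $\gar^{-1}$, not $O(1)$ as you wrote, but this is harmless since the target bound already carries the factor $\gar^{-1}\|w_1\|_{L^2}$; and for the $c\,\Opg\partial_x v_2$ contribution you should explicitly split $v_2=(cv_1+v_2)-cv_1$ to land on $\gar^{-1/2}\|cw_1+w_2\|_{L^2}$ plus a term absorbed into $\gar^{-1}\|w_1\|_{L^2}$.
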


For the purposes of this work, we also include a refined version of Lemma \ref{lem:z_w} $(a)$. 
\begin{lem} 
Let $1\leq K\leq A$ fixed. Then,
\begin{equation}\label{v1_mejorada}
\begin{aligned}
\|(1-\gar\partial_{x}^{2})^{-1}\LL g \|_{L^2}\lesssim &~{}\|g\|_{L^{2}} +\gar^{-1/2}\left\| \partial_{x} g\right\|_{L^{2}};\\
\|\zeta_{K} z_{1}\|_{L^{2}}\lesssim &~{}  \left(1+\frac{1}{K\gar^{1/2}}\right)  \left\|\zeta_{K} v_1\right\|_{L^{2}}
		  +\gar^{-1/2}\left\| \zeta_{K} \partial_{x} v_1\right\|_{L^{2}}+|c|\|\zeta_K(cv_1+v_2)\|_{L^2}.
\end{aligned}
\end{equation}
\end{lem}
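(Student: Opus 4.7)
The two estimates are of a very different nature and I would treat them separately.

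For the first estimate, the plan is to decompose $\mathcal L g = -\partial_x^2 g + (1-c^2-f'(Q_c))g$ and to apply Lemma \ref{lem:estimates_IOp}. Writing
\[
(1-\gar\partial_x^2)^{-1}\mathcal L g
= -(1-\gar\partial_x^2)^{-1}\partial_x(\partial_x g)
+ (1-\gar\partial_x^2)^{-1}\bigl[(1-c^2-f'(Q_c))g\bigr],
\]
item $(ii)$ of Lemma \ref{lem:estimates_IOp} gives the bound $\gar^{-1/2}\|\partial_x g\|_{L^2}$ for the first term, while item $(i)$ of the same lemma, together with the uniform boundedness of $f'(Q_c)$ (recall that $Q_c$ is bounded in $L^\infty$ by \eqref{eq:scaling_Q}), gives the bound $\|g\|_{L^2}$ for the second one. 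This yields the claimed inequality.

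For the second estimate, the starting point is the defining identity $z_1 = (1-\gar\partial_x^2)^{-1}(\mathcal L v_1 + c(cv_1+v_2))$ from \eqref{def_z}. First I would use the weight-exchange inequality \eqref{eq:sech_Opg} (or its analogue valid for $\zeta_K$, which follows from the same Fourier-multiplier argument since $\zeta_K$ has the same type of exponential decay as $\sech(Ky)$) in order to move $\zeta_K$ inside the resolvent:
\[
\|\zeta_K z_1\|_{L^2}\lesssim \bigl\|(1-\gar\partial_x^2)^{-1}[\zeta_K\mathcal L v_1]\bigr\|_{L^2}
+|c|\,\bigl\|(1-\gar\partial_x^2)^{-1}[\zeta_K(cv_1+v_2)]\bigr\|_{L^2}.
\]
The second resolvent is controlled by $L^2$-contractivity (Lemma \ref{lem:estimates_IOp}$(i)$), giving the last term on the right-hand side of \eqref{v1_mejorada}. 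For the first term I would commute $\zeta_K$ with $\mathcal L$, namely
\[
\zeta_K\mathcal L v_1 = \mathcal L(\zeta_K v_1) + 2\zeta_K'\,\partial_x v_1 + \zeta_K'' v_1,
\]
and apply the first estimate of this Lemma with $g=\zeta_K v_1$. This produces the bound
$
\|\zeta_K v_1\|_{L^2}+\gar^{-1/2}\|\partial_x(\zeta_K v_1)\|_{L^2},
$
and expanding $\partial_x(\zeta_K v_1) = \zeta_K\partial_x v_1 + \zeta_K' v_1$ and invoking the pointwise bound $|\zeta_K'/\zeta_K|\lesssim K^{-1}$ from \eqref{eq:z11}, the factor $\gar^{-1/2}K^{-1}$ appears in front of $\|\zeta_K v_1\|_{L^2}$. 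The commutator terms $2\zeta_K'\partial_x v_1$ and $\zeta_K'' v_1$ are treated directly by $L^2$-contractivity of $(1-\gar\partial_x^2)^{-1}$ together with \eqref{eq:z11}, and are strictly smaller than the main contributions. Collecting the three resulting terms gives \eqref{v1_mejorada}.

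\textbf{Main obstacle.} The only delicate point is ensuring that the weight-exchange step (moving $\zeta_K$ inside $(1-\gar\partial_x^2)^{-1}$) works with $\zeta_K$ in place of $\sech(Ky)$, and keeping track of the precise $\gar$ and $K$ powers in the commutator between $\zeta_K$ and $\mathcal L$. Everything else is an application of Lemma \ref{lem:estimates_IOp} and the weight bounds \eqref{eq:z11}.
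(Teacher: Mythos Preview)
Your proposal is correct and follows essentially the same route as the paper. For the second inequality the paper proceeds exactly as you outline: weight exchange to move $\zeta_K$ inside the resolvent, the commutator identity $\zeta_K\mathcal L v_1=\mathcal L(\zeta_K v_1)+2\zeta_K'\partial_x v_1+\zeta_K'' v_1$, then the first inequality applied to $g=\zeta_K v_1$ together with the pointwise bounds \eqref{eq:z11}. Your identification of the weight-exchange step for $\zeta_K$ as the only delicate point is accurate; the paper itself quotes the $\sech$ version while applying it to $\zeta_K$, implicitly relying on the same argument.

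For the first inequality there is a small but genuine simplification in your approach. The paper separates $\mathcal L g=(1-c^2-\partial_x^2)g-f'(Q_c)g$, passes to the Fourier side via Plancherel, and splits the multiplier $\frac{1-c^2+\xi^2}{1+\gar\xi^2}$ into the regions $|\xi|\le 1$ and $|\xi|>1$. Your decomposition $\mathcal L g=-\partial_x(\partial_x g)+(1-c^2-f'(Q_c))g$ followed by Lemma~\ref{lem:estimates_IOp}(ii) and (i) yields the same bound without any frequency splitting, which is cleaner. Both arguments are elementary and give identical constants in $\gar$.
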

\begin{proof}
Recalling \eqref{eq:L_N}, one gets
		\begin{equation*}
		\begin{aligned}
		\|(1-\gar\partial_{x}^{2})^{-1}\LL g\|_{L^{2}}
		=&~{} \left\|(1-\gar\partial_{x}^{2})^{-1}[(1-c^2-\partial_{x}^{2}) g-f'(\Qc) g]\right\|_{L^{2}} \lesssim \left\|(1-\gar\partial_{x}^{2})^{-1}(1-c^2-\partial_{x}^{2}) g\right\|_{L^{2}}+\left\|  f'(\Qc) g\right\|_{L^{2}}.
		\end{aligned}
		\end{equation*}
Using the the Plancherel's Theorem and that $|c|<1$, we get that the first term on the above inequality satisfies
		\[
		\begin{aligned}
		 & \left\|(1-\gar\partial_{x}^{2})^{-1}(1-c^2-\partial_{x}^{2}) g\right\|_{L^{2}}
		 \\
		  & ~{} = \left\| \frac{(1-c^2+\xi^{2})}{(1+\gar\xi^{2})} \widehat{g}\right\|_{L^{2}}
		  = \left\| \left(\frac{(1-c^2+\xi^{2})}{(1+\gar\xi^{2})} 1_{[-1,1]}+\frac{(1-c^2+\xi^{2})}{(1+\gar\xi^{2})} 1_{[-1,1]^{c}}\right)\widehat{g}\right\|_{L^{2}}\\
		    &~{} \leq \left\| \frac{2}{(1+\gar\xi^{2})} 1_{[-1,1]}\widehat{g}\right\|_{L^{2}}
		  +\left\|2\gar^{-1/2}\frac{\sgn(\xi)\sqrt{1+\gar|\xi|^{2}}}{(1+\gar\xi^{2})} 1_{[-1,1]^{c}} \xi\widehat{g}\right\|_{L^{2}}  \lesssim \left\|g\right\|_{L^{2}}
		  +\gar^{-1/2}\left\| \partial_{x} g\right\|_{L^{2}}.
		\end{aligned}
		\]
This concludes the proof of the first estimate in \eqref{v1_mejorada}. For the second inequality, since $\left\| \sech\left( K y\right) (1-\gamma\partial_x^2)^{-1} g\right\|_{L^2}
	\leq C \left\| (1-\gamma\partial_x^2)^{-1}\left[ \sech\left( K y\right)  g\right]\right\|_{L^2}$, using that $\zeta_{K}\LL(v_1)=\LL(\zeta_{K} v_1)+2\zeta_{K}'\partial_{x} v_1+\zeta_{K}'' v_1$, we obtain
		\[	
			\begin{aligned}
		\|\zeta_{K} z_{1}\|_{L^{2}}\lesssim& \| (1-\gar \partial_{x}^{2})^{-1} (\zeta_{K} \LL v_1)+c\zeta_{K}(c v_1+v_2)\|_{L^{2}}\\
		\lesssim& \left\| (1-\gar \partial_{x}^{2})^{-1}(\LL(\zeta_{K} v_1)+2\zeta_{K}'\partial_{x} v_1+\zeta_{K}'' v_1)\right\|_{L^{2}}+|c|\|\zeta_{K}(c v_1+v_2)\|_{L^{2}} \\
		\lesssim& \left\| (1-\gar \partial_{x}^{2})^{-1}\LL(\zeta_{K} v_1)\|_{L^{2}}+ \| 2\zeta_{K}'\partial_{x} v_1\right\|_{L^{2}}+ \| \zeta_{K}'' v_1\|_{L^{2}}+|c|\|\zeta_K (c v_1+v_2)\|.
		\end{aligned}
		\]
Therefore, from the first identity in \eqref{v1_mejorada}, \eqref{eq:z11} and \eqref{eq:z'/z},
		\[
		\begin{aligned}
		\|\zeta_{K} z_{1}\|_{L^{2}}
		\lesssim&~{}
		\left\|\zeta_{K} v_1 \right\|_{L^{2}}+\gar^{-1/2}\left\| \partial_{x} (\zeta_{K} v_1)\right\|_{L^{2}}+ \| 2\zeta_{K}'\partial_{x} v_1\|_{L^{2}}+\|f'(\Qc)\zeta_K v_1\|_{L^{2}}
		+ \| \zeta_{K}'' v_1\|_{L^{2}}+|c|\|\zeta_K (c v_1+v_2)\|_{L^{2}}.
		\end{aligned}
		\]
		This ends the proof.	
\end{proof}

\begin{rem}
In particular, for $K=A$ and using that $\zeta_{A}\partial_{x}v_1=\partial_{x} w_{1}-\frac{\zeta_{A}'}{\zeta_{A}} w_{1}$, we obtain
\begin{equation}\label{eq:zA_z1}
\begin{aligned}
		\|\zeta_{A} z_{1}\|_{L^{2}}
		\lesssim&~{}
		\left\| w_1 \right\|_{L^{2}}+\gar^{-1/2}\left\| \partial_{x} w_1\right\|_{L^{2}}+\frac{1}{A} \| \zeta_{A}'\partial_{x} v_1\|_{L^{2}}+\|w_1\|_{L^2}
		+ \| \zeta_{A}'' v_1\|_{L^{2}}+|c|\|c w_1+w_2\|_{L^2}
		\\
		\lesssim&~{} 
		\|w_1\|_{L^2}+\gar^{-1/2}\left\| \partial_{x} w_1\right\|_{L^{2}}
		+|c|\|c w_1+w_2\|_{L^2}.
\end{aligned}
\end{equation}
Additionally, using a similar procedure used on Lemma \ref{lem:var_v_w}, for $\zeta_K$ with $K\in[1,A]$, we obtain
\[
\begin{aligned}
	\int \zeta_{K}^{2} (\partial_x v_1)^2 
		=&\int \frac{\zeta_{K}^{2}}{\zeta_{A}^{2}}(\partial_{x} w_{1})^{2}
		+\int  w_{1}^{2} \left( 
						\frac{\zeta_{K}^{2}}{\zeta_{A}^{2}} \frac{\zeta_{A}''}{\zeta_{A}}
						+2 \frac{\zeta_{K}}{\zeta_{A}} \frac{\zeta_{A}'}{\zeta_{A}}\left(\frac{\zeta_{K}'}{\zeta_{A}}-\frac{\zeta_{K}}{\zeta_{A}}\frac{\zeta_{A}'}{\zeta_{A}} \right)
						\right).
\end{aligned}
\]
Hence, a crude estimate gives
\[
\|\zeta_{K} \partial_{x} v_{1}\|^{2}_{L^{2}} \lesssim \|\partial_{x} w_{1}\|_{L^{2}}^{2}+A^{-1}\|w_{1}\|^{2}_{L^{2}}.
\]
From \eqref{v1_mejorada}, and taking $K=A$ in the previous estimate, we obtain
\[ \label{eq:ineq_zK_u1x}
\|\zeta_{A} z_{1}\|_{L^{2}}\lesssim   \left(1+\frac{1}{(A\gar)^{1/2}}\right)  \left\| w_{1}\right\|_{L^{2}}
		  +\gar^{-1/2} \|\partial_{x} w_{1}\|_{L^{2}}+|c|\|c w_1+w_2\|_{L^2}, 
		  \quad
		   \|\zeta_{A} \partial_{x} v_{1}\|_{L^{2}} \lesssim \|\partial_{x} w_{1}\|_{L^{2}} +A^{-1/2}\|w_{1}\|_{L^{2}}.  
\] 
Finally, since $(A\gar)^{-1} = \delta^{3/5} \ll 1$,
\[ \label{eq:kv1_w1}
\begin{aligned}
\|\zeta_{A} z_{1}\|_{L^{2}}\lesssim &~{} \left\| w_{1}\right\|_{L^{2}}
		  +\gar^{-1/2} \|\partial_{x} w_{1}\|_{L^{2}}
		  +|c|\|c w_1+w_2\|_{L^2}.
  \end{aligned}
\] 
\end{rem}

\subsection{Control of the error term}
We split the term $J_E$ in \eqref{eq:JE} as follows: $J_E=J_{E,\et}+J_{E,z}+J_{E,zz}$, with
\begin{equation}\label{eq:JE_split}
\begin{aligned}
J_{E,\et}
=&~{} \frac12 \int \bigg( 2 \bigg( \frac{\zB'}{\zB}\bigg)^2-\frac{\zB''}{\zB} \bigg)  \et_2^2
	,\\
J_{E,z}
=&~{}	-\frac32 \int  \mathcal{E}_1 \zB^2 z_2^2
		+\frac12 \int  ( 3(\chi_A^2)' (\zB^2)'+3(\chi_A^2)'' \zB^2+(\chi_A^2)'''\vB) z_2^2
		,\\	
J_{E,zx}
=&~{}		-\frac12  \int  (\chi_A^2)'\vB  \left[ (z_1+c z_2)^2+3 (\partial_x z_2)^2+(1-c^2 -f'(\Qc) )z_2^2\right].
\end{aligned}
\end{equation}
We analyze each term in \eqref{eq:JE_split}. 

Bound on  $J_{E,\eta}$. By \eqref{eq:z11}, one gets
\begin{equation}\label{eq:JEet}
|J_{E,\et}| \lesssim  B^{-1}\|\et_2\|_{L^2}^2.
\end{equation}
On the other hand, for $J_{E,z}$, having in mind that $\mathcal{E}_1
	=\frac12  \chi_A''\chi_A+(\chi_A^2)' \frac{\zeta_B'}{\zeta_B}$, joint with \eqref{eq:z'/z} and \eqref{eq:vB_psi_prop}, we obtain 
\[ 
\begin{aligned}
|J_{E,z}|
\lesssim&~{}	\bigg|-\frac32 \int  \mathcal{E}_1 \zB^2 z_2^2
		+\frac12 \int  ( 3(\chi_A^2)' (\zB^2)'+3(\chi_A^2)'' \zB^2+(\chi_A^2)'''\vB) z_2^2
		\bigg|
		\\
\lesssim&~{}	(AB)^{-1} \|\zB z_2\|_{L^2}^2+ (AB)^{-1}\|z_2\|_{L^2(A<|x|<2A)}^2
\lesssim	(AB)^{-1} \left(\|\zB z_2\|_{L^2}^2+ \|\zA^2 z_2\|_{L^2}^2 \right).
\end{aligned}
\] 
After applying \eqref{eq:z2_w2}, we conclude that $J_{E,z}$ is bouded by
\begin{equation}\label{eq:JEz}
\begin{aligned}
|J_{E,z}|
\lesssim&~{}	(AB)^{-1} \|w_2+cw_1\|_{L^2}^2.
\end{aligned}
\end{equation}
Finally, for $J_{E,zx}$, applying \eqref{eq:vB_psi_prop} and \eqref{eq:chiA_zA}, we have
\[
\begin{aligned}
|J_{E,zx}|
\lesssim&~{}		A^{-1}B \left( \|z_1+c z_2\|_{L^2(|x|\leq A)}^2+\|\partial_x z_2\|_{L^2(|x|\leq A)}^2+\|z_2\|_{L^2(|x|\leq A)}^2\right)
\\
\lesssim&~{}		A^{-1}B \left(\|\zA(z_1+c z_2)\|_{L^2}^2+\|\zA \partial_x z_2\|_{L^2}^2+\|\zA z_2\|_{L^2}^2 \right)
.
\end{aligned}
\]
Using triangular inequality, \eqref{eq:z1_w1} and \eqref{eq:z2_w2}, finally we obtain
\begin{equation}\label{eq:JEzx}
\begin{aligned}
|J_{E,zx}|
\lesssim&~{} A^{-1}\gar^{-2} B( \|w_1\|_{L^2}^2+\|cw_1+ w_2\|_{L^2}^2).
\end{aligned}
\end{equation}
Then, by \eqref{eq:JEet}, \eqref{eq:JEz} and \eqref{eq:JEzx}, we conclude that
\begin{equation}\label{eq:JE_bound} 
\begin{aligned}
|J_{E}|
\lesssim&~{} 
B^{-1}\|\et_2\|_{L^2}^2
+(AB)^{-1} \|w_2+cw_1\|_{L^2}^2+A^{-1}\gar^{-2} B( \|w_1\|_{L^2}^2+\|cw_1+ w_2\|_{L^2}^2)
\\
\lesssim&~{} 
B^{-1}\|\et_2\|_{L^2}^2
+\big(A^{-1}\gar^{-2} B+(AB)^{-1}\big) \left( \|w_1\|_{L^2}^2+\|cw_1+ w_2\|_{L^2}^2\right).
\end{aligned}
\end{equation}

\subsection{Nonlinear and modulations terms} Notice that we have
\[
\begin{aligned}
 J_3= \int   \psi_{A,B} z_2 M_1 
 \quad \mbox{ and } \quad
J_5= \int   \psi_{A,B} z_1 M_2.
  \end{aligned}
  \]
We first deal with $J_3$. Replacing $M_1$ (see \eqref{eq:M}), we obtain
\[
\begin{aligned}
 \int   \psi_{A,B} z_2 M_1 
	=&~{}c \int   \psi_{A,B} z_2  \Opg  N(u_1) - \gar \int   \psi_{A,B} z_2 \Opg \left( 2\partial_x[\partial_x(f'(\Qc)) \partial_x z_2]- \partial_x^2(f'(\Qc)) \partial_x z_2\right)
\\	
	=&~{}c \int   \psi_{A,B} z_2  \Opg  N(u_1)
- 2\gar \int   \psi_{A,B} z_2 \Opg \partial_x[\partial_x(f'(\Qc)) \partial_x z_2]
	\\&+ \gar \int   \psi_{A,B} z_2 \Opg [\partial_x^2(f'(\Qc)) \partial_x z_2] =: J_{3,1}+J_{3,2}+J_{3,3}.
\end{aligned}
\]

For $J_{3,1}$, using H\"older's inequality and \eqref{eq:chiA_zA}, one gets
\[
\begin{aligned}
|J_{3,1}|
\lesssim &|c| \|  \chi_A \vB z_2 \|_{L^2} \|\chi_A \Opg  N(u_1)\|_{L^2} \lesssim  |c| \|  \chi_A \vB z_2 \|_{L^2} \|\zA^2 \Opg  N(u_1)\|_{L^2}
.
\end{aligned}
\]
Now, recalling \eqref{eq:L_N}; and applying \eqref{eq:vB_psi_prop} joint to \eqref{eq:sech_Opg_p}, we obtain
\[
\begin{aligned}
|J_{3,1}|
\lesssim &~{} |c| \|  \chi_A \vB z_2 \|_{L^2} \|\zA^2 \Opg  N(u_1)\|_{L^2}
\lesssim  |c| B\gar^{-1/2} \|  \zA^2  z_2 \left\|_{L^2} \|  \zA^2 (f(\Qc+v_1)-f(\Qc)-f'(\Qc)v_1)\right\|_{L^2}.
\end{aligned}
\]
Lastly, using \eqref{eq:taylor} and \eqref{eq:z2_w2}, we get
\begin{equation}\label{eq:J31}  
\begin{aligned}
|J_{3,1}|
\lesssim &~{}|c| B\gar^{-1/2} \|  c w_1+w_2 \|_{L^2} \|  (f''(\Qc)+|v_1|^{p-2})w_1^2\|_{L^2}
\lesssim 
|c| B\gar^{-1/2} \|u_1\|_{L^\infty} \|  c w_1+w_2 \|_{L^2} \| w_1\|_{L^2}.
\end{aligned}
\end{equation}
Now, for $J_{3,2}$, applying H\"olders inequality, \eqref{eq:vB_psi_prop} and \eqref{eq:chiA_zA}, we get 
\[\begin{aligned}
|J_{3,2}|
=
	&~{}  2\gar \left|\int    \chi_A ^2 \vB z_2 \Opg \partial_x[\partial_x(f'(\Qc)) \partial_x z_2]\right|
	\\
	\lesssim
	&~{}  \gar \| \chi_A \vB  z_2 \|_{L^2} \left\| \chi_A  \Opg \partial_x[\partial_x(f'(\Qc)) \partial_x z_2] \right\|_{L^2}
	\lesssim  \gar B \| \zA^2  z_2 \|_{L^2} \left\|\zA^2  \Opg \partial_x[\partial_x(f'(\Qc)) \partial_x z_2] \right\|_{L^2}
	.
\end{aligned}
\]
For the second factor, we apply \eqref{eq:sech_Opg_p}
\[\begin{aligned}
|J_{3,2}|
	\lesssim
	&~{}  \gar^{1/2} B \| \zA^2  z_2 \|_{L^2} \|\zA^2 \partial_x(f'(\Qc)) \partial_x z_2\|_{L^2}.
\end{aligned}
\]
To handle the second factor on the last expression, recall that $\partial_x (f'(\Qc))=\Qc^{p-2}|\Qc'|\sim \sech\left(\gamma (p-1) y\right) $. Since 
\[
\chi_A \zB \partial_x z_2=  \partial_x \et_2- \chi_A'\zB z_2 - \frac{\zB'}{\zB} \et_2,
\] 
 it follows that 
\begin{equation}\label{eq:sech_z2x}
\begin{aligned}
 \sech^2 \left(\gamma (p-1) y\right)  (\partial_x z_2)^2
 =&~{} \sech^2 \left(\gamma (p-1) y\right) \chi_A^2 (\partial_x z_2)^2+(1-\chi_A^2)\sech^2 \left(\gamma (p-1) y\right)  (\partial_x z_2)^2
 \\
 \lesssim&~{} 
 \sech^2 \left(\gamma (p-1) y\right)\zB^{-2}\bigg(  (\partial_x \et_2)^2+ (\chi_A'\zB)^2 z_2^2 + \left(\frac{\zB'}{\zB}\right)^2 \et_2^2 \bigg)
 \\&+\sech \left(\gamma (p-1) A\right) \sech \left(\gamma (p-1) y\right) (\partial_x z_2)^2
 .
\end{aligned}
\end{equation}
Then, by \eqref{eq:z2_w2}, we obtain the following inequality
\begin{equation}\label{eq:Q_z2x}
\begin{aligned}
 \|\zA \partial_x(f'(\Qc)) \partial_x z_2\|_{L^2}
 \lesssim &~{}   \| \et_2\|_{L^2}+\| \partial_x \et_2\|_{L^2}+\sech^{1/2} \left(\gamma (p-1) A\right) \gar^{-1/2} \| cw_1+w_2\|_{L^2},
\end{aligned}
\end{equation}
concluding that for $J_{3,2}$, after using \eqref{eq:z2_w2}, it holds 
\begin{equation}\label{eq:J32}  
\begin{aligned}
|J_{3,2}|
	\lesssim&~{} 
	 \gar^{1/2} B \| c w_1+w_2 \|_{L^2} \left(  \| \et_2\|_{L^2}+\| \partial_x \et_2\|_{L^2} \right)+\sech^{ 1/2}  \left(\gamma (p-1) A\right)  B \| c w_1+w_2 \|_{L^2}^2 .
\end{aligned}
\end{equation}
Lastly, for $J_{3,3}$, applying H\"older's inequality, \eqref{eq:vB_psi_prop} and \eqref{eq:chiA_zA}, we get 
\[\begin{aligned}
|J_{3,3}|
=&~{}	 \gar \left| \int   \psi_{A,B} z_2 \Opg \left[\partial_x^2(f'(\Qc)) \partial_x z_2\right]\right|
\\
\lesssim &~{}	 \gar \|   \chi_A  z_2\|_{L^2} \| \chi_A \vB\Opg [\partial_x^2(f'(\Qc)) \partial_x z_2]\|_{L^2}
\lesssim 	 \gar  B \|   \zA^2  z_2\|_{L^2} \left\| \zA^2 \Opg \left[\partial_x^2(f'(\Qc)) \partial_x z_2\right]\right\|_{L^2}.
\end{aligned}
\]
We will treat the second factor in a similar way used on $J_{3,2}$. Firstly, applying  \eqref{eq:sech_Opg}, and since $\partial_x^2(f'(\Qc))\sim \Qc^{p-1}|\Qc'|$, thanks to \eqref{eq:sech_z2x} we obtain
\[
\begin{aligned}
\| \partial_x^2(f'(\Qc)) \partial_x z_2\|_{L^2} 
\lesssim&~{}   \| \et_2\|_{L^2}+\| \partial_x \et_2\|_{L^2}+\sech^{1/2}  \left(\gamma (p-1) A\right) \gar^{-1/2} \| cw_1+w_2\|_{L^2}.
 \end{aligned}
\]
Then, by \eqref{eq:z2_w2}, we get
\begin{equation}\label{eq:J33}  
\begin{aligned}
|J_{3,3}|
\lesssim &~{}
 \gar  B \| cw_1+w_2\|_{L^2}  \left( \| \et_2\|_{L^2}+\| \partial_x \et_2\|_{L^2}\right)+\sech^{1/2}  \left(\gamma (p-1) A\right) \gar^{1/2}  B \| cw_1+w_2\|_{L^2}^2.
\end{aligned}
\end{equation}
Collecting \eqref{eq:J31} \eqref{eq:J32} and \eqref{eq:J33}, and using Cauchy--Schwarz inequality,  one gets that $J_3$ is bounded by
\[
\begin{aligned}
|J_3| \lesssim&~{} 
|c| B\gar^{-1/2} \|u_1\|_{L^\infty} \|  c w_1+w_2 \|_{L^2} \| w_1\|_{L^2}
+	 \gar^{1/2} B \| c w_1+w_2 \|_{L^2} \left(  \| \et_2\|_{L^2}+\| \partial_x \et_2\|_{L^2} \right)
 \\&+\sech^{ 1/2}  \left(\gamma (p-1) A\right)  B \| c w_1+w_2 \|_{L^2}^2
 +\gar  B \| cw_1+w_2\|_{L^2}  \left( \| \et_2\|_{L^2}+\| \partial_x \et_2\|_{L^2}\right)
 \\&+\sech^{ 1/2}  \left(\gamma (p-1) A\right) \gar^{1/2}  B \| cw_1+w_2\|_{L^2}^2.
 \end{aligned}
\]
We conclude
\begin{equation}\label{eq:J3} 
\begin{aligned}
|J_3| \lesssim&~{} \gar^{1/2} B \left(   \| \et_2\|_{L^2}^2+\| \partial_x \et_2\|_{L^2}^2 \right)+ \gar^{1/2} B \left( \| c w_1+w_2 \|_{L^2}^2+  \| w_1\|_{L^2}^2\right).
\end{aligned}
\end{equation}
Finally for $J_5$. Recalling $M_2$ in \eqref{eq:M}, similar as in $J_{3,1}$, by \eqref{eq:taylor} and using \eqref{eq:zA_z1} we obtain
\begin{equation}\label{eq:J5}
\begin{aligned}
|J_5| 
\lesssim &~{}|c| \|  \chi_A \vB z_1 \|_{L^2} \|\chi_A \Opg  N(u_1)\|_{L^2}
\\
\lesssim &~{}|c| B \gar^{-1} \|u_1\|_{L^\infty}\left( \|w_1\|_{L^2}+\|\partial_x w_1\|+|c|\|cw_1+w_2\|_{L^2} \right)  \| w_1\|_{L^2}
\\
\lesssim &~{} B \gar^{-1} \|u_1\|_{L^\infty} \left( \|w_1\|_{L^2}^2+\|\partial_x w_1\|_{L^2}^2+\|cw_1+w_2\|_{L^2}^2 \right).
\end{aligned}
\end{equation}

\subsection*{Modulations terms} In this case we have
\[
\begin{aligned}
  J_4=\int   \psi_{A,B}  z_2 M_{1,c}
  \quad \mbox{ and }\quad
 J_6= \int   \psi_{A,B} z_1 M_{2,c}
  \end{aligned}
\]
We focus on $J_4$. Replacing $M_{1,c}$ (see \eqref{eq:Mc}), we obtain
\[
\begin{aligned}
  J_4=&~{}\int   \psi_{A,B}  z_2 \left( 
  						c' \Opg(c\Qc+ f''(\Qc)\Lambda \Qc v_1 + 2cv_1-v_2) +(\rho'-c) \left( \partial_x z_1 -\Opg [\partial_x(f'(\Qc)) v_1] \right)
						\right)
\\
	=&~{}c'  \int   \psi_{A,B}  z_2 \Opg (c\Qc)
	+ c'  \int  \psi_{A,B}  z_2 \Opg [f''(\Qc)\Lambda \Qc v_1+2cv_1-v_2]
	\\&+(\rho'-c) \int   \psi_{A,B}  z_2  \partial_x z_1 
-(\rho'-c) \int   \psi_{A,B}  z_2  \Opg [\partial_x(f'(\Qc)) v_1]
	=: \sum_{j=1}^4 J_{4,j}.
\end{aligned}
\]

Now, we focus on each of the above terms. First, letting $\phi(y)=\sech\left(\frac{\gamma}{2}y \right)$ and using H\"older's inequality, one gets
\begin{equation}\label{eq:inicio_J41}
\begin{aligned}
|J_{4,1}|	=&~{} \left|c'  \int   \psi_{A,B}  z_2 \Opg(c\Qc)\right|
\\
=&~{} \left|c'  \int   \chi_A^2 \vB \phi  z_2  \phi^{-1}\Opg(c\Qc)\right|
\lesssim |c'| \|  \chi_A \vB \phi  z_2\|_{L^2} \|\chi_A  \phi^{-1}\Opg (c \Qc)\|_{L^2}.
\end{aligned}
\end{equation}
Using a similar inequality to \eqref{eq:vA_Qc} on the first factor, and \eqref{eq:cosh_kink} on the second factor, we obtain
\begin{equation}\label{eq:J41}
\begin{aligned}
|J_{4,1}|	\lesssim &~{}  |c'| \|  \et_2\|_{L^2} .
\end{aligned}
\end{equation}
We handle $J_{4,2}$ applying H\"older's inequality and  by \eqref{eq:chiA_zA}, one gets
\[
\begin{aligned}
|J_{4,2}| =&~{}
	|c'| \left| \int  \psi_{A,B}  z_2 \Opg (f''(\Qc)\Lambda \Qc v_1 + 2cv_1 -v_2)\right|
	\\
	\lesssim&~{} 	|c'| \|   \chi_A \vB  z_2\|_{L^2} \left\| \zA^2 \Opg (f''(\Qc)\Lambda \Qc v_1 +2cv_1 -v_2)\right\|_{L^2}
	.
\end{aligned}
\]
Lastly, applying  \eqref{eq:vB_psi_prop}, \eqref{eq:z2_w2}  and \eqref{eq:sech_Opg}, we conclude that $J_{4,2}$ is bounded by
\begin{equation}\label{eq:J42} 
\begin{aligned}
|J_{4,2}| 
	\lesssim&~{} 	|c'| B \|   \zA^2   z_2\|_{L^2} \left\|\zA^2 \Opg [f''(\Qc)\Lambda \Qc v_1 + 2cv_1-v_2]\right\|_{L^2}
	\\
	\lesssim&~{} 	|c'| B \|   cw_1+w_2\|_{L^2} \left\| f''(\Qc)\Lambda \Qc w_1 + 2cw_1-w_2\right\|_{L^2}
		\\
	\lesssim&~{} 	|c'| B \|   cw_1+w_2\|_{L^2} \big( \frac{c}{\gamma^2}\left\| w_1\right\|_{L^2}+\|cw_1+ w_2 \|_{L^2}\big)
	\\
	\lesssim&~{} 	|c'| B^{3/2} \|   cw_1+w_2\|_{L^2} \bigg( \left\| w_1\right\|_{L^2}+\|cw_1+ w_2 \|_{L^2}\bigg)
	.
\end{aligned}
\end{equation}
since $B^{1/4}\gamma>2$.
For $J_{4,4}$, we are going to follow a similar strategy to $J_{4,1}$. Let us consider
\[
\begin{aligned}
|J_{4,4}|
=&~{} |\rho'-c| \left| \int   \psi_{A,B}  z_2  \Opg [\partial_x(f'(\Qc)) v_1]\right|
\\
=&~{} |\rho'-c| \left| \int   \chi_A \vB \phi  z_2 \phi^{-1} \Opg [\partial_x(f'(\Qc)) v_1]\right| \\
\lesssim 	&~{}|\rho'-c| \|   \chi_A \vB \phi z_2 \|_{L^2} \| \zA^2 \phi^{-1} \Opg [\partial_x(f'(\Qc)) v_1]\|_{L^2}.
\end{aligned}
\]
Applying \eqref{eq:vA_Qc} and \eqref{eq:cosh_kink}, one concludes
\begin{equation}\label{eq:J44}  
\begin{aligned}
|J_{4,4}|
\lesssim& ~{} 	|\rho'-c| \|   \et_2 \|_{L^2} \|  w_1\|_{L^2}.
\end{aligned}
\end{equation}
We will treat $J_{4,3}$ jointly to some terms that belong to $J_6$.

\medskip

Now, for $J_6$, we have the following decomposition
\[
\begin{aligned}
 J_6=&~{} \int   \psi_{A,B} z_1 \big( 
 						- c' \Opg (\Qc+v_1) +(\rho'-c)  \partial_x z_2
						\big)
\\
	=&~{} 
 		- c'  \int   \psi_{A,B} z_1\Opg (\Qc+v_1) 
		+(\rho'-c)  \int   \psi_{A,B} z_1 \partial_x z_2
	=: J_{6,1}+J_{6,2}
	.
  \end{aligned}
\]
Let us focus on the first terms on the above expression. 
Using the same approach applied  in \eqref{eq:inicio_J41} for $J_{4,1}$; and using \eqref{eq:chiA_zA} joint to \eqref{eq:vB_psi_prop}, one gets
\[
\begin{aligned}
|J_{6,1}|=&~{} |c'| \left| \int   \psi_{A,B} z_1\Opg (\Qc+v_1) \right|
\\
\lesssim&~{} |c'| \left(\|  \et_1\|_{L^2} +\|  \chi_A \vB  z_1\|_{L^2} \|\chi_A \Opg v_1 \|_{L^2}\right)
\lesssim |c'| \left(\|  \et_1\|_{L^2} +B\|  \zA^2  z_1\|_{L^2} \|\zA^2 \Opg v_1 \|_{L^2}\right).
\end{aligned}
\]
Applying \eqref{eq:sech_Opg} and \eqref{eq:zA_z1}, we conclude
\begin{equation*}\label{eq:J61}
\begin{aligned}
|J_{6,1}|
\lesssim&~{} |c'| \left(\|  \et_1\|_{L^2} +B\|  \zA^2  z_1\|_{L^2} \|\zA^2 \Opg v_1 \|_{L^2}\right)
\\
\lesssim&~{} |c'| \left( \|  \et_1\|_{L^2} +B\gar^{-1/2}\|w_1 \|_{L^2} \big( \|w_1\|_{L^2}+\|\partial_x w_1\|+|c|\|cw_1+w_2\|_{L^2} \big)\right)
\\
\lesssim&~{} |c'| \left( \|  \et_1+c\eta_2\|_{L^2} +\|\eta_2\|_{L^2}+B\gar^{-1/2}\|w_1 \|_{L^2} \big( \|w_1\|_{L^2}+\|\partial_x w_1\|+|c|\|cw_1+w_2\|_{L^2} \big)\right).
\end{aligned}
\end{equation*}
Lastly, consider $J_{4,3}$ joint with $J_{6,2}$, integratig by parts and replacing \eqref{eq:psi_deriv}, we get
\[
\begin{aligned}
J_{4,3}+J_{6,2}
=&~{} -(\rho'-c)  \int  \left( \chi_A^2 \zB^2+(\chi_A^2)'\varphi_B \right) z_1 z_2
.
\end{aligned}
\] 
Then, by \eqref{eq:zA_z1} and \eqref{eq:z2_w2}, we obtain
\begin{equation}\label{eq:J43+J62}
\begin{aligned}
|J_{4,3}+J_{6,2}|
\lesssim~{}& |\rho'-c|  \|\et_1\|_{L^2} \|\et_2\|_{L^2}
+|\rho'-c| B\gar^{-1/2}A^{-1}\big(\|w_1\|_{L^2}+\|\partial_x w_1\|_{L^2}+\|cw_1+w_2\|_{L^2}\big) \|cw_1+w_2\|_{L^2}
\\
\lesssim~{}& |\rho'-c| \left( \|\et_1+c\eta_2\|_{L^2} \|\et_2\|_{L^2}+ \|\eta_2\|^2_{L^2}\right)
\\
&~{} +|\rho'-c| B\gar^{-1/2}A^{-1}\big(\|w_1\|_{L^2}+\|\partial_x w_1\|_{L^2}+\|cw_1+w_2\|_{L^2}\big) \|cw_1+w_2\|_{L^2}
.
\end{aligned}
\end{equation}
Finally, collecting  \eqref{eq:J41}, \eqref{eq:J42}, \eqref{eq:J44}, \eqref{eq:J61} ,\eqref{eq:J43+J62}, and recalling \eqref{eq:B_gar},
we obtain
\[
\begin{aligned}
|J_4+J_6|
\lesssim &~{} 
|c'| \|  \et_2\|_{L^2}
+|c'| B^{3/2} \|   cw_1+w_2\|_{L^2} \big( \left\| w_1\right\|_{L^2}+\|cw_1+ w_2 \|_{L^2}\big)
+|\rho'-c| \|   \et_2 \|_{L^2} \|  w_1\|_{L^2}
\\&
+|c'| \left( \|  \et_1+c\eta_2\|_{L^2} +\|\eta_2\|_{L^2}+B\gar^{-1/2}\|w_1 \|_{L^2} \big( \|w_1\|_{L^2}+\|\partial_x w_1\|+|c|\|cw_1+w_2\|_{L^2} \big)\right)
\\&
+ |\rho'-c| \left( \|\et_1+c\eta_2\|_{L^2} \|\et_2\|_{L^2}+ \|\eta_2\|^2_{L^2}\right)
\\
&+|\rho'-c| B\gar^{-1/2}A^{-1}\big(\|w_1\|_{L^2}+\|\partial_x w_1\|_{L^2}+\|cw_1+w_2\|_{L^2}\big) \|cw_1+w_2\|_{L^2}
\\
\lesssim&~{}  
|c'| \|  \et_2\|_{L^2}
+|c'| B^{3/2} \|   cw_1+w_2\|_{L^2} \big( \left\| w_1\right\|_{L^2}+\|cw_1+ w_2 \|_{L^2}\big)
+|\rho'-c| \|   \et_2 \|_{L^2} \|  w_1\|_{L^2}
\\&
+|c'| \left( \|  \et_1+c\eta_2\|_{L^2} +\|\eta_2\|_{L^2}+B^3\|w_1 \|_{L^2} \big( \|w_1\|_{L^2}+\|\partial_x w_1\|+|c|\|cw_1+w_2\|_{L^2} \big)\right)
\\&
+ |\rho'-c| \left( \|\et_1+c\eta_2\|_{L^2} \|\et_2\|_{L^2}+ \|\eta_2\|^2_{L^2}\right)\\
&
+|\rho'-c| B^{-7}\big(\|w_1\|_{L^2}+\|\partial_x w_1\|_{L^2}+\|cw_1+w_2\|_{L^2}\big) \|cw_1+w_2\|_{L^2}.
\end{aligned}
\]
Using Cauchy-Schwarz inequality jointly to $|c'|\leq \delta^2$ and $|\rho'-c|\leq \delta$; and rearranging terms, one gets
\begin{equation}\label{eq:J4+J6} 
\begin{aligned}
|J_4+J_6|
\lesssim&~{}  
|c'| \|  \et_2\|_{L^2}
+|c'|  B^{3/2}  \big( \left\| w_1\right\|_{L^2}^2+\|cw_1+ w_2 \|_{L^2}^2\big)
+|\rho'-c| \big(\|   \et_2 \|_{L^2}^2+ \|  w_1\|_{L^2}^2\big)
\\&
+|c'| \left( \|  \et_1+c\eta_2\|_{L^2} +\|\eta_2\|_{L^2}+B^3 \big( \|w_1\|_{L^2}^2+\|\partial_x w_1\|_{L^2}^2+\|cw_1+w_2\|_{L^2}^2 \big)\right)
\\&
+ |\rho'-c| \left( \|\et_1+c\eta_2\|_{L^2}^2 + \|\eta_2\|^2_{L^2}\right)
+|\rho'-c| B^{-7}\big(\|w_1\|_{L^2}^2+\|\partial_x w_1\|_{L^2}^2+\|cw_1+w_2\|_{L^2}^2\big)
\\
\lesssim&~{}   
|c'| \big(\|  \et_2\|_{L^2}+ \|  \et_1+c\eta_2\|_{L^2} \big)
+|c'|  B^{3/2}  \big( \left\| w_1\right\|_{L^2}^2+\|cw_1+ w_2 \|_{L^2}^2\big)
+|\rho'-c| \big(\|   \et_2 \|_{L^2}^2+ \|  w_1\|_{L^2}^2\big)
\\&
+|c'| B^3 \big( \|w_1\|_{L^2}^2+\|\partial_x w_1\|_{L^2}^2+\|cw_1+w_2\|_{L^2}^2 \big)
\\&
+ |\rho'-c| \left( \|\et_1+c\eta_2\|_{L^2}^2 + \|\eta_2\|^2_{L^2}\right)
+|\rho'-c| B^{-7}\big(\|w_1\|_{L^2}^2+\|\partial_x w_1\|_{L^2}^2+\|cw_1+w_2\|_{L^2}^2\big)
\\
\lesssim&~{} 
|c'| \big(\|  \et_2\|_{L^2}+ \|  \et_1+c\eta_2\|_{L^2} \big)
\\
&~{} +A^{-1} \big( \|w_1\|_{L^2}^2+\|\partial_x w_1\|_{L^2}^2+\|cw_1+w_2\|_{L^2}^2 \big)
+ A^{-1} \left( \|\et_1+c\eta_2\|_{L^2}^2 + \|\eta_2\|^2_{L^2}\right)
.
\end{aligned}
\end{equation}

\subsection{End proof Proposition \ref{prop:J}}
Collecting \eqref{eq:JE_bound}, \eqref{eq:J3} \eqref{eq:J5} and \eqref{eq:J4+J6}, and replacing \eqref{eq:B_gar}, we obtain
\[
\begin{aligned}
\frac{d}{dt} \J 
 \leq&
-\frac12  \int  \bigg[ (\et_1+c \et_2)^2+3 (\partial_x \et_2)^2+\bigg(1-c^2 -f'(\Qc)-  \frac{\vB}{\zB^2} \partial_x(f'(\Qc)) \bigg)\et_2^2\bigg]
\\
&+ C_2 B^{-1} \left( \|\et_1+c\eta_2\|_{L^2}^2+  \| \et_2\|_{L^2}^2+\| \partial_x \et_2\|_{L^2}^2 \right)
\\
&+C_2 B^{-1} \big( \|w_1\|_{L^2}^2+\|\partial_x w_1\|_{L^2}^2+\|cw_1+w_2\|_{L^2}^2 \big)
 \\
 &+ C_2 B^{5} \|u_1\|_{L^\infty} \left( \|w_1\|_{L^2}^2+\|\partial_x w_1\|_{L^2}^2+\|cw_1+w_2\|_{L^2}^2 \right)
\\&
+C_2 |c'| \big(\|  \et_2\|_{L^2}+ \|  \et_1+c\eta_2\|_{L^2} \big)
.
\end{aligned}
\]
Recalling that $\|u_1\|_{L^\infty}\leq \delta$ and \eqref{eq:A}; moreover by Lemma \ref{lem:bad_term}, one gets
\[
\begin{aligned}
\frac{d}{dt} \J \leq&
-\frac12  \int  \left( (\et_1+c \et_2)^2+3 (\partial_x \et_2)^2+\big(1-c^2\big)\et_2^2 -f'(\Qc) \et_2^2\right)
\\
&+ C_2 B^{-1} \left( \|\et_1+c\eta_2\|_{L^2}^2+  \| \et_2\|_{L^2}^2+\| \partial_x \et_2\|_{L^2}^2 \right)
+C_2 B^{-1} \big( \|w_1\|_{L^2}^2+\|\partial_x w_1\|_{L^2}^2+\|cw_1+w_2\|_{L^2}^2 \big)
\\&
+C_2 |c'| \big(\|  \et_2\|_{L^2}+ \|  \et_1+c\eta_2\|_{L^2} \big)
\end{aligned}
\] 
for some fix constant $C_2>0$. This concludes the proof of \eqref{dt_J}.

\section{Improved estimates}\label{Sec:5}

Estimate \eqref{dt_J} has an additional derivative in the variable $\eta_2$, which needs to be recovered using a new virial estimate. Recall that $\psi_{A,B}=\chi_A^2\varphi_B$. Set now
\begin{equation}\label{eq:virial_N}
\N=\int   \psi_{A,B}(y) \partial_x z_1(t,y) \partial_x z_2(t,y) dy ,
\end{equation}

\begin{prop}\label{prop:N}
Under the conditions \eqref{eq:small}, \eqref{eq:scales},  \eqref{eq:A} and \eqref{eq:B_gar}. 
There exists $C_3>0$ and $\delta_3>0$ such that for any $0<\delta\leq  \delta_3$, the following holds
\begin{equation}\label{Est_N}
\begin{aligned}
	\dfrac{d}{dt} \N
\leq~{}&
	-\frac14 \int \left( 
				(\partial_x \et_1+c\partial_x \et_2)^2 
				+3(\partial_x^2 \et_2)^2
			 \right)
+C_3 \big( \| \et_2\|_{L^2}^2+\| \partial_x \et_2\|_{L^2}^2\big)
\\&
+	C_3 B^{-1} \| \et_1+c\eta_2\|_{L^2}^2
	+ C_3B^{-1}\big( 
					\|  w_1\|_{L^2}^2 
					+\|  \partial_x w_1\|_{L^2}^2
					+ \|cw_1+w_2 \|_{L^2}^2
		\big)
	\\&
	+C_3| c'| B^{11/2} (\|w_1\|_{L^2}+\|\partial_x w_1\|_{L^2}+\|cw_1+w_2\|_{L^2}).
\end{aligned}
\end{equation}
\end{prop}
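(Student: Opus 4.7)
My plan is to carry out the same procedure as in the proof of Proposition \ref{prop:J}, one derivative level up, since $\N$ is essentially $\J$ with $(z_1,z_2)$ replaced by $(\partial_x z_1, \partial_x z_2)$. First I differentiate $\N$ in time and substitute $\partial_t(\partial_x z_i)$ obtained by applying $\partial_x$ to system \eqref{eq:z}. The single non-trivial commutation in this step is
\[
\partial_x(\LL \partial_x z_2) = \LL(\partial_x^2 z_2) - \partial_x(f'(Q_c))\, \partial_x z_2,
\]
which merely contributes an additional $-\int \psi_{A,B}\partial_x(f'(Q_c))(\partial_x z_2)^2$ to be absorbed later.

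Next I extract the main linear part of $\frac{d}{dt}\N$. Setting $w := \partial_x(z_1+cz_2)$, the identity $\partial_x z_1 = w - c\,\partial_x z_2$ makes the two transport contributions $c\,\psi_{A,B}\,\partial_x^2(z_1+cz_2)\,\partial_x z_2$ and $\psi_{A,B}\,\partial_x z_1\,\partial_x^2(z_1+cz_2)$ collapse into $\int \psi_{A,B}\,w\,\partial_x w = -\tfrac12\int \psi'_{A,B}\, w^2$. The term $\int \psi_{A,B}\,\LL\partial_x^2 z_2\cdot \partial_x z_2$, after two integrations by parts, yields $-\tfrac32\int \psi'_{A,B}(\partial_x^2 z_2)^2 + \tfrac12\int \psi'''_{A,B}(\partial_x z_2)^2$ plus $\psi'_{A,B}$-weighted $(\partial_x z_2)^2$ terms carrying factors $(1-c^2)$, $f'(Q_c)$, and $f''(Q_c)Q_c'$. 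The commutator remainder from the previous paragraph is of the same lower-order type.

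Passing to the $\eta$-variables $\eta_i = \chi_A \zeta_B z_i$ through the analog of \eqref{eq:P_CzB_zix} applied to $(\partial_x z_i)^2$, the two main negative terms become $-\tfrac12\int(\partial_x\eta_1+c\partial_x\eta_2)^2 - \tfrac32\int(\partial_x^2\eta_2)^2$, modulo commutator errors that \eqref{eq:z11}--\eqref{eq:C*z'/z} and Lemma \ref{lem:z_w} bound by $CB^{-1}(\|\partial_x\eta_i\|_{L^2}^2+\|\eta_i\|_{L^2}^2)$ plus terms controllable by $\|w_1\|_{L^2}^2+\|\partial_x w_1\|_{L^2}^2+\|cw_1+w_2\|_{L^2}^2$. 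I then split $\tfrac12 = \tfrac14+\tfrac14$ so that the surplus $\tfrac14$ absorbs the lower-order $(\partial_x z_2)^2$ contributions listed above via Cauchy--Schwarz (using $|\psi'''_{A,B}|\lesssim 1$ and the localization of $f'(Q_c)$ and $\partial_x f'(Q_c)$), leaving only a harmless $C(\|\eta_2\|_{L^2}^2+\|\partial_x\eta_2\|_{L^2}^2)$ error and producing the coefficient $-\tfrac14$ in \eqref{Est_N}.

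Finally, the nonlinear and modulation pieces $\int \psi_{A,B}\partial_x z_2 \cdot \partial_x M_1 + \int \psi_{A,B}\partial_x z_1 \cdot \partial_x M_2$ and the analogous $M_{j,c}$ contributions are estimated in the spirit of $J_3$--$J_6$. The extra derivative forces either an integration by parts onto $\psi_{A,B}$ (gaining $A^{-1}$ or $B^{-1}$) or a payment of $\gar^{-1/2} = B^2$ through $\Opg \partial_x$ by \eqref{eq:sech_Opg_p}--\eqref{eq:sech_Opg_1pp}. Tracking the worst-case accumulation of the weight $B$ from $\psi_{A,B}$, the smoothing factor $B^2$, and the $\|\zeta_A \partial_x z_i\|_{L^2}$-type norms bounded through Lemma \ref{lem:z_w}, the modulation contribution involving $c'\, \Opg(Q_c+v_1)$ paired with $\partial_x z_1$ produces the $B^{11/2}|c'|$ coefficient visible in \eqref{Est_N}. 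I expect the main obstacle to be precisely this term, since the mixed-variable cancellation analogous to $J_{4,3}+J_{6,2}$ does not act on the purely $Q_c$-dependent piece; closing the final estimate in the bootstrap of Section \ref{Sec:7} will require the quadratic smallness $|c'|\lesssim \delta^2$ provided by \eqref{eq:c'_new}.
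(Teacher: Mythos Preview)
Your proposal is correct and follows essentially the same route as the paper: derive the virial identity \eqref{eq:dt_N} for $\N$, split into main part $N_1,N_2$ plus lower-order terms $N_3,N_4$ and nonlinear/modulation contributions $N_5$--$N_8$, pass to the $\eta$-variables via the change-of-variable identities, and absorb the small errors (in particular the $\gar\|\partial_x^2\eta_2\|_{L^2}^2$ coming from $N_5$ and the $|\rho'-c|$-weighted terms from $N_{6,2}+N_{8,2}$) into the main negative term to pass from $-\tfrac12$ to $-\tfrac14$. One minor correction: the $B^{11/2}|c'|$ coefficient in \eqref{Est_N} actually originates from $N_{6,1}$ (the pairing of $\partial_x z_2$ with $c'\Opg\partial_x(c\Qc+\cdots)$, where an extra $\gamma^{-2}\lesssim B^{1/2}$ appears on top of $B\gar^{-1}=B^5$), not from the $\partial_x z_1$ pairing $N_{8,1}$ you cite, which only contributes $B\gar^{-1}=B^5$.
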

As in previous virial estimates, estimate \eqref{Est_N} will be consequence of a related identity.
\begin{lem}
Let $(z_1,z_2)\in H^1\times H^2$ a solution to \eqref{eq:z}. Then
\begin{equation}\label{eq:dt_N}
\begin{aligned}
\frac{d}{dt}\N 	=&-\frac12 \int   \psi_{A,B}' \left[(\partial_x z_1+c\partial_x z_2)^2+ 3(\partial_x^2 z_2)^2
+(1 -c^2)(\partial_x  z_2)^2 -f'(\Qc)(\partial_x  z_2)^2\right] 
	\\&
	-\frac12 \int   \psi_{A,B} \partial_x(f'(\Qc)) (\partial_x z_2)^2
	+\frac12 \int   \psi_{A,B}''' (\partial_x  z_2)^2  
	\\
		& +\int   \psi_{A,B} \partial_x z_2\partial_x (M_1+M_{1,c})  
	+ \int   \psi_{A,B} \partial_x z_1 \partial_x( M_2+M_{2,c}),
\end{aligned}
\end{equation}
where $M_1$, $M_{1,c}$,  $M_2$ and  $M_{2,c}$ are given in \eqref{eq:M} and \eqref{eq:Mc}, respectively.
\end{lem}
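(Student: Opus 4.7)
The plan is to differentiate $\N$ in time, using that $\psi_{A,B}=\psi_{A,B}(y)$ does not depend on $t$ (the modulation has already been absorbed into $M_{1,c}, M_{2,c}$ in \eqref{eq:z}), and commute $\partial_t$ with $\partial_x$ so that
\[
\frac{d}{dt}\N \;=\; \int \psi_{A,B}\, \partial_x(\partial_t z_1)\, \partial_x z_2 \;+\; \int \psi_{A,B}\, \partial_x z_1\, \partial_x(\partial_t z_2).
\]
Substituting \eqref{eq:z} then splits the right-hand side into a purely linear piece coming from $c\partial_x(z_1+cz_2)$ and $\LL\partial_x z_2$, plus two contributions carrying $M_i$ and $M_{i,c}$. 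The regularity $(z_1,z_2)\in H^1\times H^2$ together with the compactly supported cut-off $\chi_A$ inside $\psi_{A,B}$ guarantees the legitimacy of every integration by parts (no boundary terms survive).

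The heart of the computation is the linear part. First, the ``transport-type'' cross pieces combine nicely: the $c\partial_x^2(z_1+cz_2)$ coming from $\partial_t z_1$ paired with $\partial_x z_2$, and the $\partial_x^2(z_1+cz_2)$ coming from $\partial_t z_2$ paired with $\partial_x z_1$, add up to
\[
\int \psi_{A,B}\, (\partial_x z_1 + c\partial_x z_2)\, \partial_x^2(z_1+cz_2) \;=\; -\tfrac12 \int \psi_{A,B}'\, (\partial_x z_1 + c\partial_x z_2)^2,
\]
which is exactly the first term in the bracket of \eqref{eq:dt_N}. For the $\LL\partial_x z_2$ contribution I would expand $\LL\partial_x z_2 = -\partial_x^3 z_2 + (1-c^2)\partial_x z_2 - f'(\Qc)\partial_x z_2$ and treat each term in $\int \psi_{A,B}\,\partial_x z_2\,\partial_x(\LL\partial_x z_2)$ separately. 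The dispersive piece $-\int \psi_{A,B}\,\partial_x z_2\,\partial_x^4 z_2$, after two integrations by parts carefully distributing derivatives between $\psi_{A,B}$ and the $\partial_x^2 z_2$ factors, yields $-\tfrac32\int \psi_{A,B}'(\partial_x^2 z_2)^2 + \tfrac12 \int \psi_{A,B}'''(\partial_x z_2)^2$. The $(1-c^2)$ piece contributes $-\tfrac12\int \psi_{A,B}'(1-c^2)(\partial_x z_2)^2$. Finally, the $-f'(\Qc)\partial_x z_2$ piece, after integrating by parts once to move a derivative off of $f'(\Qc)\partial_x z_2$, splits into $\tfrac12\int \psi_{A,B}'f'(\Qc)(\partial_x z_2)^2 -\tfrac12\int \psi_{A,B}\,\partial_x(f'(\Qc))(\partial_x z_2)^2$. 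Summing these three contributions reproduces exactly the first two lines of \eqref{eq:dt_N}.

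The nonlinear and modulation pieces remain: from $\partial_x(M_1+M_{1,c})$ paired with $\psi_{A,B}\partial_x z_2$, and from $\partial_x(M_2+M_{2,c})$ paired with $\psi_{A,B}\partial_x z_1$, one obtains directly the last two integrals in \eqref{eq:dt_N}, without any manipulation; their quantitative control is postponed to the proof of Proposition \ref{prop:N}. The only real obstacle in this derivation is bookkeeping: because four $\partial_x$'s on $z_2$ ultimately appear but one wants only $\partial_x^2 z_2$ in the final bilinear form, the integrations by parts must be carried out in the right order so that the three derivatives that may land on $\psi_{A,B}$ produce the clean $\tfrac12\psi_{A,B}'''$ remainder while the coefficient of $(\partial_x^2 z_2)^2$ assembles consistently to $-\tfrac32\psi_{A,B}'$. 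Beyond this, the identity \eqref{eq:dt_N} is just a direct manipulation.
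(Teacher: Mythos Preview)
Your proposal is correct and follows essentially the same approach as the paper. The only cosmetic difference is that the paper packages the computation of $\int \psi_{A,B}\,\partial_x z_2\,\partial_x\LL(\partial_x z_2)$ by invoking a ready-made identity $\langle\eta\,\partial_x\LL f,f\rangle = -\tfrac12\int\eta'[3(\partial_x f)^2+V_0 f^2]+\tfrac12\int\eta V_0' f^2+\tfrac12\int\eta''' f^2$ from \cite{MaMu} with $f=\partial_x z_2$, whereas you expand $\LL$ and integrate by parts term-by-term; both routes are the same computation.
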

\begin{proof}
See Appendix \ref{virial2}.
\end{proof}
We shall decompose \eqref{eq:dt_N} as follows:
\begin{equation}\label{eq:dtN_N}
\begin{aligned}
	\dfrac{d}{dt} \N
	=&-\frac12 \int   \psi_{A,B}' \left[ (\partial_x z_1+c\partial_x z_2)^2+3 (\partial_x^2 z_2)^2 \right]
	-\frac12 \int   \psi_{A,B}'  (1 -c^2-f'(\Qc))(\partial_x  z_2)^2
	\\&
	-\frac12 \int   \psi_{A,B} \partial_x(f'(\Qc)) (\partial_x z_2)^2
	+\frac12 \int   \psi_{A,B}''' (\partial_x  z_2)^2  
+\int   \psi_{A,B} \partial_x z_2\partial_x M_1
	 +\int   \psi_{A,B} \partial_x z_2\partial_x M_{1,c}
	\\&
	+ \int   \psi_{A,B} \partial_x z_1 \partial_x M_2
	+ \int   \psi_{A,B} \partial_x z_1 \partial_x M_{2,c}  =:  \sum_{j=1}^8 N_{j}.
\end{aligned}
\end{equation}
In the next subsection, we shall estimate each term in \eqref{eq:dtN_N}. 

\subsection{Second set of identities related to the change of variable} Recall the following technical estimates on the variables $\zB$ and other related error terms. These estimates had been proved and used in \cite{Mau,MaMu},  therefore we only enunciate the main results.

\begin{claim}
	Let  $R$ be a $W^{2,\infty}(\R)$ function, $R=R(y)$, $z_i$ be as in \eqref{eq:z_cv}, and $\et_i$ be as in \eqref{eq:def_J}. Then
	\begin{equation*}
	\begin{aligned}
	\int R \chi_A^2\zB^2 (\partial_x^2 z_i)^2
	=&
	\int R(\partial_x^2 \et_i)^2+\int \tilde{R} \et_i^2
	+\int P_R (\partial_x \et_i)^2 +
	\int    \left( P_R' \frac{\zB'}{\zB}+P_R
	\frac{\zB''}{\zB}\right) \et_i^2 +\int \mathcal{E}_2 (R) \zB^2z_i^2\\
	&+\int  \mathcal{E}_1(P_R)\zB^2 z_i^2
	+\int \mathcal{E}_3 (R) \zB^2(\partial_x z_i)^2,
	\end{aligned}
	\end{equation*}
	where
	\begin{equation}\label{eq:claimtildeR}	
	\begin{aligned}
	\tilde{R}=\tilde{R}_R=&	  
-2R\left( \frac{\zB^{(4)}}{\zB}+\frac{\zB'''}{\zB}\frac{\zB'}{\zB}\right) -2	R'\frac{\zB'''}{\zB}
- R''\frac{\zB''}{\zB},
	\end{aligned}
	\end{equation}
	\begin{equation}\label{eq:claimPR}
	P_R= R \bigg(4  \frac{\zB''}{ \zB}  -2 \left(\frac{\zB'}{\zB} \right)^2\bigg)+2R' \frac{\zB'}{\zB},
	\end{equation}
	$\mathcal{E}_1$ is defined in \eqref{eq:claimE1}, 
\begin{equation}\label{eq:claimE2}	
\begin{aligned}
\mathcal{E}_2(R) =& -R \left(\chi_A^{(4)} \chi_A  +4\chi_A''' \chi_A \frac{\zB'}{\zB^2} +6 \chi_A''\chi_A\frac{\zB''}{\zB} +2(\chi_A^2)' \frac{\zB'''}{\zB}  \right)\\
& -R' \left(2\chi_A''' \chi_A +6\chi_A'' \chi_A  \frac{\zB'}{\zB} +6\chi_A' \chi_A \frac{\zB''}{\zB}\right) - R'' \left(\chi_A'' \chi_A +\frac12 (\chi_A^2)' \frac{\zB'}{\zB}\right), 
\end{aligned}
\end{equation}
and
\begin{equation}\label{eq:claimE3}
\begin{aligned}
\mathcal{E}_3(R)
= R\bigg( 
4\chi_A'' \chi_A -2(\chi_A' )^2+ 2\frac{\zB'}{\zB}(\chi_A^2)'  
\bigg)
+R'(\chi_A^2)' .
	\end{aligned}
	\end{equation}
Finally, $P_R$, $\mathcal{E}_2$ and $\mathcal{E}_3$ satisfy the following bounds
\begin{equation}\label{eq:PR_E23}
\begin{aligned}
	|P_R| \lesssim & ~{}  		B^{-1}\|R' \|_{L^{\infty}}  + B^{-1}\| R\|_{L^\infty},   \\
	|P_R'| \lesssim & ~{}  	B^{-1}\|R'' \|_{L^{\infty}}	+B^{-1}\|R' \|_{L^{\infty}}  + B^{-1}\| R\|_{L^\infty},   \\
	|\mathcal{E}_2| \lesssim & ~{} (AB)^{-1}\|R'' \|_{L^{\infty}(A\leq |y|\leq 2A)}+(AB^2)^{-1}\|R' \|_{L^{\infty}(A\leq |y|\leq 2A)} +(AB^3)^{-1}\|R \|_{L^{\infty}(A\leq |y|\leq 2A)},\\
	|\mathcal{E}_3| \lesssim & ~{}  A^{-1}\|R' \|_{L^{\infty}(A\leq |y|\leq 2A)}+(AB)^{-1}\|R \|_{L^{\infty}(A\leq |y|\leq 2A)}.
\end{aligned}
\end{equation}
\end{claim}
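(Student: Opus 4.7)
The approach is direct computation, mirroring the derivation of \eqref{eq:P_CzB_zix} but applied to the second--derivative weighted norm. Set $\phi := \chi_A \zeta_B$, so that $\eta_i = \phi z_i$. Two applications of the product rule give
\[
\partial_x \eta_i = \phi' z_i + \phi \partial_x z_i, \qquad \phi \partial_x^2 z_i = \partial_x^2 \eta_i - 2\phi' \partial_x z_i - \phi'' z_i.
\]
Squaring the second identity, multiplying by $R$, and integrating furnishes the starting point
\[
\int R\chi_A^2\zeta_B^2 (\partial_x^2 z_i)^2 = \int R(\partial_x^2 \eta_i)^2 + 4\int R(\phi')^2(\partial_x z_i)^2 + \int R(\phi'')^2 z_i^2 + (\text{three cross terms involving } \partial_x^2\eta_i \text{ and } \phi'\phi''z_i \partial_x z_i).
\]

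The next step is to lower the derivatives of $\eta_i$ in the cross terms. For the contribution $-4\int R\phi'(\partial_x z_i)\partial_x^2\eta_i$, one integration by parts moves the last derivative off, producing $\int \partial_x[R\phi'\partial_x z_i]\partial_x\eta_i$; substituting once more $\phi\partial_x^2 z_i = \partial_x^2\eta_i - 2\phi'\partial_x z_i - \phi'' z_i$ inside this expression yields a first--order problem that can be closed by \eqref{eq:P_CzB_zix}. For the contribution $-2\int R\phi'' z_i \partial_x^2\eta_i$, two successive integrations by parts place both derivatives on $(R\phi'')$, generating a pure $\int(\cdot)\eta_i^2$ integral that will match $\int \tilde R\eta_i^2$ up to boundary terms against $(\partial_x\eta_i)^2$. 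Throughout, I would decompose $\phi' = \chi_A'\zeta_B + \chi_A\zeta_B'$ and $\phi'' = \chi_A''\zeta_B + 2\chi_A'\zeta_B' + \chi_A\zeta_B''$, and systematically separate each resulting integrand into a \emph{bulk} piece (all factors of the form $\zeta_B^{(k)}/\zeta_B$, since $\chi_A \equiv 1$ for $|y|<A$) and an \emph{annulus} piece (containing at least one $\chi_A^{(k)}$ with $k\geq 1$, hence supported in $A \leq |y| \leq 2A$).

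Matching coefficients then produces the announced identity. The bulk pieces from $4R(\phi')^2(\partial_x z_i)^2$ combined with the cross--term output of the integration by parts exactly reproduce the coefficient $P_R$ of \eqref{eq:claimPR}; the $(\partial_x\eta_i)^2$--term there, after one more application of \eqref{eq:P_CzB_zix} with $P=P_R$, releases the precisely stated lower--order correction $(P_R'\zeta_B'/\zeta_B + P_R\zeta_B''/\zeta_B)\eta_i^2$ and the error $\mathcal E_1(P_R)\zeta_B^2 z_i^2$. The bulk contributions involving $z_i^2$ collect into $\tilde R$ \eqref{eq:claimtildeR}. The annulus residues split into three classes according to which of $z_i^2,\ (\partial_x z_i)^2,$ or $\zeta_B^2 z_i^2$ they multiply; sorting them gives the explicit algebraic forms \eqref{eq:claimE2} of $\mathcal E_2(R)$ and \eqref{eq:claimE3} of $\mathcal E_3(R)$.

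The bounds \eqref{eq:PR_E23} are immediate once these explicit formulas are in hand: each factor $\zeta_B^{(k)}/\zeta_B$ is controlled by $B^{-1}$ via \eqref{eq:z11}--\eqref{eq:z'/z}, each derivative $\chi_A^{(k)}$ with $k\geq 1$ is both localized to $A\leq|y|\leq 2A$ and bounded by $A^{-k}$, and one distributes powers of $\|R\|_{L^\infty}, \|R'\|_{L^\infty}, \|R''\|_{L^\infty}$ according to how the derivatives of $R$ are routed during the integrations by parts. The main obstacle of the whole proof is not analytical but purely combinatorial: tracking exactly which of the many byproducts of the two rounds of integration by parts lands in $\tilde R$, which in $P_R$, and which in the three error families $\mathcal E_1(P_R), \mathcal E_2(R), \mathcal E_3(R)$. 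No new estimate beyond \eqref{eq:P_CzB_zix} and the pointwise weight bounds \eqref{eq:z11}--\eqref{eq:C*z'/z} is used.
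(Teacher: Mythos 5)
Your proposal follows what is essentially the paper's (or rather, its sources') route: the paper does not prove this Claim here but defers to \cite{Mau,MaMu} (and, for the localized version, \cite{MaMu0}), and the derivation there is exactly the direct computation you describe. Setting $\phi=\chi_A\zeta_B$, expanding $\phi\,\partial_x^2 z_i=\partial_x^2\eta_i-2\phi'\partial_x z_i-\phi'' z_i$, squaring, integrating the cross terms by parts, collecting the surviving bulk $(\partial_x z_i)^2$--contributions into $\int P_R\chi_A^2\zeta_B^2(\partial_x z_i)^2$ and closing with \eqref{eq:P_CzB_zix} applied to $P=P_R$ is precisely how the block $\int P_R(\partial_x\eta_i)^2+\int(P_R'\zeta_B'/\zeta_B+P_R\zeta_B''/\zeta_B)\eta_i^2+\int\mathcal E_1(P_R)\zeta_B^2z_i^2$ appears, and the bounds \eqref{eq:PR_E23} are indeed mechanical consequences of $|\zeta_B^{(k)}/\zeta_B|\lesssim B^{-1}$ globally, $|\zeta_B^{(k)}/\zeta_B|\lesssim B^{-k}$ on the annulus (see \eqref{eq:C*z'/z}), and $|\chi_A^{(k)}|\lesssim A^{-k}$ with support in $A\le|y|\le2A$. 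The one step you should spell out is the term $\int R\,\phi'\,(\partial_x^2 z_i)\,\partial_x\eta_i$ created by your first integration by parts: you cannot substitute $\phi\,\partial_x^2 z_i$ there by factoring out $\phi'/\phi$, since $\chi_A'/\chi_A$ is unbounded near the edge of the support of $\chi_A$. One must first split $\phi'=\chi_A\zeta_B'+\chi_A'\zeta_B$; the first piece admits the substitution via the harmless factor $\zeta_B'/\zeta_B$, while the second is kept in the $z$--variables and feeds $\mathcal E_3(R)$. Since you already announce the bulk/annulus decomposition, this is a matter of ordering the steps rather than a gap, and the rest is the coefficient bookkeeping you acknowledge.
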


\begin{rem}
Estimates \eqref{eq:PR_E23} are of technical type. A proof of these estimates is present in \cite{Mau}, but without the localization terms. In \cite{MaMu0} we slightly improve these estimates by considering the region of space where these functions are supported.
\end{rem}

\begin{rem}
	For further purposes, we shall need the previous identities in the simplest case $R=1$.  We obtain
		\begin{equation}\label{eq:CZ_zxx}
	\begin{aligned}
	\int \chi_A^2\zB^2 (\partial_x^2 z_i)^2
	=&
	\int (\partial_x^2 \et_i)^2+\int \widetilde{R}_1\et_i^2
	+\int P_1 (\partial_x \et_i)^2 
	+
	\int    \left( P_1' \frac{\zB'}{\zB}+P_1 
	\frac{\zB''}{\zB}\right) \et_i^2 \\
	&+\int \mathcal{E}_2 \zB^2z_i^2
	+\int  \mathcal{E}_1(P_1)\zB^2 z_i^2
	+\int \mathcal{E}_3 \zB^2(\partial_x z_i)^2,
	\end{aligned}
	\end{equation}
	where (see \eqref{eq:claimtildeR} and \eqref{eq:claimPR}),
	\begin{equation}\label{eq:tildeR1_P1}	
		\widetilde{R}_1=	  
			-2\left(\frac{\zB^{(4)}}{\zB}+\frac{\zB'''}{\zB}\frac{\zB'}{\zB}\right)
	,\ \quad 
	P_1=
	4  \frac{\zB''}{ \zB}  
	-2 \left(\frac{\zB'}{\zB} \right)^2,
		\end{equation}
	$\mathcal{E}_1$ is defined in \eqref{eq:claimE1}, $\mathcal E_2$ in \eqref{eq:claimE2} becomes
\begin{equation}\label{eq:claimE2_1}	
\begin{aligned}
\mathcal{E}_2 =\mathcal{E}_2(1)
=&
- \chi_A
\left(
\chi_A^{(4)} 
+4\chi_A'''  \frac{\zB'}{\zB^2}
+6 \chi_A'' \frac{\zB''}{\zB}
+4\chi_A' \frac{\zB'''}{\zB}  
\right),
	\end{aligned}
	\end{equation}
	and \eqref{eq:claimE3} reads now
	\begin{equation}\label{eq:claimE3_1}
	\begin{aligned}
\mathcal{E}_3 =	\mathcal{E}_3 (1)
=
4\chi_A'' \chi_A -2(\chi_A' )^2+ 2\frac{\zB'}{\zB}(\chi_A^2)'  .
	\end{aligned}
	\end{equation}
	Finally, by \eqref{eq:z'/z} and \eqref{eq:C*z'/z}, we obtain the simplified estimate
	\begin{equation}\label{eq:CZzxx}
	\begin{aligned}
	\| \chi_A\zB \partial_x^2 z_i\| 
	\lesssim & ~{}  \| \partial_x^2 \et_i\|_{L^2}^2+B^{-1}\| \partial_x \et_i\|_{L^2}^2+B^{-1}\| \et_i\|_{L^2}^2 
	+(AB)^{-1} \left( \| \zeta_{B}  z_i\|_{L^2}^2 + \| \zeta_{B} \partial_{x} z_i\|_{L^2}^2 \right).
	\end{aligned}
	\end{equation}
\end{rem}

\subsection{Main part of third virial}	
	
	Here, we are going to focus on the main terms of \eqref{eq:dtN_N}. In particular, we will rewrite $N_1$, $N_2$ in terms of the localized variables $\bd \et$.
	
	\medskip
	
	First, we focus on $N_1$. Recalling \eqref{eq:psi_deriv},  we obtain
	\begin{equation}\label{eq:N1_split}
	\begin{aligned}
	N_1=&~{}
	-\frac12 \int   \big(\chi_A^2 \zB^2+(\chi_A^2)'\vB \big) \left( (\partial_x z_1+c\partial_x z_2)^2+3 (\partial_x^2 z_2)^2 \right)
	\\
	=&~{}
	-\frac12 \int  \chi_A^2 \zB^2 \left((\partial_x z_1+c\partial_x z_2)^2+3 (\partial_x^2 z_2)^2 \right)
	-\frac12 \int  (\chi_A^2)' \vB \left( (\partial_x z_1+c\partial_x z_2)^2+3 (\partial_x^2 z_2)^2 \right)
	\\
	=&~{}
	-\frac12 \int  \chi_A^2 \zB^2 (\partial_x z_1+c\partial_x z_2)^2
	-\frac32 \int  \chi_A^2 \zB^2  (\partial_x^2 z_2)^2
	-\frac12 \int  (\chi_A^2)' \vB \left( (\partial_x z_1+c\partial_x z_2)^2+3 (\partial_x^2 z_2)^2 \right)
	\\
	=&~{} N_{1,1}+ N_{1,2}+ N_{1,3}.
	\end{aligned}
	\end{equation}
	Then, for $N_{1,1}$, applying \eqref{eq:CzB_zix}, we obtain

	\[
	\begin{aligned}
	N_{1,1}
	=&~{}
	 -\frac12 \int  (\partial_x \et_1+c\partial_x \et_2)^2 
	 -\frac12 \int    \frac{\zB''}{\zB} (\et_1+c \et_2)^2
	 -\frac12 \int  \mathcal{E}_1 \zB^2 (z_1+c z_2)^2.
	\end{aligned}
	\]
	Now, for $N_{1,2}$, just replacing the identity \eqref{eq:CZ_zxx}, one gets
	\[
	\begin{aligned}
	N_{1,2} 
	=&~{} -\frac32 \int  \chi_A^2 \zB^2  (\partial_x^2 z_2)^2
	\\
	=&~{}
	-\frac32\int (\partial_x^2 \et_2)^2
	-\frac32\int \widetilde{R}_1\et_2^2
	-\frac32 \int P_1 (\partial_x \et_2)^2 
	-\frac32 \int    \left( P_1' \frac{\zB'}{\zB} + P_1 \frac{\zB''}{\zB}\right) \et_2^2 
	\\&
	-\frac32 \int \mathcal{E}_2 \zB^2 z_2^2
	-\frac32 \int  \mathcal{E}_1(P_1)\zB^2 z_2^2
	-\frac32\int \mathcal{E}_3 \zB^2 (\partial_x z_2)^2.
	\end{aligned}
	\]
	Therefore, for $N_1$ it holds the following decomposition 
	\begin{equation}\label{eq:N1}
	\begin{aligned}
	N_1
	=&~{}
	-\frac12 \int\left( (\partial_x \et_1+c\partial_x \et_2)^2 +3(\partial_x^2 \et_2)^2\right)
	+N_{\et}+N_{z}+N_{1,3},
	\end{aligned}
	\end{equation}
	where the error terms are $N_{1,3}$ (in \eqref{eq:N1_split}) and the ones given by
	\begin{equation}\label{eq:N1_E}
	\begin{aligned}
	N_{\et}=&~{}
	-\frac12 \int    \frac{\zB''}{\zB} (\et_1+c \et_2)^2
	  -\frac32\int \widetilde{R}_1\et_2^2
	-\frac32 \int P_1 (\partial_x \et_2)^2 
	-\frac32 \int    \left( P_1' \frac{\zB'}{\zB} + P_1 \frac{\zB''}{\zB}\right) \et_2^2 ,
	 \\
	 N_{z}=&~{}
	 -\frac12 \int  \mathcal{E}_1 \zB^2 (z_1+c z_2)^2
	-\frac32 \int \mathcal{E}_2 \zB^2 z_2^2
	-\frac32 \int  \mathcal{E}_1(P_1)\zB^2 z_2^2
	-\frac32\int \mathcal{E}_3 \zB^2 (\partial_x z_2)^2
	.
	\end{aligned}
	\end{equation}
 	Here,  $\mathcal{E}_1,~\mathcal{E}_2 ,~ \mathcal{E}_3$, $P_1,~\widetilde{R}_1$ are defined in \eqref{eq:claimE1}, \eqref{eq:claimE2_1}, \eqref{eq:claimE3_1} and \eqref{eq:tildeR1_P1}, respectively. It follows directly from the definition of $P_1,~\widetilde{R}_1$ and  \eqref{eq:z'/z}, that
\begin{equation}\label{eq:N1et_E}
	\begin{aligned}
	|N_{\et}|
	\lesssim& ~{} B^{-1}\left(
						\|\et_1+c \et_2\|_{L^2}^2
						+ \|\et_2\|_{L^2}^2
						+ \|\partial_x \et_2\|_{L^2}^2
				\right).
	\end{aligned}
	\end{equation}
	For the second error term, related to the variable $z$, by the definition of $\mathcal{E}_1,~\mathcal{E}_2 ,~ \mathcal{E}_3$, $P_1$, \eqref{eq:C*z'/z} and \eqref{eq:z2_w2} with \eqref{eq:z1_w1}
	\begin{equation}\label{eq:N1z_E}
	\begin{aligned}
	 |N_{z}|
	\lesssim&~{} (AB)^{-1}\gar^{-2} \big(\| w_1 \|_{L^2}^2+ \| cw_1+w_2 \|_{L^2}^2\big). 
	\end{aligned}
	\end{equation}
	For $N_{1,3}$ in \eqref{eq:N1_split}, we apply \eqref{eq:vB_psi_prop} and \eqref{eq:chiA_zA}, then we obtain
	\[ 
	\begin{aligned}
	|N_{1,3}|
	\lesssim& ~{}
	A^{-1}B \big( \| \zA^2 (\partial_x z_1+c\partial_x z_2)\|_{L^2}^2+\|\zA^2 \partial_x^2 z_2\|_{L^2}^2 \big)
	.
	\end{aligned}
	\] 
	Using \eqref{eq:z2_w2} and \eqref{eq:z1_w1}, we obtain
	\begin{equation}\label{eq:N1z1_E}
	\begin{aligned}
	|N_{1,3}|
	\lesssim& ~{}
	A^{-1}B\gar^{-2} \big( \|w_1\|_{L^2}^2+\| \partial_x w_1\|_{L^2}^2+\|cw_1+w_2\|_{L^2}^2 \big)	
	.
	\end{aligned}
	\end{equation}

	Now, we focus on $N_2$. Letting  $P_c=1-c^2-f'(\Qc)$ and replacing \eqref{eq:P_CzB_zix} 
	\begin{equation}\label{eq:N2}
	\begin{aligned}
	N_2
	=&~{}-\frac12 \int P_c \chi_A^2\zB^2(\partial_x z_2)^2 
	\\
	=&	-\frac12 \int \big( 1-c^2-f'(\Qc) \big) (\partial_x \et_2)^2 -\frac12 \int   \left( P_c' \frac{\zB'}{\zB}+P_c\frac{\zB''}{\zB}\right) \et_2^2 
		-\frac12 \int  \mathcal{E}_1(P_c)\zB^2 z_2^2,
	\end{aligned}
	\end{equation}
	where $ \mathcal{E}_1(P_c)$ is defined in \eqref{eq:claimE1}.	
	Notice that, by \eqref{eq:z'/z}, \eqref{cota_final_E1} and \eqref{eq:z2_w2}, one gets
	\begin{equation}\label{eq:N2_E}
	\left| \int   \left( P_c' \frac{\zB'}{\zB}+P_c\frac{\zB''}{\zB}\right) \et_2^2 
		+ \int  \mathcal{E}_1(P_c)\zB^2 z_2^2\right|
	\lesssim B^{-1} \|\et_2\|_{L^2}^2+(AB)^{-1}\|c w_1+w _2\|_{L^2}^2.
	\end{equation}	
	For $N_3$, recalling \eqref{eq:vA_Qc} and joint to the inequality \eqref{eq:Q_z2x}, we have
	\begin{equation}\label{eq:N3} 
	\begin{aligned}
	|N_3| \lesssim&~{} 
	 \| \et_2\|_{L^2}^2+\| \partial_x \et_2\|_{L^2}^2+\sech \left(\gamma (p-1) A\right) \gar^{-1} \| cw_1+w_2\|_{L^2}^2.
 \end{aligned}
	\end{equation}
	Lastly, replacing \eqref{eq:psi_deriv} in  $N_4$, we obtain
	\[
	\begin{aligned}
	N_4=&~{} \frac12 \int  (\chi_A^2 (\zB^2)''+3(\chi_A^2)' (\zB^2)'+3(\chi_A^2)'' \zB^2+(\chi_A^2)'''\varphi_B) (\partial_x  z_2)^2 
	\\
	=&~{}
	 \frac12 \int  \frac{(\zB^2)''}{\zB^2}  \chi_A^2\zB^2(\partial_x  z_2)^2 
	+ \frac12 \int  (3(\chi_A^2)' (\zB^2)'+3(\chi_A^2)'' \zB^2+(\chi_A^2)'''\varphi_B) (\partial_x  z_2)^2. 
	\end{aligned}
	\]
	From \eqref{eq:z'/z}, \eqref{eq:C*z'/z}  and \eqref{eq:P_CzB_zix}, with $P= (\zB^2)''/\zB^2$, it follows 	
	\[
	\begin{aligned}
	|N_4|
	\lesssim &~{}
	 \frac12 \int  \frac{(\zB^2)''}{\zB^2}  \chi_A^2\zB^2(\partial_x  z_2)^2 
	+(AB)^{-1} \|\partial_x  z_2\|_{L^2(A<|x|<2A)}^2
	\\
	\lesssim &~{}
	B^{-1} \|\partial_x \et_2\|_{L^2}^2+B^{-2} \|\et_2\|_{L^2}^2+ (AB^3)^{-1} \|\zB z_2\|_{L^2}^2
	+(AB)^{-1} \|\zA \partial_x  z_2\|_{L^2}^2  .
	\end{aligned}
	\]
	By \eqref{eq:z2_w2}, we conclude
	\begin{equation}\label{eq:N4} 
	\begin{aligned}
	|N_4|
	\lesssim &~{}
	B^{-1} \|\partial_x \et_2\|_{L^2}^2+B^{-2} \|\et_2\|_{L^2}^2
	+(AB)^{-1} \gar^{-1} \|c w_1+w_2 \|_{L^2}^2 .
	\end{aligned}
	\end{equation}

	\subsection{Controlling nonlinear and modulation terms}
	Let us define the nonlinear term and error terms:
	\[
	\begin{aligned}
	 N_5= \int   \psi_{A,B} \partial_x z_2\partial_x M_1  
	 \quad \mbox{ and }\quad
	 N_7=\int   \psi_{A,B} \partial_x z_1 \partial_x M_2;
	\end{aligned}
	\]
	modulation terms:
	\[
	\begin{aligned}
	N_6= \int   \psi_{A,B} \partial_x z_2\partial_x M_{1,c}  
	\quad \mbox{ and } \quad 
	 N_8=\int   \psi_{A,B} \partial_x z_1 \partial_x M_{2,c}.
	\end{aligned}
	\]
	
	\subsubsection*{Controlling nonlinear terms} Let us deal with $N_5$, then replacing $\psi_{A,B}$, and $M_1$ (see \eqref{eq:psi_eti} and \eqref{eq:M}), respectively; and then integrating by parts, we obtain the following decomposition
	\[ \label{eq:N5_decomp}
	\begin{aligned}
	 N_5
	 =&~{} c\int   \psi_{A,B} \partial_x z_2 \Opg \partial_x N(u_1)+ \gar \int   \psi_{A,B} \partial_x z_2 \Opg \partial_x \left( \partial_x^2(f'(\Qc)) \partial_x z_2-2 \partial_x(f'(\Qc)) \partial_x^2 z_2\right)
\\
 	=&~{} c\int   \psi_{A,B} \partial_x z_2 \Opg \partial_x N(u_1)
	\\
	&~{} - \gar \int  ( \psi_{A,B}' \partial_x z_2 + \psi_{A,B} \partial_x^2 z_2)\Opg  \left( \partial_x^2(f'(\Qc)) \partial_x z_2-2 \partial_x(f'(\Qc)) \partial_x^2 z_2\right)
	\\
	 =&~{} c\int   \psi_{A,B} \partial_x z_2 \Opg \partial_x N(u_1)- \gar \int  \psi_{A,B}' \partial_x z_2\Opg  \left( \partial_x^2(f'(\Qc)) \partial_x z_2-2 \partial_x(f'(\Qc)) \partial_x^2 z_2\right)
	\\&- \gar \int   \psi_{A,B} \partial_x^2 z_2 \Opg  \left( \partial_x^2(f'(\Qc)) \partial_x z_2-2 \partial_x(f'(\Qc)) \partial_x^2 z_2\right)
	 \\
	 =&~{} c\int   \chi_A^2 \vB \partial_x z_2 \Opg \partial_x N(u_1)
	\\&- \gar \int  (\chi_A^2 \zB^2+(\chi_A^2)'\vB)\partial_x z_2 \Opg  \left( \partial_x^2(f'(\Qc)) \partial_x z_2-2 \partial_x(f'(\Qc)) \partial_x^2 z_2\right)
	\\&- \gar \int  \chi_A^2 \vB \partial_x^2 z_2 \Opg  \left( \partial_x^2(f'(\Qc)) \partial_x z_2-2 \partial_x(f'(\Qc)) \partial_x^2 z_2\right)
	 \\
	 =:&~{} N_{5,1}+N_{5,2}+N_{5,3}.
	\end{aligned}
	\] 
	For $N_{5,1}$, by H\"older's inequality, \eqref{eq:vB_psi_prop} and \eqref{eq:chiA_zA}, one has
	\[
	\begin{aligned}
	|N_{5,1} | = &~{} \left| c \int   \chi_A^2 \vB \partial_x z_2 \Opg \partial_x  N(u_1) \right|
	\\
	\lesssim &~{} | c| \|  \chi_A \vB \partial_x z_2\|_{L^2} \|\chi_A \Opg \partial_x  N(u_1) \|_{L^2}
	\lesssim  | c| B \|  \zA  \partial_x z_2\|_{L^2} \|\zA^2 \Opg \partial_x  N(u_1) \|_{L^2}.
	\end{aligned}
	\]
	We conclude using \eqref{eq:z2_w2}, \eqref{eq:sech_Opg_1pp} joint to \eqref{eq:sech_Opg}, then
	\begin{equation}\label{eq:N51} 
	\begin{aligned}
	|N_{5,1} | 
	\lesssim &~{} | c| B \gar^{-3/2} \| cw_1+ w_2\|_{L^2} \| f'(\Qc) w_1^2 \|_{L^2}	\lesssim  B \gar^{-3/2}\|u_1\|_{L^\infty} \| cw_1+ w_2\|_{L^2} \| w_1 \|_{L^2}.
	\end{aligned}
	\end{equation}
In the case of $N_{5,3}$, using a similiar strategy to $J_{4,1}$ in  \eqref{eq:inicio_J41}, one gets
	\[ \label{eq:N53}
	\begin{aligned}
	|N_{5,3}| 
	=&~{} \left|
	 \gar \int  \chi_A^2 \vB\phi \partial_x^2 z_2 \phi^{-1}\Opg  \left( \partial_x^2(f'(\Qc)) \partial_x z_2-2 \partial_x(f'(\Qc)) \partial_x^2 z_2\right) \right|
	 \\
	 \lesssim&~{}
	  \gar \|  \chi_A ^2\vB\phi \partial_x^2 z_2\|_{L^2} \left\| \phi^{-1}\Opg  \left( \partial_x^2(f'(\Qc)) \partial_x z_2-2 \partial_x(f'(\Qc)) \partial_x^2 z_2 \right)\right\|_{L^2}
	  .
	\end{aligned}
	\] 
	Moreover, using \eqref{eq:cosh_kink} on the second factor, we obtain
	\[\begin{aligned}
	|N_{5,3}| 
	 \lesssim&~{}
	  \gar \|  \chi_A ^2\zB \partial_x^2 z_2\|_{L^2} \| \phi^{-1}(\partial_x^2(f'(\Qc)) \partial_x z_2-2 \partial_x(f'(\Qc)) \partial_x^2 z_2)\|_{L^2}
	  \\
	   \lesssim&~{}
	  \gar \|  \chi_A ^2\zB \partial_x^2 z_2\|_{L^2} \left( \| \phi^{-1}\partial_x^2(f'(\Qc)) \partial_x z_2\|_{L^2}+\|\phi^{-1} \partial_x(f'(\Qc)) \partial_x^2 z_2)\|_{L^2}\right)
	  .
	\end{aligned}
	\]
	Since
	\[
	\begin{aligned}
	  \|\phi^{-1} \partial_x(f'(\Qc)) \partial_x^2 z_2)\|_{L^2}
	  =&~{}  \|\phi^{-1} \partial_x(f'(\Qc))(1-\chi_A^2+\chi_A^2) \partial_x^2 z_2)\|_{L^2}
	  \lesssim  \|\chi_A^2 \zB^2 \partial_x^2 z_2\|_{L^2}+\sech(A\gamma/10) \| \phi \partial_x^2 z_2\|_{L^2},
	\end{aligned}
	\]
 by \eqref{eq:z2_w2} one obtains
	\begin{equation} \label{eq:fzx}
	\begin{aligned}
	\|\phi^{-1} \partial_x(f'(\Qc)) \partial_x^2 z_2\|_{L^2}
	  \lesssim&~{}  \|\chi_A^2 \zB^2 \partial_x^2 z_2\|_{L^2}+\sech^{1/2}(A\gamma/10) \gar^{-1}\| c w_1+w_2\|_{L^2}.
	\end{aligned}
	\end{equation}
	In a similar way, using \eqref{eq:z2_w2}, one can check that
	\begin{equation} \label{eq:fzxx}
	\begin{aligned}
	\|\phi^{-1} \partial_x(f'(\Qc)) \partial_x z_2\|_{L^2}
	  \lesssim&~{}  \|\chi_A^2 \zB^2 \partial_x z_2\|_{L^2}+\sech(A\gamma/10) \gar^{-1/2}\| c w_1+w_2\|_{L^2}.
	\end{aligned}
	\end{equation}
	Using the previous inequalities and the Cauchy-Schwarz inequality, we conclude that $N_{5,3}$ satisfies the following estimation
	\begin{equation}\label{eq:N53}
	\begin{aligned}
	|N_{5,3}| 
	\lesssim&~{}
	  \gar \|  \chi_A ^2\zB \partial_x^2 z_2\|_{L^2} \left(  \|\chi_A^2 \zB^2 \partial_x^2 z_2\|_{L^2}+\|\chi_A^2 \zB^2 \partial_x z_2\|_{L^2}+\sech(A\gamma/10) \gar^{-1}\| c w_1+w_2\|_{L^2}\right)
	  \\
	  \lesssim&~{}
	  \gar  \left(  \|\chi_A^2 \zB \partial_x^2 z_2\|_{L^2}^2+\|\chi_A^2 \zB \partial_x z_2\|_{L^2}^2+\sech(A\gamma/10) \gar^{-1}\| c w_1+w_2\|_{L^2}^2\right).
	\end{aligned}
	\end{equation}
	
	Finally, for $N_{5,2}$, by H\"older's inequality, we have
	\[\begin{aligned}
	N_{5,2} 
	=&~{}
		 \gar \int  (\chi_A^2 \zB^2+(\chi_A^2)'\vB)\partial_x z_2 \Opg  \left( -\partial_x^2(f'(\Qc)) \partial_x z_2+2 \partial_x(f'(\Qc)) \partial_x^2 z_2\right)
	\\
	=&~{}
	 \gar \int  (\chi_A^2 \zB^2+2\chi_A'\vB)\chi_A\partial_x z_2 \Opg  \left(- \partial_x^2(f'(\Qc)) \partial_x z_2+2 \partial_x(f'(\Qc)) \partial_x^2 z_2\right)
	\\
	\lesssim&{}~  \gar \|(\chi_A \zB^2+2\chi_A'\vB)\chi_A\partial_x z_2 \|_{L^2} \left\| \Opg  \left( \partial_x^2(f'(\Qc)) \partial_x z_2-2 \partial_x(f'(\Qc)) \partial_x^2 z_2\right)\right\|_{L^2}
	\\
	\lesssim&{} ~ \gar\big( \|\chi_A^2 \zB^2\partial_x z_2 \|_{L^2}+\|\chi_A'\vB \chi_A\partial_x z_2 \|_{L^2}\big) \left\| \Opg  \left( \partial_x^2(f'(\Qc)) \partial_x z_2-2 \partial_x(f'(\Qc)) \partial_x^2 z_2\right)\right\|_{L^2}
	.
	\end{aligned}
	\]
Now, using \eqref{eq:vB_psi_prop},\eqref{eq:chiA_zA}, Lemma  \ref{lem:estimates_IOp} and \eqref{eq:fzx} with \eqref{eq:fzxx}, 
	\[\begin{aligned}
	|N_{5,2} |
	\lesssim&{} ~ \gar\big( \|\chi_A^2 \zB^2\partial_x z_2 \|_{L^2}+BA^{-1}\| \chi_A\partial_x z_2 \|_{L^2}\big) \big( \| \partial_x^2(f'(\Qc)) \partial_x z_2 \|_{L^2} +\| \partial_x(f'(\Qc)) \partial_x^2 z_2 \|_{L^2}\big)
	\\
	\lesssim&{} ~ \gar\big( \|\chi_A^2 \zB^2\partial_x z_2 \|_{L^2}+BA^{-1}\| \zA^2 \partial_x z_2 \|_{L^2}\big) 
	\\&\times
	\big(  \|\chi_A^2 \zB^2 \partial_x z_2\|_{L^2}+\|\chi_A^2 \zB^2 \partial_x^2 z_2\|_{L^2}+\sech(A\gamma/10) \gar^{-1}\| c w_1+w_2\|_{L^2}\big).
	\end{aligned}
	\]
Now, we apply \eqref{eq:z2_w2} and Cauchy-Schwarz inequality, we conclude that $N_{5,2}$ is bounded by
	\begin{equation} \label{eq:N52} 
	\begin{aligned}
	|N_{5,2} |
	\lesssim&{} ~ \gar \big(  \|\chi_A^2 \zB^2 \partial_x z_2\|_{L^2}^2+\|\chi_A^2 \zB^2 \partial_x^2 z_2\|_{L^2}^2+\| c w_1+w_2\|_{L^2}^2\big).
	\end{aligned}
	\end{equation}
	Then, collecting \eqref{eq:N51}, \eqref{eq:N52} and \eqref{eq:N53}; and by Cauchy-Schwarz inequality, we obtain
	\[ 
	\begin{aligned}
	|N_5|
	\lesssim &~{}
	 B \gar^{-3/2}\|u_1\|_{L^\infty}\big( \| w_1 \|_{L^2}^2+\| cw_1+ w_2\|_{L^2}^2 \big)+ \gar  \big(  \|\chi_A^2 \zB \partial_x^2 z_2\|_{L^2}^2+\|\chi_A^2 \zB \partial_x z_2\|_{L^2}^2+\| c w_1+w_2\|_{L^2}^2\big).
	\end{aligned}
	\] 
	Now, we consider \eqref{eq:CZzxx} joint to \eqref{eq:CZzx} and \eqref{eq:z2_w2}, we conclude
	\begin{equation}\label{eq:N5}
	\begin{aligned}
	|N_5|
	\lesssim &~{}
	 B \gar^{-3/2}\|u_1\|_{L^\infty}\big( \| w_1 \|_{L^2}^2+\| cw_1+ w_2\|_{L^2}^2 \big)
	 \\&
	 + \gar  \big( 
	 			 \|\partial_x^2 \eta_2\|_{L^2}^2+B^{-1}\|\partial_x \eta_2\|_{L^2}^2+B^{-1}\| \eta_2\|_{L^2}^2+(AB)^{-1}(\|\zB z_2\|_{L^2}^2+\|\zB \partial_x z_2\|_{L^2}^2)
				 \\&
				 +\| \partial_x \eta_2\|_{L^2}^2 +B^{-1}\|\eta_2\|_{L^2}^2
				 +(AB)^{-1}\|\zB z_2\|_{L^2}^2
				 +\| c w_1+w_2\|_{L^2}^2
			\big)
	\\
	\lesssim &~{}
	 B \gar^{-3/2}\|u_1\|_{L^\infty}\big( \| w_1 \|_{L^2}^2+\| cw_1+ w_2\|_{L^2}^2 \big)
	 \\&
	 + \gar  \big( 
	 			 \|\partial_x^2 \eta_2\|_{L^2}^2
				  +\| \partial_x \eta_2\|_{L^2}^2 
				 +B^{-1}\| \eta_2\|_{L^2}^2
				 +(AB)^{-1}\gar^{-1}\| c w_1+w_2\|_{L^2}^2
			\big).
	\end{aligned}
	\end{equation}
	Now, we focus on $N_7$, replacing $M_2$ on \eqref{eq:M}, we obtain 
	\[
	\begin{aligned}
	N_7 =& \int   \psi_{A,B} \partial_x z_1 \partial_x \Opg  N(u_1).
	\end{aligned}
	\]
	Similar to $N_{5,1}$, by H\"older's inequality, \eqref{eq:vB_psi_prop} and \eqref{eq:chiA_zA}, , one gets
	\[ 
	\begin{aligned}
	|N_7|
	\lesssim&~{} | c| \|  \chi_A \vB \partial_x z_1\|_{L^2} \|\chi_A \Opg \partial_x  N(u_1) \|_{L^2}
	\lesssim | c| B\gar^{-1} \|u_1\|_{L^\infty} \|  \zA \partial_x z_1\|_{L^2} \|w_1 \|_{L^2}
	.
	\end{aligned}
	\]
	Then, by \eqref{eq:z1_w1}, one obtains
	\begin{equation}\label{eq:N7} 
	\begin{aligned}
	|N_7|
	\lesssim&~{} | c| B\gar^{-2} \|u_1\|_{L^\infty} \|  w_1\|_{L^2} \big( \|  w_1\|_{L^2} +\|  \partial_x w_1\|_{L^2}+\gar^{1/2}\|cw_1+w_2 \|_{L^2}\big)\\
	\lesssim  &~{} B\gar^{-2} \|u_1\|_{L^\infty} \big( \|  w_1\|_{L^2}^2 +\|  \partial_x w_1\|_{L^2}^2
	+\gar \|cw_1+w_2 \|_{L^2}^2\big) .
	\end{aligned}
	\end{equation}
	Collecting \eqref{eq:N5} and \eqref{eq:N7} and using \eqref{eq:B_gar}, we conclude the nonlinear terms holds the following bound
	\begin{equation}\label{eq:N5+N7}
	\begin{aligned}
	|N_5+N_7|
	\lesssim &~{}
		  \gar  \big( 
	 			 \|\partial_x^2 \eta_2\|_{L^2}^2
				  +\| \partial_x \eta_2\|_{L^2}^2 
				 +\| \eta_2\|_{L^2}^2
			\big)
	\\&
	+\big( (AB)^{-1}+B \gar^{-3/2}\|u_1\|_{L^\infty}+B\gar^{-2} \|u_1\|_{L^\infty} \big)\big( 
		\|  w_1\|_{L^2}^2 
		+\|  \partial_x w_1\|_{L^2}^2
		+ \|cw_1+w_2 \|_{L^2}^2
		\big)
	\\
		\lesssim &~{}
		  \gar  \big( 
	 			 \|\partial_x^2 \eta_2\|_{L^2}^2
				  +\| \partial_x \eta_2\|_{L^2}^2 
				 +\| \eta_2\|_{L^2}^2
			\big)
	+ B^{-1}\big( 
		\|  w_1\|_{L^2}^2 
		+\|  \partial_x w_1\|_{L^2}^2
		+ \|cw_1+w_2 \|_{L^2}^2
		\big)
	.
	\end{aligned}
	\end{equation}

	\subsubsection*{Controlling modulation terms}
	We focus on $N_6$, replacing $M_{1,c}$ in \eqref{eq:Mc}, we obtain the following separation
		\[
	\begin{aligned}
	N_6=&~{} \int   \psi_{A,B} \partial_x z_2\partial_x M_{1,c}  
	\\
	=&~{} \int   \psi_{A,B} \partial_x z_2\partial_x \left( c' \Opg(c\Qc+ f''(\Qc)\Lambda \Qc v_1+2cv_1-v_2)  +(\rho'-c) \left( \partial_x z_1 -\Opg (\partial_x(f'(\Qc)) v_1) \right)\right)
	\\
	=&~{} c' \int   \psi_{A,B} \partial_x z_2 \Opg\partial_x \left( c\Qc+ f''(\Qc)\Lambda \Qc v_1 +2cv_1-v_2 \right)
	\\&+(\rho'-c) \int   \psi_{A,B} \partial_x z_2\partial_x^2 z_1 
	-(\rho'-c) \int   \psi_{A,B} \partial_x z_2 \Opg \partial_x \left[\partial_x(f'(\Qc)) v_1 \right]
	\\
	=:&~{} N_{6,1}+N_{6,2}+N_{6,3}.
	\end{aligned}
	\]
	Now, for $N_{6,1}$, using  H\"older's inequality, one gets
	\[
	\begin{aligned}
	|N_{6,1} |
	\lesssim &~{} | c' | \|   \chi_A \vB \partial_x z_2\|_{L^2} \| \chi_A \Opg\partial_x (c\Qc+ f''(\Qc)\Lambda \Qc v_1-v_2)
	\|_{L^2}.
	\end{aligned}
	\]
	Applying \eqref{eq:vB_psi_prop}, \eqref{eq:sech_Opg_p}, \eqref{eq:chiA_zA}, \eqref{eq:z2_w2}  join to $B^{1/4}\gamma>2$, we obtain
		\begin{equation}\label{eq:N61}
	\begin{aligned}
	|N_{6,1} |
	\lesssim &~{} | c' | \gar^{-1/2} B \|   \zA  \partial_x z_2\|_{L^2} \| \zA (c\Qc+ f''(\Qc)\Lambda \Qc v_1+2cv_1 -v_2)
	\|_{L^2}
	\\
	\lesssim &~{} | c' | \gar^{-1/2} B \|   \zA  \partial_x z_2\|_{L^2} \| (c\zA \Qc+ f''(\Qc)\Lambda \Qc w_1 + 2cw_1-w_2)
	\|_{L^2}
	\\
	\lesssim &~{} | c' | \gar^{-1} B \|  cw_1+w_2\|_{L^2} \left( 1+\| f''(\Qc)\Lambda \Qc w_1 + 2cw_1\|_{L^2}+\|cw_1+w_2\|_{L^2}\right)
	\\
	\lesssim &~{} | c' | \gar^{-1} B \|  cw_1+w_2\|_{L^2} \left( 1+\frac{c}{\gamma^2}\|w_1\|_{L^2}+\|cw_1+w_2\|_{L^2}\right)
	\\
	\lesssim &~{} | c' | \gar^{-1}  B^{3/2}  \left( \|  cw_1+w_2\|_{L^2}+\|cw_1+w_2\|_{L^2}^2+\|w_1\|_{L^2}^2\right).
	\end{aligned}
	\end{equation}
	For $N_{6,3}$, we follow the same strategy consider for $N_{5,3}$ \eqref{eq:N53}. Let us consider   the auxiliary function $\phi=\sech(\gamma \cdot/2)$ and by H\"older's inequality, one get
	\[
	\begin{aligned}
	|N_{6,3} |
	=&~{} \left| (\rho'-c) \int   \psi_{A,B} \partial_x z_2 \Opg \partial_x \left[\partial_x(f'(\Qc)) v_1 \right]
	\right|
	\\
	\lesssim &~{}  | \rho'-c| \|   \chi_A \phi \vB \partial_x z_2\|_{L^2} 
	\|\phi^{-1}\chi_A \Opg \partial_x \left(\partial_x(f'(\Qc)) v_1 \right) \|_{L^2}.
	\end{aligned}
	\]
	Then, using a similar bound to \eqref{eq:vA_Qc}, applying \eqref{eq:cosh_kink}, \eqref{eq:sech_v1_w1} and \eqref{eq:CZzx},  we obtain
	\begin{equation}\label{eq:N63}
	\begin{aligned}
	|N_{6,3} |
	\lesssim &~{}  | \rho'-c| \|   \chi_A \zB \partial_x z_2\|_{L^2} 
	\|\phi^{-1}\partial_x^2(f'(\Qc)) v_1 + \phi^{-1} \partial_x(f'(\Qc)) \partial_x v_1 \|_{L^2}
	\\
	\lesssim &~{}  | \rho'-c| \|   \chi_A \zB \partial_x z_2\|_{L^2} 
	\big(
	\| w_1 \|_{L^2}+\| \partial_x w_1 \|_{L^2}\big)
	\\
	\lesssim &~{}  | \rho'-c| \big( \|  \partial_x \eta_2\|_{L^2}^2+B^{-1}\|\eta_2\|_{L^2}^2+(AB)^{-1}\|cw_1+w_2\|_{L^2}^2+\| w_1 \|_{L^2}^2+\| \partial_x w_1 \|_{L^2}^2\big).
	\end{aligned}
	\end{equation}
	Finally, in the case of $N_8$, we consider the following decomposition
	\[
	\begin{aligned}
	 N_8=&~{}\int   \psi_{A,B} \partial_x z_1 \partial_x M_{2,c} 
	 \\
	 =&~{}\int   \psi_{A,B} \partial_x z_1 \partial_x \bigg(- c' \Opg (\Qc+v_1) +(\rho'-c)  \partial_x z_2 \bigg)
	 \\
	 =&~{} -c' \int   \psi_{A,B} \partial_x z_1   \Opg \partial_x \left(\Qc+v_1\right)	+(\rho'-c)\int   \psi_{A,B} \partial_x z_1  \partial_x^2 z_2
	=: N_{8,1}+N_{8,2}.
	\end{aligned}
	\]
	For $N_{8,1}$, using H\"older's inequality jointly to \eqref{eq:chiA_zA} and \eqref{eq:sech_Opg}, one can see
	\[
	\begin{aligned}
	|N_{8,1} |
	=&~{} \left| 
	-c' \int   \psi_{A,B} \partial_x z_1   \Opg \partial_x \left(\Qc+v_1\right)
	\right|
	\\
	\lesssim &~{} | c'| \|  \chi_A \vB \partial_x z_1\|_{L^2}   \|\chi_A \Opg \partial_x \left(\Qc+v_1\right)\|_{L^2}
	\\
	\lesssim &~{} | c'| B \|  \zA \partial_x z_1\|_{L^2}   \|\zA \Opg \partial_x \left(\Qc+v_1\right)\|_{L^2}
		\\
	\lesssim &~{} | c'| B \|  \zA \partial_x z_1\|_{L^2}   (1+\|\zA \partial_x v_1\|_{L^2})	\lesssim  | c'| B \|  \zA \partial_x z_1\|_{L^2}   (1+\|\zA \partial_x v_1\|_{L^2}).
	\end{aligned}
	\]
	We conclude applying \eqref{eq:z1_w1} and \eqref{eq:sech_v1_w1}, 
		\begin{equation}\label{eq:N81}
	\begin{aligned}
	|N_{8,1} |
	\lesssim &~{} | c'| B\gar^{-1}(\|w_1\|_{L^2}+\|\partial_x w_1\|_{L^2})    (1+\|w_1\|_{L^2}+\|\partial_x w_1\|_{L^2})
	\\
	\lesssim &~{} | c'| B\gar^{-1}(\|w_1\|_{L^2}+\|\partial_x w_1\|_{L^2}+\|cw_1+w_2\|_{L^2}+\|w_1\|_{L^2}^2+\|\partial_x w_1\|_{L^2}^2+\|cw_1+w_2\|_{L^2}^2).
	\end{aligned}
	\end{equation}
	Lastly, we have that
	\[
	N_{6,2}+N_{8,2}= -(\rho'-c)\int   \psi_{A,B}' \partial_x z_1  \partial_x z_2,
	\]
	considering \eqref{eq:psi_deriv}, \eqref{eq:vB_psi_prop} and \eqref{eq:chiA_zA}; lastly applying \eqref{eq:z1_w1} with \eqref{eq:z2_w2}, one gets
	\[ \label{eq:N62+N82}
	\begin{aligned}	
	|N_{6,2}+N_{8,2}|
	\lesssim&~{} |\rho'-c|    \left( 
							\| \chi_A \zB\partial_x z_1 \|_{L^2} \| \chi_A \zB\partial_x z_2 \|_{L^2}
							+ A^{-1} B \| \zA^2\partial_x z_1\|_{L^2} \| \zA^2 \partial_x z_2\|_{L^2} \right)
	\\
	\lesssim&~{} |\rho'-c|  \| \chi_A \zB\partial_x z_1 \|_{L^2} \| \chi_A \zB\partial_x z_2 \|_{L^2}
			\\&	
			+ |\rho'-c|   A^{-1} B\gar^{-3/2} ( \| w_1\|_{L^2}+ \| \partial_x w_1\|_{L^2}+ \| cw_1+w_2\|_{L^2}) \| cw_1+w_2\|_{L^2}.				.
	\end{aligned}
	\] 
	Finally, by \eqref{eq:CZzx}, \eqref{eq:z1_w1}, \eqref{eq:z2_w2} and \eqref{eq:B_gar}, we conclude
	\begin{equation}\label{eq:N62+N82}
	\begin{aligned}	
	|N_{6,2}+N_{8,2}|
	\lesssim&~{} |\rho'-c|  \big( \| \partial_x \et_1\|_{L^2}^2+B^{-1}\| \et_1\|_{L^2}^2
	+ (AB)^{-1}\| \zeta_{B}z_1\|_{L^2}^2
						+\| \partial_x \et_2\|_{L^2}^2+B^{-1}\| \et_2\|_{L^2}^2
	+ (AB)^{-1}\| \zeta_{B}z_2\|_{L^2}^2
							\big)
			\\&	
			+ |\rho'-c|   A^{-1} B\gar^{-3/2} ( \| w_1\|_{L^2}^2+ \| \partial_x w_1\|_{L^2}^2+ \| cw_1+w_2\|_{L^2}^2)
	\\
		\lesssim&~{} |\rho'-c|  \big( \| \partial_x \et_1+c\partial_x \et_2\|_{L^2}^2
		+\| \partial_x \et_2\|_{L^2}^2
		+B^{-1}\| \et_1+c\eta_2\|_{L^2}^2+B^{-1}\| \et_2\|_{L^2}^2
							\big)
			\\&	
			+ |\rho'-c|   (A^{-1} B\gar^{-3/2}+ (AB)^{-1}\gar^{-1}) ( \| w_1\|_{L^2}^2+ \| \partial_x w_1\|_{L^2}^2+ \| cw_1+w_2\|_{L^2}^2)
	\\
	\lesssim&~{} |\rho'-c|  \big( \| \partial_x \et_1+c\partial_x \et_2\|_{L^2}^2
		+\| \partial_x \et_2\|_{L^2}^2
		+B^{-1}\| \et_1+c\eta_2\|_{L^2}^2+B^{-1}\| \et_2\|_{L^2}^2
							\big)
			\\&	
			+ |\rho'-c|  B^{-3} ( \| w_1\|_{L^2}^2+ \| \partial_x w_1\|_{L^2}^2+ \| cw_1+w_2\|_{L^2}^2).
	\end{aligned}
	\end{equation}
	Collecting \eqref{eq:N61}, \eqref{eq:N63}, \eqref{eq:N81} and \eqref{eq:N62+N82} and after rearrange the terms, we conclude
	\begin{equation}\label{eq:N6+N8}
	\begin{aligned}
	|N_6+N_8|
	\lesssim& ~{} 
	 | c' | \gar^{-1}  B^{3/2}  \left( \|  cw_1+w_2\|_{L^2}+\|cw_1+w_2\|_{L^2}^2+\|w_1\|_{L^2}^2\right)
	 \\&
	 + | \rho'-c| \big( \|  \partial_x \eta_2\|_{L^2}^2+B^{-1}\|\eta_2\|_{L^2}^2+(AB)^{-1}\|cw_1+w_2\|_{L^2}^2+\| w_1 \|_{L^2}^2+\| \partial_x w_1 \|_{L^2}^2\big)
	 \\&
	 +| c'| B\gar^{-1}(\|w_1\|_{L^2}+\|\partial_x w_1\|_{L^2}+\|cw_1+w_2\|_{L^2}+\|w_1\|_{L^2}^2+\|\partial_x w_1\|_{L^2}^2+\|cw_1+w_2\|_{L^2}^2)
	 \\&
	 +|\rho'-c|  \big( \| \partial_x \et_1+c\partial_x \et_2\|_{L^2}^2
		+\| \partial_x \et_2\|_{L^2}^2
		+B^{-1}\| \et_1+c\eta_2\|_{L^2}^2+B^{-1}\| \et_2\|_{L^2}^2
							\big)
	\\&	
			+ |\rho'-c|  B^{-3} ( \| w_1\|_{L^2}^2+ \| \partial_x w_1\|_{L^2}^2+ \| cw_1+w_2\|_{L^2}^2).
	\\
		\lesssim& ~{} 
	 |\rho'-c|  \big( \| \partial_x \et_1+c\partial_x \et_2\|_{L^2}^2
		+\| \partial_x \et_2\|_{L^2}^2
		+B^{-1}\| \et_1+c\eta_2\|_{L^2}^2+B^{-1}\| \et_2\|_{L^2}^2
							\big)
	\\&	
	+ |\rho'-c|  B^{-3} ( \| w_1\|_{L^2}^2+ \| \partial_x w_1\|_{L^2}^2+ \| cw_1+w_2\|_{L^2}^2)
	\\&
	+| c'| {B^{3/2}}\gar^{-1}(\|w_1\|_{L^2}+\|\partial_x w_1\|_{L^2}+\|cw_1+w_2\|_{L^2}+\|w_1\|_{L^2}^2+\|\partial_x w_1\|_{L^2}^2+\|cw_1+w_2\|_{L^2}^2).
	\end{aligned}
	\end{equation}
	
	\subsection{End of proof} We now end the proof of  Proposition \ref{prop:N}.
	Recalling that  $\frac{d}{dt}\N$ holds \eqref{eq:dtN_N}, and replacing $N_1$ and $N_2$ obtained in \eqref{eq:N1} and \eqref{eq:N2}, respectively. We get
	\begin{equation*}
\begin{aligned}
	\dfrac{d}{dt} \N
	=&
	-\frac12 \int \left( 
				(\partial_x \et_1+c\partial_x \et_2)^2 
				+3(\partial_x^2 \et_2)^2
				+\big( 1-c^2-f'(\Qc) \big) (\partial_x \et_2)^2 
			 \right)
	\\&
	-\frac12  \int   \left( P_c' \frac{\zB'}{\zB}+P_c\frac{\zB''}{\zB}\right) \et_2^2 
	-\frac12 \int  \mathcal{E}_1(P_c)\zB^2 z_2^2
	+N_{\et}+N_{z}+N_{z,1}+ \sum_{j=3}^8 N_{j}.
	\end{aligned}
	\end{equation*}
	recalling that $P_c=1-c^2-f'(\Qc)$ and where $N_{\et}, ~N_{z},~N_{1,3}$ are given in \eqref{eq:N1_E} and \eqref{eq:N1_split}.
	
	\medskip
	
	Now, gathering and rearranging the error terms of $N_1$ in \eqref{eq:N1et_E}, \eqref{eq:N1z_E}, and \eqref{eq:N1z1_E}, along with those from \eqref{eq:N2_E}, \eqref{eq:N3}, \eqref{eq:N4}, \eqref{eq:N5+N7}, and \eqref{eq:N6+N8}, into the previous expression, and recalling 
\eqref{eq:B_gar}, we obtain
\[
\begin{aligned}
	\dfrac{d}{dt} \N
\leq~{}&
	-\frac12 \int \left( 
				(\partial_x \et_1+c\partial_x \et_2)^2 
				+3(\partial_x^2 \et_2)^2
			 \right)
+C_3 B^{-4}  \|\partial_x^2 \eta_2\|_{L^2}^2
+C_3 \big( \| \et_2\|_{L^2}^2+\| \partial_x \et_2\|_{L^2}^2\big)
\\&
+C_3B^{-1}				\|\et_1+c \et_2\|_{L^2}^2
+	C_3 |\rho'-c|  \big( \| \partial_x \et_1+c\partial_x \et_2\|_{L^2}^2
		+\| \partial_x \et_2\|_{L^2}^2
		+B^{-1}\| \et_1+c\eta_2\|_{L^2}^2+B^{-1}\| \et_2\|_{L^2}^2
							\big)
\\&
	+ C_3( |\rho'-c|  B^{-3}+B^{-1})\big( 
					\|  w_1\|_{L^2}^2 
					+\|  \partial_x w_1\|_{L^2}^2
					+ \|cw_1+w_2 \|_{L^2}^2
		\big)
	\\&
	+C_3| c'| B^{11/2} (\|w_1\|_{L^2}+\|\partial_x w_1\|_{L^2}+\|cw_1+w_2\|_{L^2}+\|w_1\|_{L^2}^2+\|\partial_x w_1\|_{L^2}^2+\|cw_1+w_2\|_{L^2}^2).
\end{aligned}
\]

Recalling \eqref{eq:c'}, we observe that $\|\bd{w}\|_{H^1\times L^2}\leq \|\bd{u}\|_{H^1\times L^2}\leq \delta$, moreover $|c'|\leq \delta^2$ and $|\rho'-c|\leq \delta$, then
\[
\begin{aligned}
	\dfrac{d}{dt} \N
\leq~{}&
	-\frac14 \int \left( 
				(\partial_x \et_1+c\partial_x \et_2)^2 
				+3(\partial_x^2 \et_2)^2
			 \right)
+C_3 \big( \| \et_2\|_{L^2}^2+\| \partial_x \et_2\|_{L^2}^2\big)
\\&
+	C_3 B^{-1} \| \et_1+c\eta_2\|_{L^2}^2
	+ C_3B^{-1}\big( 
					\|  w_1\|_{L^2}^2 
					+\|  \partial_x w_1\|_{L^2}^2
					+ \|cw_1+w_2 \|_{L^2}^2
		\big)
	\\&
	+C_3| c'| B^{11/2} (\|w_1\|_{L^2}+\|\partial_x w_1\|_{L^2}+\|cw_1+w_2\|_{L^2}).
\end{aligned}
\]
	 for some fix constant $C_3>0$.

\section{Coercivity estimates}\label{Sec:6}
In this section, we deal with the uncontrolled terms in \eqref{cota_I}. 

\subsection{First Coercivity estimate}
First we present a coercivity estimate for the centered variable $(v_1,v_2)$ under  \eqref{eq:orto}.
\begin{lem}\label{lem6p1}
There exists $\mu_0>0$ such that if \eqref{eq:orto} holds and $\gamma>0$ sufficiently small, one has
	\[
	\mu_0 \int\sech(\gamma x)  [v_1^2+(\partial_x v_1)^2] \leq  \int  \sech(\gamma x)  \left((\partial_x v_1)^2+(1-c^2-f'(\Qc))v_1^2 +(v_2+cv_1)^2\right).
	\]	
\end{lem}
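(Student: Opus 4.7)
The plan is to argue by contradiction and compactness, reducing the weighted coercivity to the unweighted version in Lemma~\ref{lem2p2}(v), and absorbing the resulting perturbative errors via the smallness of $\gamma=\sqrt{1-c^2}$.

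First I would conjugate the inequality by the square-root of the weight: setting $\phi(y) := \sech^{1/2}(\gamma y)$, $w_1 := \phi v_1$, $w_2 := \phi(v_2 + c v_1)$, a single integration by parts gives
\[
\int \sech(\gamma y)(\partial_y v_1)^2 \;=\; \|\partial_y w_1\|_{L^2}^2 + \int \frac{\phi''}{\phi}\,w_1^2, \qquad \left|\frac{\phi''}{\phi}\right|\lesssim \gamma^2 \text{ uniformly on } \mathbb R.
\]
Hence the LHS of the lemma equals $\|w_1\|_{H^1(\mathbb R)}^2$ up to $O(\gamma^2)\|w_1\|_{L^2}^2$, and the RHS equals $\langle \LL w_1, w_1\rangle + \|w_2\|_{L^2}^2$ up to $O(\gamma^2)\|w_1\|_{L^2}^2$, where $\LL=-\partial_y^2+(1-c^2)-f'(\Qc)$. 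Assuming no $\mu_0>0$ works, a normalized sequence $(v_1^n, v_2^n)$ then yields $(w_1^n, w_2^n)$ uniformly bounded in $H^1 \times L^2$; by Rellich--Kondrachov plus the exponential decay provided by $\phi^{-1}$, up to a subsequence $w_1^n \to \bar w_1$ strongly in $L^2(\mathbb R)$ and weakly in $H^1$, and $w_2^n \rightharpoonup \bar w_2$ in $L^2$.

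Weak lower semicontinuity of the positive quadratic forms in $\partial_y w_1^n$ and $w_2^n$, together with strong $L^2$-convergence of $w_1^n$ to handle the bounded multiplier $f'(\Qc)$, passes the inequality to the limit:
\[
\langle \LL \bar w_1, \bar w_1\rangle + \|\bar w_2\|_{L^2}^2 \;\leq\; O(\gamma^2)\,\|\bar w_1\|_{L^2}^2,
\]
while the orthogonalities \eqref{eq:orto} pass to the limit as $L^2$-orthogonalities of $(\bar w_1, \bar w_2)$ against the test profiles $\phi^{-1}Q_c$ and $\phi^{-1}Q_c'$, which lie in $L^2(\mathbb R)$ because $Q_c \lesssim e^{-\gamma|y|}$ beats $\phi^{-1}\lesssim e^{\gamma|y|/2}$.

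The main obstacle is to deduce $(\bar w_1, \bar w_2) \equiv 0$ from this limit, since the resulting orthogonalities are \emph{weighted} versions of those in Lemma~\ref{lem2p2}(v). I would observe that $\bar{\bd V} := (\bar w_1,\, \bar w_2 - c \bar w_1) \in H^1 \times L^2$ satisfies $\langle \bd{L} \bar{\bd V}, \bar{\bd V}\rangle = \langle \LL \bar w_1, \bar w_1\rangle + \|\bar w_2\|_{L^2}^2$, and that its orthogonalities differ from those required in Lemma~\ref{lem2p2}(v) only by replacing $\Qc, \Qc'$ with $\phi^{-1}\Qc, \phi^{-1}\Qc'$. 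My plan is to exploit the rescaled variable $z = \gamma y$, in which the profiles $\Qc, \Qc'$ and the potential $f'(\Qc)$ all live on a fixed $O(1)$ scale while $\phi^{-1}$ becomes $\cosh^{1/2}(z) = 1 + O(z^2)$ near the origin, so that the weighted and unweighted orthogonalities coincide in the $\gamma=0$ limit. A modulation correction---subtracting from $\bar{\bd V}$ a linear combination $\alpha \bd{T}_c + \beta \bd{\Lambda Q}_c$ chosen to restore the exact orthogonalities of Lemma~\ref{lem2p2}(v)---should then be of size $o_\gamma(1)\|\bar{\bd V}\|_{H^1\times L^2}$, and plugging the corrected vector into \eqref{coer0} and tracking the errors should yield $(c_0 - o_\gamma(1))\|\bar{\bd V}\|_{H^1\times L^2}^2 \leq O(\gamma^2)\|\bar w_1\|_{L^2}^2$. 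For $\gamma$ sufficiently small this forces $\bar{\bd V} \equiv 0$, contradicting $\|w_1^n\|_{H^1} \to 1$.
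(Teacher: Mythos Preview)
Your overall strategy---conjugate by $\phi=\sech^{1/2}(\gamma y)$ and reduce to the unweighted coercivity \eqref{coer0}---is the standard argument the paper invokes, and is sound in outline. The gap is in your claim that the modulation correction is $o_\gamma(1)$. You identify the weight parameter with the soliton's $\gamma=\sqrt{1-c^2}$ and argue via the rescaling $z=\gamma y$; but in that variable $Q_c(y)=\gamma^{2/(p-1)}Q(z)$ and $\phi^{-1}(y)=\cosh^{1/2}(z)$ are both $O(1)$ profiles \emph{independent of $\gamma$}, so
\[
\frac{\|(\phi^{-1}-1)Q_c\|_{L^2}^2}{\|Q_c\|_{L^2}^2}
=\frac{\int(\cosh^{1/2}(z)-1)^2 Q(z)^2\,dz}{\int Q(z)^2\,dz}
\]
is a fixed positive constant. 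Hence your weighted orthogonalities $\langle\phi^{-1}\bd{T}_c,\bd{V}\rangle=\langle\phi^{-1}\bd{J}\bd{Q}_c,\bd{V}\rangle=0$ differ from those in \eqref{eq:orto} by an $O(1)$ amount, and the correction $\alpha\bd{T}_c+\beta\bd{\Lambda Q}_c$ is of size $O(1)\|\bar{\bd V}\|$, not $o_\gamma(1)\|\bar{\bd V}\|$; plugging into \eqref{coer0} then yields no contradiction.

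The resolution is that the weight parameter is a \emph{separate} small number, not the soliton's $\gamma$: in the application (proof of Lemma~\ref{prop:coer_v1}) the lemma is invoked with weight $\sech(\gamma\ell y)$, $\ell$ small, while by orbital stability $\gamma=\sqrt{1-c^2}$ stays bounded below. Writing $\kappa$ for the weight parameter, one has $|\cosh^{1/2}(\kappa y)-1|\lesssim \kappa^2 y^2$ on $\{|y|\leq\kappa^{-1}\}$, and since $Q_c$ decays at the \emph{fixed} exponential rate $\gamma$ one gets $\|(\phi^{-1}-1)Q_c\|_{L^2}\lesssim(\kappa/\gamma)^2\|Q_c\|_{L^2}$ directly. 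With this reading the compactness layer is unnecessary: your conjugation already shows that the right-hand side equals $\langle\bd{L}\bd{V},\bd{V}\rangle+O(\kappa^2)\|\bd{V}\|_{H^1\times L^2}^2$ for $\bd{V}=\phi\bd{v}$, with $\bd{V}$ satisfying orthogonalities within $O((\kappa/\gamma)^2)$ of \eqref{eq:orto}, and a direct perturbation of \eqref{coer0} finishes.
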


\begin{proof} This is a standard consequence of \eqref{coer0} and the fact that $\gamma$ is sufficiently small. For a similar proof, see e.g. \cite{CMPS}.
\end{proof} 

	\subsection{Second coercivity estimate}
	In this subsection, we will focus on the last term on the RHS of the previous coercivity.
	\begin{lem}\label{prop:coer_v2}
	One has
	\begin{equation}\label{lem6p2}
	\begin{aligned}
	 \int \sech^2\left(\frac{\gamma y}{4}\right) (v_2+cv_1)^2 
	 	\lesssim&~{}
	 \| w_2+c w_1\|_{L^2} \| \et_2\|_{L^2}+ B^{-4} \| w_2+c w_1\|_{L^2}^2
	+ B^{-4} \big( \| \partial_x^2 \et_2\|_{L^2}^2+\| \partial_x \et_2\|_{L^2}^2+\| \et_2\|_{L^2}^2 \big).
	\end{aligned}
	\end{equation}
	\end{lem}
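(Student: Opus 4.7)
The plan is to exploit the definition \eqref{def_z} of the transformed variable $z_2$, which gives the pointwise identity $cv_1+v_2 = (1-\varepsilon\partial_x^2)z_2 = z_2 - \varepsilon z_{2,xx}$. Setting $h(y) := \sech(\gamma y/4)$, I would expand
\begin{equation*}
\int h^2 (v_2+cv_1)^2 = \int h^2 (v_2+cv_1)\, z_2 \;-\; \varepsilon \int h^2 (v_2+cv_1)\, z_{2,xx},
\end{equation*}
so that one factor of $z_2$ is differentiated twice but paid for with the small parameter $\varepsilon = B^{-4}$. Every inequality below follows this decomposition.

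For the first integral, Cauchy--Schwarz gives $\|h(v_2+cv_1)\|_{L^2}\|hz_2\|_{L^2}$. The first factor is $\lesssim \|w_2+cw_1\|_{L^2}$ by \eqref{eq:sech_v2_w2} (applied with $K = 4/\gamma \in [1,A]$). For the second, I would use that $h \leq C\zeta_B$ pointwise on $\mathbb{R}$, since $\gamma/4 > 1/B$ when $B$ is large and $\gamma$ is bounded below by Theorem \ref{MT1}. Splitting the integral into $\{|y|<A\}$ (where $\chi_A = 1$, so $|hz_2|\leq C\chi_A\zeta_B |z_2| = C|\eta_2|$ pointwise) and $\{|y|\geq A\}$ (where $h^2\leq 4e^{-\gamma A/2}$ is exponentially small against $\|z_2\|_{L^2}\leq\|cv_1+v_2\|_{L^2}\lesssim \delta$) yields $\|hz_2\|_{L^2}\lesssim \|\eta_2\|_{L^2} + e^{-c/\delta}$. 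Taking the product produces precisely the leading cross-term $\|w_2+cw_1\|_{L^2}\|\eta_2\|_{L^2}$ of \eqref{lem6p2}, modulo an exponentially small remainder.

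For the second integral, Cauchy--Schwarz again gives $\varepsilon\|h(v_2+cv_1)\|_{L^2}\|hz_{2,xx}\|_{L^2}$, and the same localisation reduces $\|hz_{2,xx}\|_{L^2}$ to $\|\chi_A\zeta_B z_{2,xx}\|_{L^2}$ up to an error of size $\varepsilon^{-1}e^{-\gamma A/4}\cdot\delta$, which is exponentially tiny. The identity \eqref{eq:CZzxx} combined with \eqref{eq:z2_w2} (which yields $\|\zeta_B z_2\|_{L^2}\lesssim \|cw_1+w_2\|_{L^2}$ and $\|\zeta_B z_{2,x}\|_{L^2}\lesssim \varepsilon^{-1/2}\|cw_1+w_2\|_{L^2}$) then bounds
\begin{equation*}
\|\chi_A\zeta_B z_{2,xx}\|_{L^2}^2 \lesssim \|\partial_x^2\eta_2\|_{L^2}^2 + \|\partial_x\eta_2\|_{L^2}^2 + \|\eta_2\|_{L^2}^2 + (AB\varepsilon)^{-1}\|cw_1+w_2\|_{L^2}^2.
\end{equation*}
A final application of Young's inequality distributes $\varepsilon\|cw_1+w_2\|_{L^2}(\|\partial_x^2\eta_2\|_{L^2}+\|\partial_x\eta_2\|_{L^2}+\|\eta_2\|_{L^2})$ into $\tfrac{\varepsilon}{2}(\|\partial_x^2\eta_2\|_{L^2}^2+\|\partial_x\eta_2\|_{L^2}^2+\|\eta_2\|_{L^2}^2) + \tfrac{3\varepsilon}{2}\|cw_1+w_2\|_{L^2}^2$, while the residual $\varepsilon(AB\varepsilon)^{-1/2}\|cw_1+w_2\|_{L^2}^2 = A^{-1/2}B^{-5/2}\|cw_1+w_2\|_{L^2}^2 \sim \delta^{3/4}\|cw_1+w_2\|_{L^2}^2$ is controlled by $B^{-4}\|cw_1+w_2\|_{L^2}^2 \sim \delta^{2/5}\|cw_1+w_2\|_{L^2}^2$ since $3/4>2/5$ under the scalings \eqref{eq:A}--\eqref{eq:B_gar}. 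Substituting $\varepsilon = B^{-4}$ one reads off precisely \eqref{lem6p2}.

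The main obstacle is purely bookkeeping: one must check that every off-support or weight-differentiation error coming from the replacements $hz_2\leadsto \eta_2$ and $\chi_A\zeta_B z_{2,xx}\leadsto \partial_x^2\eta_2$, together with the derivative losses $\|\zeta_B z_{2,x}\|_{L^2}\lesssim \varepsilon^{-1/2}\|cw_1+w_2\|_{L^2}$ and $\|z_{2,xx}\|_{L^2}\lesssim \varepsilon^{-1}\|cv_1+v_2\|_{L^2}$, fits under the coefficient $B^{-4}$ advertised in \eqref{lem6p2}. The hierarchy $1\ll B\ll B^{10}\ll A$ of \eqref{eq:scales} together with the specific calibration $\varepsilon = B^{-4}$ of \eqref{eq:B_gar} is exactly what forces every mixed power of $\delta$ appearing in the error terms to be at least $\delta^{2/5}$; no term escapes control. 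No coercivity or spectral input is needed in this lemma, which is essentially a smoothing/change-of-variable estimate relying only on Lemma \ref{lem:estimates_IOp} and its corollaries.
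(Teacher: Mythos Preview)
Your approach is essentially identical to the paper's: both start from the identity $v_2+cv_1=(1-\varepsilon\partial_x^2)z_2$, split into the $z_2$ piece and the $\varepsilon\partial_x^2 z_2$ piece, localise via $\chi_A$ to recover $\eta_2$, and invoke \eqref{eq:CZzxx} together with \eqref{eq:z2_w2} for the second-derivative term. The paper calls the two pieces $L_1$ and $L_2$ and inserts $1=(1-\chi_A^2)+\chi_A^2$ \emph{before} applying Cauchy--Schwarz, but the mechanism is the same.

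There is one small bookkeeping slip. In your far-field estimates (the region $|y|\geq A$) you bound $\|z_2\|_{L^2}\lesssim\delta$ and $\|z_{2,xx}\|_{L^2}\lesssim\varepsilon^{-1}\delta$; after Cauchy--Schwarz this yields remainders of the form $e^{-c/\delta}\|w_2+cw_1\|_{L^2}$, which are \emph{linear} in the perturbation and therefore cannot be absorbed into the purely quadratic right-hand side of \eqref{lem6p2} (there is no lower bound on $\|w_2+cw_1\|_{L^2}$). The paper avoids this by keeping the weighted norms: on the $(1-\chi_A^2)$ piece one writes $\sech^2(\gamma y/4)\lesssim \sech(\gamma A/4)\,\zeta_A^2$ and then uses $\|\zeta_A z_2\|_{L^2}\lesssim\|cw_1+w_2\|_{L^2}$ and $\|\zeta_A\partial_x^2 z_2\|_{L^2}\lesssim\varepsilon^{-1}\|cw_1+w_2\|_{L^2}$ from \eqref{eq:z2_w2}, producing a quadratic error $\sech(\gamma A/4)\|cw_1+w_2\|_{L^2}^2$ that is trivially $\lesssim B^{-4}\|cw_1+w_2\|_{L^2}^2$. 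With that one-line adjustment your argument is correct and matches the paper's.
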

	\begin{proof}
	Recalling the definition of $z_2$ on \eqref{eq:z_cv}, we have
	\[
	\begin{aligned}
	 \int \sech^2 \left( \frac{\gamma y}{4} \right) (v_2+c v_1)^2
	 =& \int \sech^2 \left( \frac{\gamma y}{4} \right) (v_2+c v_1) (1-\gar \partial_x^2) z_2
	 \\
	 =&~{}  \int \sech^2 \left( \frac{\gamma y}{4} \right)(1-\chi_A^2+\chi_A^2) (v_2+c v_1)  z_2
	 \\&
	 -\gar \int \sech^2 \left( \frac{\gamma y}{4} \right) (v_2+c v_1)(1-\chi_A^2+\chi_A^2)  \partial_x^2 z_2
	 =: L_1  +L_2.
	\end{aligned}
	\]
	For $L_1$, by Holder's inequality, \eqref{eq:chiA_zA} and \eqref{eq:z2_w2}, joint to the definition of $w_i$ and $\et_i$, we get
	 \begin{equation}\label{eq:L1}
	\begin{aligned}
	|L_1| 
	\leq &\left| \int \sech^2 \left( \frac{\gamma y}{4} \right)\zB^{-1} \chi_A (v_2+c v_1) \zB \chi_A  z_2\right|
	+ \left|\int \sech^2 \left( \frac{\gamma y}{4} \right)\zA^{-2} (1-\chi_A^2) \zA(v_2+c v_1)  \zA z_2 \right|
	\\
	\lesssim&~{}   \| w_2+c w_1\|_{L^2} \| \et_2\|_{L^2}
	+  \sech \left( \frac{\gamma A}{4} \right) \| w_2+c w_1\|_{L^2}^2 .
	\end{aligned}
	\end{equation}
	 Similarly for $L_2$, using H\"older's inequality and by the definition of $w_i$ and $\et_i$, we have
		\[
	\begin{aligned}
	|L_2|
	\leq&~{}	  
	 \gar \left| \int \sech^2 \left( \frac{\gamma y}{4} \right)\zB^{-1} \chi_A (v_2+c v_1) \chi_A \zB \partial_x^2 z_2 \right|
	 +\gar \left| \int \sech^2 \left( \frac{\gamma y}{4} \right) (v_2+c v_1)(1-\chi_A^2 ) \partial_x^2 z_2 \right| 
	 \\
	  \lesssim&~{} 
	 \gar \| \chi_A (v_2+c v_1)\|_{L^2} \|\chi_A \zB \partial_x^2 z_2\|_{L^2}
	  +\gar\sech \left( \frac{\gamma A}{4} \right) \| w_2+c w_1 \|_{L^2} \| \zA(1-\chi_A^2 ) \partial_x^2 z_2\|_{L^2}.
	\end{aligned}
	\]
	Applying \eqref{eq:chiA_zA}, \eqref{eq:z2_w2} together with \eqref{eq:sech_v2_w2}; Cauchy Schwarz inequality and \eqref{eq:CZzxx}, we conclude that
	\begin{equation}\label{eq:L2}
	\begin{aligned}
	|L_2|
	  \lesssim&~{} 
	 \gar \|  w_2+c w_1\|_{L^2} \|\chi_A \zB \partial_x^2 z_2\|_{L^2}+ \sech \left( \frac{\gamma A}{4} \right) \| w_2+c w_1\|_{L^2}^2
	 \\
	 \lesssim&~{} 
	 \gar \|\chi_A \zB \partial_x^2 z_2\|_{L^2}^2+ \left(\sech \left( \frac{\gamma A}{4} \right)+\gar\right) \| w_2+c w_1\|_{L^2}^2
	 \\
	  \lesssim&~{} 
	 \gar (  \| \partial_x^2 \et_2\|_{L^2}^2+B^{-1}\| \partial_x \et_2\|_{L^2}^2+B^{-1}\| \et_2\|_{L^2}^2 
	+(AB)^{-1}  \gar^{-1}\|w_2+c w_1\|_{L^2}^2)
	+\left( \sech \left( \frac{\gamma A}{4} \right) +\gar \right)\| w_2+c w_1\|_{L^2}^2.
	\end{aligned}
	\end{equation}
	Collecting $L_1$ in \eqref{eq:L1} and $L_2$  in \eqref{eq:L2}, and recalling \eqref{eq:B_gar}, we obtain
	\begin{equation*}
	\begin{aligned}
	\int \sech^2 \left( \frac{\gamma y}{4} \right) (v_2+c v_1)^2
	\lesssim&~{}
	 \| w_2+c w_1\|_{L^2} \| \et_2\|_{L^2}+ B^{-4} \| w_2+c w_1\|_{L^2}^2
	\\&
	+ B^{-4} \big( \| \partial_x^2 \et_2\|_{L^2}^2+\| \partial_x \et_2\|_{L^2}^2+\| \et_2\|_{L^2}^2 \big).
	\end{aligned}
	\end{equation*}
	This concludes the proof of \eqref{lem6p2}. 
	\end{proof}

\subsection{Third coercivity estimate}

Now we present a decomposition in terms of the localized variables $\bd w=(w_1,w_2)$ and $\bd \et =(\eta_1,\eta_2)$ for the term	$\int  \sech\left(\frac{\gamma}{2} x\right)  v_1^2$.

	\begin{lem}\label{prop:coer_v1}
	One has
	\[
	\begin{aligned}
	\int  \sech(\gamma x)  v_1^2 \lesssim &~{} 
	\| w_1\|_{L^2} \big(\|\et_1+c\eta_2\|_{L^2} + \| \et_2\|_{L^2} \big)
	+ \| w_2+c w_1\|_{L^2} \| \et_2\|_{L^2}
	\\&
	+B^{-4} \big( \|w_1 \|_{L^2}^2 + \| \partial_x w_1 \|_{L^2}^2 +\| cw_1+w_2\|_{L^2}^2\big)
	 \\&
	+B^{-4} \big(		\| \partial_x \et_1+c \partial_x \et_2\|_{L^2}^2+\| \et_1+c\eta_2 \|_{L^2}^2
				+\| \partial_x^2 \et_2\|_{L^2}^2
	 			+\| \partial_x \et_2\|_{L^2}^2+\| \et_2\|_{L^2}^2
				\big)	
	\end{aligned}
	\]
	\end{lem}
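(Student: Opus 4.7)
The strategy mirrors that of Lemma \ref{prop:coer_v2}, but requires an additional coercivity reduction to bring the full quadratic form $v_1\mathcal L v_1$ into play. First, apply Lemma \ref{lem6p1} with the weight $\sech(\gamma x)$ to obtain
\[
\int \sech(\gamma x)\, v_1^2 \;\lesssim\; \int \sech(\gamma x)\bigl[(\partial_x v_1)^2 + (1-c^2-f'(\Qc))v_1^2\bigr] + \int \sech(\gamma x)\,(v_2+cv_1)^2.
\]
The last integral is controlled directly by Lemma \ref{prop:coer_v2}, using $\sech(\gamma x)\leq C\sech^2(\gamma x/4)$; this already produces the $\|w_2+cw_1\|_{L^2}\|\et_2\|_{L^2}$, $B^{-4}\|cw_1+w_2\|_{L^2}^2$ and $B^{-4}(\|\partial_x^2\et_2\|^2+\|\partial_x\et_2\|^2+\|\et_2\|^2)$ contributions claimed in the statement.

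For the first integral, an integration by parts yields
\[
\int \sech(\gamma x)\bigl[(\partial_x v_1)^2 + (1-c^2-f'(\Qc))v_1^2\bigr] = \int \sech(\gamma x)\, v_1\, \mathcal L v_1 + \tfrac12 \int \sech''(\gamma x)\, v_1^2,
\]
and $|\sech''(\gamma x)|\leq \gamma^2 \sech(\gamma x)$ allows absorbing the second term into the LHS for $\gamma$ sufficiently small. Next, the key identity, obtained directly from \eqref{eq:z_cv}, is
\[
\mathcal L v_1 = (1-\gar\partial_x^2)(z_1 - c z_2),
\]
which converts the analysis to the two terms
\[
K_1 := \int \sech(\gamma x)\, v_1\,(z_1-cz_2), \qquad K_2 := \int \sech(\gamma x)\, v_1\, \partial_x^2(z_1-cz_2),
\]
with $\int\sech\,v_1\,\mathcal L v_1 = K_1 - \gar K_2$.

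Both $K_1$ and $\gar K_2$ are handled by the splitting $1 = \chi_A^2 + (1-\chi_A^2)$. On the inner region, the condition $B^{1/4}\gamma > 2$ ensures $\|\sech(\gamma x)\zB^{-1}\|_{L^\infty}\lesssim 1$, so that the weight $\sech$ can be absorbed into $\zB^{-1}\cdot \zB$, producing the localized variables $\eta_1 \mp c\eta_2 = \chi_A \zB(z_1 \mp cz_2)$. For $K_1$ this gives
\[
\left|\int \chi_A^2 \sech\,v_1(z_1-cz_2)\right| \lesssim \|\chi_A v_1\|_{L^2}\|\et_1-c\et_2\|_{L^2} \lesssim \|w_1\|_{L^2}\bigl(\|\et_1+c\et_2\|_{L^2}+\|\et_2\|_{L^2}\bigr),
\]
where \eqref{eq:chiA_zA} was used together with $\eta_1-c\eta_2 = (\eta_1+c\eta_2) - 2c\eta_2$. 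For $\gar K_2$, one integrates by parts once to avoid $\partial_x^2 v_1$, then invokes \eqref{eq:CZzx} and the refined estimate \eqref{eq:zA_z1} to bound $\|\chi_A\zB\partial_x(z_1-cz_2)\|$ in terms of $\|\partial_x(\et_1+c\et_2)\|$, $\|\partial_x\et_2\|$, $B^{-1/2}\|\et_1+c\et_2\|$, $B^{-1/2}\|\et_2\|$, plus remainders of size $(AB)^{-1}\|\zB(z_1-cz_2)\|^2\lesssim B^{-7}(\|w_1\|^2+\|\partial_x w_1\|^2+\|cw_1+w_2\|^2)$. Cauchy--Schwarz with $\gar = B^{-4}$ then yields the full $B^{-4}$ budget required by the statement.

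The outer region $|x|>A$ is dominated by the super-exponential factor $\sech(\gamma A)$; combined with $\|z_i\|, \|\partial_x z_i\|\lesssim \gar^{-1}(\|w_1\|+\|\partial_x w_1\|+\|cw_1+w_2\|)$ from \eqref{eq:z1_w1}--\eqref{eq:z2_w2}, these contributions are absorbed into the $B^{-4}$ quadratic terms. Summing all pieces proves the claim. The main technical point is bookkeeping: since $\mathcal L v_1 = (1-\gar\partial_x^2)(z_1-cz_2)$ naturally produces the combination $z_1-cz_2$, while the target is phrased in terms of the ``correct'' coercivity variable $\eta_1+c\eta_2$, one must systematically use $\eta_1-c\eta_2 = (\eta_1+c\eta_2) - 2c\eta_2$ and the analogous identity for $\partial_x$, re-routing the extra $\eta_2$-contributions into the terms already present in the statement.
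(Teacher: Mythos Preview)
Your proof sketch is correct and follows essentially the same strategy as the paper: apply Lemma \ref{lem6p1}, reduce to $\int \sech \cdot\, v_1 \LL v_1$ via integration by parts, use $\LL v_1 = (1-\gar\partial_x^2)(z_1-cz_2)$, and treat the resulting $K_1$, $\gar K_2$ by the splitting $\chi_A^2 + (1-\chi_A^2)$ together with \eqref{eq:CZzx}, \eqref{eq:zA_z1}, \eqref{eq:z1_w1}--\eqref{eq:z2_w2}. The one technical point you gloss over is that $\gamma=\sqrt{1-c^2}$ is fixed by the speed and is not a free small parameter; the paper handles this by first enlarging $\sech(\gamma x)\leq \sech(\gamma\ell x)$ for a small auxiliary $\ell$, then applying Lemma \ref{lem6p1} with parameter $\gamma\ell$, which simultaneously makes the $\sech''$ absorption work and keeps the weight comparisons $\sech(\gamma\ell x)\zB^{-1}\lesssim 1$ valid.
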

	\begin{proof}
	For $\ell$ small and applying Lemma \ref{lem6p1}, one gets
	\[
	\begin{aligned}
	 \int  \sech(\gamma x)  v_1^2
	 \leq&~{}  \int  \sech(\gamma \ell x)  v_1^2
	\leq ~{} \int\sech(\gamma\ell x)  [v_1^2+(\partial_x v_1)^2] \\
	\leq &~{} \mu_0^{-1} \int  \sech(\gamma\ell x)  \big((\partial_x v_1)^2+(1-c^2-f'(\Qc))v_1^2\big)
	+\int\sech(\gamma\ell x) (v_2+c v_1)^2.
	\end{aligned}
	\]
	Integrating by parts, we get
	\[
	\begin{aligned}
	\int  \sech(\gamma \ell x)  v_1^2
	 \leq&~{} \mu_0^{-1} \int  \sech(\gamma \ell x)  v_1 \LL v_1
			+\frac{1}{2\mu_0 } \int\partial_x^2 (\sech(\gamma\ell x)) v_1^2+\int\sech(\gamma \ell x) (v_2+c v_1)^2,
	\end{aligned}
	\]
	and we get
		\[
	\begin{aligned}
	\int  \sech(\gamma \ell x)  v_1^2
	\leq&~{} \mu_0^{-1} \int  \sech(\gamma \ell x)  v_1 \LL v_1+\int\sech(\gamma \ell x) (v_2+c v_1)^2 :=  L_3+L_4.
	\end{aligned}
	\]
	Notice that $L_4$ was treated in  \eqref{lem6p2}. 
	\\
	Next, we focus on $L_3$. From \eqref{eq:z_cv}, we can observe
	$z_1-c z_2 =\Opg \LL v_1$, then
	\[
	\begin{aligned}
	\mu_0 L_3=
	 &~{}  \int  \sech(\gamma \ell x)  v_1 (1-\gar \partial_x^2)(z_1 - c z_2)
	 \\
	 =&~{}  \int  \sech(\gamma \ell x)  v_1 (z_1 - c z_2)
	 -\gar \int  \sech(\gamma \ell x)  v_1 \partial_x^2(z_1 - c z_2)
	 =: \ell_1+\ell_2.
	\end{aligned}
	\]
	
	Firstly, we are going to deal with  $\ell_1$. Recalling \eqref{eq:wi}, \eqref{eq:psi_eti}, \eqref{eq:def_J}, joint to \eqref{eq:chiA_zA} and applying H\"older's inequality, we get
	\[
	\begin{aligned}
	|\ell_1| 
	=&  \left| \int  \chi_A^3 \sech(\gamma \ell x)  v_1 (z_1 - c z_2)
	+ \int  (1-\chi_A^3) \sech(\gamma \ell x)  v_1 (z_1 - c z_2) \right|
	\\
	\leq & \left| \int  \chi_A^2 \sech(\gamma \ell x)\zB^{-1}  v_1 (\et_1 - c \et_2)\right|
	+ \left|\int  (1-\chi_A^3) \sech(\gamma \ell x)\zA^{-2}  w_1 [\zA (z_1 - c z_2)] \right|
	\\
	\leq&~{}  \| \zA^2 v_1\|_{L^2} \|\et_1 - c \et_2\|_{L^2}
	+ \sech \left( \frac{\gamma \ell A}{4} \right)  \|w_1\|_{L^2} \|\zA (z_1 - c z_2)\|_{L^2}.
	\end{aligned}
	\]
	Therefore, applying Cauchy-Schwarz inequality together with \eqref{eq:sech_v1_w1}, \eqref{eq:z2_w2} and \eqref{eq:zA_z1},  we obtain that $\ell_1$ is bounded by
	 \begin{equation}\label{l1}
	 \begin{aligned}
	|\ell_1| 
	 \lesssim&~{}  \| w_1\|_{L^2} \big(\|\et_1+c\eta_2\|_{L^2} + \| \et_2\|_{L^2} \big)
	+ \sech \left( \frac{\gamma \ell A}{4} \right) \gar^{-1/2} \|w_1\|_{L^2} \big( \|w_1\|_{L^2}+\| \partial_x w_1\|_{L^2}+\|c w_1+w_2\|_{L^2} \big)
	\\
	 \lesssim&~{}  \| w_1\|_{L^2} \big(\|\et_1+c\eta_2\|_{L^2} + \| \et_2\|_{L^2} \big)
	+ \sech \left( \frac{\gamma \ell A}{4} \right) \gar^{-1/2} \big( \|w_1\|_{L^2}^2+\| \partial_x w_1\|_{L^2}^2+\|c w_1+w_2\|_{L^2}^2 \big).
	 \end{aligned}
	 \end{equation}
	 
	Now, we will deal with $\ell_2$. Integrating by parts, one gets
	 \[
	 \begin{aligned}
	\ell_2 =&~{}  -\gar \int  \sech(\gamma \ell x)  v_1 \partial_x^2(z_1 - c z_2)
	=  \gar \int  \partial_x[\sech(\gamma \ell x)  v_1 ](\partial_x z_1 - c \partial_x z_2)
	\\
	=&~{}  \gar \int  [  (\sech(\gamma \ell x) )' v_1+\sech(\gamma \ell x)  \partial_x v_1 ](\partial_x z_1 - c \partial_x z_2)
	\\
	=&~{}  \gar \int  [  (\sech(\gamma \ell x) )' v_1+\sech(\gamma \ell x)  \partial_x v_1 ] \chi_A^2 (\partial_x z_1 - c \partial_x z_2)\\
	&~{}+  \gar \int  [  (\sech(\gamma \ell x) )' v_1+\sech(\gamma \ell x)  \partial_x v_1 ] (1-\chi_A^2) (\partial_x z_1 - c \partial_x z_2) =: \ell_{2,1}+\ell_{2,2}.
	 \end{aligned}
	 \]
	 For the first integral in the RHS, using a similar decomposition that in $\ell_1$,  one gets
	 \[ 
	 \begin{aligned}
	| \ell_{2,1}| =&~{} \left| \gar \int  [  (\sech(\gamma \ell x) )' v_1+\sech(\gamma \ell x)  \partial_x v_1 ] \chi_A^2 (\partial_x z_1 - c \partial_x z_2)\right|
	 \\
	 \lesssim& ~{} \gar \big( \|\zA v_1 \|_{L^2} + \|\zA \partial_x v_1 \|_{L^2} \big)
	 \big(\|\chi_A \zB \partial_x z_1 \|_{L^2}+|c|\|\chi_A \zB   \partial_x z_2 \|_{L^2}\big).
	 \end{aligned}
	 \] 
	 Moreover, by \eqref{eq:sech_v1_w1}, \eqref{eq:CZzx}, \eqref{eq:zA_z1} and \eqref{eq:z2_w2}; and recalling \eqref{eq:B_gar}, we conclude
	 \begin{equation}\label{l21}
	 \begin{aligned}
	| \ell_{2,1}| 
	 \lesssim& ~{} B^{-4} \big( \|w_1 \|_{L^2}^2 + \| \partial_x w_1 \|_{L^2}^2 +\| cw_1+w_2\|_{L^2}^2\big)
	 \\&
	+B^{-4} \big(		\| \partial_x \et_1+c \partial_x \et_2\|_{L^2}^2+\| \et_1+c\eta_2 \|_{L^2}^2
	 			+\| \partial_x \et_2\|_{L^2}^2+\| \et_2\|_{L^2}^2
				\big)			
				.
	 \end{aligned}
	 \end{equation}
	 
	 Now, for $\ell_{2,2}$. Applying H\"older's inequality jointly to \eqref{eq:sech_v1_w1},  \eqref{eq:z2_w2}, and \eqref{eq:z1_w1}, we conclude
	 \begin{equation}\label{l22}
	 \begin{aligned}
	| \ell_{2,2}|
	 =& ~{} \left| \gar \int \zA [  (\sech(\gamma \ell x) )' v_1+\sech(\gamma \ell x)  \partial_x v_1 ] \zA^{-2}(1-\chi_A^2) \zA (\partial_x z_1 - c \partial_x z_2)\right|
	 \\
	 \lesssim&~{} \gar \sech \left( \frac{\gamma \ell A}{4} \right) \left( \|\zA v_1 \|_{L^2} + \|\zA \partial_x v_1 \|_{L^2}\right) 
	\|\zA \partial_x(z_1-c z_2 ) \|_{L^2} 
	\\
	 \lesssim&~{} \sech \left( \frac{\gamma \ell A}{4} \right) 
	  \left( \| w_1 \|_{L^2} + \| \partial_x w_1 \|_{L^2}\right) 
	 \left( \| w_1 \|_{L^2} + \| \partial_x w_1 \|_{L^2}+\|cw_1+w_2\|_{L^2}\right) .
	 \end{aligned}
	 \end{equation}
	Gathering estimates \eqref{l1}, \eqref{l21}, \eqref{l22}, \eqref{lem6p2}; and considering \eqref{eq:B_gar}, we get
	\[
	\begin{aligned}
	\int  \sech(\gamma \ell x)  v_1^2
		\lesssim~{}&
	\| w_1\|_{L^2} \big(\|\et_1+c\eta_2\|_{L^2} + \| \et_2\|_{L^2} \big)
	+ \| w_2+c w_1\|_{L^2} \| \et_2\|_{L^2}
	\\&
	+ \sech \left( \frac{\gamma \ell A}{4} \right)  \big( \|w_1\|_{L^2}^2+\| \partial_x w_1\|_{L^2}^2+\|c w_1+w_2\|_{L^2}^2 \big)
	\\&
	+B^{-4} \big( \|w_1 \|_{L^2}^2 + \| \partial_x w_1 \|_{L^2}^2 +\| cw_1+w_2\|_{L^2}^2\big)
	 \\&
	+B^{-4} \big(		\| \partial_x \et_1+c \partial_x \et_2\|_{L^2}^2+\| \et_1+c\eta_2 \|_{L^2}^2
	 			+\| \partial_x \et_2\|_{L^2}^2+\| \et_2\|_{L^2}^2
				\big)	
	\\&
	+\sech \left( \frac{\gamma \ell A}{4} \right) \left( \| w_1 \|_{L^2} + \| \partial_x w_1 \|_{L^2}\right) 
	 \left( \| w_1 \|_{L^2} + \| \partial_x w_1 \|_{L^2}+\|cw_1+w_2\|_{L^2}\right)
	 \\&
	 +B^{-4} \| w_2+c w_1\|_{L^2}^2
	+ B^{-4} \big( \| \partial_x^2 \et_2\|_{L^2}^2+\| \partial_x \et_2\|_{L^2}^2+\| \et_2\|_{L^2}^2 \big).
\end{aligned}
\]
Simplifying,
\[
\begin{aligned}
\int  \sech(\gamma \ell x)  v_1^2
	\lesssim~{}&
	\| w_1\|_{L^2} \big(\|\et_1+c\eta_2\|_{L^2} + \| \et_2\|_{L^2} \big)
	+ \| w_2+c w_1\|_{L^2} \| \et_2\|_{L^2}
	\\&
	+B^{-4} \big( \|w_1 \|_{L^2}^2 + \| \partial_x w_1 \|_{L^2}^2 +\| cw_1+w_2\|_{L^2}^2\big)
	 \\&
	+B^{-4} \big(		\| \partial_x \et_1+c \partial_x \et_2\|_{L^2}^2+\| \et_1+c\eta_2 \|_{L^2}^2
				+\| \partial_x^2 \et_2\|_{L^2}^2
	 			+\| \partial_x \et_2\|_{L^2}^2+\| \et_2\|_{L^2}^2
				\big)	.
	\end{aligned}
	\]
This concludes the proof of Lemma \ref{prop:coer_v1}.
	\end{proof}
	
From Lemmas \ref{prop:coer_v1} and \ref{prop:coer_v2}, we obtain that the modulation terms satisfies the following result
\begin{cor}
Let $c,\rho$ the modulation terms which holds \eqref{eq:c'}. It holds
\begin{equation}\label{eq:c'_wz}
\begin{aligned}
|c'|+|\rho'-c|^2 \lesssim&~{} 	 \big( \| w_1\|_{L^2} + \| w_2+c w_1\|_{L^2}\big)\big(\|\et_1+c\eta_2\|_{L^2} + \| \et_2\|_{L^2} \big)
	\\&
	+B^{-4} \big( \|w_1 \|_{L^2}^2 + \| \partial_x w_1 \|_{L^2}^2 +\| cw_1+w_2\|_{L^2}^2\big)
	 \\&
	+B^{-4} \big(		\| \partial_x \et_1+c \partial_x \et_2\|_{L^2}^2+\| \et_1+c\eta_2 \|_{L^2}^2
				+\| \partial_x^2 \et_2\|_{L^2}^2
	 			+\| \partial_x \et_2\|_{L^2}^2+\| \et_2\|_{L^2}^2
				\big)	.
\end{aligned}
\end{equation}
\end{cor}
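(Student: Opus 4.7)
The plan is to combine the raw modulation bounds \eqref{eq:c'_new} with the two preceding coercivity lemmas. The key observation is that the soliton-weighted norms that appear on the right-hand side of \eqref{eq:c'_new} are dominated pointwise by the two $\sech$-weighted integrals controlled by Lemmas \ref{prop:coer_v2} and \ref{prop:coer_v1}.

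First, from \eqref{eq:scaling_Q} and the uniform positive lower bound on $\gamma$ guaranteed by Theorem \ref{MT1}, the soliton satisfies $Q_c(y) \lesssim e^{-\gamma|y|}$ for every $|y|$ sufficiently large, and is uniformly bounded otherwise. Consequently, for any fixed $\alpha>0$ one has $Q_c(y)^{\alpha} \lesssim e^{-\alpha\gamma|y|}$, and in particular, using $p \geq 2$,
\[
Q_c(y)^{3/2} \lesssim \sech(\gamma y), \qquad Q_c(y)^{p-1} \lesssim \sech(\gamma y), \qquad Q_c(y)^{3/2} \lesssim \sech^{2}(\gamma y / 4).
\]
Integrating these pointwise inequalities yields
\[
\|Q_c^{3/4} v_1\|_{L^2}^2 + \|Q_c^{(p-1)/2} v_1\|_{L^2}^2 \lesssim \int \sech(\gamma y)\,v_1^2, \qquad \|Q_c^{3/4}(v_2+cv_1)\|_{L^2}^2 \lesssim \int \sech^2(\gamma y/4)\,(v_2+cv_1)^2.
\]

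Second, I would square the first line of \eqref{eq:c'_new} and add it to the already quadratic second line. The two integrals above collect every term on the right, so
\[
|c'| + |\rho'-c|^2 \lesssim \int \sech(\gamma y)\,v_1^2 + \int \sech^2(\gamma y/4)\,(v_2+cv_1)^2.
\]
Finally, applying Lemma \ref{prop:coer_v1} to the first integral and Lemma \ref{prop:coer_v2} to the second gives exactly the right-hand side of \eqref{eq:c'_wz}: the bilinear products $\|w_1\|_{L^2}\|\eta_1+c\eta_2\|_{L^2}$, $\|w_1\|_{L^2}\|\eta_2\|_{L^2}$ and $\|w_2+cw_1\|_{L^2}\|\eta_2\|_{L^2}$ appear directly from Lemma \ref{prop:coer_v1} (together with the matching term from Lemma \ref{prop:coer_v2}), and the remaining $B^{-4}$-weighted squared norms in the $w$ and $\eta$ variables also match, since both lemmas produce error terms of identical structure.

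The argument is essentially assembly: there is no genuine obstacle, only the bookkeeping of verifying that the decay rate $\gamma$ in the pointwise majorization of $Q_c^\alpha$ is strong enough to absorb the $\sech$-weights of Lemmas \ref{prop:coer_v2} and \ref{prop:coer_v1}. This is the only place where the assumption $p\geq 2$ enters (ensuring $Q_c^{p-1}$ decays at least as fast as $Q_c$), and it is the reason one must take the pointwise comparison with $\sech(\gamma y)$ rather than a narrower weight. No additional Young inequality is required: the bilinear form $(\|w_1\|+\|w_2+cw_1\|)(\|\eta_1+c\eta_2\|+\|\eta_2\|)$ is produced directly by the Cauchy-Schwarz step in the proof of Lemma \ref{prop:coer_v1}, and is sharper than the crude quadratic bound one would get from $2ab \leq a^2+b^2$.
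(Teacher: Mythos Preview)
Your proposal is correct and follows essentially the same route as the paper: bound the $Q_c$-weighted norms in \eqref{eq:c'_new} by the $\sech$-weighted integrals, then apply Lemmas \ref{prop:coer_v1} and \ref{prop:coer_v2} directly. You spell out the pointwise weight comparison and the role of $p\geq 2$ more explicitly than the paper does, but the logical skeleton is identical.
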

\begin{proof}
Recalling \eqref{eq:c'}, \eqref{eq:c'_new} and using Lemmas \ref{prop:coer_v1} and \ref{prop:coer_v2},
\[
\begin{aligned}
|c'|+|\rho'-c|^2\lesssim ~{}&
 \left\| \Qc^{(p-1)/2} v_1\right\|_{L^2}^2 +\left\| \Qc^{3/4}v_1  \right\|_{L^2}^2+ \left\| \Qc^{3/4}(v_2+cv_1)  \right\|_{L^2}^2
 \\
 \lesssim~{} &
 \| w_1\|_{L^2} \big(\|\et_1+c\eta_2\|_{L^2} + \| \et_2\|_{L^2} \big)
	+ \| w_2+c w_1\|_{L^2} \| \et_2\|_{L^2}
	\\&
	+B^{-4} \big( \|w_1 \|_{L^2}^2 + \| \partial_x w_1 \|_{L^2}^2 +\| cw_1+w_2\|_{L^2}^2\big)
	 \\&
	+B^{-4} \big(		\| \partial_x \et_1+c \partial_x \et_2\|_{L^2}^2+\| \et_1+c\eta_2 \|_{L^2}^2
				+\| \partial_x^2 \et_2\|_{L^2}^2
	 			+\| \partial_x \et_2\|_{L^2}^2+\| \et_2\|_{L^2}^2
				\big)	
\\&
+ \| w_2+c w_1\|_{L^2} \| \et_2\|_{L^2}+ B^{-4} \| w_2+c w_1\|_{L^2}^2
	+ B^{-4} \big( \| \partial_x^2 \et_2\|_{L^2}^2+\| \partial_x \et_2\|_{L^2}^2+\| \et_2\|_{L^2}^2 \big)
\\
\lesssim~{}&	
	 \| w_1\|_{L^2} \big(\|\et_1+c\eta_2\|_{L^2} + \| \et_2\|_{L^2} \big)
	+ \| w_2+c w_1\|_{L^2} \| \et_2\|_{L^2}
	\\&
	+B^{-4} \big( \|w_1 \|_{L^2}^2 + \| \partial_x w_1 \|_{L^2}^2 +\| cw_1+w_2\|_{L^2}^2\big)
	 \\&
	+B^{-4} \big(		\| \partial_x \et_1+c \partial_x \et_2\|_{L^2}^2+\| \et_1+c\eta_2 \|_{L^2}^2
				+\| \partial_x^2 \et_2\|_{L^2}^2
	 			+\| \partial_x \et_2\|_{L^2}^2+\| \et_2\|_{L^2}^2
				\big)	
\\
\lesssim~{}&	
	 \big( \| w_1\|_{L^2} + \| w_2+c w_1\|_{L^2}\big)\big(\|\et_1+c\eta_2\|_{L^2} + \| \et_2\|_{L^2} \big)
	\\&
	+B^{-4} \big( \|w_1 \|_{L^2}^2 + \| \partial_x w_1 \|_{L^2}^2 +\| cw_1+w_2\|_{L^2}^2\big)
	 \\&
	+B^{-4} \big(		\| \partial_x \et_1+c \partial_x \et_2\|_{L^2}^2+\| \et_1+c\eta_2 \|_{L^2}^2
				+\| \partial_x^2 \et_2\|_{L^2}^2
	 			+\| \partial_x \et_2\|_{L^2}^2+\| \et_2\|_{L^2}^2
				\big)	
	. 
\end{aligned}
\]
This concludes the proof of the corollary.
\end{proof}	

	\subsection{Fourth coercivity estimate}
	Considering the orthogonality conditions from \eqref{eq:orto}, $z_i$ in \eqref{eq:z_cv} and $\eta_i$ from \eqref{eq:def_J} we obtain first
	\begin{equation}\label{ortoz1}
	\langle (1-\varepsilon \partial_x^{2}){\bf T}_c , \bd{z} \rangle =\langle (1-\varepsilon \partial_x^{2}){\bf T}_c , \Opg \bd{L}\bd{v} \rangle =\langle  \bd{L} \bd{Q}_c' , \bd{v} \rangle =0.
	\end{equation}
	Also, from Lemma \ref{lem2p2},
	\begin{equation}\label{ortoz2}
	 \langle (1-\varepsilon \partial_y^{2}) \Lambda \QQc , \bd{z} \rangle = \langle  \Lambda \QQc, \bd{L}\bd{v} \rangle = -\langle {\bf JQ}_c ,  \bd{v} \rangle =0.
	\end{equation}
	Recalling that $ \eta_i:=  \chi_A \zB z_i$, $i=1,2$, \eqref{ortoz1} and \eqref{ortoz2} are described as follows:
	\begin{equation}\label{ortoz3}
	\begin{aligned}
	&\left| \langle {\bf T}_c , \bd{\eta} \rangle \right|  \leq \varepsilon \left| \langle  \partial_y^{2}{\bf T}_c , \bd{\eta} \rangle \right| + \left|  \langle (1- \chi_A \zB) (1-\varepsilon \partial_y^{2}){\bf T}_c , \bd{z} \rangle \right| \lesssim \varepsilon  \|\bd{\eta}\|_{L^2\times H^1} +  e^{-m_0 B} \| \bd{z}\|_{L^2\times L^2},  
	\\ 
	& \left| \langle   \Lambda \QQc , \bd{\eta} \rangle \right| \lesssim \varepsilon  \left| \langle   \partial_y^{2}  \Lambda \QQc , \bd{\eta} \rangle \right| + \left|  \langle  (1- \chi_A \zB)(1-\varepsilon \partial_y^{2}) \Lambda \QQc , \bd{z} \rangle \right| \lesssim \varepsilon  \|\bd{\eta}\|_{L^2\times H^1} +e^{-m_0 B} \| \bd{z}\|_{L^2\times L^2}.
	\end{aligned}
	\end{equation}
Consider now the first line on the right of \eqref{dt_J}. For this line, we have the following coercivity estimate:
	
\begin{lem}
There exists $c_0>0$ such that 
\begin{equation}\label{coercoer}
\int \left( 3\eta _{2,y}^2 +(1-c^2)\eta_2^2 -f'(Q_c)\eta_2^2 +(\eta_1+c\eta_2)^2 \right)  \geq c_0 \| \bd{\eta} \|_{L^2 \times H^1}^2 - C e^{-2m_0 B} \| \bd{z}\|_{L^2\times L^2}^2 .
\end{equation}
\end{lem}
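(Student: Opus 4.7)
The plan is to reduce \eqref{coercoer} to the exact coercivity statement of Lemma \ref{lem2p3}(iv) via a standard projection--and--absorb scheme. Since \eqref{ortoz3} only tells us that $\bd{\eta}$ is \emph{approximately} orthogonal to $\bd{\Lambda Q}_c$, I would first decompose $\bd{\eta}$ into the piece lying in the exact orthogonal complement plus a small corrective multiple of a fixed reference vector. Concretely, fix once and for all a smooth compactly supported vector $\bd{Y}$ normalized so that $\langle \JJ\bd{\Lambda Q}_c, \bd{Y}\rangle = 1$ (the precise pairing is the one required by Lemma \ref{lem2p3}(iv)), set $\mu := \langle \JJ\bd{\Lambda Q}_c, \bd{\eta}\rangle$, and define $\tilde{\bd{\eta}} := \bd{\eta} - \mu \bd{Y}$, which satisfies the exact orthogonality. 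Lemma \ref{lem2p3}(iv) then yields $\langle \bd{\tilde L}\tilde{\bd{\eta}}, \tilde{\bd{\eta}}\rangle \geq c_0 \gamma \|\tilde{\bd{\eta}}\|_{L^2 \times H^1}^2$.

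Next, I would expand
\[
\langle \bd{\tilde L}\bd{\eta}, \bd{\eta}\rangle = \langle \bd{\tilde L}\tilde{\bd{\eta}}, \tilde{\bd{\eta}}\rangle + 2\mu\, \langle \bd{\tilde L}\tilde{\bd{\eta}}, \bd{Y}\rangle + \mu^2 \langle \bd{\tilde L}\bd{Y}, \bd{Y}\rangle,
\]
use the boundedness of $\bd{Y}$ in $L^2\times H^1$ together with Young's inequality to absorb $2\mu \langle \bd{\tilde L}\tilde{\bd{\eta}}, \bd{Y}\rangle$ into half the coercive term plus $C\mu^2$, and convert the lower bound on $\|\tilde{\bd{\eta}}\|_{L^2 \times H^1}^2$ to one on $\|\bd{\eta}\|_{L^2 \times H^1}^2$ via the triangle inequality $\|\bd{\eta}\|^2 \leq 2\|\tilde{\bd{\eta}}\|^2 + 2\mu^2 \|\bd{Y}\|^2$. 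This produces
\[
\langle \bd{\tilde L}\bd{\eta}, \bd{\eta}\rangle \geq c_1\, \|\bd{\eta}\|_{L^2 \times H^1}^2 - C \mu^2
\]
for some $c_1>0$ depending on $\gamma$ and $\bd{Y}$. Finally, I would substitute into $C\mu^2$ the bound $\mu^2 \lesssim \varepsilon^2 \|\bd{\eta}\|_{L^2 \times H^1}^2 + e^{-2m_0 B} \|\bd{z}\|_{L^2 \times L^2}^2$ furnished by \eqref{ortoz3}. Under the scale choice $\varepsilon = B^{-4}$ from \eqref{eq:B_gar}, the term $C\varepsilon^2\|\bd{\eta}\|^2$ is absorbed into the LHS for $B$ large enough, leaving precisely the exponentially small residual $C e^{-2m_0 B}\|\bd{z}\|_{L^2 \times L^2}^2$ in \eqref{coercoer}.

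The main obstacle is a bookkeeping one rather than an analytic one: ensuring that the approximate orthogonality produced by \eqref{ortoz3} actually matches the one required by Lemma \ref{lem2p3}(iv). The transform from $\bd{v}$ to $\bd{z}$ pulls the original orthogonality $\langle \JJ\bd{Q}_c, \bd{v}\rangle = 0$ through the self-adjointness of $\bd{L}$ and the identity $\bd{L}[\bd{\Lambda Q}_c] = -\JJ\bd{Q}_c$, giving $\langle \bd{\Lambda Q}_c, \bd{\eta}\rangle \approx 0$ in \eqref{ortoz3} rather than $\langle \JJ\bd{\Lambda Q}_c, \bd{\eta}\rangle \approx 0$. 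The compactness argument in the proof of Lemma \ref{lem2p3}(iv) in fact operates with $\langle \bd{\Lambda Q}_c, \bd{\eta}\rangle = 0$, so the reference vector $\bd{Y}$ above should be chosen so that $\langle \bd{\Lambda Q}_c, \bd{Y}\rangle = 1$, and $\mu$ redefined accordingly. Once this compatibility is pinned down, the projection--absorb argument runs as described.
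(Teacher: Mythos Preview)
Your proposal is correct and follows essentially the same route as the paper. The paper's own proof is very terse: it first reduces to the exact orthogonality $\langle \Lambda\QQc,\bd{\eta}\rangle=0$, invokes Lemma~\ref{lem2p3}(iv) ``with the role of $\eta_1$ and $\eta_2$ changed'' to obtain \eqref{coercoer_new}, and then passes to the approximate orthogonality \eqref{ortoz3} by what it calls ``standard arguments''---precisely your projection--and--absorb scheme with the reference vector $\bd Y$. Your discussion of the ``bookkeeping obstacle'' is on point: the statement of Lemma~\ref{lem2p3}(iv) is phrased with $\langle\JJ\Lambda\QQc,\bd{\eta}\rangle=0$, but after the swap $\eta_1\leftrightarrow\eta_2$ (which turns the $H^1\times L^2$ form of $\bd{\tilde L}$ into the $L^2\times H^1$ form appearing in \eqref{coercoer}) this becomes exactly $\langle\Lambda\QQc,\bd{\eta}\rangle=0$, matching \eqref{ortoz3}. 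The only thing you leave implicit that the paper makes explicit is this index swap; otherwise the arguments coincide.
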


\begin{proof}

Without loss of generality, we assume that $\langle  \Lambda \QQc , \bd{\eta} \rangle   =0$ and prove that there exists $c_0>0$ such that
\begin{equation}\label{coercoer_new}
\int \left( 3\eta _{2,y}^2 +(1-c^2)\eta_2^2 -f'(Q_c)\eta_2^2 +(\eta_1+c\eta_2)^2 \right)  \geq c_0 \| \bd{\eta} \|_{L^2 \times H^1}^2. 
\end{equation}
By standard arguments, and \eqref{ortoz3}, one gets \eqref{coercoer} if $\varepsilon$ is fixed but sufficiently small.  Finally, \eqref{coercoer_new} is a consequence of Lemma \ref{lem2p3} and \eqref{coer0_new} with the role of $\eta_1$ and $\eta_2$ changed.

\end{proof}

\subsection{Improved bounds} Recall \eqref{cota_I}. Using the previous result, we are now ready to conclude a new bound on $\mathcal I'(t)$.
	\begin{cor}
	It holds
	\begin{equation}\label{eq:dtI_F}
	\begin{aligned}
\frac{d}{dt}\I
	\leq&~{}  -\frac14 \int  \left( (w_2 + c   w_1)^2+ 3 (\partial_x w_1)^2 +\big(1-c^2-A^{-1} \big) w_1^2 \right)
	 \\&
	+C_1B^{-2}  \big(		\| \partial_x \et_1+c \partial_x \et_2\|_{L^2}^2
				+\| \partial_x^2 \et_2\|_{L^2}^2
	 			+\| \partial_x \et_2\|_{L^2}^2
				\big)
	+C_1 B^{1/2}\big(\|\et_1+c\eta_2\|_{L^2}^2 + \| \et_2\|_{L^2}^2\big) 
.
	\end{aligned}
	\end{equation}
	\end{cor}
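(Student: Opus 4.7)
The aim is to deduce the improved estimate \eqref{eq:dtI_F} from \eqref{cota_I} by absorbing three classes of ``bad'' terms on the right-hand side: the non-definite potential contribution $\tfrac12\int \zA^2 f'(\Qc)\,w_1^2$ buried in the quadratic form, the explicit nonlinear remainder $C_1\int \sech^2(\gamma y/4)\,v_1^2$, and the two modulation terms $C_1|\rho'-c|(\|w_2+cw_1\|_{L^2}+\|w_1\|_{L^2})$ and $C_1|c'|(\frac{c}{\gamma^2}\|w_2+cw_1\|_{L^2}+\|w_1\|_{L^2})$. The plan is to express each of these as a product between localized $\bd w$-norms and $\bd\eta$-norms (plus $B^{-4}$-small correctors), and then use Young's inequality with a weight $B^{1/2}$ to split the products into a piece that can be absorbed by half of the coercive main part, and a piece that matches the $B^{1/2}$ and $B^{-2}$ coefficients in the statement.

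Concretely, I would first observe that since $f'(\Qc)$ decays like $\sech^{p-1}(\gamma y)$, the term $\tfrac12\int \zA^2 f'(\Qc)\,w_1^2$ is bounded by $C\int \sech(\gamma y)\,v_1^2$, and likewise the explicit $\sech^2(\gamma y/4)\,v_1^2$ term is directly of this form. Applying Lemma \ref{prop:coer_v1} to both gives a clean bound of the form
\[
\|w_1\|_{L^2}(\|\et_1+c\et_2\|_{L^2}+\|\et_2\|_{L^2}) + \|w_2+cw_1\|_{L^2}\|\et_2\|_{L^2} + B^{-4}\bigl(\|\bd w\|_{H^1\times L^2}^2 + \|\bd\eta\|_{\star}^2\bigr),
\]
where $\|\bd\eta\|_{\star}^2$ is exactly the combination appearing on the right of \eqref{eq:c'_wz}.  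Next, for the modulation terms I rewrite $|\rho'-c|\cdot\|w\|_{L^2} \leq \tfrac{B^{1/2}}{2}|\rho'-c|^2 + \tfrac{1}{2B^{1/2}}\|w\|_{L^2}^2$ and use \eqref{eq:c'_wz} for $|c'|+|\rho'-c|^2$ (noting that $|c|/\gamma^2 = O(1)$ by the hypothesis $|c|>c_+(p)$, which makes the scaling prefactor harmless), obtaining again the same structural bound of products and $B^{-4}$ correctors.

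The key step is then to systematically apply Young's inequality $ab\leq B^{-1/2}a^2 + \tfrac{B^{1/2}}{4}b^2$ to every cross term $\|w\|_{L^2}\|\eta\|_{L^2}$ produced above. The resulting $B^{-1/2}\|w\|^2$-pieces are absorbable into the coercive main part for $B$ large enough: this is precisely why the coefficient $-\tfrac12$ in \eqref{cota_I} degrades to $-\tfrac14$ in \eqref{eq:dtI_F}.  The complementary $\tfrac{B^{1/2}}{4}\|\eta\|_{L^2}^2$ pieces produce exactly the coefficient $C_1 B^{1/2}$ in front of $\|\et_1+c\et_2\|_{L^2}^2+\|\et_2\|_{L^2}^2$, while the $B^{-4}$ correctors from Lemma \ref{prop:coer_v1} and \eqref{eq:c'_wz}, being $\ll B^{-2}$, easily fit into the $C_1 B^{-2}$ coefficient in front of $\|\partial_x\et_1+c\partial_x\et_2\|_{L^2}^2+\|\partial_x^2\et_2\|_{L^2}^2+\|\partial_x\et_2\|_{L^2}^2$.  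Finally, the $A^{-1}$ correction inside the coercive term is picked up directly from \eqref{eq:z11}, which gives $|\zA''/\zA - 2(\zA'/\zA)^2|\lesssim A^{-1}$ and thus shifts $(1-c^2)$ to $(1-c^2 - A^{-1})$.

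The main obstacle is the delicate bookkeeping of the $B$-weights: on the one hand, the $B^{-1/2}\|w\|_{L^2}^2$ and $B^{-1/2}\|\partial_x w_1\|_{L^2}^2$ residues coming from Young must all be dominated by the fixed fraction $\tfrac14\int((w_2+cw_1)^2+3(\partial_x w_1)^2+(1-c^2)w_1^2)$, which forces $B$ to be chosen sufficiently large (equivalently $\delta$ sufficiently small); on the other hand, the weight $B^{1/2}$ chosen for the $\eta$-side of Young must not accidentally produce a $\|\partial_x\eta_2\|_{L^2}^2$ or $\|\partial_x^2\eta_2\|_{L^2}^2$ contribution with coefficient worse than $B^{-2}$, which is ensured because these higher-derivative norms only appear inside the $B^{-4}$ correctors of Lemma \ref{prop:coer_v1} and \eqref{eq:c'_wz}, never inside the cross terms themselves.
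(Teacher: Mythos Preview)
Your proposal is correct and follows essentially the same route as the paper: group the potential term $\tfrac12\int f'(\Qc)w_1^2$ with the $\sech^2(\gamma y/4)v_1^2$ remainder and control both via Lemma \ref{prop:coer_v1}; handle $|\rho'-c|$ through a Young split that produces $B^{1/2}|\rho'-c|^2$ (fed into \eqref{eq:c'_wz}) plus $B^{-1/2}\|w\|^2$; treat $|c'|$ directly from \eqref{eq:c'_wz}; and read off the $A^{-1}$ shift from \eqref{eq:z11}. The only imprecision is the justification ``$|c|/\gamma^2=O(1)$ by the hypothesis $|c|>c_+(p)$'': the lower bound on $|c|$ says nothing about $\gamma^{-2}$ --- what you need is that $\gamma$ is uniformly bounded away from zero by orbital stability (Theorem \ref{MT1}), or, as the paper does, use $B^{1/4}\gamma>2$ to get $c/\gamma^2\lesssim B^{1/2}$ and absorb the extra $B^{1/2}$ via the smallness $\|\bd w\|\lesssim\delta$ in the $|c'|$ term. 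Either route closes the estimate.
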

\begin{proof}
Recalling \eqref{cota_I}.  Firstly, we will focus on the terms
\[
\begin{aligned}
& I_{f1}= |\rho'-c| \left( \|w_2+cw_1\|_{L^2}  +\|w_1\|_{L^2}  \right) , \quad\quad
I_{f2}= |c'| \left(\frac{c}{\gamma^2}\|w_2+c w_1\|_{L^2}+\|w_1\|_{L^2} \right)  ,
\\
& I_{f3}=  \int \left(\sech^2\left(\frac{\gamma}{4}y\right) v_1^2 +\frac12f'(\Qc)w_1 \right).
\end{aligned}
\]
Let us focus on $I_{f1}$. Using \eqref{eq:c'_wz}, we obtain
\[
\begin{aligned}
I_{f1}\lesssim&~{} \left( \|w_2+cw_1\|_{L^2}  +\|w_1\|_{L^2}  \right) 
\bigg(
	 \big(\| w_1\|_{L^2}+\| w_2+c w_1\|_{L^2}\big) \big(\|\et_1+c\eta_2\|_{L^2} + \| \et_2\|_{L^2} \big)\bigg)^{1/2}
	\\&
	+B^{-2} \left( \|w_2+cw_1\|_{L^2}  +\|w_1\|_{L^2}  \right) \big( \|w_1 \|_{L^2} + \| \partial_x w_1 \|_{L^2} +\| cw_1+w_2\|_{L^2}\big)
	 \\&
	+B^{-2}\left( \|w_2+cw_1\|_{L^2}  +\|w_1\|_{L^2}  \right)  \big(		\| \partial_x \et_1+c \partial_x \et_2\|_{L^2}+\| \et_1+c\eta_2 \|_{L^2}
				+\| \partial_x^2 \et_2\|_{L^2}
	 			+\| \partial_x \et_2\|_{L^2}+\| \et_2\|_{L^2}
				\big),
\end{aligned}
\]
and by Young's inequality, we obtain
\[
\begin{aligned}
I_{f1}
\lesssim&~{} \left( \|w_2+cw_1\|_{L^2}  +\|w_1\|_{L^2}  \right) ^{3/2}
 \big(\|\et_1+c\eta_2\|_{L^2} + \| \et_2\|_{L^2} \big)^{1/2}
	\\&
	+B^{-2} \big( \|w_1 \|_{L^2}^2 + \| \partial_x w_1 \|_{L^2}^2 +\| cw_1+w_2\|_{L^2}^2\big)
	 \\&
	+B^{-2}  \big(		\| \partial_x \et_1+c \partial_x \et_2\|_{L^2}^2+\| \et_1+c\eta_2 \|_{L^2}^2
				+\| \partial_x^2 \et_2\|_{L^2}^2
	 			+\| \partial_x \et_2\|_{L^2}^2+\| \et_2\|_{L^2}^2
				\big)
\\
\lesssim&~{} B^{-1/2}\left( \|w_2+cw_1\|_{L^2}^2  +\|w_1\|_{L^2}^2  \right)
 +B^{1/2}\big(\|\et_1+c\eta_2\|_{L^2}^2 + \| \et_2\|_{L^2}^2\big) 
	\\&
	+B^{-2} \big( \|w_1 \|_{L^2}^2 + \| \partial_x w_1 \|_{L^2}^2 +\| cw_1+w_2\|_{L^2}^2\big)
	 \\&
	+B^{-2}  \big(		\| \partial_x \et_1+c \partial_x \et_2\|_{L^2}^2+\| \et_1+c\eta_2 \|_{L^2}^2
				+\| \partial_x^2 \et_2\|_{L^2}^2
	 			+\| \partial_x \et_2\|_{L^2}^2+\| \et_2\|_{L^2}^2
				\big).
\end{aligned}
\]
Therefore, $I_{f1}$ holds the following estimate
\begin{equation}\label{eq:If1}
\begin{aligned}
I_{f1}
\lesssim&~{} 
	B^{-1/2} \big( \|w_1 \|_{L^2}^2 + \| \partial_x w_1 \|_{L^2}^2 +\| cw_1+w_2\|_{L^2}^2\big)
	 \\&
	+B^{-2}  \big(		\| \partial_x \et_1+c \partial_x \et_2\|_{L^2}^2
				+\| \partial_x^2 \et_2\|_{L^2}^2
	 			+\| \partial_x \et_2\|_{L^2}^2
				\big)
	+B^{1/2}\big(\|\et_1+c\eta_2\|_{L^2}^2 + \| \et_2\|_{L^2}^2\big). 
\end{aligned}
\end{equation}

Let us focus on $I_{f2}$. Recalling \eqref{eq:c'_wz} and $B^{1/4}\gamma>2$ joint to $\|\bd{w}\|_{H^1\times L^2} \lesssim \|\bd{u} \|_{H^1\times L^2}\lesssim \delta$
\[
\begin{aligned}
I_{f2}\lesssim&~{}  |c'| \left(\frac{c}{\gamma^2}\|w_2+c w_1\|_{L^2}+\|w_1\|_{L^2} \right) \\
\lesssim&~{}  \delta B^{1/2} \big(
 \| w_1\|_{L^2}^2 + \| w_2+c w_1\|_{L^2}^2+ \|\et_1+c\eta_2\|_{L^2}^2 + \| \et_2\|_{L^2}^2 \big)
	\\&
	+\delta  B^{-3} \big( \|w_1 \|_{L^2}^2 + \| \partial_x w_1 \|_{L^2}^2 +\| cw_1+w_2\|_{L^2}^2\big)
	 \\&
	+\delta   B^{-3} \big(		\| \partial_x \et_1+c \partial_x \et_2\|_{L^2}^2+\| \et_1+c\eta_2 \|_{L^2}^2
				+\| \partial_x^2 \et_2\|_{L^2}^2
	 			+\| \partial_x \et_2\|_{L^2}^2+\| \et_2\|_{L^2}^2
				\big).
\end{aligned}
\]
We conclude that $I_{f2}$, satisfies the following estimate
\begin{equation}\label{eq:If2}
\begin{aligned}
I_{f2}
\lesssim&~{}  \delta  \big(
 \| w_1\|_{L^2}^2 + \| \partial_x w_1 \|_{L^2}^2 + \| w_2+c w_1\|_{L^2}^2+ \|\et_1+c\eta_2\|_{L^2}^2 +\| \partial_x \et_2\|_{L^2}^2+ \| \et_2\|_{L^2}^2 \big)
	 \\&
	+\delta   B^{-4} \big(		\| \partial_x \et_1+c \partial_x \et_2\|_{L^2}^2
				+\| \partial_x^2 \et_2\|_{L^2}^2
				\big)	.
\end{aligned}
\end{equation}

Noticing that the bound of $I_{f3}$ is obtained by direct application of Lemma \ref{prop:coer_v1}, replacing \eqref{eq:If1} and \eqref{eq:If2}; and using Cauchy-Schwarz inequality, we obtain
\[ 
\begin{aligned}
\frac{d}{dt}\I 
\leq&~{}  -\frac12 \int  \left( (w_2 + c   w_1)^2+ 3 (\partial_x w_1)^2 +\big(1-c^2-A^{-1} \big) w_1^2 \right)
\\&
	+ C_1B^{-1/2} \big( \|w_1 \|_{L^2}^2 + \| \partial_x w_1 \|_{L^2}^2 +\| cw_1+w_2\|_{L^2}^2\big)
	 \\&
	+ C_1B^{-2}  \big(		\| \partial_x \et_1+c \partial_x \et_2\|_{L^2}^2
				+\| \partial_x^2 \et_2\|_{L^2}^2
	 			+\| \partial_x \et_2\|_{L^2}^2
				\big)
	+ C_1B^{1/2}\big(\|\et_1+c\eta_2\|_{L^2}^2 + \| \et_2\|_{L^2}^2\big) 
\\&
	+ C_1 \delta  \big(
 \| w_1\|_{L^2}^2 + \| \partial_x w_1 \|_{L^2}^2 + \| w_2+c w_1\|_{L^2}^2+ \|\et_1+c\eta_2\|_{L^2}^2 +\| \partial_x \et_2\|_{L^2}^2+ \| \et_2\|_{L^2}^2 \big)
	 \\&
	+ C_1\delta   B^{-4} \big(		\| \partial_x \et_1+c \partial_x \et_2\|_{L^2}^2
				+\| \partial_x^2 \et_2\|_{L^2}^2
				\big)	
	\\&
	+C_1 \| w_1\|_{L^2} \big(\|\et_1+c\eta_2\|_{L^2} + \| \et_2\|_{L^2} \big)
	+C_1  \| w_2+c w_1\|_{L^2} \| \et_2\|_{L^2}
	\\&
	+C_1 B^{-4} \big( \|w_1 \|_{L^2}^2 + \| \partial_x w_1 \|_{L^2}^2 +\| cw_1+w_2\|_{L^2}^2\big)
	 \\&
	+C_1 B^{-4} \big(		\| \partial_x \et_1+c \partial_x \et_2\|_{L^2}^2+\| \et_1+c\eta_2 \|_{L^2}^2
				+\| \partial_x^2 \et_2\|_{L^2}^2
	 			+\| \partial_x \et_2\|_{L^2}^2+\| \et_2\|_{L^2}^2
				\big)	.
\end{aligned}
\]
Arranging terms,
\[
\begin{aligned}
\frac{d}{dt}\I
	\leq&~{}  -\frac12 \int  \left( (w_2 + c   w_1)^2+ 3 (\partial_x w_1)^2 +\big(1-c^2-A^{-1} \big) w_1^2 \right)
\\&
	+C_1 B^{-1/2} \big( \|w_1 \|_{L^2}^2 + \| \partial_x w_1 \|_{L^2}^2 +\| cw_1+w_2\|_{L^2}^2\big)
	 \\&
	+C_1B^{-2}  \big(		\| \partial_x \et_1+c \partial_x \et_2\|_{L^2}^2
				+\| \partial_x^2 \et_2\|_{L^2}^2
	 			+\| \partial_x \et_2\|_{L^2}^2
				\big)
	+C_1 B^{1/2}\big(\|\et_1+c\eta_2\|_{L^2}^2 + \| \et_2\|_{L^2}^2\big) 
.
	\end{aligned}
\] 
Finally, using that $B$ is large and $1-c^2>0$ uniformly in time, we conclude the proof.
\end{proof}

Recall \eqref{dt_J}. Using the previous result, we are now ready to conclude a new bound on $\mathcal J'(t)$.

	\begin{cor}
	It holds the improved estimate
	\begin{equation}\label{dt_J_F}
	\begin{aligned}
	\frac{d}{dt}\J
	\leq&~{}  
-\frac14  \int  \left( (\et_1+c \et_2)^2+3 (\partial_x \et_2)^2+\big(1-c^2\big)\et_2^2 -f'(\Qc) \et_2^2\right)
\\
&
+C_2 B^{-1} \big( \|w_1\|_{L^2}^2+\|\partial_x w_1\|_{L^2}^2+\|cw_1+w_2\|_{L^2}^2 \big)
+C_2 A^{-1} \big(		\| \partial_x \et_1+c \partial_x \et_2\|_{L^2}^2
				+\| \partial_x^2 \et_2\|_{L^2}^2
				\big)	.
	\end{aligned}
	\end{equation}
	\end{cor}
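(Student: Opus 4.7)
The plan is to pass from the raw estimate \eqref{dt_J} in Proposition \ref{prop:J} to the improved form \eqref{dt_J_F} by combining two ingredients: the vector-valued coercivity \eqref{coercoer} of Lemma \ref{lem2p3}(iv), which converts the leading negative quadratic form into pointwise control of $\|\bd{\eta}\|_{L^2\times H^1}^2$, and the sharp modulation bound \eqref{eq:c'_wz}, which unpacks $|c'|$ into pieces landing naturally either in the $w$-bucket or in the $A^{-1}$-derivative bucket.

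First, split the main negative term in \eqref{dt_J} as $-\tfrac12 Q = -\tfrac14 Q - \tfrac14 Q$, where $Q$ denotes the bilinear form $\int((\et_1+c\et_2)^2+3(\partial_x\et_2)^2+(1-c^2)\et_2^2-f'(\Qc)\et_2^2)$. Applying \eqref{coercoer} to the second copy yields a coercive buffer
\[
-\tfrac14 Q \leq -\tfrac{c_0}{4}\|\bd{\eta}\|_{L^2\times H^1}^2 + \tfrac{C}{4}e^{-2m_0 B}\|\bd{z}\|_{L^2\times L^2}^2.
\]
Since $B=\delta^{-1/10}$ is large, the condition $C_2 B^{-1}\leq c_0/8$ holds for $\delta$ sufficiently small, and the error term $C_2 B^{-1}(\|\et_1+c\et_2\|_{L^2}^2+\|\et_2\|_{L^2}^2+\|\partial_x\et_2\|_{L^2}^2)$ in \eqref{dt_J} is dominated by half of the buffer. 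The exponential tail $e^{-2m_0 B}\|\bd z\|_{L^2\times L^2}^2$ is controlled, via Lemma \ref{lem:z_w}, by $\gar^{-2}\|\bd w\|_{H^1\times L^2}^2$ times an exponentially small factor, which is absorbed into the $B^{-1}(\|w_1\|^2+\|\partial_x w_1\|^2+\|cw_1+w_2\|^2)$ bucket already present in \eqref{dt_J}.

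Second, the modulation remainder $C_2|c'|(\|\et_2\|_{L^2}+\|\et_1+c\et_2\|_{L^2})$ is handled via \eqref{eq:c'_wz}. The first piece of the bound on $|c'|$ is a product $(\|w_1\|+\|w_2+cw_1\|)(\|\et_1+c\et_2\|+\|\et_2\|)$; multiplying by $\|\et_2\|+\|\et_1+c\et_2\|$ and using $\|w_i\|\leq C\delta$ from orbital stability (Theorem \ref{MT1}), this contributes at most a term of size $\delta(\|\et_1+c\et_2\|^2+\|\et_2\|^2)$, which for $\delta$ small absorbs into the coercive buffer. The second piece of $|c'|$ is the tail $B^{-4}(\|w_1\|^2+\|\partial_x w_1\|^2+\|cw_1+w_2\|^2+\|\partial_x\et_1+c\partial_x\et_2\|^2+\|\et_1+c\et_2\|^2+\|\partial_x^2\et_2\|^2+\|\partial_x\et_2\|^2+\|\et_2\|^2)$; after multiplying by the factor $\|\et_2\|+\|\et_1+c\et_2\|\leq C\delta$ and noting that $\delta B^{-4}=\delta^{7/5}\leq\delta=A^{-1}$, the higher-derivative contributions fit into the $A^{-1}(\|\partial_x\et_1+c\partial_x\et_2\|^2+\|\partial_x^2\et_2\|^2)$ bucket of \eqref{dt_J_F}, while the remaining non-derivative pieces either reduce to the $B^{-1}$-bucket on the $w$'s or are absorbed into the coercive buffer.

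The main obstacle is the careful bookkeeping in Step 2: the bound \eqref{eq:c'_wz} on $|c'|$ mixes $\eta$- and $w$-norms with two distinct weights ($1$ and $B^{-4}$), and Young's splittings must be chosen so that each fragment falls into the correct target bucket ($-\tfrac14 Q$, $B^{-1}\|\bd w\|_{H^1\times L^2}^2$, or $A^{-1}\|\partial_x\bd\eta\|^2$-type) without re-introducing uncontrolled low-order $\eta$-terms or derivative mismatches. Once this distribution is performed consistently with the scale relations $A=\delta^{-1}$, $B=\delta^{-1/10}$, $\gar=B^{-4}$ of \eqref{eq:B_gar}, the inequality \eqref{dt_J_F} follows immediately.
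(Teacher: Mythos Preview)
Your approach is correct and essentially the same as the paper's. Both start from \eqref{dt_J}, expand the $|c'|$ term via \eqref{eq:c'_wz}, use smallness of $\|\bd w\|$ and of $\|\et_2\|+\|\et_1+c\et_2\|$ to distribute the resulting pieces into the $B^{-1}(\bd w)$ and $A^{-1}(\partial\bd\eta)$ buckets, and finally absorb the $B^{-1}$-sized $\eta$-errors into the leading quadratic form $Q$, passing from coefficient $-\tfrac12$ to $-\tfrac14$. The only real difference is that you make the use of the coercivity \eqref{coercoer} explicit in this last absorption step, whereas the paper simply writes $-\tfrac12 Q + C_2 B^{-1}(\ldots\eta\ldots)\leq -\tfrac14 Q$ without comment; since $Q$ contains the sign-indefinite term $-\int f'(\Qc)\et_2^2$, that step does tacitly rely on the same coercivity (or, equivalently, on its deferred use in Section~\ref{Sec:7}). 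Your version is therefore slightly more transparent.

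One minor imprecision: you invoke $\|\et_2\|+\|\et_1+c\et_2\|\leq C\delta$, but the correct bound (via $\eta_i=\chi_A\zB z_i$, \eqref{eq:zA_z1} and \eqref{eq:z2_w2}, as the paper records) carries an extra factor $\gar^{-1/2}$, namely $\leq C\delta\gar^{-1/2}=C\delta^{4/5}$. This is harmless for your argument: the resulting coefficient $\delta^{4/5}B^{-4}=\delta^{6/5}$ still fits inside the target $A^{-1}=\delta$ bucket.
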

\begin{proof}
Recalling \eqref{eq:dJ_lema}. Firstly, we will focus on the last term on the RHS in \eqref{eq:dJ_lema} and replacing \eqref{eq:c'_wz}, one gets
\[
\begin{aligned}
 |c'| &\big(\|  \et_2\|_{L^2}+ \|  \et_1+c\eta_2\|_{L^2} \big)
 \\
 \lesssim&~{} 	\big( \| w_1\|_{L^2} \big(\|\et_1+c\eta_2\|_{L^2} + \| \et_2\|_{L^2} \big)
	+ \| w_2+c w_1\|_{L^2} \| \et_2\|_{L^2}\big)\big(\|  \et_2\|_{L^2}+ \|  \et_1+c\eta_2\|_{L^2} \big)
	\\&
	+B^{-4}\big(\|  \et_2\|_{L^2}+ \|  \et_1+c\eta_2\|_{L^2} \big) \big( \|w_1 \|_{L^2}^2 + \| \partial_x w_1 \|_{L^2}^2 +\| cw_1+w_2\|_{L^2}^2\big)
	 \\&
	+B^{-4}\big(\|  \et_2\|_{L^2}+ \|  \et_1+c\eta_2\|_{L^2} \big) \big(		\| \partial_x \et_1+c \partial_x \et_2\|_{L^2}^2+\| \et_1+c\eta_2 \|_{L^2}^2
				+\| \partial_x^2 \et_2\|_{L^2}^2
	 			+\| \partial_x \et_2\|_{L^2}^2+\| \et_2\|_{L^2}^2
				\big)	
\end{aligned}
\]
Noticing that $\|  \et_2\|_{L^2}+ \|  \et_1+c\eta_2\|_{L^2}\leq \delta\gar^{-1/2}$. Indeed,  recalling that that $\eta_2=\chi_A\zB z_2$ and $\eta_1+c\eta_2=\chi_A\zB (z_1+cz_2)$  jointly to \eqref{eq:z2_w2} and \eqref{eq:zA_z1}, we obtain
\[
\|  \et_2\|_{L^2}+ \|  \et_1+c\eta_2\|_{L^2}\lesssim  \|w_1 \|_{L^2} +\gar^{-1/2} \| \partial_x w_1 \|_{L^2} +\| cw_1+w_2\|_{L^2},
\]
we conclude by \eqref{OS} and  \eqref{eq:B_gar}.

Applying Cauchy-Schwarz inequality jointly the above estimates; and having in mind \eqref{eq:A} and \eqref{eq:B_gar}, we have
\[
\begin{aligned}
 |c'| \big(\|  \et_2\|_{L^2}+ \|  \et_1+c\eta_2\|_{L^2} \big)
 \lesssim&~{} 	\big( \| w_1\|_{L^2} +\|w_2+cw_1\|_{L^2}\big)\big(\|\et_1+c\eta_2\|_{L^2}^2 + \| \et_2\|_{L^2}^2 \big)
	\\&
	+B^{-2}A^{-1} \big( \|w_1 \|_{L^2}^2 + \| \partial_x w_1 \|_{L^2}^2 +\| cw_1+w_2\|_{L^2}^2\big)
	 \\&
	+B^{-2}A^{-1} \big(		\| \partial_x \et_1+c \partial_x \et_2\|_{L^2}^2+\| \et_1+c\eta_2 \|_{L^2}^2
				+\| \partial_x^2 \et_2\|_{L^2}^2
	 			+\| \partial_x \et_2\|_{L^2}^2+\| \et_2\|_{L^2}^2
				\big)	
\\
 \lesssim&~{} 	
	A^{-1} \big(		\| \partial_x \et_1+c \partial_x \et_2\|_{L^2}^2+\| \et_1+c\eta_2 \|_{L^2}^2
				+\| \partial_x^2 \et_2\|_{L^2}^2
	 			+\| \partial_x \et_2\|_{L^2}^2+\| \et_2\|_{L^2}^2
				\big)	
	\\&+B^{-2}A^{-1} \big( \|w_1 \|_{L^2}^2 + \| \partial_x w_1 \|_{L^2}^2 +\| cw_1+w_2\|_{L^2}^2\big).
\end{aligned}
\]
Replacing th estimates in \eqref{eq:dJ_lema},
\[
\begin{aligned}
\frac{d}{dt} \J \leq&
-\frac12  \int  \left( (\et_1+c \et_2)^2+3 (\partial_x \et_2)^2+\big(1-c^2\big)\et_2^2 -f'(\Qc) \et_2^2\right)
\\
&+ C_2 B^{-1} \left( \|\et_1+c\eta_2\|_{L^2}^2+  \| \et_2\|_{L^2}^2+\| \partial_x \et_2\|_{L^2}^2 \right)
+C_2 B^{-1} \big( \|w_1\|_{L^2}^2+\|\partial_x w_1\|_{L^2}^2+\|cw_1+w_2\|_{L^2}^2 \big)
\\&
+C_2 A^{-1} \big(		\| \partial_x \et_1+c \partial_x \et_2\|_{L^2}^2+\| \et_1+c\eta_2 \|_{L^2}^2
				+\| \partial_x^2 \et_2\|_{L^2}^2
	 			+\| \partial_x \et_2\|_{L^2}^2+\| \et_2\|_{L^2}^2
				\big)	
\\&+C_2 B^{-2}A^{-1} \big( \|w_1 \|_{L^2}^2 + \| \partial_x w_1 \|_{L^2}^2 +\| cw_1+w_2\|_{L^2}^2\big).
\end{aligned}
\]
Arranging terms,
\[
\begin{aligned}
\frac{d}{dt} \J \leq&
-\frac12  \int  \left( (\et_1+c \et_2)^2+3 (\partial_x \et_2)^2+\big(1-c^2\big)\et_2^2 -f'(\Qc) \et_2^2\right)
\\
&+ C_2 B^{-1} \left( \|\et_1+c\eta_2\|_{L^2}^2+  \| \et_2\|_{L^2}^2+\| \partial_x \et_2\|_{L^2}^2 \right)
+C_2 B^{-1} \big( \|w_1\|_{L^2}^2+\|\partial_x w_1\|_{L^2}^2+\|cw_1+w_2\|_{L^2}^2 \big)
\\&
+C_2 A^{-1} \big(		\| \partial_x \et_1+c \partial_x \et_2\|_{L^2}^2
				+\| \partial_x^2 \et_2\|_{L^2}^2
				\big)	
\\
 \leq&
-\frac14  \int  \left( (\et_1+c \et_2)^2+3 (\partial_x \et_2)^2+\big(1-c^2\big)\et_2^2 -f'(\Qc) \et_2^2\right)
\\
&+C_2 B^{-1} \big( \|w_1\|_{L^2}^2+\|\partial_x w_1\|_{L^2}^2+\|cw_1+w_2\|_{L^2}^2 \big)
+C_2 A^{-1} \big(		\| \partial_x \et_1+c \partial_x \et_2\|_{L^2}^2
				+\| \partial_x^2 \et_2\|_{L^2}^2
				\big)	.
\end{aligned}
\]
This concludes the proof of this corollary.
\end{proof}

Finally, recall \eqref{Est_N}. Using the previous result, we are now ready to conclude a new bound on $\mathcal N'(t)$.
	\begin{cor}
	It holds
	\begin{equation}\label{Est_N_F}
	\begin{aligned}
	\frac{d}{dt}\N
	\leq&~{}  
	-\frac18 \int \left( 
				(\partial_x \et_1+c\partial_x \et_2)^2 
				+3(\partial_x^2 \et_2)^2
			 \right)
+C_3 \big( \| \et_2\|_{L^2}^2+\| \partial_x \et_2\|_{L^2}^2\big)
\\&
+	C_3 B^{-1} \| \et_1+c\eta_2\|_{L^2}^2
	+ C_3B^{-1}\big( 
					\|  w_1\|_{L^2}^2 
					+\|  \partial_x w_1\|_{L^2}^2
					+ \|cw_1+w_2 \|_{L^2}^2
		\big).
	\end{aligned}
	\end{equation}
	\end{cor}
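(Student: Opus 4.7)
The inequality \eqref{Est_N_F} differs from \eqref{Est_N} only in the absence of the modulation remainder
\[
C_3\,|c'|\,B^{11/2}\bigl(\|w_1\|_{L^2}+\|\partial_x w_1\|_{L^2}+\|cw_1+w_2\|_{L^2}\bigr)
\]
and in the slightly reduced coefficient $-\tfrac18$ in front of the main quadratic form. The plan is to substitute the quadratic bound \eqref{eq:c'_wz} for $|c'|$ inside this remainder, exploit the smallness $\|\bd{w}\|_{H^1\times L^2}\lesssim \delta$ from Theorem \ref{MT1}, and balance the resulting powers of $B$ and $\delta$ through the scaling choice $B=\delta^{-1/10}$ in \eqref{eq:B_gar}.

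Denote $W:=\|w_1\|_{L^2}+\|\partial_x w_1\|_{L^2}+\|cw_1+w_2\|_{L^2}\lesssim \delta$, and let $\mathcal{E}$ denote the quadratic sum
\[
\mathcal{E}:=\|w_1\|_{L^2}^2+\|\partial_x w_1\|_{L^2}^2+\|cw_1+w_2\|_{L^2}^2+\|\partial_x\eta_1+c\partial_x\eta_2\|_{L^2}^2+\|\eta_1+c\eta_2\|_{L^2}^2+\|\partial_x^2\eta_2\|_{L^2}^2+\|\partial_x\eta_2\|_{L^2}^2+\|\eta_2\|_{L^2}^2.
\]
Feeding \eqref{eq:c'_wz} into $|c'|$ and applying Young's and Cauchy--Schwarz inequalities will yield a bound of the form
\[
|c'|\,B^{11/2}\,W\;\lesssim\;\delta^{9/20}\bigl(\|w_1\|_{L^2}^2+\|w_2+cw_1\|_{L^2}^2+\|\eta_1+c\eta_2\|_{L^2}^2+\|\eta_2\|_{L^2}^2\bigr)\,+\,\delta^{17/20}\,\mathcal{E},
\]
using $B^{11/2}\,W\lesssim B^{11/2}\delta=\delta^{9/20}$ for the cross term and $B^{11/2-4}\,W\lesssim B^{3/2}\delta=\delta^{17/20}$ for the quadratic error. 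For $\delta<1$ one has $\delta^{9/20}\leq \delta^{1/10}=B^{-1}$, so the first contribution fits the $C_3\,B^{-1}$- and $C_3$-type pieces already present on the right of \eqref{Est_N_F}. The second contribution $\delta^{17/20}\,\mathcal{E}$ has prefactor $\ll 1$ for $\delta$ small, hence its $H^2$-type components $\|\partial_x^2\eta_2\|_{L^2}^2$ and $\|\partial_x\eta_1+c\partial_x\eta_2\|_{L^2}^2$ are absorbed into the main negative quadratic form at the cost of changing its coefficient from $-\tfrac14$ to $-\tfrac18$, while the remaining $L^2$- and $H^1$-type components fit the $C_3\,B^{-1}$- and $C_3$-type pieces.

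The main obstacle is strictly of bookkeeping type: one needs the three scales $\delta^{9/20}$, $\delta^{17/20}$ and $1$ to be strictly smaller than the admissible coefficients $B^{-1}=\delta^{1/10}$, any fixed constant below $\tfrac18$, and $C_3$ respectively, which forces the final choice of $\delta_3$ but does not require any new virial or spectral input. All the identities \eqref{eq:dt_N}--\eqref{eq:dtN_N} together with the bounds \eqref{eq:N1et_E}, \eqref{eq:N1z_E}, \eqref{eq:N1z1_E}, \eqref{eq:N2_E}, \eqref{eq:N3}, \eqref{eq:N4}, \eqref{eq:N5+N7}, \eqref{eq:N6+N8} are used verbatim, and only the contribution from $N_6+N_8$ where the factor $|c'|\,B^{11/2}$ arose (via \eqref{eq:N61}, \eqref{eq:N63}, \eqref{eq:N81}) is refined through the quadratic modulation estimate \eqref{eq:c'_wz}.
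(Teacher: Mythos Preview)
The proposal is correct and follows essentially the same route as the paper: starting from \eqref{Est_N}, you substitute the quadratic modulation bound \eqref{eq:c'_wz} for $|c'|$, use the orbital smallness $W\lesssim\delta$, and track the resulting powers $\delta^{9/20}\le B^{-1}$ and $\delta^{17/20}\ll 1$ to absorb the highest-order $\eta$-terms into the main negative quadratic form (relaxing $-\tfrac14$ to $-\tfrac18$) while fitting the rest under the $C_3$ and $C_3B^{-1}$ pieces. Your exponent bookkeeping is slightly more explicit than the paper's, but the argument is the same.
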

\begin{proof}
Firstly, we will focus on the las term on the RHS of \eqref{Est_N}. Replacing \eqref{eq:c'_wz} and using that $\|\bd{w}\|_{H^1\times L^2}\lesssim \|\bd{u}\|_{H^1\times L^2}\leq \delta$,  we obtain
\[
\begin{aligned}
| c'| &(\|w_1\|_{L^2}+\|\partial_x w_1\|_{L^2}+\|cw_1+w_2\|_{L^2}) \\
\lesssim &
~{}\delta \| w_1\|_{L^2} \big(\|\et_1+c\eta_2\|_{L^2} + \| \et_2\|_{L^2} \big)
	+ \| w_2+c w_1\|_{L^2} \| \et_2\|_{L^2}
	\\&
	+\delta B^{-4} \big( \|w_1 \|_{L^2}^2 + \| \partial_x w_1 \|_{L^2}^2 +\| cw_1+w_2\|_{L^2}^2\big)
	 \\&
	+\delta B^{-4} \big(		\| \partial_x \et_1+c \partial_x \et_2\|_{L^2}^2+\| \et_1+c\eta_2 \|_{L^2}^2
				+\| \partial_x^2 \et_2\|_{L^2}^2
	 			+\| \partial_x \et_2\|_{L^2}^2+\| \et_2\|_{L^2}^2
				\big)
\\
\lesssim &~{}
\delta  \big(\|\et_1+c\eta_2\|_{L^2}^2 + \| \et_2\|_{L^2}^2\big)
	\\&
	+\delta \big( \|w_1 \|_{L^2}^2 + \| \partial_x w_1 \|_{L^2}^2 +\| cw_1+w_2\|_{L^2}^2\big)
	+\delta B^{-4} \big(	\| \partial_x \et_1+c \partial_x \et_2\|_{L^2}^2
				+\| \partial_x^2 \et_2\|_{L^2}^2
	 			+\| \partial_x \et_2\|_{L^2}^2
				\big).
\end{aligned}
\]
Finally, using the above estimate in \eqref{Est_N} and having in mind \eqref{eq:A}, we conclude
\begin{equation*}
\begin{aligned}
	\dfrac{d}{dt} \N
\leq~{}&
	-\frac18 \int \left( 
				(\partial_x \et_1+c\partial_x \et_2)^2 
				+3(\partial_x^2 \et_2)^2
			 \right)
+C_3 \big( \| \et_2\|_{L^2}^2+\| \partial_x \et_2\|_{L^2}^2\big)
\\&
+	C_3 B^{-1} \| \et_1+c\eta_2\|_{L^2}^2
	+ C_3B^{-1}\big( 
					\|  w_1\|_{L^2}^2 
					+\|  \partial_x w_1\|_{L^2}^2
					+ \|cw_1+w_2 \|_{L^2}^2
		\big).
\end{aligned}
\end{equation*}
The proof is complete.
\end{proof}

Now we gather \eqref{eq:dtI_F}, \eqref{dt_J_F} and \eqref{Est_N_F} to conclude the proof of Theorem \ref{MT}.

\section{Proof of the Main Theorem}\label{Sec:7}

Finally, we prove Theorem \ref{MT}.

\subsection{Adding virial estimates}	Recall \eqref{eq:dtI_F}, \eqref{dt_J_F} and \eqref{Est_N_F}. Let us define
\[
\mathcal H:= 
\mathcal{J}+16 \delta^{3/40}C_2\mathcal{I} +40 C_1 C_2 \delta^{1/8} \mathcal{N}.
\]
We have
\begin{equation}\label{eq:dtH_inequality}
\begin{aligned}
\frac{d}{dt}\mathcal H(t) \le &~{} -C_2 \delta^{3/40} \int  \left( (w_2 + c   w_1)^2+  (\partial_x w_1)^2 +\big(1-c^2 \big) w_1^2 \right) \\
&~{} -4C_1 C_2 \delta^{1/8}  \int \left( 
(\et_1+c \et_2)^2+ (\partial_x \et_2)^2+\big(1-c^2\big)\et_2^2  + (\partial_x \et_1+c\partial_x \et_2)^2 
				+(\partial_x^2 \et_2)^2
			 \right).
\end{aligned}
\end{equation}
Indeed, gathering \eqref{dt_J_F}, \eqref{eq:dtI_F} and \eqref{Est_N_F}, we obtain
	\begin{equation*}
	\begin{aligned}
	\frac{d}{dt}\mathcal{H}
	\leq&~{}  
-\frac14  \int  \left( (\et_1+c \et_2)^2+3 (\partial_x \et_2)^2+\big(1-c^2\big)\et_2^2 -f'(\Qc) \et_2^2\right)
	 \\&
	 + (16 C_1 C_2 \delta^{1/20} +40C_1 C_2 C_3\delta^{1/8}) \big(\|\et_1+c\eta_2\|_{L^2}^2 + \| \et_2\|_{L^2}^2+\| \partial_x \et_2\|_{L^2}^2\big) 
\\&
	  -4\delta^{3/40}C_2  \int  \left( (w_2 + c   w_1)^2+ 3 (\partial_x w_1)^2 +\big(1-c^2 \big) w_1^2 \right)
	\\&
	 +16  C_1 C_2 \delta^{1/8} \big( \|w_1 \|_{L^2}^2 + \| \partial_x w_1 \|_{L^2}^2 +\| cw_1+w_2\|_{L^2}^2\big)
	\\&
	+ 40 C_1 C_2  C_3\delta^{1/10}\delta^{1/8}\big( 
					\|  w_1\|_{L^2}^2 
					+\|  \partial_x w_1\|_{L^2}^2
					+ \|cw_1+w_2 \|_{L^2}^2\big)
\\&
	-5C_1 C_2 \delta^{1/8}  \int \left( 
				(\partial_x \et_1+c\partial_x \et_2)^2 
				+3(\partial_x^2 \et_2)^2
			 \right)
	\\&
+C_2 \delta \big(		\| \partial_x \et_1+c \partial_x \et_2\|_{L^2}^2
				+\| \partial_x^2 \et_2\|_{L^2}^2
				\big)	
	+16  C_1 C_2 \delta^{11/40}  \big(		\| \partial_x \et_1+c \partial_x \et_2\|_{L^2}^2
				+\| \partial_x^2 \et_2\|_{L^2}^2
				\big).
	\end{aligned}
	\end{equation*}
Finally, arranging terms,
\begin{equation*}
\begin{aligned}	
\frac{d}{dt}\mathcal{H}
\leq&
-\frac14  \int  \left( (\et_1+c \et_2)^2+3 (\partial_x \et_2)^2+\big(1-c^2\big)\et_2^2 -f'(\Qc) \et_2^2\right)
	 \\&
	 +8 C_1 C_2 \delta^{1/20} ( 2+5 C_3\delta^{3/20} )\big(\|\et_1+c\eta_2\|_{L^2}^2 + \| \et_2\|_{L^2}^2++\| \partial_x \et_2\|_{L^2}^2\big) 
\\&
	  -8\delta^{3/40}C_2  \int  \left( (w_2 + c   w_1)^2+ 3 (\partial_x w_1)^2 +\big(1-c^2 \big) w_1^2 \right)
 \\&
+C_2\delta^{1/10} (1+16  C_1  \delta^{1/40}+ 40 C_1 C_3 \delta^{1/8} ) \big( \|w_1\|_{L^2}^2+\|\partial_x w_1\|_{L^2}^2+\|cw_1+w_2\|_{L^2}^2 \big)
\\&
	-5C_1 C_2 \delta^{1/8}  \int \left( 
				(\partial_x \et_1+c\partial_x \et_2)^2 
				+3(\partial_x^2 \et_2)^2
			 \right)
	\\&
+ C_2\delta^{11/40} ( 16  C_1+\delta^{29/40}  ) \big(		\| \partial_x \et_1+c \partial_x \et_2\|_{L^2}^2
				+\| \partial_x^2 \et_2\|_{L^2}^2
				\big).	
	\end{aligned}
	\end{equation*}
Using that $\delta^{1/40}\|w_1\|_{L^2}^2\leq (1-c^2)\|w_1\|_{L^2}^2$ along with \eqref{coercoer}, the previous estimates lead to \eqref{eq:dtH_inequality}.

\subsection{End of proof}
First of all, one has
\begin{equation*}
\int_{0}^\infty \left[\| w_2+c w_1 \|_{L^2}^2  +\|\partial_x w_1\|_{L^2}^2 
+(1-c^2)\|w_1\|_{L^2}^2  \right]dt\lesssim  \delta.
\end{equation*}
Indeed, integrating the estimate \eqref{eq:dtH_inequality} on $[0,t]$, we get
\[
\int_{0}^t \left[\| w_2+c w_1 \|_{L^2}^2  +\|\partial_x w_1\|_{L^2}^2 
+(1-c^2)\|w_1\|_{L^2}^2  \right]dt\lesssim 2\sup_{0\leq s\leq t} |\mathcal H(s)| \leq 2\sup_{s\geq 0} |\mathcal H(s)|.
\]
By Lemma \ref{lem:z_w}, estimates \eqref{eq:sech_v1_w1}-\eqref{eq:sech_v2_w2}, one obtains
\begin{equation}\label{eq:int_a1_a2_norm}
\int_{0}^\infty \left(
\int ((1-c^2)v_1^2+(\partial_x v_1)^2+(v_2+c v_1)^2 )\mu(y) \right) dt\leq \delta.
\end{equation}
Using the above estimate, we will conclude the proof of Theorem \ref{MT}. Let $\mu=\sech^2$, and
\begin{equation*}
\begin{gathered}
\ca{P}(t)=\int \mu(y) v_1^2+\int \mu(y) (\IOpg (v_2+cv_1))^2=:\ca{P}_1+\ca{P}_2.
\end{gathered}
\end{equation*}
For $\mathcal{K}_1$, using \eqref{eq:u_TD} and integrating by parts, we have
\[
\begin{aligned}
\frac{d \ca{P}_1}{dt}
=&~{} 2\int \mu(v_1 \dot{v_1}) 
\\
=&~{} 2\int \mu v_1 \big( \partial_y (v_2+c v_1) +(\rho'-c)\partial_y (\Qc+v_1)-c'~\Lambda (\Qc)\big)
\\
=&~{} 2\int \mu v_1 \partial_x (v_2+c v_1) 
+2(\rho'-c) \int \mu(y) v_1\partial_x (\Qc+v_1) -c'~2\int \mu v_1 \Lambda (\Qc)
\end{aligned}
\]
Then, applying H\"older inequality, Cauchy-Schwarz inequality, \eqref{eq:c'_new}  and $B^{1/4}\gamma>2$, we get
\[
\begin{aligned}
\left|\frac{d}{dt}\ca{P}_1(t)\right|
\lesssim & ~{}\int \mu(y)(v_1^2+(\partial_x v_1)^2+ (v_2+cv_1)^2) +|\rho'-c|^2+|c'|^2
\\
\lesssim & ~{}\int \mu(y)((1-c^2)v_1^2+(\partial_x v_1)^2+ (v_2+cv_1)^2) 
\end{aligned}
\]
For $\ca{P}_2$, passing to the variables $(z_1,z_2)$ (see \eqref{eq:z_cv}) 
\begin{equation*}
\begin{gathered}
\mathcal{P}_2=\int \mu(y)  z_2^2,
\end{gathered}
\end{equation*}
 and using \eqref{eq:z}, we get
\[
\begin{aligned}
\frac{d}{dt}{\ca{P}}_2
= &~{} 2\int \mu(y)  z_2 \dot{z_2}
=  2\int \mu(y)  z_2\big(  \partial_x (z_1+cz_2)+M_2+M_{2,c}\big)
\\
= &~{} 2\int \mu  z_2  \partial_x (z_1+cz_2)
+2\int \mu  z_2 M_2
+2\int \mu  z_2M_{2,c}
=: P_{21}+P_{22} +P_{23}.
\end{aligned}
\]
Integrating by parts in $P_{21}$, we have
\[
\begin{aligned}
P_{21}
=- 2\int (\mu'(y)  z_2+\mu(y)  \partial_y z_2  )(z_1+cz_2).
\end{aligned}
\]
Using H\"older's inequality, we obtain 
\[
\begin{aligned}
|P_{21}|
\lesssim&~{}\big( \|\mu^{1/2} z_2\|_{L^2}+\|\mu^{1/2} \partial_yz_2\|_{L^2}\big) \|\mu^{1/2}(z_1+cz_2)\|_{L^2}
\\
\lesssim&~{}\big( \|\mu^{1/2} z_2\|_{L^2}+\|\mu^{1/2} \partial_yz_2\|_{L^2}\big) \|\mu^{1/2}(z_1-cz_2+2cz_2)\|_{L^2}
.
\end{aligned}
\]
Recalling \eqref{def_z} and \eqref{eq:L_N}; using that $\left\| \sech\left( K y\right) (1-\gar\partial_x^2)^{-1} g\right\|_{L^2}\leq C \left\| (1-\gar\partial_x^2)^{-1}\left[ \sech\left( K y\right)  g\right]\right\|_{L^2}$, $K\leq A$,  we obtain
\[
\begin{aligned}
|P_{21}|
\lesssim&~{} \gar^{-1/2}\|\mu^{1/2} (cv_1+v_2)\|_{L^2} \big( \gar^{-1}\|\mu^{1/2} v_1\|_{L^2}+\|\mu^{1/2} (cv_1+v_2)\|_{L^2} \big)
\\
\lesssim&~{} \gar^{-3/2}\|\mu^{1/2} (cv_1+v_2)\|_{L^2} \big( \|\mu^{1/2} v_1\|_{L^2}+\|\mu^{1/2} (cv_1+v_2)\|_{L^2} \big)
.
\end{aligned}
\]
By Cauchy-Schwarz, we conclude
\[
\begin{aligned}
|P_{21}|
\lesssim_{\gar}&~{} \|\mu^{1/2} (cv_1+v_2)\|_{L^2}^2 + \|\mu^{1/2} v_1\|_{L^2}^2
.
\end{aligned}
\]

For $P_{22}$, we use Cauchy-Schwarz inequality jointly to  \eqref{eq:sech_Opg} and  \eqref{eq:sech_Opg_p}; then
\[
\begin{aligned}
|P_{22}| 
&
\lesssim~{}\| \mu^{1/2}  z_2\|_{L^2}\| \mu^{1/2}\Opg  N(v_1)\|_{L^2} 
\lesssim~{} \gar^{-1/2} \| \mu^{1/2} (cv_1+v_2)\|_{L^2} \| \mu^{1/2} v_1\|_{L^2} 
\\&
\lesssim_{\gar}~{} \| \mu^{1/2} (cv_1+v_2)\|_{L^2}^2 +\| \mu^{1/2} v_1\|_{L^2}^2
.
\end{aligned}
\]
The term $P_{23}$ is bounded as follows:
\[
\begin{aligned} 
|P_{23}|
=~{}& \left| 2\int \mu  z_2 M_{2,c}\right|
\lesssim~{}\|\mu^{1/2}z_2\|_{L^2}\|\mu^{1/2}(c' \Opg (\Qc+v_1) +(\rho'-c)  \partial_x z_2)\|_{L^2}
\\
\lesssim&~{}\|\mu^{1/2}z_2\|_{L^2} \big( |c'| \|\mu^{1/2} \Opg (\Qc+v_1)\|_{L^2}+ |\rho'-c| \|\mu^{1/2} \partial_x z_2\|_{L^2}\big)
\\
\lesssim&~{}\|\mu^{1/2}z_2\|_{L^2}^2+\|\mu^{1/2}\partial_x z_2\|_{L^2}^2+ |c'|^2+ \|\mu^{1/2} \Opg v_1\|_{L^2}^2
\\
\lesssim_{\gar}&~{} \|\mu^{1/2}(cv_1+v_2)\|_{L^2}^2+ \|\mu^{1/2} v_1\|_{L^2}^2+ |c'|^2.
\end{aligned}
\]
Then, we conclude
\[
\begin{aligned}
\left|\frac{d}{dt}{\mathcal{P}}_2(t)\right|
\lesssim_{\gar}& ~{}  \int  \mu(y)(v_1^2+(\partial_x v_1)^2+ (v_2+c v_1)^2).
\end{aligned}
\]
By \eqref{eq:int_a1_a2_norm}, there exists and increasing sequence $t_n\to \infty$ such that
\[
\lim_{n\to \infty} \left[\mathcal{P}_1(t_n)+\mathcal{P}_2(t_n)\right]=0.
\]
For $t\geq 0$, integrating on $[t,t_n]$, and passing to the limit as $n\to \infty$, we obtain
\[
\mathcal{P}(t)\lesssim \int_{t}^\infty \left[ \int \sech(y)(v_1^2+(\partial_x v_1)^2+ (v_2+cv_1)^2)\right]dt.
\]
By \eqref{eq:int_a1_a2_norm}, we deduce 
\begin{equation}\label{limP}
\lim_{t\to\infty} \mathcal{P}(t)=0.
\end{equation}
By the decomposition of solution, and the boundedness in $H^1$ of $u_1$, this implies $\lim_{t\to +\infty} \| \phi_1(t,\cdot + \rho(t))- Q_{c(t)}\|_{(L^2\cap L^\infty)(I)}=0$ for any bounded interval $I$.

\subsection{Convergence of the scaling parameter} Now we use \eqref{eq:c'_wz} again to get
\[ \label{eq:c'_wz}
\begin{aligned}
|c'|  \lesssim&~{}   \|w_1 \|_{L^2}^2 + \| \partial_x w_1 \|_{L^2}^2 +\| cw_1+w_2\|_{L^2}^2 \\
&~{} + \| \partial_x \et_1+c \partial_x \et_2\|_{L^2}^2+\| \et_1+c\eta_2 \|_{L^2}^2 +\| \partial_x^2 \et_2\|_{L^2}^2 +\| \partial_x \et_2\|_{L^2}^2+\| \et_2\|_{L^2}^2 =: \mathcal R.
\end{aligned}
\] 
From \eqref{eq:dtH_inequality} we get, for $0<t_1<t_2$,
\[
|c(t_1)-c(t_2)|  \leq \int_{t_1}^{t_2} \mathcal R \leq \int_{t_1}^{\infty}\mathcal R \leq  C_\delta <+\infty.
\]
Therefore $c(t)$ is a Cauchy sequence and converges to some $c_+$ satisfying (Theorem \ref{MT1}) $|c - c_+ | \lesssim C_0\delta.$ This ends the proof of \eqref{AS} and Theorem \ref{MT}.

\medskip

Finally, the proof of \eqref{AS2} follows directly from \eqref{limP}, interpolation and the boundedness of the $L^2$-norm of $\phi_2(t,\cdot + \rho(t))+c_+ Q_{c_+}$.

\appendix

\section{Proof of virial identities}\label{virial2}

In this section we prove the three virial identities used along this work.

\subsection{First virial identity} 
Proof of \eqref{eq:dtI}.   From \eqref{eq:I} and using \eqref{eq:eq_lin},
	\[ 
	\begin{aligned}
	\dfrac{d}{dt}\I
	=&\int \vA(\dot{v_1}v_2+v_1\dot{v_2}) \\
	=&\int \vA v_2  \partial_x (v_2+c v_1) 
	+\int \vA v_2 ( (\rho'-c)\partial_x (\Qc+v_1)-c'~\Lambda (\Qc))
	\\&
	+\int \vA  v_1 ( \partial_x \LL v_1 +c\partial_x (v_2+cv_1))
	+\int \vA  v_1(N+(\rho'-c)\partial_x(-c\Qc+v_2)+c'~\Lambda (c\Qc)).
	\end{aligned}
	\] 
Rearranging the terms, we obtain	
\begin{equation}\label{eq:I'_RHS}
\begin{aligned}
	\dfrac{d}{dt}\I
	=&\int \vA v_2  \partial_x v_2 
	+\int \vA  v_1  \partial_x (\LL v_1 +c^2 v_1)
	+c \int \vA [ v_2  \partial_x  v_1+   v_1 \partial_x v_2]
	+\int \vA v_2 ( (\rho'-c)\partial_x (\Qc)-c'~\Lambda (\Qc))
	\\&
	+\int \vA  v_1(N+(\rho'-c)\partial_x(-c\Qc)+c'~\Lambda (c\Qc)) 
	+(\rho'-c) \int \vA  (  v_2 \partial_x v_1+ v_1\partial_x v_2).
	\end{aligned}
	\end{equation}
For the third integral  and the last integral in the RHS of the above equation, we have $\int \vA [ v_2  \partial_x  v_1+   v_1 \partial_x v_2]= - \int \vA'  v_1 v_2.$ For the second term on \eqref{eq:I'_RHS}, applying the identity \cite{MaMu} $\Jap{\eta \partial_x\LL(f)}{f}= -\frac12  \int\eta' [ 3 (\partial_x f)^2 +V_0 f^2 ]+\frac12 \int \eta V_0'  f^2+\frac12 \int \eta''' f^2,$ we get
	\begin{equation*}
	\begin{aligned}
	&\int \vA v_1 \partial_x(\LL(u_1)+c^2 v_1)  \\
	&= \int \vA v_1 \partial_x L v_1=  -\frac12  \int\vA' [ 3 (\partial_x v_1)^2 +(1-f'(\Qc)) v_1^2 ] -\frac12 \int \vA (f'(\Qc))'  v_1^2+\frac12 \int \vA''' v_1^2 .
	\end{aligned}
	\end{equation*}
Collecting and arraying the terms, we get
\[ 
	\begin{aligned}
	\dfrac{d}{dt}\I
	=&-\frac12 \int \vA' \left[ (v_2 + c   v_1)^2+ 3 (\partial_x v_1)^2 +(1-c^2-f'(\Qc)) v_1^2 \right]
	 -(\rho'-c)  \int \vA'  v_1 v_2
	+\frac12 \int \vA''' v_1^2 
	\\&
	- \frac12 \int \vA (f'(\Qc))'  v_1^2
	+(\rho'-c) \int \vA \Qc' \left(  v_2 -c  v_1\right)
	- c' \int  \vA  \left( v_2  ~\Lambda (\Qc) +  v_1~\Lambda (c\Qc) \right)
	+\int \vA  v_1 N.
	\end{aligned}
\] 
This ends the proof.

\subsection{Second virial identity}
Proof of \eqref{eq:dJ_lema}.  We have from \eqref{eq:def_J} and \eqref{eq:z},
\[ 
\begin{aligned}
	\dfrac{d}{dt}\J
	=& \int   \psi_{A,B} (\dot{z_1} z_2+z_1 \dot{z_2}) \\
	=& \int   \psi_{A,B} (c(\partial_x z_1+c\partial_x z_2)+\LL \partial_x z_2 +M_1+M_{1,c}) z_2+ \int   \psi_{A,B} z_1 (\partial_x (z_1+cz_2)+M_2+M_{2,c}) \\
	=& \int   \psi_{A,B} (c(\partial_x z_1+c\partial_x z_2)+\LL \partial_x z_2 ) z_2
		+ \int   \psi_{A,B} z_1 \partial_x (z_1+cz_2) 
		\\& + \int   \psi_{A,B} (M_1+M_{1,c}) z_2 + \int   \psi_{A,B} z_1 (M_2+M_{2,c}) .
\end{aligned}
\] 
Rearranging the terms,
\begin{equation}\label{C3}
\begin{aligned}
	\dfrac{d}{dt}\J
	=& \int   \psi_{A,B} z_2 \LL \partial_x z_2 
		+ \int   \psi_{A,B} (z_1+c z_2) \partial_x (z_1+cz_2) 
	\\& + \int   \psi_{A,B} M_1 z_2 + \int   \psi_{A,B} M_{1,c} z_2 
	+ \int   \psi_{A,B}  M_2 z_1  	+ \int   \psi_{A,B}  M_{2,c} z_1.
\end{aligned}
\end{equation}
Appealing to the identity \cite{MaMu} $\Jap{\eta \LL(\partial_x f)}{f}= -\frac12  \int\eta' [ 3 (\partial_x f)^2 +V_0 f^2 ]-\frac12 \int \eta V_0'  f^2+\frac12 \int \eta''' f^2,$
one gets
	\[
	\begin{aligned}
	 \int   \psi_{A,B} z_2 \LL \partial_x z_2 =&  -\frac12  \int  \psi_{A,B}' [ 3 (\partial_x z_2)^2 +(1-c^2-f'(\Qc)) z_2^2 ]
		+\frac12 \int  \psi_{A,B} \partial_x(f'(\Qc))  z_2^2
		+\frac12 \int  \psi_{A,B}''' z_2^2. 
	\end{aligned}
	\]
Then, by the above  expression and integrating by parts we obtain
\[
\begin{aligned}
& \int   \psi_{A,B} z_2 \LL \partial_x z_2 
		+ \int   \psi_{A,B} (z_1+c z_2) \partial_x (z_1+cz_2) \\
&~{}= -\frac12  \int  \psi_{A,B}' [ (z_1+c z_2)^2+3 (\partial_x z_2)^2+(1-c^2 -f'(\Qc) )z_2^2]
		+\frac12 \int  \psi_{A,B} \partial_x(f'(\Qc))  z_2^2
		+\frac12 \int  \psi_{A,B}''' z_2^2	.
\end{aligned}
\]	
Gathering \eqref{C3} and the previous identity we get \eqref{eq:dJ_lema}.

\subsection{Third virial identity}
Proof of \eqref{eq:dt_N}. We have from \eqref{eq:virial_N} and \eqref{eq:z},
\[ 
\begin{aligned}
	\dfrac{d}{dt} \N
	=& \int   \psi_{A,B} (\partial_x \dot{z_1} \partial_x z_2+\partial_x z_1 \partial_x \dot{z_2}) \\
	=& \int   \psi_{A,B} \partial_x z_2\partial_x (c\partial_x (z_1+cz_2)+\LL \partial_x z_2 +M_1+M_{1,c})  
	+ \int   \psi_{A,B} \partial_x z_1 \partial_x( \partial_x (z_1+cz_2)+M_2+M_{2,c}) \\
	=& \int   \psi_{A,B} \partial_x z_2\partial_x (c\partial_x (z_1+cz_2)+\LL \partial_x z_2 )  
	+ \int   \psi_{A,B} \partial_x z_1 \partial_x( \partial_x (z_1+cz_2)) \\
	&+ \int   \psi_{A,B} \partial_x z_2\partial_x (M_1+M_{1,c})  
	+ \int   \psi_{A,B} \partial_x z_1 \partial_x( M_2+M_{2,c}) \\
	=& \int   \psi_{A,B} \partial_x z_2\partial_x \LL \partial_x z_2  
	+ \int   \psi_{A,B} (\partial_x z_1+c\partial_x z_2) \partial_x^2( z_1+cz_2) \\
	& +\int   \psi_{A,B} \partial_x z_2\partial_x (M_1+M_{1,c})  
	+ \int   \psi_{A,B} \partial_x z_1 \partial_x( M_2+M_{2,c}).
\end{aligned}
\] 
Integrating by parts and applying the identity \cite{MaMu} $\Jap{\eta \partial_x\LL(f)}{f}=  -\frac12  \int\eta' [ 3 (\partial_x f)^2 +V_0 f^2 ]+\frac12 \int \eta V_0'  f^2+\frac12 \int \eta''' f^2,$
\[ \label{eq:dtN_N}
\begin{aligned}
	\dfrac{d}{dt} \N
	=&-\frac12 \int   \psi_{A,B}' \bigg[ (\partial_x z_1+c\partial_x z_2)^2+3 (\partial_x^2 z_2)^2 \bigg]
	-\frac12 \int   \psi_{A,B}'  (1 -c^2-f'(\Qc))(\partial_x  z_2)^2
	\\&
	-\frac12 \int   \psi_{A,B} \partial_x(f'(\Qc)) (\partial_x z_2)^2
	+\frac12 \int   \psi_{A,B}''' (\partial_x  z_2)^2  
+\int   \psi_{A,B} \partial_x z_2\partial_x M_1
	 +\int   \psi_{A,B} \partial_x z_2\partial_x M_{1,c}
	\\&
	+ \int   \psi_{A,B} \partial_x z_1 \partial_x M_2
	+ \int   \psi_{A,B} \partial_x z_1 \partial_x M_{2,c}.
\end{aligned}
\] 
This proves \eqref{eq:dt_N}.

\subsection*{Data Availability} All the data obtained for this work is presented in the same manuscript.

\subsection*{Conflict of Interest} The authors declare no conflict of interest in the production and possible publication of this work. 

\medskip

\end{document}